\DeclareMathOperator{\rank}{rank}
\DeclareMathOperator{\im}{Im}
\DeclareMathOperator{\vol}{Vol}
\DeclareMathOperator{\logg}{\log\log}
\DeclareMathOperator*{\argmin}{argmin}
\def\N{\mathbb{N}}
\def\Q{\mathbb{Q}}
\def\R{\mathbb{R}}
\def\F{\mathbb{F}}
\def\Z{\mathbb{Z}}
\def\P{\mathbb{P}}
\def\T{\mathbb{T}}
\def\S{\mathbb{S}}
\def\cA{\mathcal{A}}
\def\cC{\mathcal{C}}
\def\cH{\mathcal{H}}
\def\cL{\mathcal{L}}
\def\cP{\mathcal{P}}
\def\cR{\mathcal{R}}
\def\cS{\mathcal{S}}
\def\cX{\mathcal{X}}
\def\cY{\mathcal{Y}}
\def\cL{\mathcal{L}}
\newcommand{\E}{\mathbb{E}} 
\newcommand{\given}{\;|\;}
\newcommand{\mean}[1] {\E\left\{{#1}\right\}}
\newcommand{\meanx}[1] {\E\{{#1}\}}
\newcommand{\cmean}[2] {\E\left\{#1\given #2\right\}}
\newcommand{\ind}{\boldsymbol{\mathbbm{1}}} 
\newcommand{\indf}[1]{\ind\set{#1}} 
\newcommand{\var}[1]{\mathrm{Var}\param{{#1}}}
\DeclareMathOperator{\Gr}{Gr}
\newcommand{\set}[1]{\left\{#1\right\}}
\newcommand{\param}[1]{\left(#1\right)}
\newcommand{\abs}[1] {\left| {#1}\right|}
\newcommand{\floor}[1] {\left\lfloor{#1}\right\rfloor}
\newcommand{\prob}[1]{\mathbb{P}\left(#1\right)}
\newcommand{\cprob}[2]{\mathbb{P}\left(#1\given #2\right)} 
\newcommand{\eps}{\epsilon}
\newcommand{\bx}{\mathbf{x}}
\def\Cn{C^{^\bullet}}
\def\Cp{C^{^\circ}}
\newcommand{\rmax}{r_{\max}}
\def\bvphi{\boldsymbol\varphi}
\newcommand{\cB}{{\cal{B}}}
\def\bth{\boldsymbol\theta}
\providecommand{\setthms}[1]{#1}
\newtheorem{lem}{Lemma}[section]
\newtheorem{thm}[lem]{Theorem}
\newtheorem{prop}[lem]{Proposition}
\newtheorem{cor}[lem]{Corollary}
\newtheorem{rem}[lem]{Remark}
\theoremstyle{definition}
\newtheorem{defn}[lem]{Definition}
\newcommand{\cech}{\v{C}ech }
\newcommand{\erdren}{Erd\H{o}s-R\'enyi }
\newcommand{\iid}{\mathrm{i.i.d.}}
\newcommand{\whp}{{w.h.p.}}
\newcommand{\ninf}{n\to\infty}
\newcommand{\pois}[1]{\mathrm{Poisson}\param{{#1}}}
\newcommand{\limninf}{\lim_{\ninf}}
\newcommand{\dtv}[2]{d_{\mathrm{TV}}\param{{#1},{#2}}}
\newcommand{\bs}{\backslash}
\def\const {C^*}
\numberwithin{equation}{section}
\def\ems{\emptyset}
\def\bsplit#1\esplit{\begin{split} #1 \end{split} }
\def\splitb#1\splite{\begin{split} #1 \end{split} }
\def\beq#1\eeq{\begin{equation} #1 \end{equation}}
\def\eqb#1\eqe{\begin{equation} #1 \end{equation}}
\def\vcap{V_{\scap}}
\def\vdiff{V_{\diff}}
\def\vint{V_{\mathrm{int}}}
\def\kint{D_{\mathrm{int}}}
\def\vsimp{V_{\simp}}
\def\vuni{V_{\mathrm{uni}}}
\def\gcrit{g_{\mathrm{crit}}}
\def\hcrit{h_{\mathrm{crit}}}
\def\hsep{h_{\mathrm{sep}}}
\def\phmin{\phi_{\min}}
\def\cmin{c_{\min}}
\def\rhmin{\rho_{\min}}
\def\hrho{\hat\rho}
\def\Abp{{D}_{\mathrm{bp}}}
\def\hcX{\hat\cX}
\def\hcXmin{\hcX_{\min}}
\def\hphi{\hat\phi}
\def\bC{\bar C}
\def\bZ{\bar Z}
\def\bB{\bar B}
\def\Dn{D}
\def\gn{g}
\def\Cn{F^{\scriptscriptstyle{\bullet}}}
\def\Cp{F^{\scriptscriptstyle{\circ}}}
\def\Nn{N^{\scriptscriptstyle{\bullet}}}
\def\Np{N^{\scriptscriptstyle{\circ}}}
\def\Rn{\cR^{\scriptscriptstyle{\bullet}}}
\def\Rp{\cR^{\scriptscriptstyle{\circ}}}
\DeclareFontFamily{U}{FdSymbolC}{}
\DeclareFontShape{U}{FdSymbolC}{m}{n}{<-> s * FdSymbolC-Book}{}
\DeclareSymbolFont{fdarrows}{U}{FdSymbolC}{m}{n}
\DeclareMathSymbol{\origof}{\mathrel}{fdarrows}{"73}
\DeclareFontFamily{U}{FdSymbolD}{}
\DeclareFontShape{U}{FdSymbolD}{m}{n}{<-> s * FdSymbolD-Book}{}
\DeclareSymbolFont{fdnarrows}{U}{FdSymbolD}{m}{n}
\DeclareMathSymbol{\norigof}{\mathrel}{fdnarrows}{"73}
\DeclareMathOperator{\diff}{diff}
\DeclareMathOperator{\simp}{simp}
\DeclareMathOperator{\scap}{cap}
\def\Cnp{F^{\origof}}
\def\Cnnp{F^{\norigof}}
\def\Scrit{S_k^{\mathrm{crit}}}
\def\Vcrit{V_k^{\mathrm{crit}}}
\def\hbth{\hat\bth}
\def\thres{\log n + (k-1)\logg n}
\newlength{\dhatheight}
\title{\vspace{-50pt}
Homological Connectivity in Random \cech Complexes}
\author{Omer Bobrowski\footnote{omer@ee.technion.ac.il}\\
\small Technion - Israel Institute of Technology
\vspace{-20pt}}
\date{}
\begin{document}

\maketitle

{\abstract{

We study the homology of random \cech complexes generated by a homogeneous Poisson process. We  focus on  `homological connectivity' -- the stage where the random complex is dense enough, so that its homology ``stabilizes" and becomes isomorphic to that of the underlying topological space. 
Our results form a comprehensive high-dimensional analogue of well-known phenomena related to connectivity in the \erdren graph and random geometric graphs.
We first prove that there is a sharp phase transition describing homological connectivity.
Next, we analyze the behavior of the complex in the critical window. We show that the cycles obstructing homological connectivity have a very unique and simple shape. In addition, we prove that the process counting the last obstructions converges to a Poisson process.
We make a heavy use of Morse theory, and its adaptation to distance functions. 
In particular, our results classify the critical points of random distance functions according to their exact effect on homology.
}}

{
\tableofcontents
}

\section{Introduction}

One of the first, and  most widely-known results on random graphs is the connectivity of the so called \emph{\erdren random graph}. Let $G(n,p)$ be graph generated on $n$ labelled vertices, where each edge is added independently and with probability $p = p(n)$. The main results in \cite{erdos_random_1959} can be roughly divided into four  statements.\footnote{We note that the exact model studied in \cite{erdos_random_1959} is slightly different than the $G(n,p)$. However: (a) the $G(n,p)$ model is more commonly studied, and (b) it is has a similar limiting behavior as the fixed-size model in \cite{erdos_random_1959}.}
\begin{enumerate}
\item There is a sharp phase transition for connectivity at $p = \frac{\log n}{n}$, i.e.~
\eqb\label{eq:ER_1}
	\limninf\prob{G(n,p)\text{ is connected} }= \begin{cases} 1 & p = \frac{\log n+w(n)}{n},\\
	0 & p = \frac{\log n-w(n)}{n}  ,\end{cases}
\eqe
for any $w(n) \to \infty$, $w(n) = o(\log n)$. 
\item Denote by $N_{\mathrm{comp}}$ the number of connected components in $G(n,p)$, and by $N_{\mathrm{iso}}$ the number of isolated vertices in $G(n,p)$.
When $p$ is in the critical range i.e.~$p = \frac{\log n + \lambda}{n} $ for some $\lambda\in \R$, then with high probability (\whp) we have
\eqb\label{eq:ER_2}
	N_{\mathrm{comp}} = N_{\mathrm{iso}}+1.
\eqe
This implies that 
the random $G(n,p)$ graph consists of a giant connected component and isolated points only. In other words, the ``obstructions" to connectivity are isolated points, and once they vanish (i.e.~merge with the giant component), the graph is connected.
\item For $p= \frac{\log n+\lambda}{n}$,   studying the limiting distribution of $N_{\text{iso}}$ leads to
\eqb\label{eq:ER_3}
	(N_{\mathrm{comp}}-1) \xrightarrow{\cL} \pois{e^{-\lambda}},
\eqe
where $\xrightarrow{\cL}$ stands for convergence of the probability law.
\item Combining \eqref{eq:ER_2} and \eqref{eq:ER_3}, if $p= \frac{\log n+\lambda}{n}$ then
\eqb\label{eq:ER_4}
	\limninf\prob{G(n,p)\text{ is connected}} = e^{-e^{-\lambda}}.
\eqe
\end{enumerate}
Clearly, the existence of isolated vertices prevents connectivity. The challenging part of the proof in \cite{erdos_random_1959} is to show that the vanishing of the isolated vertices indeed implies connectivity.

There is an alternative way to phrase statements 2-4, which sometimes provides additional insight into the behavior of the random graph. Instead of considering $G(n,p)$ as a fixed graph with $n$ vertices and probability $p$, one can consider it as an increasing sequence of graphs indexed by $p$ (constructing such a sequence can be done by attaching uniform $\iid$ ``clocks" to the edges, and adding each edge when its corresponding clock goes off). 
With this sequence in mind, we can define the following random ``hitting-times",
\eqb\label{eq:st_graph}
\splitb
	T_0^{\text{iso}}&:= \inf \set{p : G\text { has no isolated vertices}},\\
	T_0 &:= \inf \set{p: G \text{ is connected}}.
\splite
\eqe
Then \eqref{eq:ER_2} implies that \whp~we have $T_0^{\text{iso}} = T_0$. In addition, if we define $T_0' = \exp(-nT_0 +\log n)$,  then \eqref{eq:ER_4} is equivalent to $T'_0\xrightarrow{\cL} \mathrm{Exponential(1)}$. Note also that if we consider $G(n,p)$ as a weighted graph (where  weights = clock values), then $T_0$ equals the largest edge-weight in the minimal spanning tree (MST), while $T_0^{\text{iso}}$ is the largest weight in the nearest neighbor graph (NNG). The above equality holds not only for the largest edges, but for the entire sequence of edges in the MST/NNG around the connectivity threshold (see e.g.~\cite{skraba_randomly_2017}). It is also shown that this sequences converges to a homogenous Poisson process. We will return to these ideas later.

It turns out that the behavior described in \eqref{eq:ER_1}-\eqref{eq:ER_4} is not unique to the $G(n,p)$ model.
In \cite{linial_homological_2006,meshulam_homological_2009} the Linial-Meshulam (LM) random $d$-complex $Y_d(n,p)$ was introduced where 
one starts with the full $(d-1)$-skeleton on $n$ vertices, and adds $d$-dimensional faces independently with probability $p$.
The observation in \cite{linial_homological_2006} was that connectivity in $G(n,p)$ is the same as having a trivial zeroth homology $H_0$.
Thus, for the complex $Y_d(n,p)$ one may  ask analogous questions about having a trivial $(d-1)$-th homology  $H_{d-1}$, a property that \cite{linial_homological_2006} referred to as `homological connectivity'. 
The phase transition proved in \cite{linial_homological_2006,meshulam_homological_2009} is a  generalization of \eqref{eq:ER_1},
\[
	\limninf \prob{H_{d-1}(Y_d(n,p),\F) = 0} = \begin{cases} 1 & p = \frac{d\log n + w(n)}{n},\\
	0 & p = \frac{d\log n - w(n)}{n}.\end{cases}
\]
where $\F$ is a finite field.
Note that taking $d=1$ recovers the $G(n,p)$ result. Somewhat surprisingly, it turns out that, similarly to the $G(n,p)$ model, the obstructions to homological connectivity are ``isolated" or ``uncovered" $(d-1)$-faces. By that we refer to $(d-1)$-faces that are not  included in any $d$-face. It is straightforward to show that in the LM model, the existence of  isolated $(d-1)$-faces implies the existence of  non-trivial $(d-1)$-cycles (or cocycles, as used in \cite{linial_homological_2006,meshulam_homological_2009}), implying that $H_{d-1} \ne 0$. As in the $G(n,p)$ model, the more challenging part of the proof is to show that the opposite also holds. These statements were also proven for homology with integer coefficients more recently \cite{luczak_integral_2018,newman_integer_2018}.

In \cite{kahle_inside_2016} the equivalent statements to \eqref{eq:ER_2}-\eqref{eq:ER_4} were generalized as well. In particular, they showed that when $p = \frac{d\log n + \lambda}{n}$, then \whp\ the Betti number $\beta_{d-1} = \rank(H_{d-1})$  is equal to the number of isolated $(d-1)$-faces, and consequently that
\[
	\beta_{d-1}(Y_d(n,p)) \xrightarrow{\cL} \pois{e^{-\lambda}/d!}, 
\]
implying that
\[
	\limninf\prob{H_{d-1}(Y_d(n,p),\Z_2) = 0} = e^{-e^{-\lambda}/d!}.
\]
Thus, the entire set of statements \eqref{eq:ER_1}-\eqref{eq:ER_4} is generalized by the $Y_d(n,p)$ complex. We can also define hitting times analogous to those in \eqref{eq:st_graph},
\eqb\label{eq:st_complex}
\splitb
T_{d-1}^{\text{iso}} &:= \inf\set{p : Y_d \text{ has no isolated $(d-1)$-faces}},\\
T_{d-1} &:= \inf\set{p : H_{d-1}(Y_d) = 0}.
\splite
\eqe
For $d=2$, the work in \cite{kahle_inside_2016}   showed that \whp~$T_{d-1}^{\text{iso}} = T_{d-1}$.
The MST interpretation above, also has a higher-dimensional analogue via the notion of  minimal spanning acycles (MSA), defined in \cite{hiraoka_minimum_2017}.  Similarly to the MST of the $G(n,p)$, in \cite{skraba_randomly_2017} it was shown that the sequence of largest $d$-faces in the MSA converges to a Poisson process (after proper normalization).

Another interesting combinatorial model is the random \emph{flag} (or \emph{clique}) complex $X(n,p)$, where one takes the random graph $G(n,p)$ and adds a $k$-simplex for every clique of $(k+1)$ vertices.
In \cite{kahle_sharp_2014} it was proved that there exists a sequence of  phase transitions, such that for any fix $\eps >0$,
\[
	\limninf \prob{H_k(X(n,p),\Q) = 0} = \begin{cases} 1 & p = \param{\frac{(\frac{k}{2}+1+\eps)\log n}{n}}^{1/(k+1)},\\
	0 & p = \param{\frac{(\frac{k}{2}+1-\eps)\log n}{n}}^{1/(k+1)}.
	\end{cases}
\]
The connection between isolated faces and homological connectivity in this case was made via  Garland's method \cite{garland_p-adic_1973}. Brielfy, Garland's method provides a sufficient spectral condition for a simplicial complex to have a trivial $k$-th homology, given that it has no isolated $k$-faces.
In \cite{kahle_sharp_2014} it was conjectured that analogous statements to \eqref{eq:ER_2}-\eqref{eq:ER_4} should exist for this model as well. In other words, the only obstructions to homological connectivity are isolated faces, and consequently a Poisson limit for the Betti numbers should exist. However, to date it remains an open problem.
Perhaps the most significant difference between the random flag complex and the LM complex is that the homology of the former is non-monotone. In $Y_d(n,p)$ (as well as in $G(n,p)$), we start with the largest $H_{d-1}$ possible (all $(d-1)$-faces, no $d$-faces), and then adding $d$-faces (by increasing $p$) can only remove existing generators from $H_{d-1}$. This is not the case for $X(n,p)$ where increasing $p$ may also introduce new cycles to homology in all degrees. This difference makes the analysis significantly more complicated for the flag complex, and is also the main challenge we will be facing in this paper. We note that for $H_1$ a similar result was proven in \cite{demarco_triangle_2012} for $\Z_2$ coefficients. In addition, similar methods to \cite{kahle_sharp_2014} were used to study a more general model \cite{costa_large_2016,costa_homological_2015,fowler_generalized_2015}, that includes the LM and flag complexes as special cases, and the connection between homological connectivity and isolated faces was observed there as well.

The phenomena we have described so far are not unique to combinatorial models.
In the \emph{random geometric graph} $G(n,r)$  \cite{gilbert_random_1961, penrose_random_2003},
 one starts with a random set of  $n$ points in a $d$-dimensional metric space (assumed to be connected), and places an edge whenever the distance between two vertices is less than $r = r(n)$. The phase transition here was proved in \cite{penrose_longest_1997}, and is of the form
\eqb\label{eq:pt_rgg}
	\limninf \prob{G(n,r)\text{ is connected}} = \begin{cases} 1 & r = C\param{\frac{\log n + w(n)}{n}}^{1/d},\\
	0 & r =C\param{\frac{\log n - w(n)}{n}}^{1/d}.
	\end{cases}
\eqe
where  $C>0$ depends on the  space  in which the points are generated  and the probability measure on it. For the case where the points are generated on the flat torus (as in this paper), similar statements to \eqref{eq:ER_2}-\eqref{eq:ER_4} were proven as well, along with the suitable hitting-times and MST statements (where the weights are the lengths of the edges).
 We note that similar results hold when the points are generated by a homogeneous Poisson process with rate $n$. 

In this paper, we study a higher-dimensional analogue of $G(n,r)$ known as the \emph{random \cech complex}.
The  study of random geometric complexes first appeared in \cite{robins_betti_2006}. The rigorous mathematical analysis of these complexes was initiated by Kahle \cite{kahle_random_2011}, and extended over the past decade in various directions (see \cite{bobrowski_topology_2018} for a survey of the field). 
In particular, aspects related to homological connectivity were studied in 
\cite{bobrowski_distance_2014,bobrowski_topology_2014,bobrowski_random_2019,bobrowski_vanishing_2017,iyer_thresholds_2018}, where the foundations for the results we present here were laid. 
In this paper we are finally ready to put all the pieces together and present phase transition results that  are a complete analogue of \eqref{eq:ER_1}-\eqref{eq:ER_4}. 

Let $M$ be a $d$-dimensional smooth compact Riemannian manifold.
Let $\cP\subset M$ be a finite subset. The \cech complex we consider here is generated with $\cP$ as its set of vertices, by fixing a radius $r>0$ and asserting that  $(k+1)$ points span a simplex if the intersection of $r$-balls  around them (in the Riemmannian metric) is non-empty, see Section \ref{sec:cech}.
The \emph{random \cech complex} $\cC_r$ we study here is when $\cP = \cP_n$ is a homogeneous spatial Poisson process on $M$ with intensity $n$ (see Section \ref{sec:poisson}).
Our goal is to study the homology of the random complex $\cC_r$ when $n\to\infty$ and $r = r(n)\to 0$.
An important quantity in our anaylsis will be
\eqb\label{eq:lambda}
    \Lambda := \omega_d nr^d, 
\eqe
where $\omega_d$ is the volume of a $d$-dimensional unit-ball in $\R^d$. Notice that $\Lambda$ represents the expected number of points of $\cP_n$ lying in a given ball of radius $r$. This quantity controls the expected degree of the faces in the complex. 

While homological connectivity was defined earlier as the point where homology becomes trivial, in our setting this definition requires an update. If $r$ is large enough, then the balls of radius $r$ around $\cP_n$ cover $M$ completely. The Nerve Lemma \ref{lem:nerve} (cf. \cite{borsuk_imbedding_1948}) then implies that $\cC_r$ is homotopy equivalent to $M$, and in particular $H_k(\cC_r)\cong H_k(M)$ for all $k$. Therefore, we should not expect the homology $H_k(\cC_r)$ to become trivial in general (unless $H_k(M)$ is itself trivial).

Note that the $1$-skeleton of $\cC_r$ (i.e.~the set of vertices and edges) is the random geometric graph $G(n,2r)$.
In this case, assuming that $M$ is connected, and has a unit-volume, then \eqref{eq:pt_rgg} holds with $C = \frac{1}{2}\omega_d^{-1/d}$.\footnote{To be precise, this was proven for the flat torus in \cite{penrose_longest_1997}, but similar analysis to \cite{bobrowski_topology_2018} can show it holds for a general $M$.}
We will therefore say that the threshold for connectivity of $\cC_r$ is when
$\Lambda = 2^{-d}\log n$.
For higher-dimensional homology, the best result known is
\eqb\label{eq:pt_hom}
	\limninf \prob{H_k(\cC_r) \cong H_k(M)} = \begin{cases} 1 & \Lambda = \log n + k\logg n + w(n),\\
	0 & \Lambda = \log n + (k-2)\logg n - w(n),
	\end{cases}
\eqe
for $1\le k \le d-1$, $w(n)\to\infty$. This result was proved first for the flat torus in \cite{bobrowski_vanishing_2017}, and later generalized to compact Riemmannian manifolds in \cite{bobrowski_topology_2018}.
While \eqref{eq:pt_hom} indicates a phase transition  where the homology of  $\cC_r$ identifies with that of the underlying manifold, it is not the full description we are seeking.
Firstly, the description in \eqref{eq:pt_hom} is not tight. It is not clear where exactly is the threshold (if one exists). Secondly, as we do not know where the exact transition occurs, we cannot analyze the obstructions as in \eqref{eq:ER_2}-\eqref{eq:ER_4}.
It turns out that the gap between \eqref{eq:pt_hom} and a full probabilisitic description for homological connectivity was much bigger than expected,  and this brings us to the present work.

A key observation that allowed us to make the  breakthrough in this paper, is the realization that the event $\set{H_k(\cC_r)\cong H_k(M)}$ does not capture well the notion of `homological connectivity'. 
The main reason is that for $k\ge 1$ the homology $H_k(\cC_r)$ does not behave in  monotone fashion, in the sense that increasing the radius $r$ might create new $k$-cycles as well as destroy existing ones. As a consequence, even if it is true that $H_k(\cC_r) \cong H_k(M)$ for a given $r$, there is no guarantee that the same holds for $r' > r$. In fact, we will show later  that for some values of $r$ such discrepancies occur with high probability.
Intuitively, we want to think of homological connectivity as the stage where homology ``stabilizes" and no more changes occur. Thus, we need to consider a slightly different event. Notice that in the LM model this is not an issue, since  homology there is  monotone.
Our definition of \emph{homological connectivity}  will thus be the occurrence of the event
\[
\cH_{k,r} := \set{H_k(\cC_s) \cong H_k(M)\ \ \forall s\ge r}.
\]
Note that $\cH_{k,r}$ is indeed monotone in $r$. Consequently, when $\cH_{k,r}$ occurs we know that homology stops changing, and thus $\cH_{k,r}$ properly captures the notion of homology stabilizing. 
Our main result will show that $\cH_{k,r}$ exhibits a sharp phase transition at $\Lambda = \thres$ (except for   $k=d-1$, for which the threshold is $\log n + (d-1)\logg n$). In addition, we will explore the behavior of the complex in the critical window, and provide detailed statements analogous to \eqref{eq:ER_2}-\eqref{eq:ER_4}.
In Section \ref{sec:results} we will describe these results in detail. In order to simplify our notation and calculations, we will focus here on the case where $M=\T^d$  i.e.~the $d$-dimensional flat torus (see Section \ref{sec:flat_torus}). However, the framework established in \cite{bobrowski_random_2019} can be used to show that similar results hold for a wide class of compact manifolds $M$.

The study here brings together stochastic geometry and algebraic topology.
The bridge between  these two fields is provided by \emph{Morse theory}, studying the connection between critical points and homology, and in particular its adaptation to distance functions \cite{ferry_when_1976,gershkovich_morse_1997}.  
A framework integrating Morse theory  into the analysis of the random \cech complexes was developed in \cite{bobrowski_distance_2014,bobrowski_topology_2018,bobrowski_vanishing_2017}.
To prove the results of this paper we will have to dig  deeper into this connection. 

\paragraph{Other related work.} Homological connectivity is one topological aspect of random complexes that can be studied. For the combinatorial models some of the other topics studied are the emergence of homology, collapsibility, the structure of the fundamental group, spectral and high-dimensional expansion properties, and more (e.g.\ \cite{aronshtam_threshold_2015,babson_fundamental_2011,costa_asphericity_2015,dotterrer_coboundary_2012,gundert_eigenvalues_2016,linial_phase_2016,kozlov_threshold_2010}).
The study of the geometric models includes questions about the Betti numbers, persistent homology, extremal behavior, topological types, and more (e.g.\ \cite{adler_crackle:_2014,auffinger_topologies_2018,bobrowski_maximally_2017,hiraoka_limit_2018,kahle_limit_2013,owada_limit_2017,yogeshwaran_random_2016}). Finally, while not addressing the \cech complex directly, the  results on ``topological learning"  of Niyogi, Smale, and Weinberger \cite{niyogi_finding_2008} certainly provided inspiration and insight for the current study.


\paragraph{Organization.} The rest of the paper is organized as follows. Section 2 provides a brief introduction to the main objects studied in this paper. In Section 3 we will provide a detailed  review on the main results in the paper. The proofs will be divided between Sections 4-8. We tried to organize it so that the more fundamental and intuitive parts of the proofs will appear earlier, and the more difficult and technical parts are saved for later. Finally, in Section 9 we provide some conclusions and open problems.

\section{Preliminaries}
In this section we briefly introduce the main objects we will be studying in this paper.

\subsection{Homology}

We provide here a brief and intuitive introduction to the topic of homology, for  readers who are unfamiliar with algebraic topology. This should be sufficient to understand most parts of this paper.
A comprehensive introduction to the topic can be found in \cite{hatcher_algebraic_2002,munkres_elements_1984}, for example.

Let $X$ be a topological space. The \textit{homology} of $X$ is a sequence of Abelian groups denoted  $\set{H_k(X)}_{k=0}^\infty$. 
In this paper, we will assume that homology is computed with field coefficients, so that the homology groups $H_k(X)$ are simply vector spaces. 
Each of these vector spaces captures different topological properties of $X$. Loosely speaking, the basis elements of $H_0(X)$ correspond the connected components of $X$, the basis of $H_1(X)$ corresponds to ``holes" or ``loops", and $H_2(X)$ corresponds to ``voids" or ``bubbles".
In higher dimensions, the basis of $H_k(X)$ represents `nontrivial $k$-cycles' in $X$, generalizing the idea of holes and voids.  A nontrivial $k$-cycle  can be thought of as a shape topologically similar (e.g.~homeomorphic) to a $k$-sphere. 
The dimension of these vector spaces (i.e. the number of linearly independent nontrivial $k$-cycles) are called the \emph{Betti numbers}, denoted $\beta_k(X):=\dim(H_k(X))$. 

Two examples that will be of use for us in this paper are the sphere and the torus.
Given coefficients in a field $\F$, the homology groups of  the $d$-dimensional sphere $\S^d$, and the $d$-dimensional torus $\T^d = \S^1\times \S^1\times\cdots\times\S^1$, are
\eqb\label{eq:hom_ex}
	H_k(\S^d) \cong \begin{cases} \F & k=0,d,\\
	0 & \text{otherwise},\end{cases} \qquad \text{and} \qquad H_k(\T^d) \cong \F^{\binom{d}{k}},\ 0\le k\le d.
\eqe
This implies that $\beta_0(\S^d) = \beta_d(\S^d) = 1$, while $\beta_k(\S^d) = 0$ for $k\ne 0,d$, and that $\beta_k(\T^d) = \binom{d}{k}$. The case $d=2$ can be visualized by drawing the $2$-dimensional torus, and observing that in addition to a single connected component ($\beta_0 = 1$) there are two essential ``holes" ($\beta_1 = 2$), and a single void ($\beta_2=1$).


\subsection{The \cech complex}\label{sec:cech}

The topological space we study in this paper is called the \emph{\cech complex}. In this section we provide some basic definitions and properties.

\begin{defn}
Let $S$ be a set. An \emph{abstract simplicial complex} $X$ is a collection of finite subsets $A\subset S$ satisfying the following property,
\[
    A \in X\text{ and } B\subset A \quad \Rightarrow \quad B\in X.
\]
\end{defn}
We refer to the sets $A\in X$ with $\abs{A}=k+1$ as $k$-dimensional simplexes, or \emph{$k$-faces}.
We will often refer to a $0$-simplex as `a vertex', $1$-simplex as `an edge', and a $2$-simplex `a triangle'. We denote by $X^k \subset X$ the set of all $k$-simplexes. 

The \cech complex we study is an abstract simplicial complex, defined in the following way.

\begin{defn}\label{def:cech_complex}
Let $\cP = \set{p_1,p_2,\ldots,p_n}$ be a set of points in a metric space, and let $B_r(p)$ be the closed ball of radius $r > 0$ around $p$. The \cech complex $\cC_r(\cP)$ is constructed as follows:
\begin{enumerate}
\item The $0$-simplexes (vertices) are the points in $\cP$.
\item A $k$-simplex $\set{p_{i_1},\ldots,p_{i_{k+1}}}$ is in $\cC_r(\cP)$ if $\bigcap_{j=1}^{k+1} {B_{r}(p_{i_j})} \ne \emptyset$.
\end{enumerate}
\end{defn}

\begin{figure}[ht]
\centering
\includegraphics[scale=0.175]{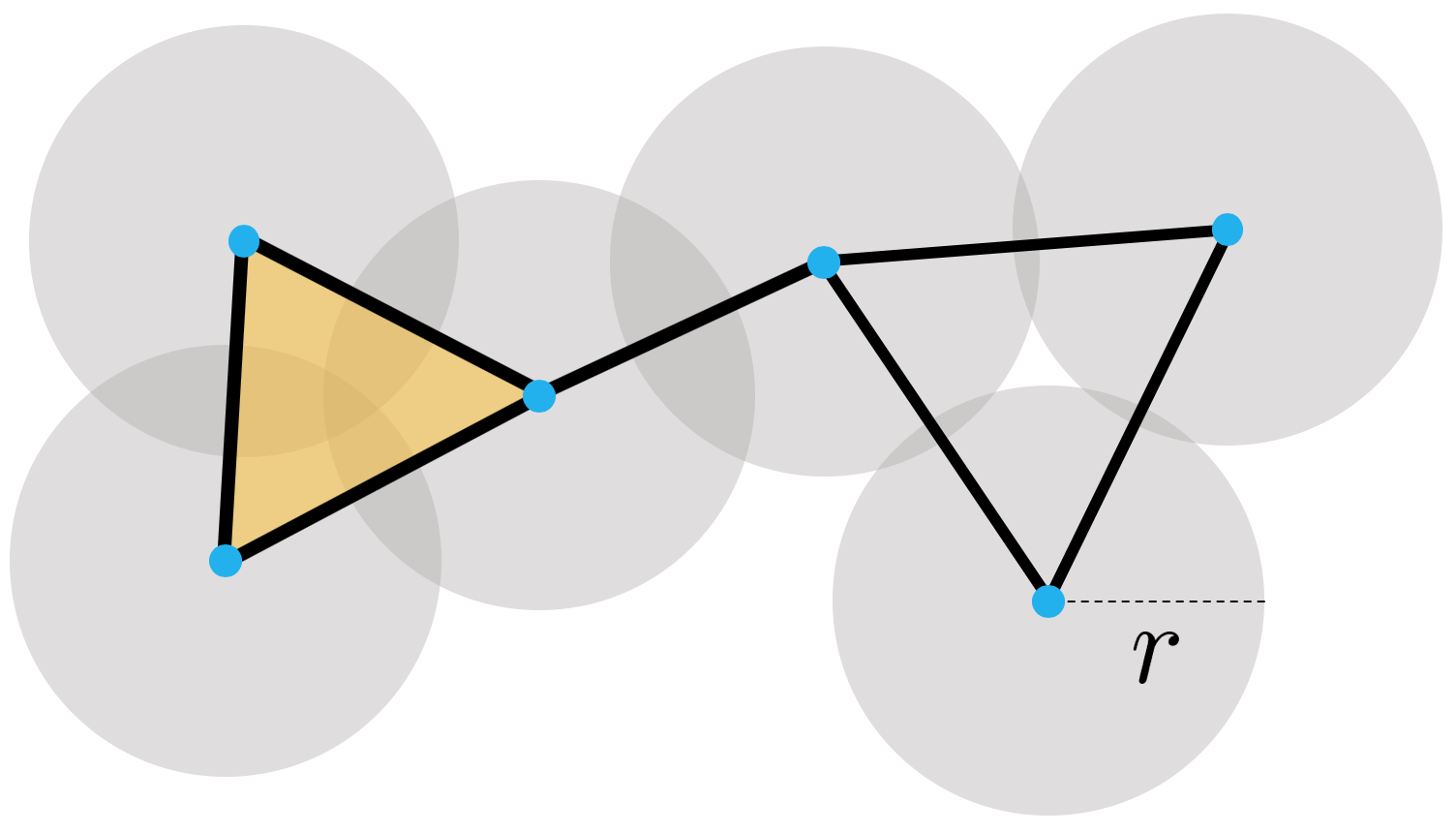}
\caption{\label{fig:cech}  A \cech complex generated by a set of points in $\R^2$. The complex has 6 vertices (0-simplexes), 7 edges (1-simplexes) and one triangle (a 2-simplex). }
\end{figure}

The following lemma states that from the point of view of topology, the abstract simplicial complex 
$\cC_r(\cP)$ is equivalent to the geometric object $B_r(\cP) = \bigcup_{p\in \cP} B_r(p)$, i.e.~the union of balls that were used to generate the complex.
This lemma is a special case of a more general topological statement originated in \cite{borsuk_imbedding_1948}, and commonly referred to as the `Nerve Lemma'.

\begin{lem}[The Nerve Lemma]\label{lem:nerve}
Let $\cC_r(\cP)$ and $B_r(\cP)$ be as defined above. If for every $p_{i_1},\ldots,p_{i_k}$ the intersection $B_r(p_{i_1})\cap \cdots\cap B_r(p_{i_k})$ is either empty or contractible, then $\cC_r(\cP)\simeq B_r(\cP)$ (i.e.~they are homotopy equivalent), and in particular,
\[
	H_k(\cC_r(\cP)) \cong H_k(B_r(\cP)),\quad \forall k\ge 0.
\]
\end{lem}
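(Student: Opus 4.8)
This is the classical nerve theorem of Borsuk for a cover all of whose nonempty intersections are contractible, and the plan is to run the standard ``Mayer--Vietoris blow-up'' argument, which yields an honest homotopy equivalence (the homology statement then being immediate). Write $\cU=\set{U_i}_{i=1}^n$ with $U_i=B_r(p_i)$, let $|\cC_r(\cP)|$ be the geometric realization with vertices $v_1,\dots,v_n$, and for a face $\sigma=\set{i_0,\dots,i_k}\in\cC_r(\cP)$ put $U_\sigma=\bigcap_j U_{i_j}$, which is nonempty and contractible by hypothesis. Introduce the blow-up space
\[
\cZ \;:=\; \bigcup_{\sigma\in\cC_r(\cP)} U_\sigma\times|\sigma|\;\subseteq\;B_r(\cP)\times|\cC_r(\cP)|,
\]
equivalently $\cZ=\set{(x,t):x\in U_{\sigma(t)}}$ where $\sigma(t)$ is the carrier of $t$ (the smallest face whose realization contains $t$), together with the two coordinate projections $\pi_1\colon\cZ\to B_r(\cP)$ and $\pi_2\colon\cZ\to|\cC_r(\cP)|$. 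I will show both are homotopy equivalences; composing a homotopy inverse of one with the other then gives $B_r(\cP)\simeq|\cC_r(\cP)|$, which is the claim. (It is harmless to assume the $U_i$ are open: enlarging the radius by an arbitrarily small $\delta$ changes neither the nerve nor the homotopy type of $B_r(\cP)$ for a fixed finite $\cP$, the latter because the distance function to $\cP$ has only finitely many critical values and the Morse theory of distance functions applies, cf.~\cite{gershkovich_morse_1997}.)

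For $\pi_1$: fix a partition of unity $\set{\varphi_i}$ subordinate to $\cU$, with $\varphi_i(x)>0\Rightarrow x\in U_i$ (available since $B_r(\cP)$ is metrizable, hence paracompact, and the cover is finite). Define $s\colon B_r(\cP)\to\cZ$ by $s(x)=\bigl(x,\sum_i\varphi_i(x)v_i\bigr)$; the indices $i$ with $\varphi_i(x)>0$ all contain $x$, hence span a face and $x$ lies in the corresponding intersection, so $s$ is well defined, continuous, and $\pi_1\circ s=\id$. Conversely, for $(x,t)\in\cZ$ the points $t$ and $\sum_i\varphi_i(x)v_i$ both lie in the closed simplex spanned by $\set{i:x\in U_i}$, which is convex, so the straight-line homotopy in the second coordinate stays inside $\cZ$ and deforms $\id_\cZ$ to $s\circ\pi_1$. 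Hence $\pi_1$ is a homotopy equivalence.

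For $\pi_2$: argue by induction on skeleta, starting from the $0$-skeleton, where $\pi_2$ restricts to $\bigsqcup_i U_i\to\bigsqcup_i\set{v_i}$, a homotopy equivalence since each $U_i$ is contractible. Over the interior of a face $\sigma$ the map $\pi_2$ is the projection $U_\sigma\times(\text{open simplex})\to(\text{open simplex})$ with contractible fiber, and $\pi_2^{-1}(|\sigma|)$ is obtained from $\pi_2^{-1}(\partial|\sigma|)$ by attaching $U_\sigma\times|\sigma|$ along $U_\sigma\times\partial|\sigma|$. Thus passing from the $(k-1)$-skeleton to the $k$-skeleton is a pushout along the cofibration $\bigsqcup_\sigma U_\sigma\times\partial|\sigma|\hookrightarrow\bigsqcup_\sigma U_\sigma\times|\sigma|$, and $\pi_2$ is a map of such pushout squares in which the three comparison maps to the base --- $\pi_2^{-1}(|\cC_r|^{(k-1)})\to|\cC_r|^{(k-1)}$, $\bigsqcup U_\sigma\times|\sigma|\to\bigsqcup|\sigma|$, and $\bigsqcup U_\sigma\times\partial|\sigma|\to\bigsqcup\partial|\sigma|$ --- are all homotopy equivalences: the last two because each $U_\sigma$ is contractible, the first by the inductive hypothesis. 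By the gluing lemma for homotopy equivalences, $\pi_2$ restricted to each skeleton is a homotopy equivalence, and since $\cC_r(\cP)$ is finite this completes the proof.

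\textbf{Main obstacle.} The whole weight of the hypothesis falls on the $\pi_2$ step: contractibility of every $U_\sigma$ is exactly what makes the comparison maps over the cells homotopy equivalences and lets the skeletal induction go through; drop it and one is left only with a Mayer--Vietoris spectral sequence and partial information. The remaining work is bookkeeping --- checking the cofibration (and ANR-type) conditions needed to invoke the gluing lemma, and the minor reduction from closed to open balls --- which is routine but must be stated with care. A more structural alternative would be to identify $B_r(\cP)$ with the homotopy colimit of the diagram $\sigma\mapsto U_\sigma$ over the face poset of $\cC_r(\cP)$ (valid for numerable covers) and then use that, all $U_\sigma$ being contractible, this homotopy colimit agrees with that of the constant point diagram, namely $|\cC_r(\cP)|$.
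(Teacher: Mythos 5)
The paper does not supply a proof of this lemma at all --- it is treated as a known background fact, attributed to Borsuk's 1948 paper and stated without argument. Your proposal is a correct and essentially standard proof via the blow-up space $\cZ$ and the two projections (the ``double mapping cylinder'' argument that appears, e.g., in Hatcher's \S4.G and in tom Dieck), with the two genuine subtleties --- the closed-vs-open ball reduction and the need for the gluing lemma for homotopy equivalences in the skeletal induction --- correctly flagged and handled; there is nothing to correct.
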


Note that in Figure \ref{fig:cech} indeed both $\cC_r(\cP)$ and $B_r(\cP)$ have a single  component and a single hole.

\subsection{Critical faces for the \cech complex in $\R^d$}\label{sec:crit_faces}

In this section we wish to provide some definitions that will allow us later to analyze the critical faces of a random \cech complex. 
For simplicity, in this section only, we present the definitions for point-sets in $\R^d$. 
The complete Morse-theoretic definitions and arguments, for either the flat torus $\T^d$ or  general compact Riemannian manifolds can be found in  \cite{bobrowski_topology_2018,bobrowski_vanishing_2017}, and will be briefly discussed in Section \ref{sec:flat_torus}.

Let $\cP$ be a fixed set of points in a metric space. 
We can think of the \cech complex $\cC_r(\cP)$ as a process evolving according the parameter $r$. In other words, we consider the \cech\!-filtration $\set{\cC_r(\cP)}_{r=0}^{\infty}$.
As  $r$ is increased from $0$ to $\infty$ faces of various dimensions are added to the complex.
As a consequence, cycles in various degrees of homology can be created or destroyed.
By \emph{critical faces} we refer to those faces whose addition to the complex facilitates such changes in homology. 
The description we are about to provide relies on \emph{Morse theory}, and more concretely on its specialized version for \emph{min-type} functions   introduced in \cite{gershkovich_morse_1997}. For more details see \cite{bobrowski_distance_2014,bobrowski_topology_2018,bobrowski_vanishing_2017}.




The main observation coming from the Morse-theoretic framework  in \cite{bobrowski_distance_2014,bobrowski_topology_2018,bobrowski_vanishing_2017} 
 is the following. Fix $r_0>0$, and suppose that we increase the value of $r$ from $r_0-\eps$ to $r_0+\eps$.
If $\eps$ is small enough,  only one of three things can happen.
Either: (a) $\cC_{r_0+\eps}(\cP)$ is \emph{homotopy equivalent} to $\cC_{r_0-\eps}(\cP)$, in which case we have
$H_k(\cC_{r_0+\eps}(\cP))\cong H_k(\cC_{r_0-\eps}(\cP))$, i.e.~the homology remains unchanged through the interval $[r_0-\eps,r_0+\eps]$, or (b) There exists a single $k$ for which a new nontrivial cycle is added to $H_k$, implying that $\beta_k(\cC_{r_0+\eps}) =\beta_k(\cC_{r_0-\eps}) +1$, or (c) There exists a single $k$ for which an existing nontrivial cycle in $H_k$ is terminated (becomes trivial), implying that $\beta_k(\cC_{r_0+\eps}) =\beta_k(\cC_{r_0-\eps}) -1$.  For each of the changes ((b) and (c)) we can associate a single face in the complex that is added exactly at $r=r_0$,  such that its addition to the complex facilitates the change in homology. We call these simplexes ``critical faces", since they correspond to critical points of the distance function, see \cite{bobrowski_distance_2014}. From Morse theory we also know that ``positive" changes in $H_k$ (cycles being formed) are associated to critical faces of dimension $k$, while  ``negative" changes  in $H_k$ (cycles being ``filled in")  are associated to critical faces of dimension $k+1$.

Critical faces are going to be highly useful for us in our analysis of the homology of random \cech complexes, providing information about global phenomena (homology) via local structures (faces). Next, we wish to provide necessary and sufficient conditions for  a $k$-dimensional face in $\cC_r(\cP)$ to be critical.

Denote by $\cC^k(\cP)$ the set of all $k$-simplexes that are generated throughout the filtration $\set{\cC_r(\cP)}_r$.
Starting with the $0$-dimensional faces, we consider all the vertices in $\cC^0(\cP)$ to be critical, since each contributes a single connected component. For higher dimensions, let $\cX\in \cC^k(\cP)$, and assume that the points in $\cX$ are in \emph{general position}. General position implies that (a) the set $\cX$ spans a (geometric) $k$-dimensional simplex in $\R^d$, and (b) there is a unique $(k-1)$-dimensional sphere containing $\cX$. To identify this sphere, we start by defining
\[
	E(\cX) := \set{ x\in \R^d : |x-x_1| = |x-x_2| = \cdots = |x-x_{k+1}|},
\]
i.e.~the set of equidistant points from $\cX$. If $\cX$ is in general position, this set is a $(d-k)$-dimensional affine plane, orthogonal to the $k$-plane containing $\cX$. In addition, there exists a unique point $c \in E(\cX)$ such that 
\eqb\label{eq:c_min}
	|c-x_1| = \inf_{x\in E(\cX)} |x-x_1|.
\eqe
We will denote this point by $c(\cX)$, and the corresponding minimum distance by $\rho(\cX)$. We will refer to them as the \emph{center} and \emph{radius} of $\cX$, respectively.

Once $c = c(\cX)$ is identified, we will consider the position of $\cX$ in a $d$-dimensional coordinate system centered at $c$, which we denote $\R^d_c$. Since we assume general position, the set $\cX$ lies on a unique $k$-dimensional linear space of $\R^d_c$, which we denote by $\Pi(\cX)$ -- an element in the Grassmannian $\Gr(d,k)$. Notice that the definition of $c$ as the center of $\cX$, implies that $\cX$ lies on a $(k-1)$-sphere of radius $\rho(\cX)$ centered at the origin of $\R^d_c$. We will denote by $\theta(\cX) \subset \S^{k-1}$ the spherical coordinates of $\cX$ on this sphere. Notice that the transformation $\cX \to (c(\cX), \rho(\cX),\Pi(\cX),\theta(\cX))$ is a bijection, a fact that we will use later when we use the Blaschke-Petkantschin formula later (see Appendix \ref{sec:bp}). To conclude, for every abstract simplex $\cX\in \cC^k(\cP)$ we defined the following,
\eqb \label{eq:crit_face_1}
\splitb
c(\cX) &= \text{The center of $\cX$}, \\
\rho(\cX) &= \text{The radius of $\cX$} , \\
\Pi(\cX) &= \text{The linear $k$-plane centered at $c(\cX)$ containing $\cX$}, \\
\theta(\cX) &= \text{The spherical coordinates of $\cX$ in $\Pi(\cX)$.}
\splite
\eqe

We will also need the following defintions,
\eqb \label{eq:crit_face_2}
\splitb
\sigma(\cX) &= \text{The \emph{open} geometric $k$-simplex (a subset of $\R^d$) spanned by $\cX$}, \\
B(\cX) &= \text{The \emph{open} ball in $\R^d$ with radius $\rho(\cX)$ centered at $c(\cX)$}.  
\splite
\eqe

The following Lemma  identifies the critical faces.

\begin{lem}[Lemma 2.4 in \cite{bobrowski_vanishing_2017}]\label{lem:crit_face}
Let $\cX \in \cC^k(\cP)$ be in general position. Then $\cX$ is a critical $k$-dimensional  face in the \cech filtration, if and only if
\[
	(1)\ \ c(\cX) \in \sigma(\cX),\quad\textrm{ and }\quad (2)\ \ B(\cX)\cap \cP= \emptyset.
\]
\end{lem}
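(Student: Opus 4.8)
The natural approach is to invoke the Morse-theoretic description of the \cech filtration for min-type distance functions, as developed in \cite{bobrowski_distance_2014,bobrowski_vanishing_2017}. Recall that a $k$-simplex $\cX$ enters the filtration precisely at radius $r = \rho(\cX)$ — this is the first radius at which $\bigcap_{x\in\cX} B_r(x) \ne \emptyset$, and at that radius the intersection is the single point $c(\cX)$. By the Nerve Lemma \ref{lem:nerve}, homotopy-type changes of $\cC_r(\cP)$ track those of the union of balls $B_r(\cP)$, and critical faces in the filtration correspond exactly to critical points of the distance function $d_\cP(x) = \min_{p\in\cP}|x-p|$ (equivalently, of $-d_\cP$ viewed as giving the sublevel-set filtration of the ball union). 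So the plan is: \textbf{(i)} identify the candidate critical point associated to $\cX$ as the point $c(\cX)$ at "time" $\rho(\cX)$; \textbf{(ii)} show $c(\cX)$ is a genuine critical point of $d_\cP$ with the generating set of nearest neighbors being exactly $\cX$ if and only if conditions (1) and (2) hold; \textbf{(iii)} translate "critical point generated by the $(k+1)$-tuple $\cX$" into "$\cX$ is a critical $k$-face."

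For step (ii), the min-type Morse theory of \cite{gershkovich_morse_1997} (adapted in \cite{bobrowski_distance_2014}) says: a point $c$ where the $\ell$ nearest points of $\cP$ are $p_{i_1},\dots,p_{i_\ell}$ (all at distance $\rho$) is a critical point of $d_\cP$ if and only if $c$ lies in the interior of the convex hull of $\{p_{i_1},\dots,p_{i_\ell}\}$ — i.e.\ $0$ is in the interior of the convex hull of the vectors $p_{i_j}-c$ — and in that case its Morse index equals $\ell-1$. Now apply this with $c = c(\cX)$. By construction of $c(\cX)$ and $\rho(\cX)$ via the equidistant plane $E(\cX)$, the points of $\cX$ all lie at distance exactly $\rho(\cX)$ from $c(\cX)$; condition (2), $B(\cX)\cap\cP = \emptyset$, is exactly the statement that \emph{no other} point of $\cP$ is strictly closer, so that $\cX$ is precisely the set of nearest neighbors of $c(\cX)$ and $d_\cP(c(\cX)) = \rho(\cX)$. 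Given that, condition (1), $c(\cX)\in\sigma(\cX)$ — the open simplex spanned by $\cX$ — is, since $\cX$ is affinely independent (general position) and lies on a sphere about $c(\cX)$, equivalent to $c(\cX)$ being in the interior of $\conv(\cX)$ \emph{within its affine span}; one must check that this "relative interior" condition is the right notion, i.e.\ that the critical-point condition only sees the affine span of the active constraints. This is exactly where the min-type critical point theory gives a critical point of index $k$, living on the $k$-dimensional stratum $\Pi(\cX)$.

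For step (iii), once $c(\cX)$ is a critical point of index $k$ generated by the $(k+1)$ points of $\cX$, the Morse theory for the ball-union filtration tells us that passing through radius $\rho(\cX)$ attaches a $k$-cell, which is exactly the effect (b) or (c) described in Section \ref{sec:crit_faces}, and the face responsible is the $k$-simplex $\cX$ added at that radius. Conversely, if either (1) or (2) fails: if (2) fails, some $p\in\cP\cap B(\cX)$ is strictly closer to $c(\cX)$, so $c(\cX)$ is not even on the boundary of the sublevel set at time $\rho(\cX)$ in the relevant way — the simplex $\cX$ appears but $\bigcap B_{\rho(\cX)}(x_i)$ is a point lying in the interior of the ball union, contributing no topology change; if (1) fails, $c(\cX)\notin\sigma(\cX)$, and the min-type theory shows $c(\cX)$ is a regular point (the gradient of $d_\cP$ at $c(\cX)$ is nonzero, pointing away from the convex hull), again no attachment, so $\cX$ is a non-critical ("collapsible") face. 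I expect the main obstacle to be step (ii): carefully matching the geometric bijection $\cX \leftrightarrow (c(\cX),\rho(\cX),\Pi(\cX),\theta(\cX))$ and the convex-position characterization of \cite{gershkovich_morse_1997} — in particular verifying that "$c(\cX)$ in the interior of the convex hull of the active points" is correctly captured by "$c(\cX)\in\sigma(\cX)$" (the open simplex, as a subset of $\R^d$), and handling the reduction to the affine span $\Pi(\cX)$ cleanly. The general-position hypothesis does most of the work here (ensuring affine independence, a unique circumscribing sphere, and transversality of the equidistant plane), so the argument is really a matter of assembling these pieces precisely rather than proving anything genuinely new.
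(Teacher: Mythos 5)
The paper does not prove this lemma itself; it imports it verbatim from Lemma~2.4 of \cite{bobrowski_vanishing_2017}, where it is established by precisely the route you outline: identify $c(\cX)$ with a (candidate) critical point of the min-type distance function $d_\cP$ at level $\rho(\cX)$, apply the convex-position criterion of \cite{gershkovich_morse_1997} to show $c(\cX)$ is critical of index $k$ exactly when $\cX$ is the full nearest-neighbor set (condition (2)) and $c(\cX)$ lies in the relative interior $\sigma(\cX)$ (condition (1)), and then read off the effect on the \cech filtration through the Nerve Lemma. So your plan matches the intended argument.

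One small imprecision worth fixing: you open by recalling that ``$\cX$ enters the filtration precisely at radius $\rho(\cX)$,'' but this is only true \emph{when} condition (1) already holds --- in general $\cX$ enters at the radius of its minimal enclosing ball, which equals $\rho(\cX)$ if and only if $c(\cX)\in\bar\sigma(\cX)$ (this equivalence is exactly Remark~\ref{rem:half_sphere}(1)). Your later handling of the case where (1) fails shows you are aware of this, but as written the opening ``recall'' could be read as assuming part of what must be proved. A clean version would start from the fact that the entry radius of $\cX$ is the minimal enclosing ball radius, observe that $\rho(\cX)$ equals this iff (1) holds, and only then pursue the criticality of $c(\cX)$ at level $\rho(\cX)$.
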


Figure \ref{fig:crit_faces} provides examples for critical faces in dimensions $k=1,2$.

\begin{figure}[ht]
\begin{center}
\includegraphics[scale=0.25]{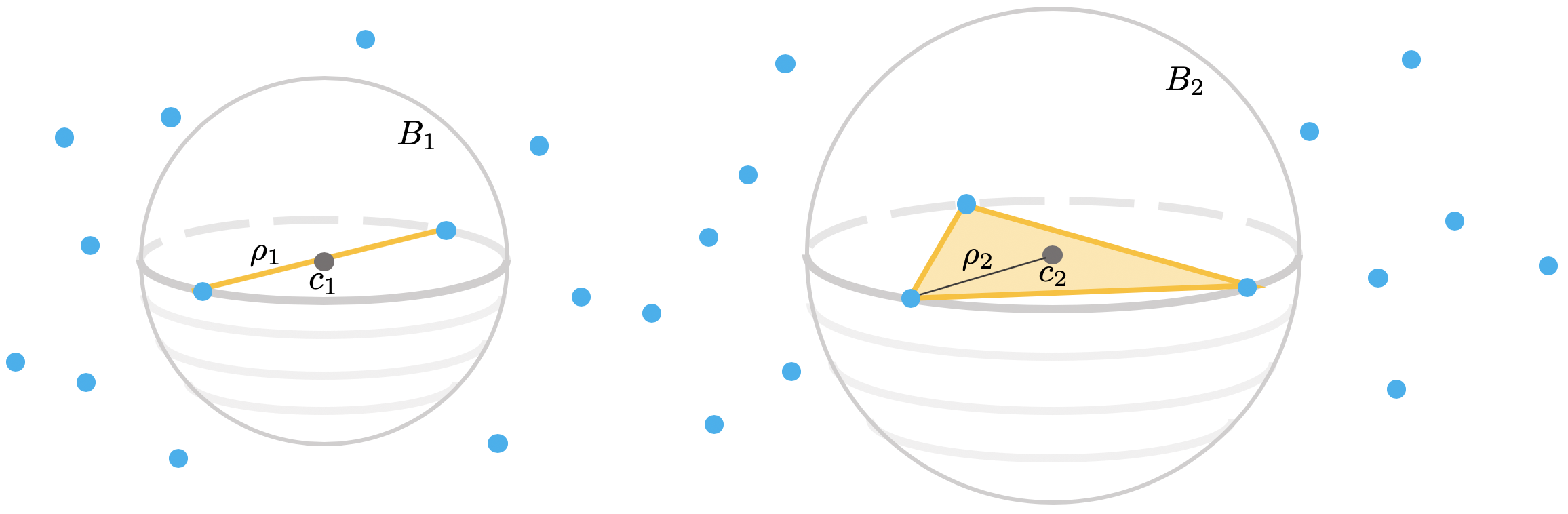}
\caption{\label{fig:crit_faces} 
Critical faces of dimension $k=1,2$ generated by points in $\R^d$. We observe two sets $\cX_1$ (left) and $\cX_2$ (right) consisting of 2 and 3 points, respectively. We denote $c_i = c(\cX_i),\rho_i = \rho(\cX_i)$, and $B_i = B(\cX_i)$.
In both cases the center $c_i$ lies in the open simplex $\sigma(\cX_i)$, satisfying condition (1) in Lemma \ref{lem:crit_face}. In addition, the open balls $B_i$ are empty, satisfying condition (2).
}\end{center}
\end{figure}

\begin{rem}\label{rem:half_sphere} 
(1) The first condition in Lemma \ref{lem:crit_face} is equivalent to stating that $\rho(\cX)$ is equal to the radius at which $\cX$ is added the complex. 
(2) The first condition is also equivalent to the requirement that there is no hemisphere of $\S^{k-1}$ that contains all the points in $\theta(\cX)$. We will use these equivalent notions in some of our arguments later.
\end{rem}

\subsection{The $d$-dimensional flat torus}\label{sec:flat_torus}

As stated in the introduction, the results in this paper can be proven for Poisson processes generated on compact smooth Riemannian manifolds. 
In order to simplify the notation and  calculations required for the proofs, we focus our attention on the special case of the $d$-dimensional flat torus. 
By `flat torus' we refer to the quotient space $\T^d  = \R^d / \Z^d$. We can think of $\T^d$ as the cube $[0,1]^d$ under the relation $0\sim 1$, and with the toroidal metric
\[
d_T(x,y) = \min_{\delta\in \Z^d} \abs{x-y+\delta},\quad x,y\in [0,1]^d.
\]
The main advantage of the flat torus is that it allows us to work in a compact Riemannian manifold setting, while keeping the metric simple in the following sense.
As we will mostly be considering infinitesimally small neighborhoods, for all practical purposes, the metric we will be using  can be assumed to be Euclidean. Compared to the cube $[0,1]^d$, the torus $\T^d$ has no boundary, which will also be desirable for us. 
 
Studying the \cech complex, and its critical faces in particular, in the flat torus case (as well as  any compact Riemmannian manifold) requires some extra caution compared to $\R^d$. Firstly, recall that the Nerve Lemma \ref{lem:nerve} requires the intersection of balls to be contractible. While this is  true  in $\R^d$, on $\T^d$ it is not always the case.
Secondly, our definition of critical faces via Lemma \ref{lem:crit_face} needs to be revisited. This lemma, as well as \eqref{eq:crit_face_1} and \eqref{eq:crit_face_2}, are not always well-defined on the torus. 

In 
\cite{bobrowski_topology_2018} we showed that there exists $\rmax >0$ such that if we limit ourselves to study the bounded filtration $\set{\cC_r(\cP)}_{r=0}^{\rmax}$ then these issues are resolved. Firstly, $\rmax$ is taken to be small enough so that all intersections are contractible. Secondly, we show in \cite{bobrowski_topology_2018} that if $E(\cX)\cap B_{\rmax}(\cX)\ne \emptyset$ then $c(\cX)$ is well defined. Further, in this case we have that $\cX \subset B_{\rmax}(c(\cX))$, and this neighborhood can be isometrically embedded  in $\R^d$ as a Euclidean ball. Thus, we can view $\cX$ as lying in $\R_c^d$ and proceed as in Section \ref{sec:crit_faces}.
Note that while there might be  faces in $\set{\cC_r(\cP)}_{r=0}^{\rmax}$ for which $E(\cX)\cap B_{\rmax}(\cX)= \emptyset$, these faces cannot be critical, and therefore will not affect our analysis. Finally, our results will show that homological connectivity occurs at $r\ll \rmax$, therefore the limitation on the filtration will have no practical effect.

\subsection{The Poisson process}\label{sec:poisson}

In this paper we study random \cech complexes constructed over point sets $\cP$  generated by finite  homogeneous Poisson processes.
Let $X_1,X_2,\ldots$, be a sequence of $\iid$ (independent and identically distributed) random variables, distributed uniformly on $\T^d$. Let $N\sim \pois{n}$ be a Poisson random variable, independent of the $X_i$-s. Define the spatial Poisson process $\cP_n$ as
\[
\cP_n := \set{X_1,X_2,\ldots, X_N}.
\]
Notice that the size of the set $\cP_n$ is random but finite, and $\mean{\abs{\cP_n}} = n$.
For every subset $A\subset \T^d$ we define $\cP_n(A) = \abs{\cP_n\cap A}$, i.e.~the number of points lying in $A$.
Two important properties of the process $\cP_n$ are the following: 
\begin{enumerate}
\item For every  $A\subset  \T^d$,  we have $\cP_n(A) \sim \pois{n\vol(A)}$.

\item For every $A,B\subset  \T^d$  such that $A\cap B = \emptyset$, the variables  $\cP_n(A),\cP_n(B)$ are independent.
\end{enumerate}
The last property is sometimes referred to as ``spatial independence''. It is worth noting that these two properties actually can be used as an equivalent definition of $\cP_n$.

In this paper we  study the random \cech complex $\cC_r :=\cC_r(\cP_n)$,  where $n$ will often be omitted.


\subsection{Notation}

The main results in this paper are asymptotic. We use  $a_n = o(b_n)$ do denote that $a_n/b_n \to 0$, and  $a_n\approx b_n$ to denote that $a_n/b_n \to 1$. We say that an event $E$ occurs with high probability (\whp) if $\limninf \prob{E} = 1$.

The calculations in this paper will involve some constant values. Those constants that carry important meaning will be denoted by  the letter `$D$' (with an identifying index). 
Other constants that appear only within the context of a single proof, and are not needed elsewhere will be denoted by $C_i$, where the index $i$ resets at the beginning of every proof.
Finally, there are numerous places where the value of the constant is  insignificant, and then we will denote it by $\const$. Notice that the value of $\const$  changes throughout the paper, even within the same equation.

Finally, a word on set notation. We defined an abstract $k$-simplex $\cX$ to be a set of $(k+1)$ elements. Throughout the paper we will refer to simplexes as sets indeed. However, in our calculations we will often assume that some arbitrary ordering is given on $\cX$, so that we can list it as $\cX = \set{x_1,\ldots, x_{k+1}}$. Abusing notation, we will then allow mixing between sets and tuples. For example, if $\cX$ is a $k$-simplex, and $\bx \in (\T^d)^{k+1}$ we will use ``$\cX = \bx$" to state that $X_i = x_i$ for all $1\le i \le k+1$. In addition, we will often use functions defined on sets $\cX$ of a fixed size. In these cases we will allow the input to be a tuple $\bx$ of the same size, as the function value will be independent of the ordering.

\section{Outline of the results}
\label{sec:results}


Let $\cP_n$ be a homogeneous Poisson process on $\T^d$, with intensity $n$. 
Define $\cC_r := \cC_r(\cP_n)$ to be the \cech complex generated by $\cP_n$. We will focus on the event,
\eqb\label{eq:hom_conn}
\cH_{k,r} := \set{H_k(\cC_s) \cong H_k(\T^d),\ \ \forall s\in[r,
\rmax]}.
\eqe
More concretely, by $\cong$ we mean that the map $j_*:H_k(\cC_s)\to H_k(\T^d)$ induced by the inclusion $B_r(\cP_n)\hookrightarrow \T^d$ and the Nerve Lemma \ref{lem:nerve}, is an isomorphism.
We refer to the occurrence of the event $\cH_{k,r}$ as `homological connectivity'. The restriction $s\le \rmax$ comes from our discussion in Section \ref{sec:flat_torus}. Note, however, that if we consider the union of balls $B_r(\cP_n)$ rather than the \cech complex $C_r(\cP_n)$, we can remove this restriction. We will prove that $\cH_{k,r}$ occurs when $r\ll \rmax$, in which case $B_r(\cP_n)\simeq \cC_r(\cP_n)$ by the Nerve Lemma \ref{lem:nerve}. Thus, for any practical purposes this restriction carries no significance.

The case  $k=0$ represents the usual sense of graph connectivity, discussed in the introduction.
The main results in this paper are the proofs for sharp phase transitions for the occurrence of $\cH_{k,r}$, for $1\le k \le d$. The behavior of $\cH_{k,r}$ is different between $k=1,\ldots, d-2$, and $k=d-1,d$. We phrase all the results in this paper in terms of $\Lambda = \omega_d nr^d$ \eqref{eq:lambda}.

\begin{thm}\label{thm:pt_hk_1}
Let $1\le k \le d-2$, and suppose that as $n\to\infty$, $w(n)\to\infty$. Then,
\[
\limninf \prob{\cH_{k,r}} = \begin{cases} 1 & \Lambda = \thres + w(n), \\
0 & \Lambda = \thres - w(n).\end{cases}
\]
\end{thm}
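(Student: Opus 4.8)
The plan is to prove the two halves separately, both via the Morse-theoretic dictionary of Lemma \ref{lem:crit_face}. The key reduction is that the event $\cH_{k,r}$ fails (for $s\ge r$) precisely when homology changes somewhere in $[r,\rmax]$, i.e.\ when some critical face is born at radius $\rho\ge r$ whose addition alters $H_k$ or $H_{k-1}$ in the ``wrong'' direction. So I would first set up the counting framework: for each $j$, let $N_{j,n}(r)$ denote the number of critical $j$-faces of $\cC_r(\cP_n)$ with radius $\rho(\cX)\ge r$, where by Lemma \ref{lem:crit_face} a $j$-subset $\cX\subset\cP_n$ in general position is such a critical face iff $c(\cX)\in\sigma(\cX)$, $B(\cX)\cap\cP_n=\emptyset$, and $\rho(\cX)\ge r$. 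Since positive changes in $H_k$ come from critical $k$-faces and negative changes from critical $(k+1)$-faces, after radius $r$ we have $H_k(\cC_s)\cong H_k(\T^d)$ for all $s\in[r,\rmax]$ once there are no critical $k$- or $(k+1)$-faces of radius $\ge r$ beyond those accounted for by the ``final'' homology of $\T^d$. (More carefully, one argues that with high probability the total number of critical $k$-faces of radius $\ge r$ equals $\beta_k(\T^d)=\binom{d}{k}$, and symmetrically for $k+1$; I would lean on the bounds already established in \cite{bobrowski_topology_2018,bobrowski_vanishing_2017} for this bookkeeping.)

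For the subcritical half ($\Lambda=\thres-w(n)$, so $\prob{\cH_{k,r}}\to 0$), I would compute the first moment $\mean{N_{k+1,n}(r)}$ — the expected number of critical $(k+1)$-faces of radius at least $r$ — using the Mecke (Palm) formula together with the Blaschke--Petkantschin change of variables (Appendix \ref{sec:bp}) to disintegrate the integral over $(c,\rho,\Pi,\theta)$. The integrand factorizes into: a Jacobian factor $\sim\rho^{?}$ from Blaschke--Petkantschin, an indicator that $\theta$ admits no containing hemisphere (Remark \ref{rem:half_sphere}) integrated over $(\S^{k})^{k+2}$ giving a combinatorial constant, an indicator $\rho\ge r$, and the void probability $e^{-n\omega_d\rho^d}=e^{-\Lambda(\rho/r)^d}$. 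Carrying out the $\rho$-integral, one finds $\mean{N_{k+1,n}(r)}\approx D\cdot n^{?}r^{?}e^{-\Lambda}\cdot(\text{polylog corrections})$, and substituting $\Lambda=\log n+(k-1)\logg n-w(n)$ makes this blow up (the $\logg n$ power is exactly the right one for degree $k$). A second-moment / Chebyshev argument, again via Mecke and the spatial independence of disjoint regions, then shows $N_{k+1,n}(r)\to\infty$ in probability, and each such extra critical $(k+1)$-face kills a $k$-cycle that is present in the final complex $\cC_{\rmax}$ but must be absent at radius $s$ just below its birth — forcing $H_k(\cC_s)\not\cong H_k(\T^d)$ for some $s\ge r$, hence $\cH_{k,r}$ fails whp. (One must also make sure these faces are genuinely ``obstructing'' and not matched off by a canceling critical $(k+1)$-face born even later; this is where the structure theory of the critical window — which the paper develops — is invoked, and for this half the simpler statement that the count diverges already suffices.)

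For the supercritical half ($\Lambda=\thres+w(n)$, so $\prob{\cH_{k,r}}\to 1$), the first-moment computation above now gives $\mean{N_{k+1,n}(r)}\to 0$, so by Markov's inequality whp there are \emph{no} critical $(k+1)$-faces of radius $\ge r$ at all; symmetrically $\mean{N_{k,n}(r)}$, whose threshold is $\log n+(k-2)\logg n$, is also $o(1)$ at this radius — wait, more precisely the count of critical $k$-faces of radius $\ge r$ must converge to exactly $\binom{d}{k}$, the ones realizing $H_k(\T^d)$; I would handle this by showing the number of critical $k$-faces of radius $\ge r$ that are \emph{not} part of a fixed spanning set is $o(1)$ in expectation. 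Granting that, for every $s\in[r,\rmax]$ no homology-changing face of dimension $k$ or $k+1$ is added, so $H_k(\cC_s)$ is constant on $[r,\rmax]$ and equals its value at $\rmax$, which by the Nerve Lemma \ref{lem:nerve} (valid since $r\ll\rmax$) is $H_k(\T^d)$; thus $\cH_{k,r}$ holds whp.

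\textbf{Main obstacle.} The delicate point is not the first-moment asymptotics — those are standard Blaschke--Petkantschin/Mecke calculations — but rather the step translating ``no excess critical $k$- and $(k+1)$-faces of radius $\ge r$'' into the \emph{monotone} statement $H_k(\cC_s)\cong H_k(\T^d)$ \emph{simultaneously for all} $s\in[r,\rmax]$. One must rule out the scenario where a $k$-cycle is created (a critical $k$-face of radius in $[r,\rmax]$ that is ``extra'') and later destroyed (by a critical $(k+1)$-face), producing a transient discrepancy even though the endpoint homology is correct; controlling this requires knowing that \emph{both} the $k$-face count and the $(k+1)$-face count with radius $\ge r$ are tight at their minimal values, and pairing them up correctly. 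I expect this cancellation analysis — essentially showing the relevant part of the persistence diagram is empty above radius $r$ — to be the technical heart, and it is presumably why the paper must first establish the precise structure of the obstructing cycles in the critical window before the phase transition for $\cH_{k,r}$ can be nailed down.
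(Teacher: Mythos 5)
Your plan has the right skeleton — Morse-theoretic reduction to critical faces, Mecke/Blaschke--Petkantschin first and second moments — but it founders on a computational error that hides the key conceptual difficulty. Your $N_{j,n}(r)$ is what the paper calls $F_{j,r}$, and by Proposition~\ref{prop:mean_var} its mean is $\approx D_j n\Lambda^{j-1}e^{-\Lambda}$, so the threshold for $F_{j,r}$ to vanish is $\Lambda = \log n + (j-1)\log\log n$, \emph{not} $\log n + (j-2)\log\log n$ as you assert. In particular the threshold for $F_{k+1,r}$ is $\log n + k\log\log n$, which is strictly \emph{later} than the theorem's threshold $\log n + (k-1)\log\log n$. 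So at $\Lambda = \log n + (k-1)\log\log n + w(n)$ with $w(n) = o(\log\log n)$ one has $\mean{F_{k+1,r}} \approx D_{k+1}(\log n)\,e^{-w(n)} \to \infty$: your claim that $\mean{N_{k+1,n}(r)}\to 0$ in the supercritical half is simply false, and the Markov step collapses. What a pure $F_{j,r}$ count would actually recover is the weaker pre-existing statement \eqref{eq:pt_hom}, with its $2\log\log n$ gap, not the sharp Theorem~\ref{thm:pt_hk_1}.

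The missing idea is the paper's distinction between positive and negative critical faces. Not every critical $(k+1)$-face touches $H_k$; only the \emph{negative} ones terminate $k$-cycles, while positive $(k+1)$-faces create $(k+1)$-cycles and leave $H_k$ unchanged. Proposition~\ref{prop:pt_neg} shows that $\Cn_{k+1,r}$ (and $\Cp_{k,r}$) vanish at the \emph{earlier} threshold $\log n + (k-1)\log\log n$ — a full $\log\log n$ before $F_{k+1,r}$ does — and that shift is precisely what the theorem is about. Establishing it requires the sufficient condition for positivity (Lemma~\ref{lem:crit_positive}) and the annulus-covering estimates of Section~\ref{sec:pn_faces}; it cannot be read off the raw count $F_{k+1,r}$. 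Once Proposition~\ref{prop:pt_neg} is in hand, the rest of the paper's proof is exactly the bookkeeping you sketch: whp no positive $k$-face nor negative $(k+1)$-face has radius in $(r,\rmax]$, so $H_k(\cC_s)$ is constant there and equals $H_k(\cC_{\rmax})\cong H_k(\T^d)$ by coverage; conversely at $\Lambda=\thres-w(n)$ whp $\Cp_{k,r}>0$, and each positive $k$-face strictly changes $\beta_k$, so $H_k(\cC_s)\cong H_k(\T^d)$ cannot hold on both sides of that radius. You correctly flag the cancellation issue at the end as the technical heart, but the trouble is not a pairing concern to defer: the quantity you chose to count is still diverging in the regime where you need it to vanish, so the supercritical argument does not start.
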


\begin{thm}\label{thm:pt_hk_2}
Suppose that as $n\to\infty$, $w(n)\to\infty$. Then, 
\[
\limninf \prob{\cH_{d-1,r}} = \limninf \prob{\cH_{d,r}} = \begin{cases} 1 & \Lambda = \log n + (d-1)\logg n  + w(n), \\
0 & \Lambda = \log n + (d-1)\logg n - w(n).\end{cases}
\]
\end{thm}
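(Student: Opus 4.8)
The plan is to translate both events into a single counting statement about the top-dimensional critical faces above level $r$, and then run a first- and second-moment analysis on that count.

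\emph{Reduction to counting critical $d$-faces.} Let $N(r)$ denote the number of critical $d$-faces $\cX\subseteq\cP_n$ with $\rho(\cX)\ge r$; by Lemma \ref{lem:crit_face} these are the $(d+1)$-subsets with $c(\cX)\in\sigma(\cX)$, $B(\cX)\cap\cP_n=\emptyset$ and $\rho(\cX)\ge r$. First I would show $\cH_{d,r}=\{N(r)=0\}$ (up to a \whp\ general-position error): coverage of $\T^d$ by $B_r(\cP_n)$ is monotone in $r$, so $\cH_{d,r}$ reduces to coverage at radius $r$; by the Nerve Lemma \ref{lem:nerve}, $H_d(\cC_s)\cong H_d(B_s(\cP_n))$, and for the orientable closed manifold $\T^d$ one has $H_d(B_s(\cP_n);\F)\cong\F=H_d(\T^d;\F)$ exactly when $B_s(\cP_n)=\T^d$ (a proper open subset of a connected $d$-manifold has vanishing top homology); finally, the global maximum of the distance function $x\mapsto d_T(x,\cP_n)$ is a local maximum, hence (Lemma \ref{lem:crit_face}, Remark \ref{rem:half_sphere}) a critical $d$-face attained at its radius, so $B_r(\cP_n)=\T^d$ is equivalent to $N(r)=0$. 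Next, trivially $\{N(r)=0\}\subseteq\cH_{d-1,r}$, since $N(r)=0$ makes $\cC_s\simeq\T^d$ for every $s\in[r,\rmax]$. For the reverse, the Morse-theoretic classification of critical faces (Section \ref{sec:crit_faces}, refined later) gives that exactly one critical $d$-face is positive — it creates the generator of $H_d(\T^d)\cong\F$, i.e.\ it fills the last uncovered region — while every other critical $d$-face is negative and kills a distinct $(d-1)$-cycle; a negative critical $d$-face of radius $\ge r$ contributes, \whp, a nonzero element of $\ker\big(j_*\colon H_{d-1}(\cC_r)\to H_{d-1}(\T^d)\big)$, and these elements are independent in $H_{d-1}(\cC_r)$ by the structure of the persistence module. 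Hence \whp\ $\cH_{d-1,r}\subseteq\{N(r)\le 1\}$, and it remains only to locate the sharp threshold of $N(r)$.

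\emph{First moment and the ``$\to1$'' part.} Using the Mecke/Palm formula for $\cP_n$, the change of variables $\cX\mapsto(c(\cX),\rho(\cX),\theta(\cX))$ of Blaschke--Petkantschin (Appendix \ref{sec:bp}), and the spatial independence $\probx{B(\cX)\cap\cP_n=\emptyset}=e^{-\od n\rho(\cX)^d}$, one gets
\[
\mean{N(r)}=\frac{c_1\,n^{d+1}}{(d+1)!}\int_r^{\infty}\rho^{\,d^2-1}e^{-\od n\rho^d}\,d\rho
= D\,n\,\Lambda^{d-1}e^{-\Lambda}\,(1+o(1)),
\]
where $c_1=\int_{(\S^{d-1})^{d+1}}\ind\{0\in\conv\theta\}\,(\text{Jacobian})\,d\theta>0$, $D=c_1/((d+1)!\,d\,\od^d)$, and the $\rho$-integral is an incomplete-Gamma integral asymptotic to $(d(\od n)^d)^{-1}\Lambda^{d-1}e^{-\Lambda}$. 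Substituting $\Lambda=\log n+(d-1)\logg n\pm w(n)$ yields $\mean{N(r)}=D\,e^{\mp w(n)}(1+o(1))$. In the ``$+$'' regime this tends to $0$, so Markov's inequality gives $\probx{N(r)\ge1}\to0$, hence $\probx{N(r)=0}\to1$; by the inclusions above, $\prob{\cH_{d,r}}\to1$ and $\prob{\cH_{d-1,r}}\to1$.

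\emph{Second moment and the ``$\to0$'' part.} In the ``$-$'' regime $\mean{N(r)}=D\,e^{w(n)}(1+o(1))\to\infty$. A clumping estimate, via the same change of variables, shows that pairs of critical $d$-faces with disjoint defining balls (and disjoint vertex sets) contribute $\mean{N(r)}^2(1+o(1))$ to the second factorial moment, while pairs with overlapping balls — necessarily within distance $O(r)$ — contribute $o(\mean{N(r)}^2)$ (the expected number of such pairs being $\lesssim\mean{N(r)}\,\Lambda^{d+1}e^{-c\Lambda}\to0$ for a dimensional constant $c>0$). Hence $\var{N(r)}=o(\mean{N(r)}^2)$, so by Chebyshev $N(r)\to\infty$ in probability; equivalently, these estimates feed a Chen--Stein bound giving $N(r)\xrightarrow{\cL}\pois{De^{-\lambda}}$ at the critical scale $\Lambda=\log n+(d-1)\logg n+\lambda$. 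Consequently $\probx{N(r)\le1}\to0$, and by the inclusions $\prob{\cH_{d,r}}=\prob{N(r)=0}+o(1)\to0$ and $\prob{\cH_{d-1,r}}\le\prob{N(r)\le1}+o(1)\to0$.

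\emph{Main obstacle.} The technical core is the pair of moment computations for $N(r)$ — pinning down $D$ through the Blaschke--Petkantschin Jacobian and the incomplete-Gamma asymptotics, and controlling the correlated (close-pair) configurations in the second moment. The other delicate point is the Morse/persistence bookkeeping behind the sandwich: that exactly one critical $d$-face is positive (using $\beta_d(\T^d)=1$) and that a negative critical $d$-face of radius $\ge r$ contributes, \whp, an independent class to $\ker j_*$ at radius $r$. By comparison, the inclusion $\cH_{d,r}\subseteq\cH_{d-1,r}$, the identification $\cH_{d,r}=\{N(r)=0\}$, and the final squeeze are routine.
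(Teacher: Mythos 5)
Your proof is correct and follows essentially the same route as the paper: reduce both events to counting critical $d$-faces ($N(r)$ is exactly the paper's $F_{d,r}$), identify $\cH_{d,r}$ with $\{F_{d,r}=0\}$ via coverage and $\cH_{d-1,r}$ \whp\ with $\{F_{d,r}\le 1\}$ using that exactly one $d$-face is positive ($\Cp_{d,r}=1$, $\Cn_{d,r}=F_{d,r}-1$), then apply first and second moments. The only difference is presentational — the paper cites Propositions \ref{prop:mean_var}, \ref{prop:pt_crit} and the coverage result where you re-derive the moment estimates inline, and your $\ker j_*$/persistence phrasing is just the paper's $\Cn_{d,r}>0$ argument in different clothing.
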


These results show the existence of multiple sharp phase transitions, with increasing threshold values: $\Lambda = \thres$ (for $1\le k \le d-2$), and $\Lambda = \log n + (d-1)\logg n$ (for $k=d-1,d$). In other words, homological connectivity occurs in an orderly fashion, where the degree of homology $k$ controls the second-order term of the threshold. From the discussion in the introduction, we know that $\cH_{0,r}$ occurs at $\Lambda = 2^{-d}\log n$, which is much earlier than the thresholds we have here. In addition, 
the threshold for $\cH_{d-1,r}$ and $\cH_{d,r}$ is the same as the coverage threshold \cite{bobrowski_vanishing_2017,flatto_random_1977}. This is not a coincidence, and we will discuss it later. Finally, notice that there is a ``gap" in the sense that at $\Lambda = \log n + (d-2)\logg n$ we do not observe any homological phase transition.

The main challenge in proving Theorems \ref{thm:pt_hk_1} and \ref{thm:pt_hk_2} is that homology describes global features of the complex that cannot be deduced by looking at local neighborhoods.
Nevertheless, when the complex is dense (i.e.~when $\Lambda\to\infty$) we will show that a significant amount of information about homology can be extracted from local information. This local information is related to Morse theory, and manifests itself in the critical faces introduced in Section \ref{sec:crit_faces}.

To analyze the critical faces, we define
\eqb\label{eq:Fk}
	F_{k,r} := \#\text{ critical $k$-faces $\cX \in \cC^k(\cP_n)$, with $\rho(\cX) \in (r,\rmax]$},
\eqe
where $\rho(\cX)$ was defined in \eqref{eq:crit_face_1}.
In other words, $F_{k,r}$ counts critical $k$-faces with radius larger than $r$. These critical faces will facilitate changes in the homology of the complex if we increase the radius beyond $r$. Therefore, our main effort will be to identify the point when $F_{k,r}=0$.
Recall from Section \ref{sec:crit_faces} that critical faces can be divided into two groups - \emph{positive} (creating cycles) and \emph{negative} (terminating cycles). We therefore define the following,
\eqb\label{eq:Fk_pos_neg}
\splitb
	\Cp_{k,r} &:= \#\text{ positive critical $k$ faces $\cX \in \cC^k(\cP_n)$, with $\rho(\cX)\in(r,\rmax]$},\\
	\Cn_{k,r} &:= \#\text{ negative critical $k$ faces $\cX \in \cC^k(\cP_n)$, with $\rho(\cX)\in(r,\rmax]$}.
\splite
\eqe

Notice that $F_{k,r} = \Cp_{k,r} + \Cn_{k,r}$. It is also worth noting that the faces accounted for by $\Cp_{k,r}$ generate cycles in $H_k$, while the faces in $\Cn_{k,r}$ terminate cycles in $H_{k-1}$ (see Section \ref{sec:crit_faces}). The quantities $F_{k,r}, \Cp_{k,r}, \Cn_{k,r}$ are interesting random variables by themselves. In addition, their analysis will shed light on the behavior of homology, and in particular will lead to the proof of Theorems \ref{thm:pt_hk_1} and $\ref{thm:pt_hk_2}$. In Section \ref{sec:pt_crit}, we start with the following result for  $F_{k,r}$.

\newtheorem*{tprop:mean_var}{Proposition \ref{prop:mean_var}}
\begin{tprop:mean_var}
Let $1\le k \le d$, and suppose that $r\to 0$  and $\Lambda\to \infty$. Then,
\[
	\mean{F_{k,r}} \approx \var{F_{k,r}} \approx D_k n\Lambda^{k-1}e^{-\Lambda},
\]
where $D_k>0$ will be defined in \eqref{eq:A_crit}.
\end{tprop:mean_var}

This result will leads almost immediatley to the following phase transition.

\newtheorem*{tprop:pt_crit}{Proposition \ref{prop:pt_crit}}
\begin{tprop:pt_crit}
Let $1 \le k \le d$, and suppose that $w(n)\to\infty$. Then
\[
	\limninf \prob{ F_{k,r} = 0} = \begin{cases} 1 & \Lambda = \thres + w(n),\\ 
	0 & \Lambda = \thres - w(n). \end{cases}
\]
\end{tprop:pt_crit}

Deriving similar phase transitions for
 the positive and negative faces separetly, is a much more challenging task. The following is the main result of Section \ref{sec:pn_faces}.

\newtheorem*{tprop:pt_neg}{Proposition \ref{prop:pt_neg}}
\begin{tprop:pt_neg}
 Let $1 \le k \le d-2$, and  $w(n)\to 0$. Then,
\[
	\limninf \prob{ \Cp_{k,r} = 0} = \limninf \prob{ \Cn_{k+1,r} = 0} = \begin{cases} 1 & \Lambda = \thres + w(n),\\
	0 & \Lambda = \thres - w(n). \end{cases}
\]
\end{tprop:pt_neg}
In other words, around the $k$-th  phase transition, \whp~we have $F_{k,r} = \Cp_{k,r}$ (since $\Cn_{k,r}=0$), and these critical $k$-faces generate the last cycles appearing in the \cech filtration. At the same time we also observe the last negative $(k+1)$-faces, that are to terminate those very last $k$-cycles that obstruct homological connectivity. Using Morse theory (see Section \ref{sec:crit_faces}), once Propositions \ref{prop:pt_crit} and \ref{prop:pt_neg} are proved, the proofs for Theorems \ref{thm:pt_hk_1} and \ref{thm:pt_hk_2} follow almost immediately.

Notice that Proposition \ref{prop:pt_neg} excludes two cases: $k=0$ and $k=d-1$. The point where $\Cn_{1,r}$ vanishes is when the complex becomes connected, i.e.~around $\Lambda = 2^{-d}\log n$. Thus, in the regime we are analyzing here we already have  \whp\  $\Cn_{1,r}=0$. As for $\Cn_{d,r}$, notice  $\beta_d(\T^d)=1$, and for every subset of $A \subset \T^d$ we have $\beta_d(A) = 0$. Thus,  as long as $F_{d,r} > 0$ it is always true that $\Cp_{d,r} = 1$ (as only a single $d$-cycle is to be created) and then $\Cn_{d,r} = F_{d,r}-1$. The vanishing of $\Cn_{d,r}$ will therefore follow the same phase-transition as $F_{d,r}$, i.e.~at $\Lambda = \log n + (d-1)\logg n$, and not at $\Lambda = \log n + (d-2)\logg n$ as Proposition \ref{prop:pt_neg} might suggest.

Once we proved the phase transitions in Theorems \ref{thm:pt_hk_1} and  \ref{thm:pt_hk_2}, we will move on to examine the behavior of the complex inside the critical window for each of these phase transitions. As we discussed in the introduction, we want to study the interferences to homological connectivity. The interesting phenomenon we discover here is that inside the critical window, obstructions to homological connectivity  always appear in the form of pairs of positive-negative faces, such that the positive $k$-simplex that generates a new $k$-cycle is a face of the $(k+1)$-simplex terminating that same cycle. Consequently, the difference in the critical radii is infinitesimally small, and these obstructing cycles are ``born" and ``die" instantaneously.
A cartoon picture demonstrating these cycles is in Figure \ref{fig:iso_faces}. Notice that similar pairings also occur in the random graph case -- the obstructions to connectivity are pairs of positive $0$-faces (isolated vertices) and their matching negative $1$-faces (the edges that connect them to the rest of the graph).
 
\begin{figure}[ht]
\centering
\includegraphics[width=0.5\textwidth]{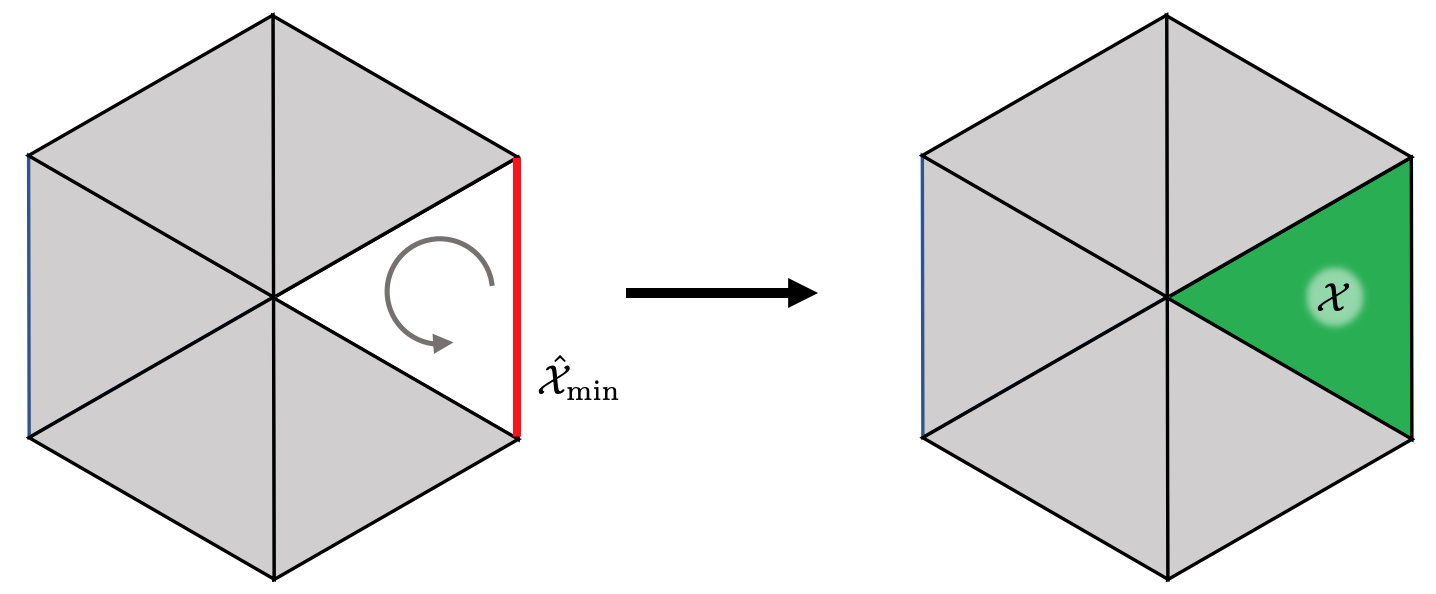}
\caption{
The last $k$-cycles are created by positive $k$-faces ($\hcXmin$), and are being instantaneously terminated by the $(k+1)$-coface $\cX$. In this example $k=1$. 
\label{fig:iso_faces}}
\end{figure}

More formally, for every $k$-simplex $\cX$ we will define $\hcXmin$ to be the ``nearest" $(k-1)$-face of $\cX$, in the sense that the plane containing $\hcXmin$ is closest to the center $c(\cX)$ among all the $(k-1)$-faces of $\cX$ (this will be rigorously defined later, also see Figure \ref{fig:phi}). Then, we define
\[
\splitb
    \Cnp_{k,r} := \#&\text{{\bf negative} critical $k$-faces  $\cX\in\cC^k(\cP_n)$ with $\rho(\cX)\in (r,\rmax]$,}\\ &\text{and such that $\hcXmin$ is a {\bf positive} critical $(k-1)$-face}.
\splite
\]
We will prove the following lemma.
\newtheorem*{tlem:pn_pairs}{Lemma \ref{lem:pn_pairs}}
\begin{tlem:pn_pairs}
Let $1 \le k \le d-2$. Suppose that $\Lambda = \log n + (k-1)\logg n + o(\logg n)$. Then,
\[	
\limninf \prob{ \Cp_{k,r}= \Cn_{k+1,r} =  \Cnp_{k+1,r} } = 1.
\]
\end{tlem:pn_pairs}
This lemma implies that for every $r$ inside the critical window, the only obstructions to homological connectivity are the positive-negative pairs captured by $\Cnp_{k+1,r}$. The lemma also excludes the possibility of a cycle generated at $\rho < r$, that is due to be terminated at $\rho > r$ (since the number of positive and negative faces is equal). This observation, together with Theorem 5.4 in \cite{bobrowski_vanishing_2017} lead to the following corollary.

\newtheorem*{tcor:pt_ihk_1}{Corollary \ref{cor:pt_ihk_1}}
\begin{tcor:pt_ihk_1}
Let $1\le k \le d-2$, and  $w(n)\to\infty$. Then,
\[
	\limninf \prob{H_k(\cC_r) \cong H_k(\T^d)} = \begin{cases} 1 & \Lambda \ge \log n + (k-1)\logg n - w(n),\\ 0 & \Lambda \le \log n + (k-2)\logg n - w(n).
	\end{cases}
\]
\end{tcor:pt_ihk_1}
This corollary implies that the instantaneous homology $H_k(\cC_r)$ exhibits a different phase transition than $\cH_{k,r}$. In particular, it shows that there are choices of $r$ for which \whp~$H_k(\cC_r)\cong H_k(\T^d)$, while $\cH_{k,r}$ does not hold. In other words, even when $H_k(\cC_r)\cong H_k(\T^d)$ \whp~the homology is still not stable. This is the main reason why we argue that $\cH_{k,r}$ is a  better interpretation for homological connectivity in this model. We conjecture that the exact transition for $H_k(\cC_r)$ occurs at $\Lambda = \log n + (k-2)\logg n$. However, its non-monotonicity also implies that  there might not even be such sharp result.

Notice that as before, the cases $k=d-1,d$ are excluded from  Lemma \ref{lem:pn_pairs} and  Corollary \ref{cor:pt_ihk_1}. From Propositions \ref{prop:mean_var} and \ref{prop:pt_crit}, when $\Lambda = \log n + (d-1)\logg n - w(n)$, we have $\Cn_{d,r} > 0$ while $\Cp_{d-1,r} = 0$.  In other words, around the phase transition $H_{d-1}(\cC_r)$ behaves in a monotone fashion, as no new $(d-1)$-cycles are generated. Consequently, the positive-negative pairing structure discussed above does not apply here. This leads to the following statement.

\newtheorem*{tcor:pt_ihk_2}{Corollary \ref{cor:pt_ihk_2}}
\begin{tcor:pt_ihk_2}
Let $k =d-1,d$, and let $w(n)\to\infty$. Then,
\[
	\limninf \prob{H_k(\cC_r) \cong H_k(\T^d)} = \begin{cases} 1 & \Lambda = \log n + (d-1)\logg n + w(n),\\ 0 & \Lambda = \log n + (d-1)\logg n - w(n).
	\end{cases}
\]
\end{tcor:pt_ihk_2}
In other words, for $k=d-1,d$ the phase transitions for $\set{H_k(\cC_r) \cong H_k(\T^d)}$ and $\cH_{k,r}$ are indeed the same.

Finally, in order to examine the distribution of the very last $k$-cycles, we focus on the regime where $\Lambda = \thres + \lambda$, and $\lambda \in \R$. Following Lemma \ref{lem:pn_pairs} and Proposition \ref{prop:pt_neg}, we know that in this regime \whp~$\Cn_{k+1,r} =  \Cp_{k,r}  = F_{k,r}$. 
Therefore, revealing the limiting distribution of $F_{k,r}$ will provide the distributions of the other terms.
From Proposition \ref{prop:mean_var} we have that
\[
\mean{F_{k,r}} \approx  \var{F_{k,r}}\approx D_k e^{-\lambda},
\]
Since these limits are finite and equal, one might expect a Poisson distribution in the limit. The result we will prove is stronger than point-wise convergence of $F_{k,r}$ to a Poisson random variable. Firstly, we will show that the entire point process representing the occurrences of the last critical $k$-faces converges to a Poisson counting process. Secondly, we will prove similar results for the positive and negative  faces separately.

Fix $k \in \set{1,\ldots,d}$. We say that $\lambda$ is ``$k$-critical" if the corresponding radius $r$ is critical for the filtration $\cC_r$ (i.e.~a new critical $k$-faces joins the complex at $r$). Similarly, define ``$k$-positive" and ``$k$-negative". Next, we define the counting processes (see \eqref{eq:crit_proc} for a more rigorous definition),
\[
\splitb
	N_k(t) &:= \# \text{$k$-critical values $\lambda$, such that $e^{-\lambda} \le t$},\\
		\Np_k(t) &:= \# \text{$k$-positive values $\lambda$, such that $e^{-\lambda} \le t$},\\
			\Nn_k(t) &:= \# \text{$(k+1)$-negative values $\lambda$, such that $e^{-\lambda} \le t$}.
\splite
\]
In other words, the process $N_k$ (resp.~$\Np_k, \Nn_k$) counts the occurrences of the last critical faces (resp.~positive, negative),  in a reversed order. 
The following theorem states that the distribution of all three processes  converges  to that of a homogeneous Poisson counting process.

\newtheorem*{tthm:pois_proc}{Theorem \ref{thm:pois_proc}}
\begin{tthm:pois_proc}
Let $1 \le k \le d$, and let $V_k(t)$ ($t\ge 0$) be a homogeneous Poisson (counting) process with rate $D_k$. Then,
\[
	N_k \Rightarrow V_k,
\] 
where `$\Rightarrow$' refers to the weak convergence of the final dimensional distributions. In other words, for all $m, t_1,\ldots, t_m$, we have a multivariate weak convergence,
\[
	(N_k(t_1),\ldots, N_k(t_m)) \Rightarrow
	(V_k(t_1)\ldots, V_k(t_m)).
\]
Similarly, for $1\le k \le d-1$ we have $\Np_k \Rightarrow V_k$, and for $1\le k \le d-2$ we have $\Nn_k \Rightarrow V_k$.
\end{tthm:pois_proc}

The following is a point-wise conclusion from the proof of Theorem \ref{thm:pois_proc}.

\newtheorem*{tcor:pois_var}{Corollary \ref{cor:pois_var}}
\begin{tcor:pois_var}
Let $1\le k \le d$, and set $\Lambda = \thres + \lambda$, then
\[
	\dtv{F_{k,r}}{ Z_{\lambda}} \to 0,
\]
where $Z_{\lambda} \sim \pois{D_k e^{-\lambda}}$, and $d_{\mathrm{TV}}$ is the total-variation distance.\\
The same limit holds for $\Cp_{k,r}$ ($1\le k \le d-1$) and $\Cn_{k+1,r}$ ($1\le k \le d-2$).

\end{tcor:pois_var}

For $1\le k \le d-2$,  Lemma \ref{lem:pn_pairs} implies that $\Nn_k(t)$ is in fact the  process that counts the occurrences of the last pairs of positive-negative faces obstructing homological connectivity in degree $k$  (in a reversed order). In other words, Theorem \ref{thm:pois_proc} provides a full description for the limiting distributions of the arrival times of the last obstructions before $\cH_{k,r}$ takes place.

As mentioned earlier,  $\cH_{d,r}$ occurs when the last critical $d$-face joins the complex, implying that the one-before-last $d$-face facilitates $\cH_{d-1,r}$. Therefore, the process $N_d(t)$ is the process counting the obstructions to connectivity in both degrees $(d-1)$ and $d$. The first jump in this process (as the order is reversed) corresponds to connectivity for $k=d$, while the second jump is for $k=d-1$.

Finally, the limiting Poisson process in Theorem \ref{thm:pois_proc}, enables us to derive the exact limiting probability for $\cH_{k,r}$ inside the critical window.

\newtheorem*{tthm:crit_prob}{Theorem \ref{thm:crit_prob}}
\begin{tthm:crit_prob}
Let $1\le k\le d$. For $k\ne d-1$, if $\Lambda = \thres + \lambda$, then
\[
	\limninf \prob{\cH_{k,r}} = 
	e^{-D_k e^{-\lambda}}.
\]
For $k=d-1$, if $\Lambda = \log n +(d-1)\logg + \lambda$ then
\[
	\limninf \prob{\cH_{d-1,r}} = 
	e^{-D_d e^{-\lambda}}(1+D_de^{-\lambda}).
\]
\end{tthm:crit_prob}

\begin{rem}
As we stated earlier, the radius at which $\cH_{d,r}$ occurs is the same as the coverage radius studied in \cite{flatto_random_1977}. The case $k=d$ in Theorem \ref{thm:crit_prob} therefore provides the exact limit for the coverage probability. To the best of our knowledge, this is an interesting new result by itself.
\end{rem}

This concludes the main results in this paper. Notice that we presented statements  similar to the properties of random graphs \eqref{eq:ER_1}-\eqref{eq:ER_4} discussed in the introduction. The only missing piece is the connection to isolated faces, which we discuss next.
The remainder of the paper will be devoted to proving the statements that we presented in this section.

\subsubsection*{A few words on isolated faces.}
Recall our discussion on isolated vertices in random graphs and isolated faces in random simplicial complexes. The phenomenon that was observed in \cite{erdos_random_1959,kahle_sharp_2014,linial_homological_2006}, was that the obstructions to connectivity are always the remaining isolated objects (vertices in graphs, faces in simplicial complexes), and once the last isolated object is covered, connectivity occurs. In addition,  the number of remaining isolated vertices/faces has a limiting Poisson distribution.

A similar behavior occurs here as well. Notice that whenever a critical $k$-face joins the complex, it is isolated, due to condition (2) in Lemma \ref{lem:crit_face}. We will show later (see Lemma \ref{lem:F_4}), that for the positive-negative pairs that appear inside the critical window, the positive $k$-face remains isolated right until the moment when its matching negative $(k+1)$-face appears. In other words, the last cycles generated in the \cech filtration are formed by isolated critical $k$-faces, and they vanish once all these faces are covered. Consequently, as in other models, the point where homological connectivity occurs is exactly the moment when the last of these isolated faces gets covered. Note, however, that our conclusion is for \emph{critical} isolated faces and not for any isolated faces. Similar calculations to the ones we  present in this paper, can show that for $k \ge 2$,  even within the critical window, isolated faces may appear that are \emph{not} critical. However, these faces  have no effect on the homology of the complex and therefore their isolation status is irrelevant to homological connectivity.
In the random graphs model, as well as in the Linial-Meshulam random $k$-complex, all isolated vertices/faces are critical by definition, and therefore the phenomenon we observe here in fact aligns, and generalizes in way, what is already known for other models.
 
Next, recall the ``hitting-times" phrasing \eqref{eq:st_complex} discussed in the introduction. We can rephrase the discussion in the previous paragraph in terms of the following random times,
\eqb\label{eq:st_geom}
\splitb
T_k^{\text{iso}} &:= \inf\set{r : \cC_s \text{ has no isolated critical $k$-faces for all $s\ge r$}},\\
T_k &:= \inf\set{r : \cH_{k,r} \text{ holds}} .
\splite
\eqe
A direct corollary from Proposition \ref{prop:pt_neg} together with Lemmas \ref{lem:pn_pairs} and \ref{lem:F_4} is that \whp~$T_k^{\text{iso}} = T_k$ for all $1\le k \le d$. Further, if we define $T_k' = \exp(-n \omega_d T_k^d + \log n +(k-1)\logg n)$ then Theorem \ref{thm:crit_prob} is equivalent to the statement 
\eqb\label{eq:limit_times}
	T_k' \xrightarrow{\cL} \mathrm{Exponential}(D_k),\ \ k\ne d-1,\qquad \text{and} \qquad
	T_{d-1}' \xrightarrow{\cL} \mathrm{Gamma(2,D_d)}.
\eqe
Finally, our results  also relate to the  MSA interpretation discussed above. Notice that in the LM complex (as well as the \erdren graph), 
the faces of the MSA (MST) are merely the negative faces in the filtration. The Poisson-process limit for $\Nn_k$ provided in 
Theorem \ref{thm:pois_proc} is therefore analogous to the Poisson-process limit discussed earlier for the heaviest faces in the MSA.
In addition, all these processes can be thought of as representing the so-called ``death times" in \emph{persistent homology} (cf. \cite{carlsson_topology_2009,edelsbrunner_persistent_2008}). Similarly, $\Np_k$ represents the latest ``birth times" in persistent homology.

\section{Phase transitions for critical faces}\label{sec:pt_crit}

The road to proving the phase transitions in Theorems \ref{thm:pt_hk_1}-\ref{thm:pt_hk_2} begins in the analysis of the critical faces. 
Recall the definition of $F_{k,r}$ in \eqref{eq:Fk}.
The main goal in this section is to prove   Propositions \ref{prop:mean_var} and \ref{prop:pt_crit} (only the expectation part of Proposition \ref{prop:mean_var} will be proved here, while the variance result is postponed to Section \ref{sec:pois_limit}). 

\begin{prop}\label{prop:mean_var}
Let $1\le k \le d$, and suppose that $r\to 0$  and $\Lambda\to \infty$. Then,
\[
	\mean{F_{k,r}} \approx \var{F_{k,r}} \approx D_k n\Lambda^{k-1}e^{-\Lambda},
\]
where $D_k>0$ will be defined in \eqref{eq:A_crit}.
\end{prop}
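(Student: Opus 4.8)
The plan is to compute $\mean{F_{k,r}}$ by an application of the Mecke/Palm formula for Poisson processes, turning the sum over critical $k$-faces into an integral over $(k+1)$-tuples of points. Recall from Lemma \ref{lem:crit_face} that a $k$-face $\cX = \set{x_1,\dots,x_{k+1}}$ in general position is critical with radius $\rho(\cX)\in(r,\rmax]$ precisely when (i) $c(\cX)\in\sigma(\cX)$, and (ii) $B(\cX)\cap\cP_n=\emptyset$. Thus
\[
\mean{F_{k,r}} = \frac{n^{k+1}}{(k+1)!}\int_{(\T^d)^{k+1}} \ind\set{c(\bx)\in\sigma(\bx)}\,\ind\set{\rho(\bx)\in(r,\rmax]}\,\prob{\cP_n(B(\bx))=0}\,d\bx,
\]
and since $\cP_n$ is a homogeneous Poisson process of intensity $n$ on the unit-volume torus, $\prob{\cP_n(B(\bx))=0} = e^{-n\vol(B(\bx))} = e^{-n\omega_d\rho(\bx)^d}$. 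The first step is to change variables via the bijection $\cX\mapsto(c(\cX),\rho(\cX),\Pi(\cX),\theta(\cX))$ noted after \eqref{eq:crit_face_2}; this is exactly the Blaschke--Petkantschin formula (Appendix \ref{sec:bp}). Because the torus is flat and $r\to 0$, locally the geometry is Euclidean, so the Jacobian factors into a power of $\rho$ times a function of the angular coordinates $\theta$, and the integral over $c\in\T^d$ just contributes the volume $1$.

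After the change of variables the expectation becomes, up to the $(1+o(1))$ implicit in $\approx$,
\[
\mean{F_{k,r}} \approx C n^{k+1}\int_r^{\rmax}\rho^{d(k+1)-d-1}e^{-n\omega_d\rho^d}\,d\rho\;\int_{(\S^{k-1})^{k+1}} h_k(\theta)\,d\theta,
\]
where $h_k(\theta)$ is the angular density from Blaschke--Petkantschin times the indicator $\ind\set{0\in\conv(\theta)}$ encoding condition (1) of Lemma \ref{lem:crit_face} (equivalently, no open hemisphere of $\S^{k-1}$ contains all angular coordinates, cf. Remark \ref{rem:half_sphere}). The angular integral is a finite positive constant depending only on $k$ and $d$; this is the quantity that will become $D_k$ (with the normalizing constants assembled in \eqref{eq:A_crit}). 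For the radial integral I substitute $u = n\omega_d\rho^d$, so $\rho^d = u/(n\omega_d)$ and $d\rho \propto u^{1/d - 1}\,du/(n\omega_d)^{1/d}$; the integrand becomes a constant times $n^{-(k+1)}\cdot n\cdot u^{k-1}e^{-u}\,du$ after collecting powers (the exponent of $u$ works out to $k-1$ because $d(k+1)-d = dk$ and $(dk)/d = k$, minus one from the $du$ substitution). The lower limit is $u = \Lambda$ and the upper limit $n\omega_d\rmax^d\to\infty$, so
\[
\mean{F_{k,r}} \approx D_k\, n\int_\Lambda^\infty u^{k-1}e^{-u}\,du.
\]
Finally, since $\Lambda\to\infty$, the incomplete Gamma integral satisfies $\int_\Lambda^\infty u^{k-1}e^{-u}\,du \approx \Lambda^{k-1}e^{-\Lambda}$ (integrate by parts, or note the integrand is dominated near its left endpoint), giving $\mean{F_{k,r}}\approx D_k n\Lambda^{k-1}e^{-\Lambda}$ as claimed.

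The variance estimate $\var{F_{k,r}}\approx\mean{F_{k,r}}$ I would only sketch here, since the excerpt defers it to Section \ref{sec:pois_limit}: write $F_{k,r} = \sum_{\cX}\ind_{\cX\text{ crit}}$, so $\var{F_{k,r}} = \mean{F_{k,r}} + \paren{\mean{F_{k,r}(F_{k,r}-1)} - \mean{F_{k,r}}^2}$, and bound the second-moment difference using the Mecke formula for two tuples $\cX_1,\cX_2$. When $\cX_1,\cX_2$ share no points the two emptiness events $\set{\cP_n(B(\cX_1))=\emptyset}$, $\set{\cP_n(B(\cX_2))=\emptyset}$ are positively correlated only through the overlap $B(\cX_1)\cap B(\cX_2)$, whose volume is at most that of a single ball, so the factorial-moment term factors into $\mean{F_{k,r}}^2(1+o(1))$ plus a correction of smaller order coming from configurations where the balls (or the point sets) nearly coincide; the shared-point configurations contribute $o(\mean{F_{k,r}})$ because forcing two critical faces to share a point costs an extra near-empty ball without a compensating $n$. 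The main obstacle, and the reason this is postponed, is controlling these overlap contributions uniformly: one needs to show the region of $(\bx_1,\bx_2)$-space where the two balls substantially overlap but the faces are not essentially identical contributes negligibly, which requires a careful decomposition by the distance between the two centers and is most naturally organized alongside the Poisson-approximation machinery of Section \ref{sec:pois_limit}. For the expectation alone, the only real subtlety is carrying the Blaschke--Petkantschin Jacobian and the hemisphere indicator correctly on the torus, which is handled by the $\rmax$-truncation and local Euclidean embedding described in Section \ref{sec:flat_torus}.
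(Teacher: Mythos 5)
Your proof of the expectation follows the same route the paper takes: Palm/Mecke formula to pass to a $(k+1)$-fold integral, the Blaschke--Petkantschin change of variables to separate the radial part $\rho^{dk-1}$ from the angular factor (which becomes $D_k$), the substitution $u=n\omega_d\rho^d$ producing the incomplete Gamma integral, and the tail asymptotics $\int_\Lambda^\infty u^{k-1}e^{-u}\,du\approx\Lambda^{k-1}e^{-\Lambda}$. The deferral of the variance with a heuristic overlap argument also mirrors the paper, which proves only the expectation in Section 4 and postpones the variance to Section 8, where it is handled by exactly the kind of decomposition (by the number $j$ of shared points and by the distance between centers $c_1,c_2$ and to $c_{12}$) that you sketch.
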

Using Proposition \ref{prop:mean_var}, the following Proposition is straightforward.	
\begin{prop}\label{prop:pt_crit}
Let $1 \le k \le d$, and $w(n)\to\infty$. Then
\[
	\limninf \prob{ F_{k,r} = 0} = \begin{cases} 1 & \Lambda = \thres + w(n),\\
	0 & \Lambda = \thres - w(n). \end{cases}
\]
\end{prop}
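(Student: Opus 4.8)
The plan is to deduce the phase transition from the first- and second-moment estimates in Proposition \ref{prop:mean_var} by a standard application of Markov's and Chebyshev's inequalities. Write $m_n := \mean{F_{k,r}} \approx D_k n\Lambda^{k-1}e^{-\Lambda}$. First I would analyze how $m_n$ behaves at the two ends of the critical window. At $\Lambda = \thres + w(n)$ we have $n\Lambda^{k-1}e^{-\Lambda} = n \cdot \Lambda^{k-1} \cdot e^{-\log n - (k-1)\logg n - w(n)} = \Lambda^{k-1}(\log n)^{-(k-1)} e^{-w(n)}$, and since $\Lambda \approx \log n$ (the extra $\logg n$ and $w(n) = o(\log n)$ terms are lower order), we get $\Lambda^{k-1}(\log n)^{-(k-1)} \to 1$, hence $m_n \approx D_k e^{-w(n)} \to 0$. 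Therefore by Markov's inequality, $\prob{F_{k,r} \ge 1} \le m_n \to 0$, which gives $\prob{F_{k,r} = 0} \to 1$. Symmetrically, at $\Lambda = \thres - w(n)$ the same computation yields $m_n \approx D_k e^{w(n)} \to \infty$.

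For the lower end we need $\prob{F_{k,r} = 0} \to 0$, i.e.\ $F_{k,r} > 0$ \whp. This is where the variance estimate enters: by Chebyshev's inequality,
\[
    \prob{F_{k,r} = 0} \le \prob{\abs{F_{k,r} - m_n} \ge m_n} \le \frac{\var{F_{k,r}}}{m_n^2}.
\]
Since Proposition \ref{prop:mean_var} asserts $\var{F_{k,r}} \approx m_n$, the right-hand side is $\approx m_n^{-1} \to 0$, and we conclude $\prob{F_{k,r} = 0} \to 0$ as required. One minor point to check is that Proposition \ref{prop:mean_var} is stated under the hypotheses $r\to 0$ and $\Lambda \to \infty$; both hold throughout the critical window since $\Lambda \approx \log n \to \infty$ and $r = (\Lambda/(\omega_d n))^{1/d} \to 0$, so the proposition applies verbatim at both ends.

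There is essentially no serious obstacle here — this is a routine second-moment argument, and the only thing that requires a little care is the asymptotic bookkeeping showing $\Lambda^{k-1}/(\log n)^{k-1} \to 1$ and that the $e^{\pm w(n)}$ factor dominates the polynomial correction, which is immediate from $w(n) = o(\log n)$ together with $\Lambda = \log n + O(\logg n) \pm w(n)$. (If $w(n)$ were allowed to grow faster one would simply truncate it, but the statement already restricts to $w(n) \to \infty$ in the regime where $\Lambda \approx \log n$, consistent with the earlier phase-transition statements.) So the real content of this proposition is entirely pushed into Proposition \ref{prop:mean_var}; the present proof is a three-line corollary of it.
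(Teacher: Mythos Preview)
Your proposal is correct and follows essentially the same argument as the paper's own proof: Markov's inequality for the upper regime (using $\mean{F_{k,r}}\approx D_k e^{-w(n)}\to 0$) and Chebyshev's inequality for the lower regime (using $\var{F_{k,r}}\approx\mean{F_{k,r}}\to\infty$), both deduced from Proposition~\ref{prop:mean_var}. The extra bookkeeping you include about $\Lambda^{k-1}/(\log n)^{k-1}\to 1$ is exactly the computation the paper leaves implicit.
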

 
We start with a few definitions.
Recall the conditions for critical faces stated in Lemma \ref{lem:crit_face}. To verify these conditions we define
\[
\splitb
    \hcrit(\cX) &:= \indf{c(\cX)\in \sigma(\cX)},\\
    \gcrit(\cX,\cP) &:= \indf{B(\cX)\cap \cP = \emptyset},
\splite
\]
where $\cX,\cP$ are finite sets of $\T^d$. In other words $\hcrit$ verifies condition (1) in Lemma \ref{lem:crit_face}, while $\gcrit$ verifies condition (2). 
Next, we define
\[
\splitb
    h_{r}(\cX) &:= \hcrit(\cX)\indf{\rho(\cX)\in (r,\rmax]},\\
	g_{r}(\cX,\cP) &:= \gcrit(\cX,\cP)h_{r}(\cX).
\splite
\]
Recall from our discussion in Section \ref{sec:flat_torus} that for the definitions related to critical faces to be valid, we need that $E(\cX)\cap B_{\rmax}(\cX)\ne \emptyset$. Since this is true for all critical faces with in $\set{\cC_r}_{r=0}^{\rmax}$, we will always implicitly assume that this condition holds. Notice that
\eqb\label{eq:def_ck}
	F_{k,r} = \sum_{\cX\in \cC^k(\cP)} g_{r}(\cX,\cP).
\eqe

We start by an exact (non-asymptotic) evaluation for the expectation of $F_{k,r}$.

\begin{lem}\label{lem:mean_ck}
For $k\ge 1$, suppose that $r<\rmax$ and let $\Lambda = \omega_d nr^d$, and $\Lambda_{\max} = \omega_d n\rmax^d$. Then,
\[
	\mean{F_{k,r}} = D_k (k-1)!n\param{e^{-\Lambda} \sum_{j=0}^{k-1}\frac{\Lambda^j}{j!} - e^{-\Lambda_{\max}}\sum_{j=0}^{k-1}\frac{\Lambda_{\max}^j}{j!}} ,
\]
where $D_k>0$  is defined in \eqref{eq:A_crit}.
\end{lem}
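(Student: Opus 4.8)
The plan is to compute $\mean{F_{k,r}}$ directly from the representation \eqref{eq:def_ck}, $F_{k,r} = \sum_{\cX\in \cC^k(\cP_n)} g_r(\cX,\cP_n)$, using the Mecke formula (the Poisson analogue of Campbell's theorem for factorial moment measures). Since $g_r$ depends on a candidate $(k+1)$-tuple of points $\cX$ and, through $\gcrit$, on the remaining points $\cP_n$, applying Mecke with $k+1$ points gives
\[
	\mean{F_{k,r}} = \frac{n^{k+1}}{(k+1)!}\int_{(\T^d)^{k+1}} \mean{g_r(\bx,\cP_n\cup\bx)}\, d\bx
	= \frac{n^{k+1}}{(k+1)!}\int_{(\T^d)^{k+1}} h_r(\bx)\,\prob{\cP_n(B(\bx)) = 0}\, d\bx,
\]
where the factor $1/(k+1)!$ accounts for the unordered nature of the simplex, and the points $\bx$ themselves never fall in the \emph{open} ball $B(\bx)$ so adding them back is harmless. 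By the Poisson void probability, $\prob{\cP_n(B(\bx))=0} = e^{-n\vol(B(\bx))} = e^{-n\omega_d \rho(\bx)^d}$ (using that $\rho(\bx)\le \rmax$ guarantees $B(\bx)$ embeds isometrically as a Euclidean ball, per Section \ref{sec:flat_torus}).

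The next step is to change variables via the bijection $\cX \leftrightarrow (c(\cX),\rho(\cX),\Pi(\cX),\theta(\cX))$ advertised in Section \ref{sec:crit_faces}, i.e.\ a Blaschke--Petkantschin-type formula (Appendix \ref{sec:bp}). This decomposes $d\bx$ over $(\T^d)^{k+1}$ into an integral over the center $c\in\T^d$ (contributing a factor $\vol(\T^d)=1$), the radius $\rho\in(0,\rmax]$, the plane $\Pi\in\Gr(d,k)$, and the angular coordinates $\theta\in(\S^{k-1})^{k+1}$, with a Jacobian that is homogeneous in $\rho$ of the appropriate degree — specifically producing $\rho^{dk-1}$ times a bounded angular/Grassmannian density. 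After this substitution the integrand factors: $h_r(\bx) = \hcrit(\theta)\indf{\rho\in(r,\rmax]}$ depends only on $\theta$ and $\rho$, and the void probability $e^{-n\omega_d\rho^d}$ depends only on $\rho$. Thus the angular and Grassmannian parts integrate out to a single constant, which is precisely how I would \emph{define} $D_k$ (this matches the forward reference \eqref{eq:A_crit}), leaving a one-dimensional integral
\[
	\mean{F_{k,r}} = \const\cdot n \int_r^{\rmax} \rho^{dk-1} e^{-n\omega_d \rho^d}\, d\rho.
\]
Substituting $u = n\omega_d\rho^d$, so $du = dk\cdot n\omega_d\rho^{d-1}\,d\rho$ and $\rho^{dk-1}d\rho$ becomes proportional to $u^{k-1}e^{-u}\,du$ up to powers of $n$, converts this into an incomplete Gamma integral $\int_\Lambda^{\Lambda_{\max}} u^{k-1} e^{-u}\, du$. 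Repeated integration by parts (or the standard identity $\int_\Lambda^\infty u^{k-1}e^{-u}du = (k-1)!\,e^{-\Lambda}\sum_{j=0}^{k-1}\Lambda^j/j!$) yields exactly the stated closed form $D_k(k-1)!\,n\bigl(e^{-\Lambda}\sum_{j=0}^{k-1}\Lambda^j/j! - e^{-\Lambda_{\max}}\sum_{j=0}^{k-1}\Lambda_{\max}^j/j!\bigr)$, with the constant $D_k$ now pinned down as the angular integral times the remaining normalizing factors; one then checks $D_k>0$ since it is an integral of a nonnegative function (the hemisphere condition $\hcrit$ cuts out a positive-measure region of $\theta$).

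The main obstacle is getting the Blaschke--Petkantschin change of variables exactly right — tracking the Jacobian, the correct power of $\rho$, the $1/(k+1)!$ symmetry factor, and how $\hcrit$ (the "$c\in\sigma(\cX)$" condition, equivalently "no hemisphere contains $\theta(\cX)$", per Remark \ref{rem:half_sphere}) translates into the angular integral — so that the bookkeeping produces precisely the constant $D_k$ and no stray factors. A secondary subtlety is justifying that on the torus the void-probability computation and the isometric-embedding argument are valid for all $\rho\le\rmax$; this is exactly what the bounded-filtration discussion in Section \ref{sec:flat_torus} provides, and faces with $E(\cX)\cap B_{\rmax}(\cX)=\emptyset$ are excluded by the implicit assumption noted after \eqref{eq:def_ck}. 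The asymptotic claim $\mean{F_{k,r}} \approx D_k n\Lambda^{k-1}e^{-\Lambda}$ of Proposition \ref{prop:mean_var} then follows immediately from Lemma \ref{lem:mean_ck}, since when $\Lambda\to\infty$ the leading term of $e^{-\Lambda}\sum_{j=0}^{k-1}\Lambda^j/j!$ is $\Lambda^{k-1}e^{-\Lambda}/(k-1)!$, and the $\Lambda_{\max}$ term is negligible because $\Lambda_{\max}$ is of constant order (bounded away from $0$ and $\infty$) while $\Lambda\to\infty$ forces $r\to 0$ with $\Lambda \ll \Lambda_{\max}$, making $e^{-\Lambda_{\max}}$ a fixed constant dwarfed by $n\Lambda^{k-1}e^{-\Lambda}\to\infty$.
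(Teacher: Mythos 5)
Your proof is correct and takes essentially the same route as the paper: Palm/Mecke formula to reduce to a $(k+1)$-fold integral, Poisson void probability $e^{-n\omega_d\rho^d}$ for the conditional expectation, Blaschke--Petkantschin change of variables to factor out the angular/Grassmannian integral as the constant $D_k$, then the substitution $u = n\omega_d\rho^d$ and the incomplete Gamma identity. One small correction to your closing remark on Proposition \ref{prop:mean_var}: $\Lambda_{\max} = \omega_d n\rmax^d$ with $\rmax$ a fixed constant grows linearly in $n$, so it is \emph{not} bounded away from infinity; the $\Lambda_{\max}$ term is negligible because $n\Lambda_{\max}^{k-1}e^{-\Lambda_{\max}}\to 0$ super-exponentially, not because $e^{-\Lambda_{\max}}$ is a fixed constant (if it were, that term would be $\Theta(n)$ and would dominate rather than being dwarfed).
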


\begin{proof}
This is a direct corollary of Proposition 6.1 in \cite{bobrowski_vanishing_2017}. Nevertheless, we will provide a proof here for two reasons: (a) we will use a different change of variable than the one in \cite{bobrowski_vanishing_2017}, and (b) the steps of the proof and the definitions within will be of use for us later.

We start by applying Palm theory (Theorem \ref{thm:palm}), 
\eqb\label{eq:mean_ck}
	\mean{F_{k,r} } = \frac{n^{k+1}}{(k+1)!} \mean{g_{r}(\cX', \cP_n \cup\cX')},
\eqe
where $\cX'$ is an independent set of $\iid$ points.
The properties of the Poisson process $\cP_n$ imply
\[
\splitb
\cmean{g_{r}(\cX', \cP_n\cup\cX')}{\cX'=\bx} &= h_{r}(\bx) \prob{B(\bx)\cap\cP_n = \emptyset},\\
&= h_{r}(\bx) e^{-n\omega_d \rho^d(\bx)},
\splite
\]
where we used the definition of $B(\bx)$ as a ball of radius $\rho(\bx)$.
Therefore,
\eqb\label{eq:mean_integral}
\mean{g_{r}(\cX', \cP_n\cup\cX')} = \int_{(\T^d)^{k+1}} h_{r}(\bx) e^{-n\omega_d \rho^d(\bx)}d\bx.
\eqe
In order to evaluate the last integral we will be using a Blaschke-Petkantschin formula,  discussed in Appendix \ref{sec:bp}.
The main idea is to use a generalized polar-coordinate system, following the transformation $\cX \to (c(\cX), \rho(\cX),\Pi(\cX),\theta(\cX))$ presented in Section \ref{sec:crit_faces}.  This way we transform  $\bx \in (\T^d)^k$ into $(c,\rho,\Pi,\bth)$ where $c\in \T^d$, $\rho\in [0,\infty)$, $\Pi \in \Gr(d,k)$, and $\bth\in (\S^{k-1})^{k+1}$.
Notice that using the these new variables, we have that $\hcrit$ is independent of $c,\rho,\Pi$, and therefore we will denote $\hcrit(\bx) = \hcrit(\bth)$.
Thus, applying the BP-formula \eqref{eq:bp_torus_sep}, we have
\eqb\label{eq:mean_gr}
\mean{g_{r}(\cX', \cP_n\cup\cX') }=  \Abp\int_{r}^{\rmax} \rho^{dk-1} e^{-n\omega_d\rho^d} d\rho \int_{(\S^{k-1})^{k+1}} \hcrit(\bth)(\vsimp(\bth))^{d-k+1}d\bth,
\eqe
where $\vsimp$ is the volume of the $k$-simplex spanned by $\bth$.
Taking the change of variables $n\omega_d \rho^d \to t$ we have 
\[
\int_{r}^{\rmax}  \rho^{dk-1} e^{-n\omega_d\rho^d} d\rho = \frac{1}{d (n\omega_d)^k}\int_{\Lambda}^{\Lambda_{\max}} t^{k-1} e^{-t}dt = \frac{(k-1)!}{d (n\omega_d)^k}\param{e^{-\Lambda} \sum_{j=0}^{k-1}\frac{\Lambda^j}{j!} - e^{-\Lambda_{\max} }\sum_{j=0}^{k-1}\frac{\Lambda_{\max}^j}{j!}},
\]
where we used properties of the incomplete Gamma function.
Defining
\eqb\label{eq:A_crit}
D_k := \frac{\Abp}{(k+1)!d \omega_d^k}\int_{(\S^{k-1})^{k+1}}\hcrit(\bth) (\vsimp(\bth))^{d-k+1}d\bth,
\eqe
and putting everything back into \eqref{eq:mean_ck} completes the proof.

\end{proof}

\begin{proof}[Proof of Proposition \ref{prop:mean_var} - Part I (expectation)]
Just use 
 Lemma \ref{lem:mean_ck}, and take the leading order term (recall that $\Lambda\to\infty$). This proves the result for the expectation. The proof for the variance is considerably more intricate, and requires  definitions that will only be presented later. It is therefore postponed to Section \ref{sec:pois_limit}.

\end{proof}
Note that from Proposition \ref{prop:mean_var} we have that if $w(n)\to \infty$ then
\[
\limninf \mean{ F_{k,r}} = \begin{cases} 0 & \Lambda = \thres + w(n),\\
	\infty & \Lambda = \thres - w(n).\end{cases}
\]
This phase-transition for the expectation leads us to prove Proposition \ref{prop:pt_crit}.


\begin{proof}[Proof of Proposition \ref{prop:pt_crit}]

Taking $\Lambda = \thres + w(n)$ we have that $\mean{F_{k,r}} \approx D_k e^{-w(n)} \to 0$. Using Markov's inequality we therefore have that $\prob{F_{k,r}=0} \to 1$.

For $\Lambda = \thres - w(n)$, we use Chebyshev's inequality.
\[
	\prob{F_{k,r} = 0} = 
		\prob{F_{k,r} \le 0} \le \prob{|F_{k,r} - \mean{F_{k,r}}| \ge \mean{F_{k,r}}} \le \frac{\var{F_{k,r}}}{\mean{F_{k,r}}^2}.
\]
Using Proposition \ref{prop:mean_var} we have $\var{F_{k,r}}\approx \mean{F_{k,r}}$, and since $\mean{F_{k,r}} \approx D_k e^{w(n)}\to \infty$, we have $\prob{F_{k,r}=0} \to 0$, completing the proof.

\end{proof}

To conclude, in this section we proved that at $\Lambda = \log n + (k-1)\logg n + w(n)$ the critical $k$-faces vanish. This implies that no more $k$-cycles are created at larger radii, and also no $(k-1)$-cycles are waiting to be terminated. These statements, however, are not enough to prove the phase-transition statements in Theorems \ref{thm:pt_hk_1} and \ref{thm:pt_hk_2}. To prove these theorems, we will need to prove similar phase transitions for the negative and positive critical faces separately, which is the purpose of the next section.

\section{Phase transitions for positive and negative critical faces}
\label{sec:pn_faces}

In order to close the gap between the phase transition in Proposition \ref{prop:pt_crit} and the ones in Theorems \ref{thm:pt_hk_1}, we will use the notion of negative critical faces discussed in Section \ref{sec:crit_faces}.
 Our main goal in this section is to prove the following.
\begin{prop}\label{prop:pt_neg}
 Let $1 \le k \le d-2$, and $w(n)\to 0$. Then,
\[
	\limninf \prob{ \Cp_{k,r} = 0} = \limninf \prob{ \Cn_{k+1,r} = 0} = \begin{cases} 1 & \Lambda = \thres + w(n),\\
	0 & \Lambda = \thres - w(n). \end{cases}
\]
\end{prop}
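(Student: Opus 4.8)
Fix $1\le k\le d-2$ and take $w(n)\to\infty$ (by monotonicity of $\Cp_{k,r},\Cn_{k+1,r}$ in $r$ we may let $w$ grow as slowly as we please). The statement breaks into four claims:
(i)~$\Cp_{k,r}=0$ \whp\ at $\Lambda=\thres+w(n)$;
(ii)~$\Cn_{k+1,r}=0$ \whp\ at $\Lambda=\thres+w(n)$;
(iii)~$\Cp_{k,r}>0$ \whp\ at $\Lambda=\thres-w(n)$;
(iv)~$\Cn_{k+1,r}>0$ \whp\ at $\Lambda=\thres-w(n)$.
Claim (i) is immediate since $0\le\Cp_{k,r}\le F_{k,r}$ and $F_{k,r}=0$ \whp\ by Proposition~\ref{prop:pt_crit}.

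For (iii) and (iv) the plan is a Morse-theoretic counting argument. Because $\Lambda_{\max}=\omega_d n\rmax^d\to\infty$ while $\rmax$ is a fixed constant, coverage of $\T^d$ by $r$-balls occurs before radius $\rmax$ \whp, so by the Nerve Lemma $\cC_{\rmax}\simeq\T^d$ and $\beta_k(\cC_{\rmax})=\binom{d}{k}$ \whp. As $s$ runs over $[r,\rmax]$, $\beta_k(\cC_s)$ increases by $1$ at each positive critical $k$-face and decreases by $1$ at each negative critical $(k+1)$-face (Section~\ref{sec:crit_faces}), so \whp
\[
\Cn_{k+1,r}=\Cp_{k,r}+\beta_k(\cC_r)-\binom{d}{k}\ \ge\ \Cp_{k,r}-\binom{d}{k}.
\]
Hence it suffices to prove $\Cp_{k,r}\to\infty$ \whp. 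By Proposition~\ref{prop:mean_var}, $\mean{F_{k,r}}\approx\var{F_{k,r}}\approx D_k e^{w(n)}\to\infty$, so $F_{k,r}\to\infty$ \whp\ by Chebyshev; moreover $\Cn_{k,r}=0$ \whp\ — for $k=1$ because the complex is already connected ($\Lambda\gg2^{-d}\log n$), and for $k\ge2$ because claim (ii) applied with $k-1$ in place of $k$ gives $\Cn_{k,r'}=0$ \whp\ at $\Lambda(r')=\log n+(k-2)\logg n+w'(n)$ for a suitable $w'(n)\to\infty$, while $r'<r$ and $\Cn_{k,\cdot}$ is non-increasing in the radius. Therefore $\Cp_{k,r}=F_{k,r}-\Cn_{k,r}=F_{k,r}\to\infty$ \whp, giving (iii), and the displayed inequality then gives (iv).

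Claim (ii) is the heart of the proposition and, I expect, the main obstacle. Given (i), the counting identity above shows (ii) is equivalent to $\beta_k(\cC_r)=\binom{d}{k}$ \whp, i.e.\ to the absence of ``extra'' $k$-cycles in $\cC_r$ at this radius. I would establish it by proving $\mean{\Cn_{k+1,r}}\to0$, for which one needs a local certificate of negativity. The geometric point is: if $\cX$ is a critical $(k+1)$-face and some point of $\cP_n$ ``cones over'' $\partial\cX$ — i.e.\ lies in $G(\cX):=\bigcap_i\bigl(\bigcap_{j\ne i}B_{\rho(\cX)}(x_j)\bigr)^{+\rho(\cX)}$, the set of points $p$ for which every $(k+1)$-simplex $\{p\}\cup\tau$ ($\tau$ a $k$-face of $\cX$) already lies in $\cC_{\rho(\cX)}$, where $(\cdot)^{+\rho}$ is the $\rho$-neighbourhood — then by the cone formula $\partial\cX$ is a boundary just before $\rho(\cX)$ and $\cX$ is positive. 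So a negative $\cX$ forces $G(\cX)\cap\cP_n=\emptyset$; one checks $\overline{B(\cX)}\subseteq G(\cX)$, and, crucially, that once $c(\cX)$ is kept a fixed fraction of $\rho(\cX)$ away from $\partial\sigma(\cX)$ one has $\vol(G(\cX))\ge(1+\delta)\omega_d\rho(\cX)^d$ for a fixed $\delta>0$. Running Palm theory and the Blaschke--Petkantschin change of variables exactly as in Lemma~\ref{lem:mean_ck}, the contribution of such faces to $\mean{\Cn_{k+1,r}}$ is $O\!\bigl(n\Lambda^{k}e^{-(1+\delta)\Lambda}\bigr)\to0$, since $ne^{-\delta\Lambda}$ decays polynomially.

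The delicate regime — the technical core of Section~\ref{sec:pn_faces} — is when $c(\cX)$ is within $\e_n\rho(\cX)$ of $\partial\sigma(\cX)$ with $\e_n\to0$, equivalently when the nearest face $\hcXmin$ has $\rho(\hcXmin)$ almost equal to $\rho(\cX)$; then $G(\cX)$ barely exceeds $B(\cX)$ and the cone gain is lost. Here the plan is to exploit that such a configuration presents $\hcXmin$ as an almost-critical $k$-face — its circumball is empty up to a set of volume $O(\e_n)\omega_d\rho(\cX)^d$ and $\rho(\hcXmin)\ge\rho(\cX)\bigl(1-O(\e_n^2)\bigr)$ — so that, choosing $\e_n\to0$ slowly enough to keep the corresponding $\Lambda$ above $\thres$ yet fast enough to suppress the (only $O(\e_n)$-thin) angular window, this part of $\mean{\Cn_{k+1,r}}$ is dominated by a vanishing multiple of $\mean{F_{k,r'}}$ for a radius $r'$ still above the degree-$k$ threshold. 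Balancing these two mechanisms, and in particular absorbing the loss in the empty-ball exponent incurred by replacing $B(\cX)$ with a ball containing $B(\hcXmin)$, is exactly where the work lies; it is the quantitative form of the ``obstructions are born and die almost instantaneously'' phenomenon that later underlies Lemma~\ref{lem:pn_pairs}. Once $\mean{\Cn_{k+1,r}}\to0$ is in hand, Markov gives (ii). Finally, the claims are to be proved in the order ``(i),(ii) for all $k$; then (iii),(iv) for all $k$'', so that the use of (ii) at index $k-1$ inside (iii)--(iv) at index $k$ creates no circularity.
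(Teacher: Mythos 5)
Your plan for claims (i), (iii), (iv) is sound and, for (iv), genuinely different from the paper: you replace the paper's Lemma~\ref{lem:perc} (the Mayer--Vietoris argument showing $\im(j_*)=H_k(\T^d)$, hence $\Cn_{k+1,r}\ge\Cp_{k,r}$) with the Euler--Poincar\'e/Morse counting identity $\Cn_{k+1,r}=\Cp_{k,r}+\beta_k(\cC_r)-\beta_k(\cC_{\rmax})$ plus coverage at $\rmax$. You give up the exact inequality in favor of $\Cn_{k+1,r}\ge\Cp_{k,r}-\binom{d}{k}$, but since you establish $\Cp_{k,r}\to\infty$ the slack is harmless, and you avoid proving the homology-image lemma. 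That is a legitimate simplification. Your ordering (``(ii) at index $k-1$ feeds (iii) at index $k$'') is also what the paper uses implicitly.

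The genuine gap is in claim (ii), in the intermediate range of $\phi(\cX)$. You give two mechanisms: a fixed cone-volume gain $\vol(G(\cX))\ge(1+\delta)\omega_d\rho^d$ for configurations with $\phi(\cX)\ge\delta_0$ a \emph{fixed} constant, and an ``almost-critical $\hcXmin$'' domination for $\phi(\cX)\le\eps_n$ with $\eps_n\to 0$. Neither applies on $\phi\in(\eps_n,\delta_0)$, and there the raw contribution is of order $n\Lambda^k e^{-\Lambda}\asymp(\log n)\,e^{-w(n)}$, which does not vanish for slowly growing $w$. Moreover, the comparison you propose for the small-$\phi$ piece does not close: if you try to dominate by $\mean{F_{k,r'}}$ at a radius $r'$ with $\rho(\hcXmin)>r'$, then $\mean{F_{k+1,r,0,\eps}}$ exceeds $\eps\cdot\mean{F_{k,r'}}$ by a factor $\asymp\Lambda\,e^{\Lambda-\Lambda'}$; controlling that factor forces $\eps^2\Lambda\gtrsim\log\Lambda$, which is incompatible with keeping $\Lambda'$ above the degree-$k$ threshold once $w(n)=o(\logg n)$. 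What the paper actually proves (Lemma~\ref{lem:Fk_phi}, via the annulus criterion of Lemma~\ref{lem:crit_positive} rather than a coning point) is a \emph{$\phi$-linear} gain: the conditional probability that the annulus $A_a(\cX)$ is uncovered is $\lesssim e^{-D_{k,2}a\Lambda}$, and the volume slice $\{\phi\in(a,b]\}$ has measure $\lesssim(b-a)$ (Corollary~\ref{cor:phi_lip}). The proposition then follows by telescoping over $\eps_m=m/\Lambda$, which yields $\mean{\Cn_{k+1,r}}\lesssim n\Lambda^{k-1}e^{-\Lambda}$, exactly one power of $\Lambda$ better than $\mean{F_{k+1,r}}$. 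Your plan needs either this linear-in-$\phi$ volume or empty-ball gain (which your fixed-$\delta$ statement does not provide) together with a multi-scale telescoping, or a substitute for both; as written it is a single cut with a dead zone in between.
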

Notice that Proposition \ref{prop:pt_neg} is \emph{not} a simple corollary of Proposition \ref{prop:pt_crit}. From Proposition \ref{prop:pt_crit} we can conclude that $F_{k+1,r}$ vanishes at $\Lambda = \log n + k\logg n +w(n)$. However, here we want to show that $\Cn_{k+1,r}$ vanishes \emph{earlier}, i.e.~at $\Lambda = \log n + (k-1)\logg n +w(n)$. In addition, the fact that $F_{k,r}>0$ at $\Lambda = \thres - w(n)$ does not guarantee that $\Cp_{k,r}>0$.

The proof of Proposition \ref{prop:pt_neg} is considerably more challenging than Proposition \ref{prop:pt_crit} for the following reason.
Determining whether a $k$-simplex $\cX\in \cC^k(\cP_n)$ is critical requires us to verify local conditions (represented by $\hcrit,\gcrit$). However, once a critical face is spotted, determining whether it is positive or negative is a much harder and non-local task.
 Nevertheless, in \cite{bobrowski_vanishing_2017} we discovered a \emph{sufficient} local condition to verify that a face is positive. In this section we will revisit this condition, and slightly modify it for our purposes.
 This sufficient condition for positive faces, immediately implies a \emph{necessary} condition for the existence of negative critical faces, which in turn will provide an upper bound for $\Cn_{k,r}$. That will be the heart of the proof of Proposition \ref{prop:pt_neg}.
We will divide the proof into a few key steps.
First, we a quick detour, to discuss the main (non-random) topological and geometric ingredients we will use later.

\subsection{Topological ingredient:\\
A sufficient condition for positive faces}\label{sec:suff_cond}

In this section we revisit the sufficient condition from \cite{bobrowski_vanishing_2017} with some modifications.
This sufficient condition relies heavily on a quantity that measures the distance between the center of a simplex $c(\cX)$ and the nearest face on its boundary $\partial \sigma(\cX)$, see Figure \ref{fig:phi}(a). 

\begin{figure}[ht]
    \centering
    \begin{subfigure}{0.45\textwidth}
    \centering
    \includegraphics[scale=0.25]{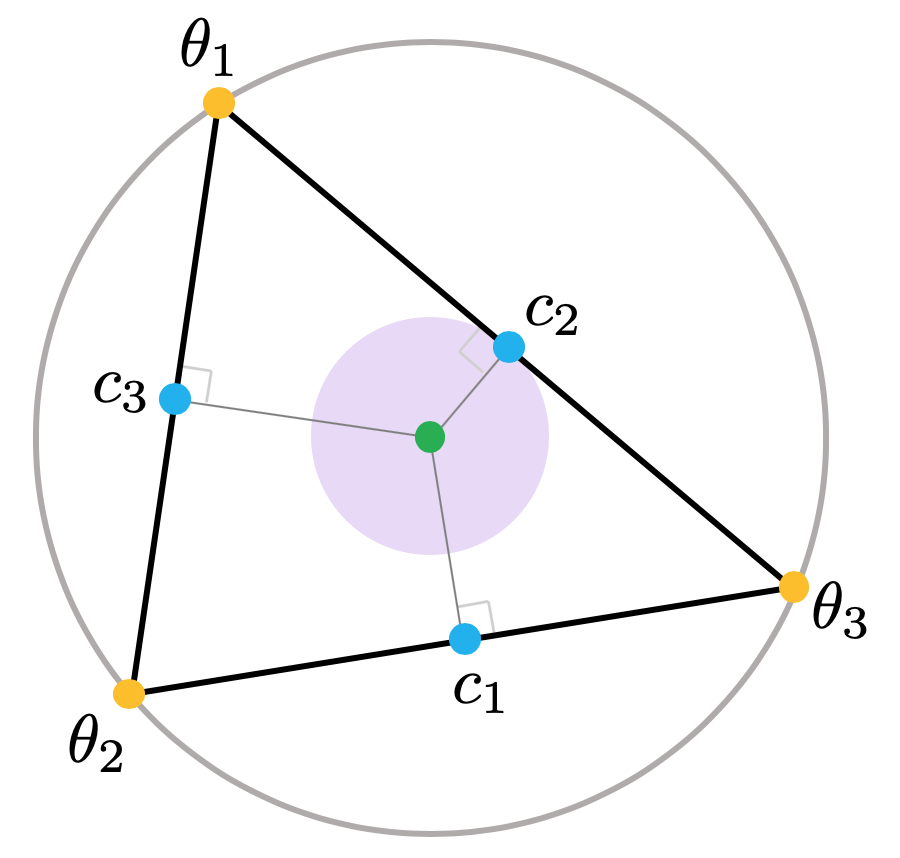}
    \caption{}
    \end{subfigure}
  \begin{subfigure}{0.45\textwidth}
  \centering
    \includegraphics[scale=0.25]{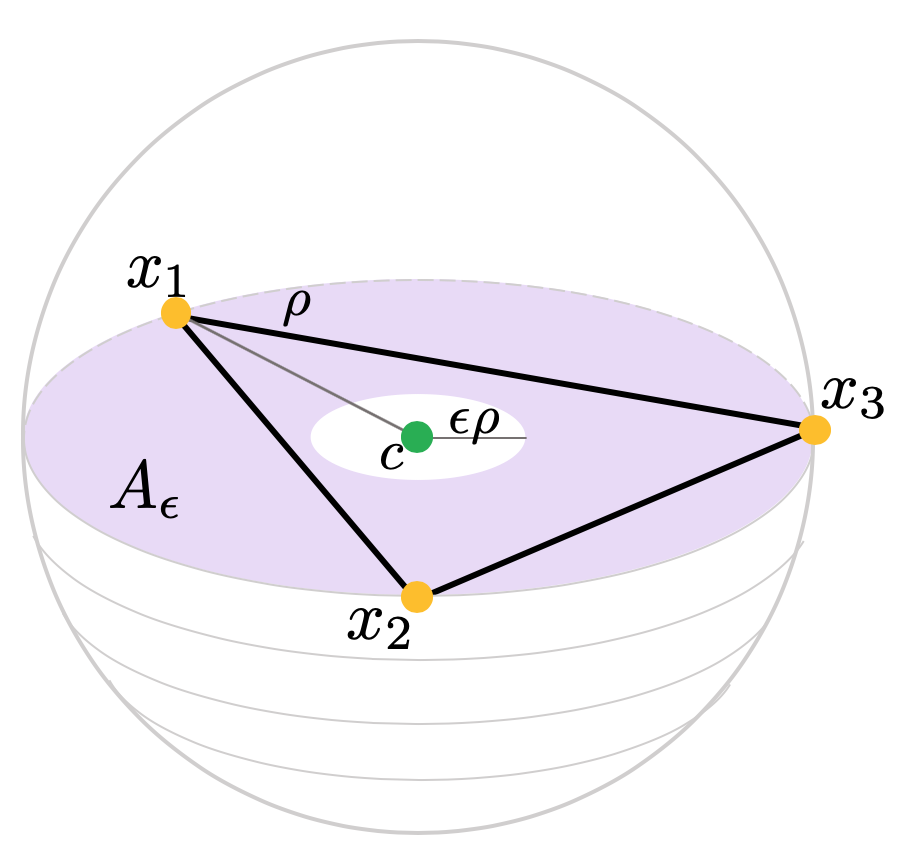}
    \caption{}
    \end{subfigure}
    \caption{(a) The distance to the  nearest center, denoted by  $\phi$. In this example $k=2$, so $\S^{k-1}$ is a circle, and we have 3 points on it -- $\bth = (\theta_1,\theta_2,\theta_3)$. Each of the $1$-faces (edges) has its own center, so that $c_{i} = c(\hat\bth_i)$. Then, $\phi(\bth) = \min(\abs{c_{1}}, \abs{c_{2}}, \abs{c_{3}}) = \abs{c_{2}}$. Notice that $\phi$ in this example is also the radius of the largest ball centered at the origin and inscribed by the simplex $\sigma(\bth)$. This is always true when $\hcrit(\bth)=1$. (b) The annulus $A_{\eps}(\bx)$. Here $\bx = (x_1,x_2,x_3)\in (\R^3)^3$, and $A_\eps(\bx)$ is a 3-dimensional annulus around $c=c(\bx)$ with radii in $[\eps\rho(\bx),\rho(\bx)]$. Notice that in this example, $\eps < \phi(\bx)$, and then we have $\partial\sigma(\bx)\subset A_\eps(\bx)$.}
    \label{fig:phi}
\end{figure}

Let $\bth = (\theta_1,\ldots,\theta_{k+1})\in (\S^{k-1})^{k+1}$ be in general position on the $(k-1)$-sphere. Denote $\hat \bth_i := \bth\bs\set{\theta_i} :=  (\theta_1,\ldots, \theta_{i-1}, \theta_{i+1},\ldots,\theta_{k+1}) \in (\S^{k-1})^k$, and define
\eqb\label{eq:phi_T}
    \phi(\bth) = \min_{1\le i\le k} |c(\hat\bth_i)|.
\eqe
The quantity $\phi(\bth)$ measures the distance of the closest center to the origin, see Figure \ref{fig:phi}(a). Next, for any $\bx \in (\T^d)^{k+1}$ recall the definition of $\theta(\bx)$ \eqref{eq:crit_face_1} as the spherical coordinates of the points in $\bx$. Then, we can define
\eqb\label{eq:phi_X}
    \phi(\bx) := \phi(\theta(\bx)).
 \eqe

Next, recall the definitions of $c(\bx),\rho(\bx)$ \eqref{eq:crit_face_1} and $B(\bx)$ \eqref{eq:crit_face_2}, and define
\eqb\label{eq:A_eps}
A_{\eps}(\bx) := \mathrm{cl}({ B(\bx) } \backslash B_{\eps \rho(\bx)}(c(\bx))),
\eqe
where $\mathrm{cl(\cdot)}$ stands for the closure, and $B_r(x)$ is the ball of radius $r$ around $x$. See Figure \ref{fig:phi}(b). In other words $A_\eps$ is a $d$-dimensional annulus centered at $c(\bx)$ with radii in the range $[\eps\rho,\rho]$.
The following is a slight variation of Lemma 7.1 in \cite{bobrowski_vanishing_2017}, and provides a sufficient condition using $\phi(\bx)$ and $A_{\eps}(\bx)$.

\begin{lem}\label{lem:crit_positive}
Let $\cP \subset \T^d$, and let $\cX\in\cC^k(\cP)$ be a critical $k$-face, with $2\le k \le d-1$. If there exist $\rho_0 < \rho(\cX)$ and $\eps_0 \le \phi(\cX)$ such that 
\[
A_{\eps_0}(\cX) \subset B_{ \rho_0}(\cP),
\] 
then $\cX$ is a \emph{positive} critical face.
\end{lem}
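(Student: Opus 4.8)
The core idea is a standard Morse-theoretic / deformation-retract argument: we want to show that at the radius $\rho_0 < \rho(\cX)$ where $A_{\eps_0}(\cX) \subset B_{\rho_0}(\cP)$, the geometric boundary $\partial\sigma(\cX)$ of the critical simplex is already covered, so that when $\cX$ finally enters the \cech filtration at $r = \rho(\cX)$, it creates a genuinely new $k$-cycle rather than filling in an existing $(k-1)$-cycle. Concretely, I would set up the argument as follows. First, recall (Remark \ref{rem:half_sphere}(1)) that $\cX$ joins the complex exactly at radius $\rho(\cX)$, and by Lemma \ref{lem:crit_face} we have $c(\cX) \in \sigma(\cX)$ and $B(\cX) \cap \cP = \emptyset$. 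The point is that the open ball $B(\cX)$ is empty of points, so none of the $r$-balls for $r = \rho(\cX)$ cover the center $c(\cX)$ from the relevant directions; the simplex $\sigma(\cX)$ is the first $k$-face to ``bridge'' across this empty ball.

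Second, I would show that the hypothesis $\eps_0 \le \phi(\cX)$ together with $A_{\eps_0}(\cX) \subset B_{\rho_0}(\cP)$ implies that $\partial\sigma(\cX) \subset B_{\rho_0}(\cP)$. This is precisely the geometric content illustrated in Figure \ref{fig:phi}(b): when $\eps < \phi(\bx)$, the closed annulus $A_\eps(\bx)$ contains all of $\partial\sigma(\bx)$, because $\phi(\cX)$ is (by the remark in the figure caption, valid since $\hcrit(\cX)=1$) the inradius of $\sigma(\cX)$ with respect to the center — so the simplex's boundary stays at distance at least $\eps_0 \rho(\cX)$ from $c(\cX)$, hence lies in the annulus, hence in $B_{\rho_0}(\cP)$. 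I would verify this inradius claim directly: a point of $\partial\sigma(\cX)$ lies on some $(k-1)$-face $\hat{\cX}_i$, and the distance from $c(\cX)$ to the affine hull of $\hat{\cX}_i$ is $|c(\hat\theta_i)|\cdot\rho(\cX)$ in the rescaled coordinates, so the minimum over faces is exactly $\phi(\cX)\rho(\cX)$.

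Third, the topological conclusion. Consider the pair of spaces obtained just before and just after $\cX$ is added. Since $\partial\sigma(\cX)$ is already covered at radius $\rho_0 \le \rho(\cX)^-$, adding $\cX$ at $\rho(\cX)$ attaches a $k$-cell along an already-present boundary sphere $\partial\sigma(\cX) \simeq S^{k-1}$ which, crucially, bounds a disk ($\sigma(\cX)$ itself is the new cell) — but because $B(\cX)$ was empty, this $(k-1)$-sphere was \emph{not} nullhomotopic in $\cC_{\rho(\cX)-\eps}$ via any chain supported away from $B(\cX)$; rather, the addition of $\cX$ is the first time the region around $c(\cX)$ gets ``capped.'' By the min-type Morse theory of \cite{gershkovich_morse_1997,bobrowski_distance_2014} recalled in Section \ref{sec:crit_faces}, adding a single critical $k$-face changes $\beta_k$ or $\beta_{k-1}$ by exactly one; I then need to rule out the negative case ($\beta_{k-1}$ decreasing). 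A negative $k$-face would terminate a $(k-1)$-cycle, meaning $\partial\sigma(\cX)$ represents a nontrivial class in $H_{k-1}(\cC_{\rho(\cX)-\eps})$; but this class would have to be supported near $c(\cX)$, and the emptiness of $B(\cX)$ forces any such representative to enclose the empty ball $B(\cX)$, which is impossible for a $(k-1)$-cycle in a $d$-complex with $k-1 < d$ unless there is genuine topology there — and the deformation-retract of $A_{\eps_0}(\cX)$ onto a covered neighborhood of $\partial\sigma(\cX)$ shows there isn't. This is essentially the argument of Lemma 7.1 in \cite{bobrowski_vanishing_2017}; I would cite that lemma's proof structure and indicate the modification is only in replacing their explicit $\eps$ with the hypothesis $\eps_0 \le \phi(\cX)$, which still guarantees $\partial\sigma(\cX) \subset A_{\eps_0}(\cX)$.

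\textbf{Main obstacle.} The delicate part is the last step: rigorously arguing that the new cell $\sigma(\cX)$ is ``positive'' rather than ``negative,'' i.e., that $\partial\sigma(\cX)$ is nullhomologous in $\cC_{\rho(\cX)^-}$ \emph{before} adding $\cX$. The clean way is a deformation-retraction argument: show $B_{\rho_0}(\cP) \cap \overline{B(\cX)}$ deformation retracts (within $B_{\rho(\cX)^-}(\cP)$) onto $\partial\sigma(\cX)$, using that $A_{\eps_0}(\cX)$ is covered and $B_{\eps_0\rho(\cX)}(c(\cX))$ may or may not be — the radial structure of the annulus lets one push everything outward onto the boundary sphere. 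If $B_{\eps_0\rho(\cX)}(c(\cX))$ happens to be covered too, $\partial\sigma(\cX)$ bounds and the face is positive; if not, one still gets that the relative homology $H_k(\cC_{\rho(\cX)}, \cC_{\rho(\cX)^-})$ is generated in degree $k$ by tracking which cell was added. Handling this carefully — and making sure the toroidal geometry doesn't interfere, which is fine since everything happens inside a ball of radius $\ll \rmax$ isometric to a Euclidean ball (Section \ref{sec:flat_torus}) — is where the real work lies. I expect I can borrow almost verbatim the argument from \cite{bobrowski_vanishing_2017}, so the proof here should be short, essentially: ``By the inradius interpretation of $\phi$, the hypothesis gives $\partial\sigma(\cX)\subset A_{\eps_0}(\cX)\subset B_{\rho_0}(\cP)$; now apply the argument of Lemma 7.1 in \cite{bobrowski_vanishing_2017}.''
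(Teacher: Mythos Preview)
Your geometric step is right: the inradius interpretation of $\phi(\cX)$ (valid because $\hcrit(\cX)=1$) gives $\partial\sigma(\cX)\subset A_{\eps_0}(\cX)\subset B_{\rho_0}(\cP)$, and this is exactly what the paper uses. The gap is in your topological step. You want to show that $\partial\sigma(\cX)$ is already a \emph{boundary} in $\cC_{\rho_2}$ for some $\rho_2<\rho(\cX)$; then, if $\partial\sigma=\partial\gamma$ with $\gamma\in\bC_k(\cC_{\rho_2})$, the chain $\gamma-\sigma$ is a $k$-cycle in $\cC_{\rho(\cX)}$ that is not a boundary (since $\sigma$ has no cofaces yet, by condition (2) of Lemma~\ref{lem:crit_face}), so $\sigma$ is positive. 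Your sketch never produces such a $\gamma$.

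The mechanism you are missing is the homotopy type of the annulus: $A_{\eps_0}(\cX)\simeq \S^{d-1}$, and for $2\le k\le d-1$ we have $H_{k-1}(\S^{d-1})=0$. Hence every $(k-1)$-cycle in $A_{\eps_0}(\cX)$, in particular $\partial\sigma(\cX)$, bounds \emph{inside the annulus}. Since the annulus is covered by $B_{\rho_0}(\cP)$, it bounds in $B_{\rho_0}(\cP)$, and the Nerve Lemma transfers this to $\cC_{\rho_0}$. This is the one-line reason the dimension restriction $2\le k\le d-1$ appears. Your discussion of deformation retracts onto $\partial\sigma(\cX)$, and your case split on whether the inner ball $B_{\eps_0\rho}(c)$ is covered, are both red herrings: the inner ball is \emph{never} covered (since $B(\cX)\cap\cP=\emptyset$), and you do not want to retract onto $\partial\sigma$ but rather to use the vanishing of $H_{k-1}$ of the ambient annulus. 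Once you insert the observation $H_{k-1}(A_{\eps_0})=0$, the rest of your outline goes through and matches the paper's proof.
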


\begin{rem}
Lemma 7.1 in \cite{bobrowski_vanishing_2017} is very similar to Lemma \ref{lem:crit_positive} here. The main difference is that in \cite{bobrowski_vanishing_2017} instead of $\phi(\cX)$ we used 
\eqb\label{eq:phi_bw}
\frac{1}{2}\frac{\inf_{x\in \partial\sigma(\cX)}\abs{x-c(\cX)}}{\rho(\cX)}.
\eqe
This quantity can also be defined as half the radius of the largest ball around $c(\cX)$ that does not intersect the boundary of $\sigma(\cX)$.
The differences between the definitions are twofold. The first difference is that we wish to remove the $\frac{1}{2}$ factor. The second is that we defined $\phi$ via the distances to the $(k-1)$-planes containing the faces of $\sigma(\cX)$, whereas in \eqref{eq:phi_bw} we take the distances to the faces themselves. One can show that if $\hcrit(\cX) = 1$ these two definitions coincide (as in Figure \ref{fig:phi}(a)), but otherwise this is not  true.
\end{rem}

\begin{proof}[Proof of Lemma \ref{lem:crit_positive}]
The main idea in this proof is the same as in Lemma 7.1 in \cite{bobrowski_vanishing_2017}. We will repeat the shared parts briefly for completeness. 
Intuitively, if $\cX$ is a negative critical $k$-face, then the nontrivial $(k-1)$-cycle it terminates is represented by the boundary $\partial \sigma(\cX)$. 
Under the conditions in the lemma, $\partial \sigma(\cX)$ can also be viewed as a (singular) $(k-1)$-cycle in the annulus $A_{\eps_0}(\cX)$. However, since the $(k-1)$-th homology of the annulus is trivial ($A_\eps \simeq\S^{d-1}$), this means that $\partial\sigma(\cX)$ is a trivial $(k-1)$-cycle, and therefore $\cX$ cannot be negative. For a more elaborated intuitive explanation we refer the reader to \cite{bobrowski_vanishing_2017}.
In the following, we use $\bC_k, \bZ_k, \bB_k$ to represent the chains, cycles, and boundaries groups, respectively (we use the bar notation to avoid confusion with other symbols defined earlier).

For a given simplex $\cX\in \cC^k(\cP)$, 
fix $\sigma = \sigma(\cX), c = c(\cX), \rho=\rho(\cX), B=B(\cX), A_{\eps_0} = A_{\eps_0}(\cX)$, and $\phi= \phi(\cX)$. Also denote $\cC_r = \cC_r(\cP)$ and $B_r = B_r(\cP)$.
In the following we will refer to $\sigma$ both as a geometric simplex and as a chain in $\bC_k$.
Notice that since $\sigma$ is added to the \cech complex at radius $\rho$, and since $\hcrit(\cX)=1$, then there exists $\rho_1<\rho$ such that all the faces of $\sigma$ are included in $\cC_{\rho_1}$. We will denote $\rho_2 = \max(\rho_0,\rho_1) < \rho$, so that at $\rho_2$ we have both coverage of the annulus $A_{\eps_0}$ and we know that $\partial\sigma$ is in the complex $\cC_{\rho_0}$.

Next, notice that $\partial\sigma$ is  a $(k-1)$-chain in $\cC_{\rho_2}$ (i.e.~$\partial \sigma \in \bC_{k-1}(\cC_{\rho_2})$), and moreover it is a $(k-1)$-cycle (i.e.~$\partial\sigma\in \bZ_{k-1}(\cC_{\rho_2})$, since $\partial^2\sigma = 0$). We will show next that $\partial \sigma$ is also a boundary in $\cC_{\rho_2}$, denoted $\partial\sigma\in \bB_{k-1} (B_{\rho_2})$, i.e.~there exists $\gamma \in \bC_k(\cC_{\rho_2})$ such that $\partial \sigma = \partial \gamma$ (recall that $\sigma\not\in \cC_{\rho_2}$). We argue that if indeed $\partial\sigma\in \bB_{k-1} (B_{\rho_2})$, then adding the critical face $\sigma$ must generate a new $k$-cycle.
To show this, consider the chain $(\gamma - \sigma)\in \bC_k(\cC_\rho)$. This is a $k$-cycle (since $\partial(\gamma-\sigma) = 0$), and it is not a boundary, since as $\sigma$ does not have any co-faces in $\cC_{\rho}$ (due to condition (2) in Lemma \ref{lem:crit_face}).
In other words,  if $\partial\sigma$ is a boundary in $\cC_{\rho_2}$, then $\sigma$ is a positive $k$-face. Thus, to conclude the proof, we need to show that our assumptions imply that $\partial\sigma$ is indeed a boundary in $\cC_{\rho_2}$.

The main idea in the proof of Lemma 7.1 \cite{bobrowski_vanishing_2017} used the following  observations.
\begin{enumerate}
    \item As a subset of $\T^d$, the boundary $\partial \sigma$ satisfies $\partial\sigma \subset A_{\eps_0}$, for   any $\eps_0\le \phi$, 
    \item If $A_{\eps_0} \subset B_{\rho_0}\subset B_{\rho_2}$, then as a singular $(k-1)$-chain, we have $\partial \sigma\in \bB_{k-1}(B_{\rho_2})$. 
\end{enumerate}
The first claim is true from the definition of $\phi$. Recall that $\phi \rho$ is the distance between the center $c$ and the nearest of the $(k-1)$-planes that contain the faces of $\sigma$ (see Figure \ref{fig:phi}(a)). 
In particular, all points on the boundary of $\sigma$ are at distance at least $\phi\rho$ from $c$. Therefore, $\partial \sigma \subset \mathrm{cl}(B\bs B_{\phi\rho}(c)) \subset A_{\eps_0}$, see Figure \ref{fig:phi}(b).
For the second claim notice that since $A_{\eps_0}$ is homotopy equivalent to $\S^{d-1}$, we have that $H_{k-1}(A_{\eps_0})\cong H_{k-1}(\S^{d-1}) = 0$ for $2\le k\le d-1$. This means that every singular cycle $\gamma \in \bZ_{k-1}(A_{\eps_0})$ must be a boundary in $\bB_{k-1}(A_{\eps_0})$. However, since we assume that $A_{\eps_0}\subset B_{\rho_2}$, then we also have $\gamma\in \bB_{k-1}(B_{\rho_2})$.

Finally, the Nerve Lemma \ref{lem:nerve} provides a homotopy-equivalence between $\cC_{\rho_2}$ and $B_{\rho_2}$, which induces homomorphisms between the simplicial chains $\bC_k(\cC_{\rho_2})$ and the singular chains $\bC_k(B_{\rho_2})$. These homomorphisms preserve cycles and boundaries.
Since we showed that  $\partial\sigma\in \bB_{k-1} (B_{\rho_2})$, then we also have $\partial \sigma\in \bB_{k-1} (\cC_{\rho_2})$. This completes the proof.

\end{proof}

\subsection{Geometric ingredient: \\
The distance to the boundary}\label{sec:dist_bound}

The sufficient condition for positive faces we presented in the previous section relies heavily on $\phi(\cX)$.
In this section we provide some quantitative estimates, that will be used later.

Let $\bth = (\theta_1,\ldots, \theta_k) \in (\S^{k-1})^k$ be $k$ points on the unit $(k-1)$-sphere. 
Define
\eqb\label{eq:S_k}
    S_k(\alpha) := \set{\bth\in (\S^{k-1})^k : |c(\bth)| \le \alpha} \subset (\S^{k-1})^k.
\eqe
Notice that $|c(\bth)|$ measures the distance 
between the $(k-1)$-dimensional affine plane containing $\bth$ and the origin (the center of the sphere), see Figure \ref{fig:phi}(a).

\begin{lem}\label{lem:V_k_lip}
For $k\ge 2$, define
\[
    V_k(\alpha) := \vol(S_k(\alpha)) = \int_{(\S^{k-1})^k} \indf{|c(\bth)|\le \alpha} d\bth.
\]
Then $V_k(\alpha)$ is a Lipschitz function in $[0,L]$ for any $L<1$.
\end{lem}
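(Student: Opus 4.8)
The plan is to produce a bounded density for $V_k$ on $[0,L]$. First recall, from the discussion following \eqref{eq:S_k}, that for $\bth$ in general position (which holds off a set of measure zero) the quantity $|c(\bth)|$ is the Euclidean distance from the origin---the center of $\S^{k-1}$---to the affine hyperplane $\mathrm{aff}(\bth)\subset\R^k$ spanned by $\bth$. Thus $S_k(\alpha)$ is exactly the set of $k$-tuples whose affine hull meets the closed ball of radius $\alpha$ centered at the origin, and it suffices to show that $V_k(\alpha_2)-V_k(\alpha_1)\le C_L(\alpha_2-\alpha_1)$ for all $0\le\alpha_1\le\alpha_2\le L$.

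The key step is to slice $(\S^{k-1})^k$ by the hyperplane $H=\mathrm{aff}(\bth)$. Parametrizing the hyperplanes of $\R^k$ not through the origin by $(u,t)\in\S^{k-1}\times(0,1)$ via $H_{u,t}=\set{x:\iprod{x,u}=t}$, the intersection $S_{u,t}:=H_{u,t}\cap\S^{k-1}$ is a round $(k-2)$-sphere of radius $\sqrt{1-t^2}$, the $k$ points of $\bth$ all lie on $S_{u,t}$, and $|c(\bth)|=t$. A Blaschke--Petkantschin-type change of variables---of the same nature as the one used in Appendix~\ref{sec:bp}, but with the points constrained to $\S^{k-1}$ and the relevant flat a hyperplane---then rewrites the integral defining $V_k$ as
\[
    V_k(\alpha)=\int_0^{\alpha}w_k(t)\,dt,
    \qquad
    w_k(t)=J_k(t)\int_{(S_{u,t})^k}\bigl(\vsimp(\bth)\bigr)^{\gamma_k}\,d\bth,
\]
where $\gamma_k\ge0$ is a fixed exponent, $J_k(t)>0$ is the Jacobian factor coming from the hyperplane coordinates, and the inner integral---taken against the product surface measure on $(S_{u,t})^k$---is independent of $u$ by rotational symmetry. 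The degenerate locus of affinely dependent tuples is null and is discarded.

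It remains to bound $w_k$ on $[0,L]$. The Jacobian $J_k(t)$ is continuous in $t$ on $(0,1)$, with its only possible singularity at $t\to1$ (where the slicing sphere $S_{u,t}$ collapses to a point), so $J_k$ is bounded on $[0,L]$. The inner integral is also continuous in $t$: since $t\le L<1$, the radius $\sqrt{1-t^2}\ge\sqrt{1-L^2}>0$ stays bounded away from $0$, the sphere $S_{u,t}$ and the integrand vary smoothly with $t$, and the integral is finite. A continuous function on the compact interval $[0,L]$ is bounded, so $w_k(t)\le C_L$ there, whence $V_k(\alpha_2)-V_k(\alpha_1)=\int_{\alpha_1}^{\alpha_2}w_k(t)\,dt\le C_L(\alpha_2-\alpha_1)$, i.e.\ $V_k$ is $C_L$-Lipschitz on $[0,L]$. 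The main obstacle is the second step: carrying out the Blaschke--Petkantschin slicing carefully enough to exhibit the density $w_k$ and, in particular, to confirm that the Jacobian $J_k(t)$ has no singularity for $t<1$---equivalently, that every degeneration capable of breaking Lipschitz continuity (the slicing sphere shrinking to a point, or the spanned simplex degenerating) is confined to the limit $t\to1$, which the hypothesis $L<1$ rules out.
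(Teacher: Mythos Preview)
Your approach is essentially the same as the paper's: both slice $(\S^{k-1})^k$ by the affine hyperplane $\mathrm{aff}(\bth)$, write $V_k(\alpha)=\int_0^\alpha w_k(t)\,dt$, and then bound the density $w_k$ on $[0,L]$. The difference is only in how explicit the computation is. Where you argue qualitatively that $J_k(t)$ and the inner sphere integral are continuous away from $t=1$---and flag this as the ``main obstacle''---the paper simply invokes the spherical Blaschke--Petkantschin formula of Lemma~\ref{lem:bp_sphere} directly. Since $|c(\bth)|^2+\rho(\bth)^2=1$, the indicator becomes $\indf{\rho\ge\sqrt{1-\alpha^2}}$, and the formula yields
\[
V_k(\alpha)=\const\int_0^\alpha (1-t^2)^{(k^2-2k-1)/2}\,dt,
\]
so $w_k(t)=\const\,(1-t^2)^{(k^2-2k-1)/2}$ explicitly. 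This makes your concern about a hidden singularity of $J_k$ for $t<1$ evaporate: the only singularity (and only for $k=2$) is at $t=1$. Your proof is correct; the paper's just closes the gap you identified by computing rather than estimating.
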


\begin{proof}
Define
\[
f(\bth) := \indf{|c(\bth)|\le \alpha}.
\]
To evaluate $V_k(\alpha)$ we will use  a version of the BP formula for integrating points on the sphere, presented in Lemma \ref{lem:bp_sphere}.
Since we assume the points $\bth$ are on the unit sphere $\S^{k-1}$, then  $\abs{c(\bth)}^2 + \rho^2(\bth) = 1$. 
Therefore, we can write $f(\bth) = \indf{\rho(\bth)\ge \sqrt{1-\alpha^2}}$, implying that $f$ is  shift and rotation  invariant.
Using the notation of Lemma \ref{lem:bp_sphere} we can write $f(\bth) = f(\rho\bvphi) = \indf{\rho \ge \sqrt{1-\alpha^2}}$.
Using the BP formula in \eqref{eq:BP_sphere} we  have,
\[
\splitb
V_k(\alpha) = 
\int_{(\S^{k-1})^k} f(\bth)d\bth &= \Abp^\circ \int_{\sqrt{1-\alpha^2}}^1 \rho^{k^2-2k} (1-\rho^2)^{-1/2}d\rho \int_{(\S^{k-2})^k} \vsimp(\bvphi)d\bvphi\\
&= \const \int_{0}^{\alpha} (1-t^2)^{(k^2-2k-1)/2} dt,
\splite
\]
where we used the change of variables $\rho\to \sqrt{1-t^2}$.
If $L<1$, then we observe that $V_k(\alpha)$ is continuously differentiable  in $[0,L]$ (for $k\ge 3$ this is true also for $L=1$), and therefore - Lipschitz.
This completes the proof. 

\end{proof}

Suppose next that $\bth = (\theta_1,\ldots,\theta_{k+1})\in (\S^{k-1})^{k+1}$ are in general position on the $(k-1)$-sphere, and recall the definition of $\phi(\bth)$ in \eqref{eq:phi_T}. Define
\[
S^{\phi}_k(\alpha) := \set{\bth \in (\S^{k-1})^{k+1} : \phi(\bth) \le \alpha},
\]
i.e.~$S_k^\phi(\alpha)$ contains all configurations where the nearest center is at distance less than $\alpha$.
\begin{cor}\label{cor:phi_lip}
For $k\ge 2$, define
\[
    V_k^\phi(\alpha) := \vol(S_k^\phi(\alpha)) = \int_{(\S^{k-1})^{k+1}} \indf{\phi(\bth)\le \alpha}d\bth.
\]
Then $V_k^\phi(\alpha)$ is Lipschitz in $[0,L]$, for any $L<1$.
\end{cor}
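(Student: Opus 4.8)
The plan is to deduce the Lipschitz bound for $V_k^\phi$ directly from Lemma~\ref{lem:V_k_lip} by decomposing the configuration event $\set{\phi(\bth)\le\alpha}$ over the codimension-one faces. The starting point is the trivial identity, coming straight from $\phi(\bth)=\min_{1\le i\le k}|c(\hat\bth_i)|$,
\[
\set{\bth\in(\S^{k-1})^{k+1}:\phi(\bth)\le\alpha}=\bigcup_{i=1}^{k}\set{\bth:|c(\hat\bth_i)|\le\alpha}.
\]
The version that is actually useful, for incremental control, is the pointwise inequality: for $0\le\alpha\le\beta\le L<1$,
\[
\indf{\alpha<\phi(\bth)\le\beta}\ \le\ \sum_{i=1}^{k}\indf{\alpha<|c(\hat\bth_i)|\le\beta}.
\]
This holds because $\phi(\bth)\le\beta$ forces $|c(\hat\bth_j)|\le\beta$ for at least one index $j$, while $\phi(\bth)>\alpha$ forces $|c(\hat\bth_i)|>\alpha$ for \emph{every} index $i$; in particular that same $j$ lands in the annulus $(\alpha,\beta]$.

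Next I would integrate this inequality over $(\S^{k-1})^{k+1}$. For each fixed $i$ the integrand $\indf{\alpha<|c(\hat\bth_i)|\le\beta}$ depends only on the $k$ coordinates appearing in $\hat\bth_i$, so Fubini lets the remaining coordinate $\theta_i$ integrate out and produces a factor $\vol(\S^{k-1})$:
\[
\int_{(\S^{k-1})^{k+1}}\indf{\alpha<|c(\hat\bth_i)|\le\beta}\,d\bth=\vol(\S^{k-1})\int_{(\S^{k-1})^{k}}\indf{\alpha<|c(\bth')|\le\beta}\,d\bth'=\vol(\S^{k-1})\bigl(V_k(\beta)-V_k(\alpha)\bigr),
\]
using the definition of $V_k$ from Lemma~\ref{lem:V_k_lip} and $S_k(\alpha)\subseteq S_k(\beta)$. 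Since $\{\phi\le\alpha\}\subseteq\{\phi\le\beta\}$, summing over $i$ then gives
\[
0\le V_k^\phi(\beta)-V_k^\phi(\alpha)\le k\,\vol(\S^{k-1})\bigl(V_k(\beta)-V_k(\alpha)\bigr),
\]
and since Lemma~\ref{lem:V_k_lip} provides a Lipschitz constant $C_1$ for $V_k$ on $[0,L]$, we obtain $|V_k^\phi(\beta)-V_k^\phi(\alpha)|\le k\,\vol(\S^{k-1})\,C_1\,|\beta-\alpha|$, which is exactly the assertion. (The same union-bound step, without the annulus, also yields the crude bound $V_k^\phi(\alpha)\le k\,\vol(\S^{k-1})\,V_k(\alpha)$, which is sometimes convenient.)

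I do not expect a genuine obstacle here: the analytic content --- the regularity of $\alpha\mapsto\vol\set{\bth:|c(\bth)|\le\alpha}$, which ultimately comes from the spherical Blaschke--Petkantschin formula in Lemma~\ref{lem:bp_sphere} --- is already packaged inside Lemma~\ref{lem:V_k_lip}, and what remains is bookkeeping. The only two points that need a word of care are (i) the pointwise domination of the ``annular'' indicator by the sum of the face indicators, i.e.\ the small logical observation above, and (ii) the legitimacy of the Fubini reduction, which is fine because $c(\hat\bth_i)$ and $\phi(\bth)$ are defined off a null set (the non-general-position configurations), so the way the indicators behave there is irrelevant to the integrals.
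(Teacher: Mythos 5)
Your proof is correct and follows essentially the same route as the paper's: dominate the annular indicator $\indf{\phi(\bth)\in(\alpha_1,\alpha_2]}$ pointwise by the sum of the per-face indicators $\indf{|c(\hat\bth_i)|\in(\alpha_1,\alpha_2]}$, integrate coordinate-by-coordinate, and invoke Lemma~\ref{lem:V_k_lip}. The only discrepancy is that you sum over $i=1,\dots,k$ rather than $i=1,\dots,k+1$; this propagates the typo in display \eqref{eq:phi_T} (there are $k+1$ faces $\hat\bth_i$ of a $(k+1)$-tuple, and the paper's proof correctly sums to $k+1$), but it only changes the constant and has no bearing on the Lipschitz conclusion.
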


\begin{proof}
Let $0\le \alpha_1 < \alpha_2 \le L$.
As the minimum of $(k+1)$ variables, we have
\[
\indf{\phi(\bth)\in (\alpha_1,\alpha_2]} \le \sum_{i=1}^{k+1} \indf{|c(\hat\bth_i)| \in (\alpha_1,\alpha_2]}.
\]
Therefore,
\[
\splitb
    |V_k^\phi(\alpha_2)-V_k^\phi(\alpha_1)| &= \int_{(\S^{k-1})^{k+1}} \indf{\phi(\bth)\in (\alpha_1,\alpha_2]}d\bth \\
    &\le (k+1)  \int_{\S^{k-1}}d\theta_1\int_{(\S^{k-1})^k} \indf{|c(\hat\bth_1)|\in(\alpha_1,\alpha_2]}d\hat\bth_1 
    \\
    & = \const (V_k(\alpha_2)-V_k(\alpha_1))\\
    & \le \const|\alpha_2 - \alpha_1|,
\splite
\]
where in the last inequality we used Lemma \ref{lem:V_k_lip}. This concludes the proof.

\end{proof}


\subsection{The vanishing of $\Cn_{k+1,r}$}

We are now ready to prove Proposition \ref{prop:pt_neg}. To do so we want to find a bound for $\Cn_{k+1,r}$ that uses local conditions derived from Lemma \ref{lem:crit_positive}. To this end define
\[
\gn_{r,a,b}(\cX, \cP) := g_{r}(\cX, \cP) \indf{\phi(\cX) \in (a,b]} \indf{A_{a}(\cX) \not\subset B_{(1-a/3)\rho(\cX)}(\cP)},
\]
and 
\[
     F_{k,r,a,b} := \sum_{\cX\in \cC^k(\cP_n)} \gn_{r,a,b}(\cX,\cP_n).
\]
In other words, we count critical $k$-faces with $\phi \in (a,b]$, such that the annulus $A_a$ is \emph{not} covered, which is a necessary condition for a face to be negative, via Lemma \ref{lem:crit_positive}.
Thus, we will use $F_{k+1,r,a,b}$ to bound $\Cn_{k+1,r}$ and to prove Proposition \ref{prop:pt_neg}.

\begin{lem}\label{lem:Fk_phi}
For $k\ge 2$, if $\Lambda\to\infty$ and $\frac{1}{\Lambda}\le a < b <1$ then there exist constants $ \Dn_{k,1}, \Dn_{k,2}>0$ such that
\[
	\mean{F_{k,r,a,b}} \le (b-a)\Dn_{k,1}  n\Lambda^{k-1} e^{-\Lambda(1+ \Dn_{k,2} a)}.
\]

In addition, for any $\eps>0$ we have
\[
	\mean{F_{k,r,0,\eps}} \le\eps \Dn_{k,1}  n\Lambda^{k-1} e^{-\Lambda}. 
	\]
\end{lem}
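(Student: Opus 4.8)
The plan is to follow the same computational route as in the proof of Lemma \ref{lem:mean_ck}: apply Palm theory to write $\mean{F_{k,r,a,b}}$ as an integral over $(\T^d)^{k+1}$, use the Poisson property to compute the conditional expectation of the emptiness-type indicators, and then pass to the Blaschke--Petkantschin coordinates $(c,\rho,\Pi,\bth)$. Concretely, by Palm theory (Theorem \ref{thm:palm}),
\[
	\mean{F_{k,r,a,b}} = \frac{n^{k+1}}{(k+1)!}\mean{\gn_{r,a,b}(\cX',\cP_n\cup\cX')},
\]
with $\cX'$ an independent $\iid$ sample. Conditioning on $\cX'=\bx$, the factor $g_r(\bx,\cP_n\cup\bx)$ already contributes $h_r(\bx)e^{-n\omega_d\rho^d(\bx)}$ as in \eqref{eq:mean_integral}; the new ingredient is the extra indicator $\indf{A_a(\bx)\not\subset B_{(1-a/3)\rho(\bx)}(\cP_n)}$, which is what forces an \emph{improvement} in the exponential rate. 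The key geometric observation is that if the annulus $A_a(\bx)$ is not covered by balls of radius $(1-a/3)\rho$ centred at the points of $\cP_n$, then there is an uncovered point $y$ inside the ball $B(\bx)$, and a whole ball $B_{(a/3)\rho}(y)\subset B_{(1+ a/3)\rho}(c(\bx))$ which is empty of $\cP_n$-points (here one uses $y\in A_a(\bx)$ so $|y-c|\le\rho$, hence $B_{(a/3)\rho}(y)$ is contained in a ball of radius $(1+a/3)\rho$ around $c$, and being empty follows since any point within $(a/3)\rho$ of $y$ would cover $y$). A standard covering/discretization argument — tile $A_a(\bx)$ by $O(a^{-(d-1)})$ balls of radius $(a/6)\rho$ and take a union bound over which tile contains the uncovered witness — bounds the conditional probability of the event by $C a^{-(d-1)}\exp(-n\omega_d((1+c_0 a)\rho)^d)$ for a suitable $c_0>0$ (e.g.\ $c_0$ a fixed fraction of $1/3$), since each empty-ball probability is $\exp(-n\omega_d (c_1 a\rho)^d)$ after the extra volume is absorbed into the exponent. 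Absorbing the polynomial prefactor $a^{-(d-1)}$ into the exponential loses only a constant, giving a conditional bound of the form $C\exp(-n\omega_d\rho^d(1+\Dn_{k,2}a))$.

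With this pointwise-in-$\bx$ bound in hand, one integrates. In BP coordinates the $\hcrit$ and the $\phi$-indicator depend only on $\bth$, and $\indf{\phi(\bx)\in(a,b]}=\indf{\phi(\bth)\in(a,b]}$; by Corollary \ref{cor:phi_lip} the $\bth$-integral $\int_{(\S^{k-1})^{k+1}}\hcrit(\bth)(\vsimp(\bth))^{d-k+1}\indf{\phi(\bth)\in(a,b]}\,d\bth$ is bounded by $\const\,|V_k^\phi(b)-V_k^\phi(a)|\le\const\,(b-a)$ using the Lipschitz property (valid since $b<1$). The radial integral is $\int_r^{\rmax}\rho^{dk-1}e^{-n\omega_d\rho^d(1+\Dn_{k,2}a)}\,d\rho$, which after the substitution $t=n\omega_d\rho^d$ equals $(n\omega_d)^{-k}(1+\Dn_{k,2}a)^{-k}\int_{\Lambda}^{\Lambda_{\max}}t^{k-1}e^{-t(1+\Dn_{k,2}a)}\,dt$, and this is at most $\const\,(n\omega_d)^{-k}\Lambda^{k-1}e^{-\Lambda(1+\Dn_{k,2}a)}$ (the incomplete-Gamma tail is dominated by its leading term since $\Lambda\to\infty$, and the factor $(1+\Dn_{k,2}a)^{-k}$ is bounded below by a constant because $a<1$). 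Multiplying by $n^{k+1}/(k+1)!$ and collecting constants into $\Dn_{k,1}$ yields the first claimed bound $(b-a)\Dn_{k,1}\,n\Lambda^{k-1}e^{-\Lambda(1+\Dn_{k,2}a)}$. For the second bound, take $a=0$ in the definition of $\gn$ — then the extra covering indicator is simply dropped (bounded by $1$) and $\phi\in(0,\eps]$ contributes $|V_k^\phi(\eps)-V_k^\phi(0)|\le\const\,\eps$, while the radial integral reduces exactly to the one in Lemma \ref{lem:mean_ck}, giving $\mean{F_{k,r,0,\eps}}\le\eps\,\Dn_{k,1}\,n\Lambda^{k-1}e^{-\Lambda}$.

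The main obstacle I anticipate is making the geometric/covering estimate on $\prob{A_a(\bx)\not\subset B_{(1-a/3)\rho}(\cP_n)}$ fully rigorous with the correct dependence on $a$: one must verify that a non-covered point in the annulus produces an empty ball whose radius is a definite multiple of $a\rho$ (so that the gained exponential rate is genuinely linear in $a$, not quadratic or worse), handle the discretization of the annulus by a net whose cardinality grows only polynomially in $1/a$, and check uniformity of all constants in $\bx$ — in particular that $\rho(\bx)\le\rmax$ keeps everything inside a region isometric to a Euclidean ball (as discussed in Section \ref{sec:flat_torus}). Once the per-configuration probability bound $C a^{-(d-1)}e^{-c_0 n\omega_d a\rho^d\cdot\rho^{d-1}/\rho^{d-1}}$ — i.e.\ $C a^{-(d-1)}e^{-c_0'\Lambda a}$ relative to the baseline — is established, the rest is a routine repetition of the Lemma \ref{lem:mean_ck} computation, and the Lipschitz input from Corollary \ref{cor:phi_lip} is exactly what supplies the $(b-a)$ factor.
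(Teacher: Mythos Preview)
Your overall route—Palm theory, conditioning to extract $h_r(\bx)e^{-n\omega_d\rho^d}$, BP coordinates, and Corollary~\ref{cor:phi_lip} for the $(b-a)$ factor—matches the paper, and your treatment of the second inequality (take $a=0$, so that $A_0=B(\bx)$ and $p_0(\bx)=1$) is correct. The gap is in the bound for the conditional probability $p_a(\bx)=\Pe\bigl(A_a(\bx)\not\subset B_{(1-a/3)\rho}(\cP_n)\bigr)$.

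Two problems with your covering argument. First, the geometry is off: if $y\in A_a$ is uncovered then the empty ball is $B_{(1-a/3)\rho}(y)$, not $B_{(a/3)\rho}(y)$, and what enters the \emph{conditional} probability is only the extra volume $\vol\bigl(B_{(1-a/3)\rho}(y)\setminus B(\bx)\bigr)$. Also, $A_a$ is a full $d$-dimensional annulus of volume $\approx\omega_d\rho^d$, so a net of mesh $(a/6)\rho$ has $\Theta(a^{-d})$ points, not $O(a^{-(d-1)})$. Second—and this is the real obstruction—the claim that the polynomial prefactor ``loses only a constant'' fails at the lower end $a=1/\Lambda$ of the allowed range: there the prefactor is polynomial in $\Lambda$ while the exponential gain $e^{-Ca\Lambda}=e^{-C}$ is merely bounded, so nothing can be absorbed. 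Carrying this through, in the application (proof of Proposition~\ref{prop:pt_neg}, summing over $\eps_m=m/\Lambda$) one obtains $\mean{\Cn_{k+1,r}}\le\const\,\Lambda^{d}\,e^{-w(n)}$ instead of $\const\,e^{-w(n)}$, which does not vanish for general $w(n)\to\infty$.

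The paper repairs this with a multi-scale decomposition: write $A_a\subset\bigcup_{m=1}^{M}\hat A_m$ with $\hat A_m=\mathrm{cl}(A_{ma}\setminus A_{(m+1)a})$ and $M=\lfloor 1/a\rfloor$. The sub-annulus $\hat A_m$ has inner radius $ma\rho$ and thickness $a\rho$, so by scaling it admits a $(ma\rho/2)$-net $\cS_m$ whose cardinality is bounded by a constant $S_{\max}$ \emph{independent of $a$ and $m$}. For $s\in\cS_m$ at distance $\ge ma\rho$ from $c$, Lemma~\ref{lem:vdiff} (with $\eps=ma$, $\alpha=5/6$) gives $\vol\bigl(B_{(1-\frac{5}{6}ma)\rho}(s)\setminus B(\bx)\bigr)\ge C_1\, ma\,\rho^d$, hence $\Pe(\hat A_m\text{ uncovered})\le S_{\max}e^{-C_1 m a n\rho^d}$. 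Summing the geometric series yields
\[
p_a(\bx)\le S_{\max}\sum_{m\ge 1}e^{-C_1 m a n\rho^d}\le \const\,e^{-\Dn_{k,2}a\Lambda},
\]
with no polynomial loss in $1/a$. After this, your BP-plus-Lipschitz computation goes through verbatim.
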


\begin{proof}
Similarly to the expectation calculations in  the proof of Lemma \ref{lem:mean_ck}, we can write
\eqb\label{eq:mean_hat_ck}
	\mean{F_{k,r,a,b}} =  \frac{n^{k+1}}{(k+1)!}\int_{(\T^d)^{k+1}} h_{r}(\bx) \ind\set{\phi(\bx)\in(a, b]} e^{-n\omega_d \rho^d(\bx)} p_{a}(\bx)d\bx,
\eqe

where 
\[
	p_{a}(\bx) := \cprob{A_{a}(\bx) \not\subset B_{(1-a/3)\rho(\bx)}(\bx \cup \cP_n)}{B(\bx)\cap \cP_n = \emptyset}.
\]
Defining,
\eqb\label{eq:cond_prob}	\P_\emptyset(\cdot) = \cprob{\cdot}{B(\bx)\cap \cP_n = \emptyset},
\eqe

then
\[
	p_{a}(\bx) \le \P_\emptyset(A_{a}(\bx) \not\subset B_{(1-a/3)\rho(\bx)}(\cP_n)).
\]
In the following we assume that $\bx$ is fixed, and remove it from the notation (so that $\rho=\rho(\bx), B=B(\bx)$, etc.). 
For every $k \ge 1$, define
\[
\hat A_m = \mathrm{cl}(A_{ma} \bs A_{(m+1)a}) ,
\]
i.e.~$\hat A_m$ is an annulus with radii range $[ma\rho, (m+1)a\rho]$. Taking $M = \floor{1/a}$ we then have
\[
A_{a} \subset \bigcup_{m=1}^{M}  \hat A_m,
\]
see Figure \ref{fig:annuli}.
Therefore,
\eqb\label{eq:p_eps_bound}
	p_{a}(\bx) \le \sum_{m=1}^{M} \P_\emptyset(\hat A_m \not\subset B_{(1-a/3)\rho}(\cP_n)).
\eqe
Next, in each annulus $\hat A_m$ take a $(ma\rho/2)$-net denoted $\cS_m$, such that for all $x\in \hat A_m$ there exists $s\in\cS_m$  with $\abs{s-x}\le ma \rho/2$. By scaling, this is equivalent to taking a $\frac{1}{2}$-net for an annulus with radii $[1,1+1/m]$, and therefore $\abs{\cS_m} \le \abs{\cS_1} := S_{\max}$ where $S_{\max}>0$ is a positive constant.

\begin{figure}[ht]
\centering
\includegraphics[scale=0.2]{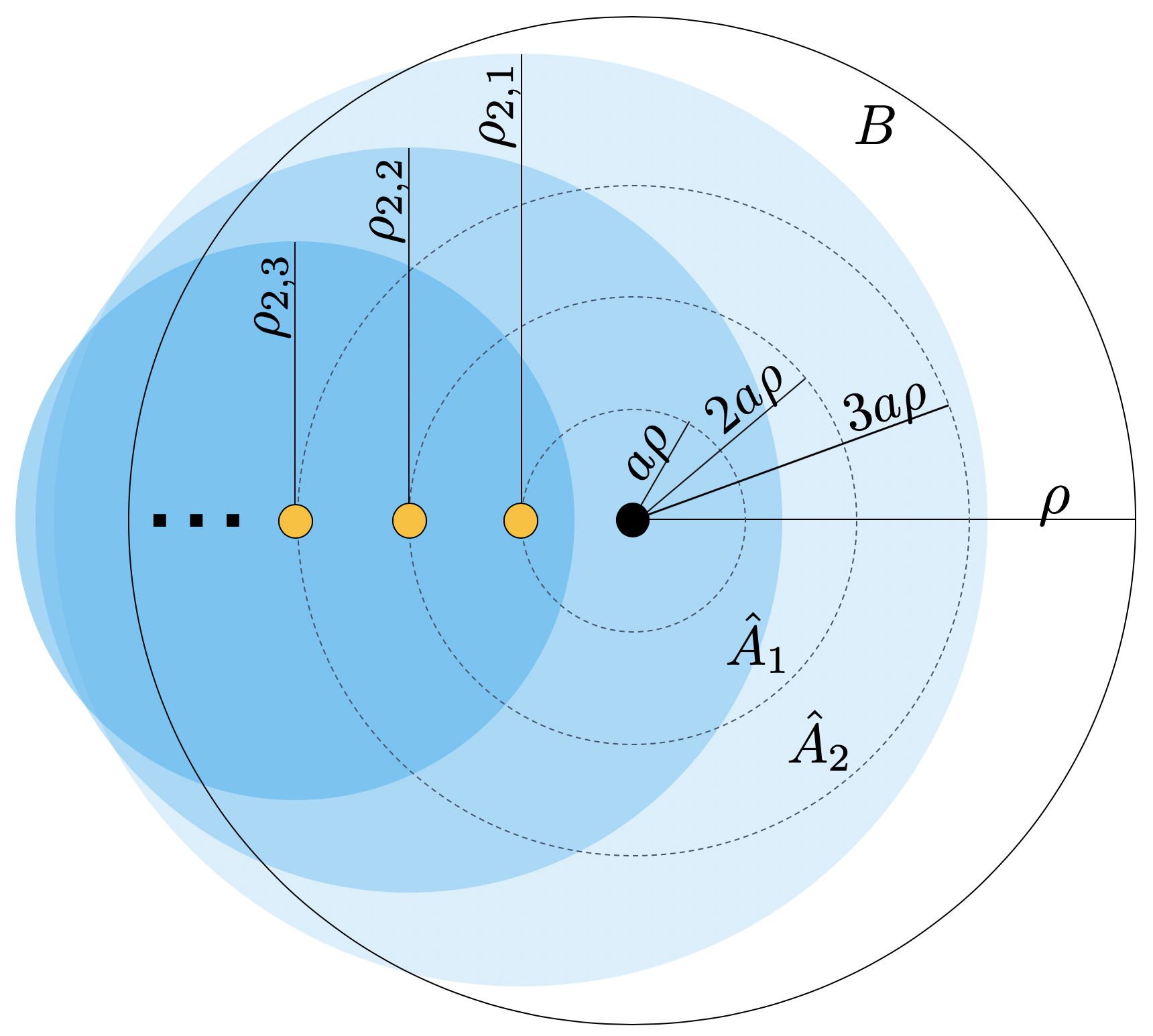}
\caption{\label{fig:annuli} Splitting annulus $A_a$ into a sequence of annuli $\hat A_1,\hat A_2,\ldots, \hat A_M$.  On each of the inner spheres we place a ball of radius $\rho_{2,m} = (1-(5/6)ma)\rho$. The area outside $B$ is $\vol(B_{\rho_2}\bs B)$ used in the proof.}
\end{figure}

Notice that if  $\hat A_m$ is not covered by $B_{(1-a/3)\rho}(\cP_n)$, then there exists $x\in \hat A_m$ such that $B_{(1-a/3)\rho}(x)\cap \cP_n = \emptyset$. By the triangle inequality,  there exists $s\in \cS_m$ such that $B_{(1-a/3-ma/2)\rho}(s)\cap \cP_n =\emptyset$.
Therefore, setting  $\rho_1 = (1-a(1/3+m/2))\rho$, we have
\[
\splitb
\P_\emptyset(\hat A_m \not\subset B_{(1-a/3)}\rho(\cP_n)) &\le  \sum_{s\in\cS_m} \P_\emptyset(B_{\rho_1}(s) \cap \cP_n = \emptyset) \\
&\le \abs{\cS_m} \cdot \max_{s\in \cS_m}\P_\emptyset\param{B_{\rho_1}(s) \cap \cP_n = \emptyset}.
\splite
\]
From \eqref{eq:cond_prob}, and using the spatial independence of the Poisson process,	 we have
\[
\P_\emptyset\param{B_{\rho_1}(s) \cap \cP_n = \emptyset} = e^{-n \vol(B_{\rho_1}(s) \backslash B)}.
\]
First, note that for all $m\ge 1$, we have  $\frac{1}{3}+\frac{m}{2} \le \frac{5}{6}m$. Therefore,
\[
\P_\emptyset\param{B_{\rho_1}(s) \cap \cP_n = \emptyset} \le  e^{-n \vol(B_{\rho_2}(s) \backslash B)},
\]
where $\rho_2 = (1-\frac{5}{6}ma)\rho$.
The points in $\hat A_m$ where the volume on the right hand side is the smallest, are the ones on the inner sphere, which are at distance  $m a \rho$ from the center $c$, see Figure \ref{fig:annuli}. For such points $s$,  using the notation of Lemma \ref{lem:vdiff} we have
\[
\vol(B_{\rho_2}(s)\bs B)= \rho^d \vdiff(ma,5/6).
\] 
From Lemma \ref{lem:vdiff}, there exists $C_1 > 0$ such that $\vdiff(ma,5/6) \ge C_1 ma$. Putting it all back into \eqref{eq:p_eps_bound} yields
\[
p_{a} \le S_{\max} \sum_{m=1}^{M}  \param{e^{-C_1  a n\rho^d}}^m \le S_{\max} \frac{e^{-C_1  a n\rho^d}}{1-e^{-C_1  a n\rho^d}} \le \const e^{-\Dn_{k,2} a \omega_d n\rho^d} \le \const e^{-\Dn_{k,2} a\Lambda},
\]
where $\Dn_{k,2} = C_1 /\omega_d$, and we used the facts that $\omega_d n \rho^d \ge \Lambda$ and $a \ge \frac{1}{\Lambda}$.

Putting this bound on $p_{a}$ back into \eqref{eq:mean_hat_ck}, and applying the BP formula \eqref{eq:bp_torus_sep} we have
\[
\splitb
	\mean{F_{k,r,a,b}} &\le \const n^{k+1} e^{-\Dn_{k,2}a\Lambda}\int_r^{r_{\max}} \rho^{dk-1} e^{-n\omega_d\rho^d}d\rho\\
	&\times \int_{(\S^{k-1})^{k+1}} \hcrit(\bth) \ind\set{\phi(\bth) \in (a,b]}\vsimp(\bth)d\bth\\
&\le \const n\Lambda^{k-1} e^{-\Lambda(1+ \Dn_{k,2} a)} \int_{(\S^{k-1})^{k+1}} \indf{\phi(\bth)\in (a,b]}d\bth,\splite
\]
where we used the fact that $\vsimp$ is bounded on the sphere.
Using Corollary \ref{cor:phi_lip} we have 
\[
\int_{(\S^{k-1})^{k+1}} \indf{\phi(\bth)\in (a,b]}d\bth = V_k^\phi(b) - V_k^\phi(a) \le \const(b-a).
\]
Putting it all together we have
\[
	\mean{F_{k,r,a,b}} \le \const (b-a) n\Lambda^{k-1} e^{-\Lambda(1+ \Dn_{k,2} a)}.
\]
Taking $D_{k,1}$ to be the last value of $\const$,  completes the first part of the proof.
For the second part, we start the same way as  \eqref{eq:mean_hat_ck} with $a=0,b=\eps$. Notice that in this case $A_0 = B\not \subset B_{\rho}(\cP_n)$, and therefore $p_0(\bx)=1$. This completes the proof.

\end{proof}

Finally, we are ready to prove Proposition  \ref{prop:pt_neg}. 

\begin{proof}[Proof of Proposition \ref{prop:pt_neg}]

 
Let $1\le k \le d-2$, and suppose that $\cX\in \cC^{k+1}(\cP_n)$ is a negative critical $(k+1)$-face. 
Using Lemma \ref{lem:crit_positive},  since $\cX$ is negative, we have that for every $\rho_0 < \rho(\cX)$ and every $\eps_0\le \phi(\cX)$, 
\[
A_{\eps_0} \not\subset B_{\rho_0}(\cP_n).
\]
Therefore, enumerating over all possible values of $\phi(\cX)$ we have
\[
\Cn_{k+1,r} \le F_{k+1,r,0,\eps_1} + \sum_{m=1}^{M-1} F_{k+1,r,\eps_m, \eps_{m+1}},
\]
for any sequence $\eps_1 < \eps_2 < \cdots < \eps_M = 1$. Using Lemma \ref{lem:Fk_phi}, we have
\[
    \mean{\Cn_{k+1,r}} \le \Dn_{k,1} n\Lambda^{k}e^{-\Lambda}\param{\eps_1 + \sum_{m=1}^{M-1}(\eps_{m+1}-\eps_m) e^{-\Dn_{k,2}\Lambda \eps_m}}.
\]
Taking $\eps_m = m/\Lambda$ we then have
\[
\mean{\Cn_{k+1,r}} \le \Dn_{k,1} n\Lambda^{k-1}e^{-\Lambda} \sum_{m=0}^\infty e^{-m\Dn_{k,2}}= \const n\Lambda^{k-1}e^{-\Lambda}.
\]
Finally, taking $\Lambda =\thres +w(n)$ we have
\[
    \mean{\Cn_{k+1,r}} \le \const e^{-w(n)} \to 0,
\]
and using Markov's inequality proves that $\prob{\Cn_{k+1,r}=0}\to 1$.

Next, consider $\Cp_{k,r}$, and suppose that $\Lambda = \thres - w(n)$. Assume first that $w(n) = o(\logg n)$.
 Then we can write $\Lambda = \log n + (k-2)\logg n + \tilde w(n)$, where $\tilde w(n) = \logg n -w(n)\to\infty$.
 Therefore, using what we just proved, we have w.h.p~$\Cn_{k,r}=0$, implying that $\Cp_{k,r} = F_{k,r}$, so that all critical $k$-faces are \emph{positive}.
 Using Proposition \ref{prop:pt_crit} we conclude that $\prob{\Cp_{k,r} = 0} \to 0$. 
For larger possible $w(n)$ (smaller $r$) we just use the fact that $\Cp_{k,r}$ is decreasing in $r$. 
On the other hand, when $\Lambda = \thres + w(n)$ from Proposition \ref{prop:pt_crit} we have $F_{k,r} = 0$ \whp~implying that $\prob{\Cp_{k,r}=0}\to 1$ as well.  This proves the phase-transition for $\Cp_{k,r}$.

Finally, from Lemma \ref{lem:perc} below, 
we know that  $\Cn_{k+1,r} \ge \Cp_{k,r}$. When $\Lambda = \thres - w(n)$, we have that \whp\ $\Cp_{k,r} >0$, and therefore $\Cn_{k+1,r} > 0$. That completes the proof.

\end{proof}

\subsection{Big vs.~small cycles}
In this section we will show that for $\Lambda \ge \log n$, \whp~all the $k$-cycles of the torus ($k<d$) already appear in the complex $\cC_r$. We name these cycles ``big", and notice that these cycles are never terminated.  Therefore, all the positive critical $k$-faces we discuss in this paper generate ``small" $k$-cycles that must be eventually terminated by a matching $(k+1)$-negative face. This, in particular, implies the fact that \whp\ $\Cn_{k+1,r}\ge \Cp_{k,r}$, which we used earlier.

More formally, let $i_*: H_k(B_r(\cP_n))\to H_k(\T^d)$ be the map induced by the inclusion $i:B_r(\cP_n)\hookrightarrow \T^d$. From the Nerve Lemma \ref{lem:nerve}, there is an isomorphism $f_*:H_k(\cC_r) \to H_k(B_r(\cP_n))$, and we can define the map
\[
	j_* : H_k(\cC_r)\to H_k(\T^d), \quad j_* = i_*\circ f_*.
\]
The image $\im(j_*)$ represents all the nontrivial $k$-cycles in $\cC_r$ that are mapped to nontrivial $k$-cycles of the torus, i.e.~the big cycles. We will prove next that in our setting, \whp\ $\im(j_*) = H_k(\T^d)$ means that all the big $k$-cycles have a representative in $\cC_r$. Therefore, all the cycles that we will be discussing in this paper are small cycles that are  mapped zero in the homology of the torus, and are bound to disappear.
\begin{lem}\label{lem:perc}
Let $1\le k \le d-1$, and suppose that $\Lambda \ge \log n$. Then,
\[
	\limninf\prob{\im(j_*) = H_k(\T^d)} = 1.
\]
\end{lem}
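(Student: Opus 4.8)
The plan is to show that each of the $\binom{d}{k}$ generators of $H_k(\T^d)$ is realized by a cycle in $\cC_r$ once $\Lambda \ge \log n$, i.e.~once the $1$-skeleton of $\cC_r$ (which is the random geometric graph $G(n,2r)$) is well past its own connectivity threshold $\Lambda = 2^{-d}\log n$. The natural object to use is a fixed ``reference'' $k$-cycle on the torus: since $\T^d = \R^d/\Z^d$, for each $k$-subset $I \subset \{1,\ldots,d\}$ the coordinate sub-torus $\T_I \cong \T^k$ carries a generator of the corresponding summand of $H_k(\T^d)$. I would fix a smooth triangulated copy $Z_I$ of $\T^k$ sitting inside $\T^d$ (a ``thin'' embedded $k$-torus, or simply a simplicial $k$-cycle supported near $\T_I$), whose image in $H_k(\T^d)$ under inclusion is the $I$-th basis element. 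The goal is then to produce, w.h.p., a simplicial $k$-cycle in $\cC_r$ that maps to $[Z_I]$ under $j_*$, for all $I$ simultaneously.

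The main step is a deterministic ``carrier/approximation'' argument combined with a covering estimate. First I would fix a tubular neighborhood $U_I$ of $Z_I$ in $\T^d$ of some fixed small width $\delta_0$ (independent of $n$), chosen so that $U_I$ deformation retracts onto $Z_I$ and the inclusion $U_I \hookrightarrow \T^d$ still sends the fundamental class of $Z_I$ to the $I$-th generator. Next, since $r = r(n) \to 0$ and $\Lambda = \omega_d n r^d \ge \log n \to \infty$, standard covering results (the same coverage machinery used in \cite{bobrowski_vanishing_2017, flatto_random_1977}, and recalled in the discussion around \eqref{eq:pt_hom}) give that w.h.p.\ $B_r(\cP_n) \supseteq U_I$ — in fact coverage of the whole torus occurs only at $\Lambda = \log n + (d-1)\logg n$, which is larger, so at $\Lambda \ge \log n$ I cannot invoke full coverage, but I can invoke coverage of the \emph{fixed-width neighborhood} $U_I$ of a fixed $k$-dimensional set: the expected number of uncovered points in a region of bounded $d$-volume times $e^{-\Lambda}$ still tends to $0$ when $\Lambda \ge \log n$ only if $U_I$ shrinks, so more carefully I would take $U_I$ to shrink slowly (width $\asymp r \cdot (\log n)^{1/d}$, say) so that $U_I$ still retracts onto $Z_I$ (true since $r\to 0$ makes the tube arbitrarily thin relative to the fixed geometry of $Z_I$) and the uncovered-volume bound becomes $n \cdot \vol(U_I) e^{-\Lambda} \to 0$. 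Once $U_I \subseteq B_r(\cP_n)$, the composite $Z_I \hookrightarrow U_I \hookrightarrow B_r(\cP_n) \hookrightarrow \T^d$ shows $[Z_I] \in \im(i_*: H_k(B_r(\cP_n)) \to H_k(\T^d))$; transporting through the Nerve Lemma isomorphism $f_*$ gives $[Z_I]\in\im(j_*)$. Running this for all $\binom{d}{k}$ choices of $I$ (a finite union of high-probability events) yields $\im(j_*) = H_k(\T^d)$ w.h.p.

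The step I expect to be the main obstacle is the coverage estimate for $U_I$: I need $U_I$ thin enough that $n\,\vol(U_I)e^{-\Lambda} \to 0$ at $\Lambda \ge \log n$ — so $\vol(U_I) = o(1)$ suffices, which is automatic since $r\to 0$ — but simultaneously thick enough (width $\gg r$, in fact width $\gg r\logg n$ or so, to kill the net/union-bound factors exactly as in Lemma~\ref{lem:Fk_phi}) that $U_I$ is genuinely a tubular neighborhood deformation-retracting onto $Z_I$ and that a simplicial $k$-cycle in $\cC_r$ living in $U_I$ and representing $[Z_I]$ actually exists — this last point requires that $\cC_r$ restricted to $U_I$ captures $H_k(U_I)$, which again follows from the Nerve Lemma provided the relevant ball intersections inside $U_I$ are contractible (true for $r < \rmax$). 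Reconciling ``thin enough for the probability bound'' with ``thick enough for topology and for the union bound over an $\epsilon$-net of $U_I$'' is the delicate balance; it works because the window $r \ll$ width $\ll \rmax$ is non-empty for all large $n$. Everything else — the existence of $Z_I$ and its tube, the use of the Nerve Lemma, and the finite union bound over $I$ — is routine.
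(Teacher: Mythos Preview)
Your approach is genuinely different from the paper's and is viable, but the execution is muddled in two places.

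First, the tube $U_I$ is unnecessary. Once the $k$-submanifold $Z_I \subset B_r(\cP_n)$, the inclusions $Z_I \hookrightarrow B_r(\cP_n)\hookrightarrow \T^d$ already factor $Z_I\hookrightarrow\T^d$, so the $I$-th generator lies in $\im(i_*)$; the Nerve isomorphism transports it to $\im(j_*)$. You never need a simplicial cycle ``living in $U_I$'', nor a retraction of $U_I$ onto $Z_I$, so the entire balancing act between ``thin enough'' and ``thick enough (width $\gg r$)'' disappears. Second, your coverage bound $n\,\vol(U_I)e^{-\Lambda}$ is not the right quantity; expected uncovered volume going to $0$ does not give coverage. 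The clean argument is a net on $Z_I$ itself: an $(\epsilon r)$-net of the $k$-torus $Z_I$ has $O(r^{-k}) = O((n/\Lambda)^{k/d})$ points, each with probability $e^{-(1-\epsilon)^d\Lambda}$ of having an empty $(1-\epsilon)r$-ball, so the union bound gives $O(n^{k/d-(1-\epsilon)^d})\to 0$ for $k\le d-1$ and $\epsilon$ small. This is the computation you should have written.

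By contrast, the paper never touches explicit cycle representatives. It works on the \emph{complement}: it shows (via the index-$d$ critical points of the distance function, Lemma~\ref{lem:hat_F_d}) that when $\Lambda\ge\log n$ the vacant set $\T^d\setminus B_r(\cP_n)$ consists w.h.p.\ of components each contained in a ball of radius $r$, and then a Mayer--Vietoris argument (Lemma~\ref{lem:MV}) with $A=$ disjoint $2r$-balls and $B=$ their complement forces $i_*^{bt}:H_k(B)\to H_k(\T^d)$ to be surjective, hence $i_*$ is. Your route is more elementary and shorter for the flat torus, but it exploits the explicit coordinate sub-tori as generators; the paper's route is representative-free and is what generalizes to the compact Riemannian manifolds mentioned in the introduction.
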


The proof of Lemma \ref{lem:perc} is more geometric in nature, and makes a heavier use of the Morse-theoretic framework introduced in \cite{bobrowski_distance_2014, bobrowski_vanishing_2017}. For an introduction to Morse theory see \cite{milnor_morse_1963}. Its adaptation to the distance function can be found in \cite{ferry_when_1976, gershkovich_morse_1997}.

Let $\cP\subset \T^d$ be a finite set and define $d_{\cP}:\T^d\to \R$ to be the distance function 
\[
	d_{\cP}(x) = \min_{p\in \cP} d_T(x,p).
\]
In Morse theory we consider the sublevel-set filtration 
$\set{d_{\cP}^{-1}([0,r])}_r$. Then, according to Morse theory, homology changes only at critical levels, and it can be shown that 
critical points of $d_{\cP}$ with Morse index $k$ are responsible for either generating a $k$-cycle, or terminating a $(k-1)$-cycle in the sublevel-set filtration. For the full discussion and definitions, see \cite{bobrowski_vanishing_2017}.
Observing that $d_{\cP}^{-1}([0,r]) = B_r(\cP) \simeq \cC_r(\cP)$ has lead to the Morse theoretic analysis we used in this paper. The critical faces introduced in Section \ref{sec:crit_faces} are such that if $\cX$ is a critical $k$-face then $c(\cX)$ is a critical point of index $k$ for $d_{\cP}$ and $\rho(\cX)$ is the corresponding critical value.

For the proof of Lemma \ref{lem:perc}, we will 
consider the set $V_r(\cP) := \mathrm{cl}(\T^d\bs  B_r(\cP))$ -- the closure of the complement. The filtration $\set{V_r(\cP)}_r$ is in fact the superlevel-set filtration of $d_{\cP}$, and it can be shown that the changes in homology also occur at the critical levels, where an index $k$ critical point either generates a $(d-k)$-cycle or terminates a $(d-k-1)$-cycle.
In particular, the connected components in $V_r$ are generated by critical points of index $d$. Notice, that the superlevel-set filtration goes in a reversed order, i.e.~we let $r$ decrease from $\infty$ to $0$.

The main idea of the proof of Lemma \ref{lem:perc} is  that when $\Lambda \ge \log n$ the set $V_r(\cP_n)$ consists of tiny connected components, implying  that $B_r(\cP_n)$ is large enough to contain all the  big cycles. 

\begin{proof}[Proof of Lemma \ref{lem:perc}]

Recall that every connected component in $V_r(\cP_n)$ emerges from a critical point of index $d$ with $\rho\in (r,\rmax]$, and define
\[
	\hat F_{d,r} := \# \text{critical $d$-faces $\cX$ with $\rho(\cX) \in (r,\rmax]$, and such that $A_{r,4r}(\cX) \not\subset B_r(\cP_n)$,}
\]
where $A_{r_1,r_2}(\cX) := \mathrm{cl}(B_{r_2}(c(\cX))\bs B_{r_1}(c(\cX)))$ is a closed annulus centered at the critical point $c(\cX)$, with radii in $[r,4r]$. In Lemma \ref{lem:hat_F_d} we will show that when $\Lambda \ge \log n$ we have that \whp\ $\hat F_{d,r} = 0$. This implies that all the components of $V_r(\cP_n)$ are contained in balls of radius $r$.

 Suppose that $V_r(\cP_n)$ consists of  $M$ connected components (where $M$ is random) denoted $C_1,\ldots, C_M$. For each component we can then choose an index $d$ critical point $p_i \in C_i$ (there might be more than one point in each component, in which case we choose arbitrarily). 
 The fact that $\hat F_{d,r} = 0$ implies that (a) $V_r(\cP_n) \subset \bigcup_i B_r(p_i)$, and (b) for every $i\ne j$ we have $B_{2r}(p_i) \cap B_{2r}(p_j) = \ems$.
Lemma \ref{lem:MV}  is then used to prove that $\im( i_*) = H_k(\T^d)$. Recall that $j_* = i_*\circ  f_*$. Since $f_*$ is an isomorphism and $\im(i_*) = H_k(\T^d)$ we conclude that $\im(j_*) = H_k(\T^d)$ as well.
This completes the proof.

\end{proof}

To conclude this section, we have two lemmas to prove.

\begin{lem}\label{lem:hat_F_d}
If $\Lambda \ge \log n$, then 
\[
	\prob{\hat F_{d,r} > 0} \to 0.
\]
\end{lem}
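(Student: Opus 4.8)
The plan is to bound $\mean{\hat F_{d,r}}$ and apply Markov's inequality, exactly parallel to the first part of the proof of Proposition \ref{prop:pt_neg}. First I would observe that by definition $\hat F_{d,r}$ counts critical $d$-faces $\cX$ with $\rho(\cX) \in (r,\rmax]$ such that the annulus $A_{r,4r}(\cX)$ is \emph{not} covered by $B_r(\cP_n)$; as in Lemma \ref{lem:Fk_phi}, applying Palm theory (Theorem \ref{thm:palm}) gives
\[
\mean{\hat F_{d,r}} = \frac{n^{d+1}}{(d+1)!}\int_{(\T^d)^{d+1}} h_r(\bx)\, e^{-n\omega_d\rho^d(\bx)}\, q_r(\bx)\, d\bx,
\]
where, writing $\P_\emptyset$ for the conditional law given $B(\bx)\cap\cP_n=\emptyset$ as in \eqref{eq:cond_prob},
\[
q_r(\bx) \le \P_\emptyset\bigl(A_{r,4r}(\bx)\not\subset B_r(\cP_n)\bigr).
\]
The key point is that here $k=d$, so the annulus $A_{r,4r}(\cX)$ lies \emph{outside} the empty ball $B(\cX)$ of radius $\rho(\cX) > r$, and we need a net argument to control the probability that this (full-dimensional) annulus fails to be covered.

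The core estimate is a covering bound of the type used in Lemma \ref{lem:Fk_phi}: place a finite $\delta$-net $\cS$ (with $\delta$ a small fixed fraction of $r$) in the annulus $A_{r,4r}(\bx)$, of bounded cardinality $|\cS| \le S_{\max}$ by a scaling argument. If $A_{r,4r}(\bx)$ is not covered by $B_r(\cP_n)$, then some point $x$ in it has $B_r(x)\cap\cP_n=\emptyset$, hence some net point $s$ has $B_{r-\delta}(s)\cap\cP_n=\emptyset$, and by spatial independence of the Poisson process,
\[
\P_\emptyset\bigl(B_{r-\delta}(s)\cap\cP_n=\emptyset\bigr) = e^{-n\,\vol\bigl(B_{r-\delta}(s)\setminus B(\bx)\bigr)}.
\]
Since $s$ is at distance at least $r$ from the center $c(\bx)$ while $B(\bx)$ has radius $\rho(\bx)>r$, a geometric computation (in the spirit of Lemma \ref{lem:vdiff}) shows $\vol(B_{r-\delta}(s)\setminus B(\bx)) \ge c_0\, r^d$ for some absolute constant $c_0 > 0$, giving $q_r(\bx) \le S_{\max}\, e^{-c_0 n r^d} \le \const\, e^{-\const\,\Lambda}$ uniformly in $\bx$. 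I expect this geometric lower bound on the lune volume to be the main technical obstacle, since one must check that $B_{r-\delta}(s)$ genuinely pokes out of $B(\bx)$ by a definite amount for every admissible configuration --- this uses that the inner radius of the annulus is $r$, not something shrinking with $\rho(\bx)$, and that $\delta$ is chosen small enough.

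Finally, substituting this bound back and evaluating the remaining integral with the Blaschke--Petkantschin formula \eqref{eq:bp_torus_sep} as in Lemma \ref{lem:mean_ck} (the angular integral is a finite constant since $\vsimp$ is bounded on the sphere, and the radial integral contributes the usual $\Lambda^{d-1}e^{-\Lambda}/(n\omega_d)^{d}$ factor), one obtains
\[
\mean{\hat F_{d,r}} \le \const\, n\Lambda^{d-1} e^{-\Lambda(1+\const)}.
\]
When $\Lambda \ge \log n$ the extra exponential factor $e^{-\const\,\Lambda} \le n^{-\const}$ kills the polynomial prefactor $n\Lambda^{d-1}e^{-\Lambda} \le \Lambda^{d-1} = O((\log n)^{d-1})$, so $\mean{\hat F_{d,r}} \to 0$, and Markov's inequality gives $\prob{\hat F_{d,r}>0}\to 0$. (One should also note, as in the manifold-versus-$\R^d$ discussion of Section \ref{sec:flat_torus}, that $r\ll\rmax$ here so the annulus $A_{r,4r}(\cX)$ embeds isometrically in Euclidean space and all the Euclidean volume computations apply verbatim.)
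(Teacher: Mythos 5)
There is a genuine gap in the covering bound. You claim that since $s$ is at distance at least $r$ from $c(\bx)$ while $B(\bx)$ has radius $\rho(\bx)>r$, one gets $\vol(B_{r-\delta}(s)\setminus B(\bx)) \ge c_0\,r^d$ uniformly over admissible $\bx$. This is false when $\rho(\bx)$ is large: for a net point $s$ near the inner sphere of $A_{r,4r}(\bx)$, i.e.\ with $|s-c(\bx)|\approx r$, the ball $B_{r-\delta}(s)$ extends only to distance $\approx 2r-\delta$ from $c(\bx)$, so whenever $\rho(\bx)\ge 2r$ it lies \emph{entirely inside} $B(\bx)$ and the lune volume is zero. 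The corresponding term in your sum over $\cS$ is then $1$, not $e^{-c_0\Lambda}$, and the uniform bound $q_r(\bx)\le\const e^{-\const\Lambda}$ fails. Your own caveat (``this uses that the inner radius of the annulus is $r$, not something shrinking with $\rho(\bx)$'') has the logic backwards --- the problem is precisely that the inner radius stays fixed at $r$ while $\rho(\bx)$ may grow, so the annulus can be swallowed by $B(\bx)$.

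The paper's proof makes an essential preliminary reduction that you are missing: fix $\eps\in(0,1/2)$ and set $r_0 = (1+\eps)r$; since $\omega_d n r_0^d \ge (1+\eps)^d\log n$, Proposition~\ref{prop:mean_var} gives $\mean{F_{d,r_0}}\to 0$, so the contribution from critical faces with $\rho(\bx)>r_0$ is asymptotically negligible and one may restrict the integral to $\rho(\bx)\in(r,r_0]$. Only on that range --- where $\rho(\bx) < 1.5 r$ --- does a ball of radius $r/2$ around a point at distance $r$ from $c(\bx)$ have a definite fraction of its volume outside $B_{r_0}(c(\bx))$, which is what makes the net argument close. Without this splitting the claimed geometric bound and the resulting expectation estimate are incorrect.
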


\begin{proof}
Similarly to the proof of Lemma \ref{lem:Fk_phi},
\[
	\mean{\hat F_{d,r}} = \frac{n^{d+1}}{(d+1)!} \int_{(\T^d)^{d+1}} h_r(\bx) e^{-n\omega_d
	\rho^d(\bx)} p(\bx)d\bx,
\]
where
\[
p(\bx) := \cprob{A_{r,4r}(\bx) \not\subset  B_r(\cP_n\cup \bx)}{B(\bx)\cap \cP_n = \emptyset}.
\]
Fix $\eps \in (0,1/2)$, and define $r_0 = (1+\eps)r$. Since $\Lambda_0 = \omega_d nr_0^d \ge (1+\eps)^d\log n$, we know from Proposition \ref{prop:mean_var} that $\mean{F_{d,r_0}}\to 0$. Therefore, 
\[
	\mean{\hat F_{d,r}} \approx \frac{n^{d+1}}{(d+1)!} \int_{(\T^d)^{d+1}} h_r(\bx)\indf{\rho(\bx)\in (r,r_0]} e^{-n\omega_d
	\rho^d(\bx)} p(\bx)d\bx.
\]
Next, following \eqref{eq:cond_prob}	, we can write
\[
p(\bx)  \le \P_\emptyset(A_{r,4r}(\bx) \not\subset B_r(\cP_n)).
\]
To bound the last probability, we take a $(r/2)$-net $\cS$ on the anuulus $A_{r,4r}(\bx)$. The size of $\cS$ can be taken as constant. Then,
\eqb\label{eq:bound_p}
	\P_{\emptyset}(A_{r,4r}(\bx)\not\subset B_r(\cP_n)) \le  \sum_{s\in \cS}\P_{\emptyset}(B_{r/2}(s) \cap \cP_n = \emptyset),
\eqe
where
\[
\P_{\emptyset}(B_{r/2}(s) \cap \cP_n = \emptyset) = e^{-n \vol(B_{r/2}(s)\bs B(\bx))}.
\]
Notice that since $\rho(\bx)\le r_0$, we have 
\[
\vol(B_{r/2}(s)\bs B(\bx)) \ge \vol(B_{r/2}(s)\bs B_{r_0}(\bx)).
\]
The points where the volume on the right is the smallest, are on the inner sphere of $A_{r,4r}$ which is of radius $r$. In addition, since $\eps \le 1/2$, for every $s$ on that sphere, we have that a fixed fraction of the volume of the ball $B_{r/2}(s)$ is outside $B_{r_0}(c(\bx))$, and therefore $\vol(B_{r/2}(s)\bs B_{r_0}(c(\bx))) =\const r^d$. Putting everything back into \eqref{eq:bound_p} yields $
p(\bx) \le \const e^{-\const \Lambda}$, and therefore,
\[
\mean{\hat F_{d,r}} \le \const n\Lambda^{d-1} e^{-\Lambda(1+\const)}.
\]
When $\Lambda \ge \log n$, the last term  goes to zero. Using Markov's inequality completes the proof.

\end{proof}

\begin{lem}\label{lem:MV}
Let $S \subset \T^d$ be a closed set, and $S^c = \mathrm{cl}(\T^d \bs S)$. Suppose that there exist $p_1,\ldots, p_M \in \T^d$ and $r \in(0,\rmax)$ such that (a) $B_{2r}(p_i)\cap B_{2r}(p_j) =\ems$ for all $i\ne j$, and (b) $S^c \subset \bigcup_{i} B_{r}(p_i)$. 
Define $i_* : H_k(S)\to H_k(\T^d)$ the map induced by inclusion. Then 
\[
\im(i_*) = H_k(\T^d).
\]
\end{lem}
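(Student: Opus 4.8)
The plan is to obtain surjectivity of $i_*$ from the long exact sequence of a pair, after replacing $S$ by a convenient open set and computing the relevant relative homology group by excision. As a first step I would pass to the open set $U := \T^d \setminus \bigcup_{i=1}^{M} B_r(p_i)$, the complement of the closed $r$-balls around the $p_i$. Hypothesis (b) states $S^c \subseteq \bigcup_i B_r(p_i)$, so $U \subseteq \T^d \setminus S^c = \intr{S} \subseteq S$; hence the map on $H_k$ induced by $U\hookrightarrow\T^d$ factors as $H_k(U)\to H_k(S)\xrightarrow{i_*} H_k(\T^d)$, and therefore $\im(i_*) \supseteq \im\!\big(H_k(U)\to H_k(\T^d)\big)$. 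It is thus enough to show that $U\hookrightarrow\T^d$ is surjective on $H_k$ for $1\le k\le d-1$.

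Next I would compute $H_k(\T^d,U)$ by excision. Set $\hat B := \bigcup_i \intr{B_{2r}(p_i)}$; by hypothesis (a) this is a disjoint union of open metric balls, and since $B_r(p_i)\subseteq \intr{B_{2r}(p_i)}$ the closed set $Z:=\T^d\setminus\hat B$ lies in $U$. Excising $Z$ from the pair $(\T^d,U)$ --- legitimate because $\overline Z=Z\subseteq U=\intr{U}$ --- gives
\[
H_k(\T^d,U)\ \cong\ H_k\big(\hat B,\ \hat B\cap U\big)\ =\ \bigoplus_{i=1}^{M} H_k\big(\intr{B_{2r}(p_i)},\,A_i\big),
\]
where $A_i=\{x\in\T^d : r<d_T(x,p_i)<2r\}$. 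Since $\rmax$ is small (Section~\ref{sec:flat_torus}), each $B_{2r}(p_i)$ is an embedded Euclidean ball, so $\intr{B_{2r}(p_i)}$ is contractible and $A_i\simeq\S^{d-1}$; the long exact sequence of each pair, together with contractibility of the ball, gives $H_k\big(\intr{B_{2r}(p_i)},A_i\big)\cong\widetilde H_{k-1}(\S^{d-1})$, which vanishes for $1\le k\le d-1$. Plugging $H_k(\T^d,U)=0$ into the long exact sequence $H_k(U)\to H_k(\T^d)\to H_k(\T^d,U)$ shows $H_k(U)\to H_k(\T^d)$ is onto, and the previous step then yields $\im(i_*)=H_k(\T^d)$.

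I do not expect a genuine obstacle here: the argument is elementary once arranged, and the only points requiring care are the point-set bookkeeping that makes excision valid (the closure of the excised set lying inside the open set $U$) and the observation that $2r$ is below the injectivity radius, so that $B_{2r}(p_i)$ is an honest ball and $A_i$ an honest spherical shell. It is worth noting that the hypothesis $k\le d-1$ is used in an essential way --- for $k=d$ one gets instead $H_d(\T^d,U)\cong\F^{M}$ --- which matches the fact that the lemma is applied (in Lemma~\ref{lem:perc}) only in degrees $1\le k\le d-1$.
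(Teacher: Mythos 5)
Your proof is correct, and it takes a genuinely different route from the paper's. The paper applies the Mayer--Vietoris sequence to the decomposition $\T^d = \intr(A)\cup\intr(B)$ with $A=\bigcup_i B_{2r}(p_i)$ and $B=\mathrm{cl}(\T^d\bs\bigcup_i B_r(p_i))$, shows $H_k(A)=0$ and $H_{k-1}(A\cap B)=0$ for $2\le k\le d-1$, and then treats $k=1$ by a separate argument (observing that the component map $H_0(A\cap B)\to H_0(A)\oplus H_0(B)$ is injective since each annulus sits in exactly one ball). You instead work with the long exact sequence of the pair $(\T^d,U)$ and compute $H_k(\T^d,U)$ by excising the complement of the open $2r$-balls; this reduces the question to $H_k(\mathrm{ball},\mathrm{annulus})\cong\widetilde H_{k-1}(\S^{d-1})=0$ for $1\le k\le d-1$. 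The two arguments hinge on the same geometric facts --- the balls are disjoint, each contains a thick annular shell homotopy equivalent to $\S^{d-1}$, and the complement of the small balls lies in $S$ --- and are of comparable length; MV and pair-excision are of course interderivable. The modest advantage of your version is that the reduced-homology formulation absorbs the $k=1$ case automatically, avoiding the paper's separate argument involving the bijection between components of $A$ and $A\cap B$. Both proofs tacitly assume $2r$ is below the injectivity radius so that $B_{2r}(p_i)$ is an honest embedded ball and the shell an honest spherical annulus; you flag this explicitly, which is good, and it is harmless in the application since $r\to 0$.
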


\begin{proof}
Define $A = \bigcup_i B_{2r}(p_i)$, and $B = \mathrm{cl}(\T^d \bs \bigcup_i B_r(p_i))$. Then $\T^d = \mathrm{int}(A) \cup \mathrm{int}(B)$, and therefore using the Meyer-Vietoris sequence (cf. \cite{hatcher_algebraic_2002}), we have the exact sequence
\[
	H_k(A)\oplus H_k(B) \xrightarrow{(i_*^{at}, i_*^{bt})} H_k(\T^d) \xrightarrow{\partial_*} H_{k-1}(A\cap B),
\]
where $i_*^{at}, i_*^{bt}$ are the maps induced by the inclusions $A\hookrightarrow \T^d$ and $B\hookrightarrow \T^d$, respectively.
Notice that $A$ is a union of disjoint balls, and therefore for all $k>0$ we have $H_k(A) = 0$. In addition, the intersection $A\cap B$ is a union of disjoint $d$-dimensional annuli, which are homotopy equivalent to  $(d-1)$-spheres. Therefore, for all $2\le   k \le d-1$ we have $H_{k-1}(A \cap B) = 0$. In other words, for every $1\le k \le d-1$, we have an exact sequence of the form
\[
	H_k(B) \xrightarrow{i_*^{bt}} H_k(\T^d) \xrightarrow{\partial_*} 0.
\]
Exactness implies that $\im(i_*^{bt}) = \ker(\partial_*) = H_k(\T^d)$. For $k=1$ we have the exact sequence
\[
	H_1(B) \xrightarrow{i_*^{bt}} H_1(\T^d) \xrightarrow{\partial_*} H_0(A\cap B) \xrightarrow{i_*^{ab}} H_0(A)\oplus H_0(B).
\]
Notice that there is a bijection between the components of $A$ (balls) and $A\cap B$ (annuli). Therefore, the map $i_*^{ab}$ is a bijection too. Exactness then implies that $\im(\partial_*) = \ker(i_*^{ab}) = 0$. In other words, $\partial_*$ is the zero map, as before.

Finally, since $S^c \subset \bigcup_i B_r(p_i)$ we have that $B \subset S$.  Consider the sequence
\[
	H_k(B) \xrightarrow{i_*^{bs}} H_k(S) \xrightarrow{i_*^{st}} H_k(\T^d),
\]	
where $i_*^{bs}, i_*^{st}$ are induced by the corresponding inclusion maps.
Since $i_*^{bt} = i_*^{st} \circ i_*^{bs}$, and  we showed that $\im(i_*^{bt}) = H_k(\T^d)$ we conclude that $\im(i_*^{bt}) = H_k(\T^d)$. That completes the proof.

\end{proof}

\section{Phase transitions for homological connectivity}\label{sec:pt_proof}

The foundations we laid in the previous sections allow us now to prove the main results of this paper - the sharp phase transition for homological connectivity presented in Theorems \ref{thm:pt_hk_1} - \ref{thm:pt_hk_2}.

 \begin{proof}[Proof of Theorem \ref{thm:pt_hk_1}]
 
 When $\Lambda = \log n + (d-1)\logg n + w(n)$, we showed in \cite{bobrowski_vanishing_2017}, based on the analysis in \cite{flatto_random_1977}, that \whp~$B_r(\cP_n) = \T^d$. In other words, the balls of radius $r$ cover the entire torus. 
 Since this is true for $r\to 0$, it must hold for $\rmax$ as well.
This implies that \whp~$H_k(\cC_{\rmax}) \cong H_k(\T^d)$.

Fix $1\le k \le d-2$.
 If $\Lambda = \thres + w(n)$,  using Proposition \ref{prop:pt_neg} we have  \whp~$\Cp_{k,r}=\Cn_{k+1,r}=0$. 
 Recall from Section \ref{sec:crit_faces}, that the $k$-th homology only changes at either positive $k$-faces or negative $(k+1)$-faces. Since there are no such faces in $(r,\rmax]$ we must have $H_k(\cC_{s}) \cong H_k(\cC_{\rmax})$ for all $s\in [r,\rmax]$.  On the other hand,  we argued above that $H_k(\cC_{\rmax}) \cong H_k(\T^d)$. Therefore, we conclude that \whp~for all $s\in [r,\rmax]$ 
 \[
    H_k(\cC_s) \cong H_k(\T^d),
 \]
 in other words -- $\cH_{k,r}$ holds.
 
 For the other direction, suppose that $\Lambda = \log n + (k-1)\logg n - w(n)$. From Proposition \ref{prop:pt_neg} we have that \whp~$\Cp_{k,r} >0$. Thus, there exist radii in $(r,\rmax]$ where new positive $k$-faces join the complex, adding new generators to $H_k$, implying that necessarily $\cH_{k,r}$ does not occur here.
 This completes the proof.
 
 \end{proof}

\begin{proof}[Proof of Theorem \ref{thm:pt_hk_2}]
When $\Lambda = \log n + (d-1)\logg n + w(n)$, we have that \whp~$B_r(\cP_n) = \T^d$, implying that $\cH_{d-1,r}$ and $\cH_{d,r}$ both hold.

When $\Lambda = \log n + (d-1)\logg n - w(n)$, then from Proposition \ref{prop:pt_crit} we have \whp~$F_{d,r}>0$. This, in particular implies that $\T^d \not \subset B_{r}(\cP_n)$ and therefore $H_d(\cC_r) = 0$, and $\cH_{d,r}$ does not hold.
Finally, recall that for this choice of $\Lambda$ we have $\Cp_{d,r} = 1$ (only a single $d$-cycle will be created), and therefore $\Cn_{d,r} = F_{d,r}-1$. The same second-moment argument used in the proof of Proposition \ref{prop:pt_crit}, can be used to show that \whp~$F_{d,r} > 1$, and thus $\Cn_{d,r}>0$. This implies that $\cH_{d-1,r}$ does not hold as well, completing the proof.

\end{proof}

\section{The structure of  cycles in the critical window}\label{sec:crit_window}

In this section we focus on the critical window, i.e.~the case where
\[
\Lambda = \thres + o(\logg n).
\]
As we discussed in Section \ref{sec:results}, we want to show that $k$-cycles in this regime exhibit a very simple structure, where the positive $k$-simplex generating the cycle is a face of the negative $(k+1)$-simplex terminating it. Next, we will turn this into a formal statement, and prove it.

Let $\cX = \set{x_1,\ldots,x_{k+1}}\in \cC^k(\cP)$, and define $\hat\cX_i := \cX\bs \set{x_i}$. Also, recall the definition of $\theta(\cX) = \set{\theta_1(\cX),\ldots, \theta_{k+1}(\cX)} $ \eqref{eq:crit_face_1}, and define $\hat\theta_i(\cX) := \theta(\cX) \bs \set{\theta_i(\cX)}$.
Next, define
\eqb\label{eq:xmin}
    \hat \cX_{\min} := \argmin_{\hcX_i}|c(\hat\theta_i(\cX)) |.\eqe 
In other words, $\hcXmin$ is the $(k-1)$-face of $\cX$ whose distance to the center is the smallest.
Notice that in the random setting, almost surely for every face the minimum is unique, and therefore $\hcXmin$ is well-defined.
We refer to the face $\hcXmin$ as the ``nearest" face of $\cX$. 

Our main argument in this section is that inside the critical window, if $\cX$ is a negative $(k+1)$-face then $\hcXmin$ is a positive $k$-face ($k\le d-2$), see Figure \ref{fig:iso_faces}.
This provides us with a bijection between positive and negative critical faces, that describes the creation and destruction of the last $k$-cycles in the complex, before reaching homological connectivity. In other words, all the last cycles in the complex are created and destroyed within the critical window, where the positive face is on the boundary of the negative face. To this end, define
\[
\splitb
    \Cnp_{k,r} := \#&\text{negative critical $k$-faces  $\cX\in\cC^k(\cP_n)$ with $\rho(\cX)\in (r,\rmax]$,}\\ &\text{and such that $\hcXmin$ is a positive critical $(k-1)$-face},
\splite
\]
Our main goal in this section is to prove the following lemma.

\begin{lem}\label{lem:pn_pairs}
Let $1 \le k \le d-2$. Suppose that $\Lambda = \thres + o(\logg n)$. Then,
\[	
\limninf \prob{ \Cp_{k,r} = \Cn_{k+1,r} = \Cnp_{k+1,r}} = 1.
\]
\end{lem}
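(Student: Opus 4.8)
The plan is to establish the chain of equalities $\Cp_{k,r} = \Cn_{k+1,r} = \Cnp_{k+1,r}$ by sandwiching. Two of the inequalities are essentially free. First, $\Cnp_{k+1,r} \le \Cn_{k+1,r}$ is immediate from the definitions, since $\Cnp$ only imposes the extra requirement that $\hcXmin$ be a positive critical face. Second, by Lemma~\ref{lem:perc} and the discussion in the ``Big vs.~small cycles'' section, \whp~we have $\Cn_{k+1,r} \ge \Cp_{k,r}$: every positive critical $k$-face in $(r,\rmax]$ generates a small $k$-cycle (all big cycles are already present once $\Lambda\ge\log n$), and each such small cycle must eventually be terminated by a distinct negative $(k+1)$-face with a larger radius, which still lies in $(r,\rmax]$ when $r$ is in the critical window. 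So it remains to close the loop by proving $\Cp_{k,r} \ge \Cnp_{k+1,r}$ \whp, i.e.\ that \emph{every} negative $(k+1)$-face counted by $\Cnp_{k+1,r}$ has its nearest face $\hcXmin$ among the positive critical $k$-faces counted by $\Cp_{k,r}$, and that these nearest faces are distinct for distinct $\cX$ --- giving an injection from the $\Cnp_{k+1,r}$-faces into the $\Cp_{k,r}$-faces.

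The heart of the argument, then, is a structural/first-moment computation: show that \whp~there is \emph{no} negative critical $(k+1)$-face $\cX$ with $\rho(\cX)\in(r,\rmax]$ whose nearest $k$-face $\hcXmin$ fails to be positive critical with radius also in $(r,\rmax]$. Equivalently, the ``good'' count $\Cn_{k+1,r} - \Cnp_{k+1,r}$ of negative $(k+1)$-faces whose nearest face is \emph{not} a positive critical $k$-face is \whp~zero. I would estimate the expectation of this count via Palm theory and the Blaschke--Petkantschin change of variables, exactly as in the proofs of Lemmas~\ref{lem:mean_ck} and \ref{lem:Fk_phi}. The key geometric observation: if $\cX$ is a negative $(k+1)$-face, then by Lemma~\ref{lem:crit_positive} the annulus $A_{\eps_0}(\cX)$ is not covered for any $\eps_0\le\phi(\cX)$; this forces an extra uncovered region near $c(\cX)$ beyond the empty ball $B(\cX)$, contributing an additional exponential decay factor $e^{-\const\Lambda\phi(\cX)}$ in the Palm expectation. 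On the other hand, if $\hcXmin$ is \emph{not} positive critical then either (i) $\hcXmin$ is not even critical --- but then $c(\hat\theta_{\min})$ sticks out of its simplex, or (ii) $\hcXmin$ is critical but negative --- and in either case one can argue that the ball $B(\hcXmin)$ must also be empty of points, adding yet another independent exponential factor (roughly $e^{-\const\Lambda}$ extra on top of $e^{-\Lambda}$), which in the critical window $\Lambda = \thres + o(\logg n)$ drives the expectation to $0$. The distinctness of the nearest faces (no two $\cX$'s share the same $\hcXmin$) follows because the radius $\rho(\hcXmin)$ together with general position determines the vertex set $\hcXmin$, and a $k$-face can be the nearest face of at most one of its cofaces in the relevant radius range --- or, if needed, this too can be folded into a second-moment / union-bound computation showing collisions have vanishing expected count.

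I would carry out the steps in this order: (1) record the two easy inequalities $\Cnp_{k+1,r}\le\Cn_{k+1,r}$ and (via Lemma~\ref{lem:perc}) $\Cp_{k,r}\le\Cn_{k+1,r}$; (2) state the key claim that $\E[\Cn_{k+1,r}-\Cnp_{k+1,r}]\to 0$, reducing the lemma to it; (3) set up the Palm-theory expression for this expectation as an integral over configurations $(\cX, \by)$ where $\cX$ is a candidate negative $(k+1)$-face and $\hcXmin$ is its nearest $k$-subface; (4) split according to whether $\hcXmin$ is non-critical or is negative-critical, and in each case extract the extra emptiness region and its exponential gain using the $\vdiff$-type estimates of Lemma~\ref{lem:vdiff} and the Lipschitz volume bounds of Corollary~\ref{cor:phi_lip}; (5) integrate via Blaschke--Petkantschin and conclude the total is $O(n\Lambda^{k-1}e^{-\Lambda(1+\delta)})$ for some $\delta>0$, hence $o(1)$ in the critical window; (6) handle distinctness of nearest faces, and assemble the sandwich.

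The main obstacle I anticipate is step (4)--(5): carefully identifying, in the case where $\hcXmin$ is negative critical, \emph{which} empty region gives the genuinely extra exponential decay --- one must make sure the emptiness condition $B(\hcXmin)\cap\cP_n=\emptyset$ contributes volume \emph{not already counted} in $B(\cX)\cap\cP_n=\emptyset$, i.e.\ control the overlap $\vol(B(\hcXmin)\setminus B(\cX))$ from below by $\const\,\rho(\cX)^d$ uniformly over the relevant configurations. Since $\hcXmin$ is the \emph{nearest} face, its center $c(\hcXmin)$ is comparatively far from $c(\cX)$ relative to the geometry, and a compactness argument on the angular coordinates $\bth\in(\S^{k})^{k+2}$ (restricted to $\hcrit(\bth)=1$ and $\phi$ bounded away from the degenerate values) should yield the needed uniform lower bound; making this quantitative and ruling out degenerate ``thin simplex'' configurations where the overlap shrinks is the delicate part. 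The case where $\hcXmin$ is non-critical is analogous but slightly easier since the non-criticality directly forces an uncovered spherical cap near $c(\cX)$.
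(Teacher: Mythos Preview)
Your sandwich strategy is correct, and the two ``easy'' inequalities ($\Cnp_{k+1,r}\le\Cn_{k+1,r}$ and, via Lemma~\ref{lem:perc}, $\Cp_{k,r}\le\Cn_{k+1,r}$) match the paper. The problem is your proposed mechanism for the hard direction, step~(4).

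First, your case split is backwards. By Lemma~\ref{lem:xmin_crit}, whenever $\cX$ is critical the geometric condition $c(\hcXmin)\in\sigma(\hcXmin)$ holds \emph{deterministically}. So if $\hcXmin$ fails to be critical, the reason is \emph{not} that its center ``sticks out''; it is that $B(\hcXmin)\cap\cP_n\ne\emptyset$. There is therefore no extra empty ball to exploit --- quite the opposite, there is an extra \emph{point} $x_*$ sitting in $I(\cX)\supset B(\hcXmin)$.

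Second, even in the case where $\hcXmin$ is critical, your claim that $\vol(B(\hcXmin)\setminus B(\cX))\ge \const\,\rho(\cX)^d$ uniformly is false in the regime that matters. By definition $\hcXmin$ is the \emph{nearest} face, so $|c(\hcXmin)-c(\cX)|=\phi(\cX)\rho(\cX)$ is the \emph{smallest} distance, not comparatively large as you write. The paper shows (Lemma~\ref{lem:F_1}) that \whp\ every surviving negative $(k+1)$-face has $\phi(\cX)\le\eps_1\sim\log\Lambda/\Lambda$; then $B(\hcXmin)$ lies almost entirely inside $B(\cX)$, the excess volume is only $O(\phi\rho^d)$, and the extra factor you get is $e^{-\const\log\Lambda}$, merely polynomial in $\Lambda$ --- nowhere near the $e^{-\const\Lambda}$ you need.

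The paper's actual argument is different and more delicate. When $\hcXmin$ is not critical, there is a point $x_*\in I(\cX)$; the paper uses this point to build an auxiliary simplex $\cX^*=\hcXmin\cup\{x_*\}$ and proves a \emph{refined} positivity criterion (Lemma~\ref{lem:crit_pos_2}): if a thinner annulus $\hat A_{\eps_0}(\cX)$, with $\eps_0\le\phmin(\cX,\cX^*)$, is covered, then $\cX$ is positive. The proof then splits into four pieces $F^{(1)},\ldots,F^{(4)}$ according to the sizes of $\phi(\cX)$, $\phi(\cX,\cX)$, and the location of $x_*$ relative to $\hat\Pi(\cX)$, using the fine volume estimates of Lemmas~\ref{lem:vol_a_b}--\ref{lem:dist_boundary} to show each expectation vanishes. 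The case where $\hcXmin$ is critical but negative is handled separately (and easily): by Proposition~\ref{prop:pt_neg}, $\Cn_{k,r}=0$ \whp\ at this value of $\Lambda$, and a short calculation (Lemma~\ref{lem:F_5}) shows $\rho(\hcXmin)>r$ \whp, so no such $\hcXmin$ exist. You will need all of these ingredients; the ``extra empty ball'' heuristic does not survive contact with the small-$\phi$ regime.
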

The implications of this lemma are twofold. Firstly, $\Cn_{k+1,r} = \Cnp_{k+1,r}$ implies that all negative $(k+1)$-faces are such that their nearest $k$-face is positive. This is the pairing phenomenon we discussed earlier. Secondly, $ \Cp_{k,r} = \Cn_{k+1,r}$ implies that that all of the remaining $k$-cycles are of this pairs form, and  they are both generated and terminated after $r$. In other words, none of the $k$-cycles obstructing homological connectivity was created before $r$.


We will start by proving that $\Cn_{k+1,r} = \Cnp_{k+1,r}$, which will take most of this section. 
Recall, that for $\hcXmin$ to be a critical $k$-face, Lemma \ref{lem:crit_face} states two conditions that must hold. The first condition, $c(\hcXmin)\in\sigma(\hcXmin)$, holds deterministically, as stated in  the next lemma.

\begin{lem}\label{lem:xmin_crit}
For $k\ge 2$, let $\cX$ be a $k$-face satisfying $c(\cX)\in\sigma(\cX)$.
Assuming that $\hcX_{\min}$ is unique, then
\[
	c(\hcXmin)\in \sigma(\hcXmin).
\]
\end{lem}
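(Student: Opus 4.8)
The plan is to reduce the statement to a two–dimensional planar geometry problem and then argue by contradiction.

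\emph{Set–up.} I would place $c(\cX)$ at the origin and work inside the $k$–plane $\Pi(\cX)\cong\R^k$; the vertices of $\cX$ then become points $v_1,\dots,v_{k+1}$ all of norm $\rho=\rho(\cX)$, and the hypothesis $c(\cX)\in\sigma(\cX)$ becomes: the origin lies in $\mathrm{int}(\Delta)$, where $\Delta:=\conv(v_1,\dots,v_{k+1})$. Write $F_i:=\conv(v_j:j\neq i)$ for the facet of $\Delta$ opposite $v_i$ and $H_i:=\mathrm{aff}(v_j:j\neq i)$ for its supporting hyperplane. Since all $v_j$ have the same norm, the origin is equidistant from every $v_j$, hence lies on $E(\hcX_i)$; an easy computation then shows that $c(\hcX_i)$ is exactly the foot of the perpendicular dropped from the origin onto $H_i$, so $|c(\hcX_i)|=\dist(0,H_i)$. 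Likewise $|c(\hat\theta_i(\cX))|=\rho^{-1}|c(\hcX_i)|$, so $\argmin_i|c(\hat\theta_i(\cX))|=\argmin_i\dist(0,H_i)=:m$. Thus $\hcXmin=F_m$ is the facet whose hyperplane is closest to the origin, $\sigma(\hcXmin)=\mathrm{relint}(F_m)$, and setting $c_m:=c(\hcXmin)$ (the foot of the perpendicular from $0$ to $H_m$) the claim to prove is $c_m\in\mathrm{relint}(F_m)$. For $k=2$ this is trivial, since then $\hcXmin$ is an edge and $c(\hcXmin)$ its midpoint; so assume $k\geq 3$.

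\emph{Contradiction.} Suppose $c_m\notin\mathrm{relint}(F_m)$. Then $c_m$ fails to be strictly on the $v_p$–side of some facet hyperplane $L:=\mathrm{aff}(v_j:j\neq m,p)$ of $F_m$, for some $p\neq m$. I would project orthogonally onto the $2$–plane $\vec L^{\perp}$: all $v_j$ with $j\neq m,p$ collapse to one point $q$, which is the foot of the perpendicular from $0$ to $L$ (again by equal norms; note $q\neq 0$ since an interior point of $\Delta$ cannot lie in the affine hull of a proper face), while $v_m,v_p$ map to $\bar v_m,\bar v_p$, and $0$ maps into the interior of the triangle $q\bar v_m\bar v_p$. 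One checks that $\dist(0,H_m)=\dist(0,\ell_m)$ and $\dist(0,H_p)=\dist(0,\ell_p)$, where $\ell_m,\ell_p$ are the lines through $q$ in directions $\bar v_p-q$ and $\bar v_m-q$, and that $c_m$ itself lands on $\ell_m$; consequently the failure of strict $v_p$–sidedness becomes the planar statement $\angle 0 q\bar v_p\geq 90^{\circ}$. Since $0$ lies interior to the triangle, $\angle 0 q\bar v_m+\angle 0 q\bar v_p=\angle\bar v_m q\bar v_p<180^{\circ}$, so $\angle 0 q\bar v_m<180^{\circ}-\angle 0 q\bar v_p$, with both sides in $[0^{\circ},90^{\circ})$, where $\sin$ is strictly increasing; using $\dist(0,\ell)=|q|\sin(\text{angle at }q)$ this gives $\dist(0,H_p)=|q|\sin\angle 0 q\bar v_m<|q|\sin\angle 0 q\bar v_p=\dist(0,H_m)$, contradicting the minimality of $m$. (The boundary case $\angle 0 q\bar v_p=90^{\circ}$, in which $c_m=q\in L$, is handled by exactly the same inequality.)

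\emph{Main obstacle.} The coordinate bookkeeping — that centers of subfaces are feet of perpendiculars from $c(\cX)$, that the named projection collapses precisely the vertices indexed by $\{1,\dots,k+1\}\setminus\{m,p\}$, and the elementary planar distance/angle identities — is routine and uses only the general position of $\cX$ (which also ensures the flats have their expected dimensions) and the uniqueness of $\hcXmin$. The genuinely substantive point, where the content of the lemma sits, is the final planar inequality: recognizing that $c_m$ failing to be interior to $F_m$ forces a non-acute angle $\angle 0 q\bar v_p$, which, together with the triangle-angle identity coming from $0\in\mathrm{int}(\Delta)$, makes the \emph{opposite} facet hyperplane $H_p$ strictly closer to the origin than $H_m$ — precisely the quantity that $\hcXmin$ is defined to minimize.
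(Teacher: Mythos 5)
Your argument is correct, and it takes a genuinely different route from the paper. The paper's proof is a one-dimensional argument: it considers the segment $x_\alpha=\alpha c + (1-\alpha)c_{\min}$ from $c_{\min}$ to $c$, notes that since $c_{\min}\notin\bar\sigma$ while $c\in\sigma$ this segment must first hit $\partial\sigma$ at some parameter $\alpha_{\min}\in(0,1)$, and then observes that because the segment is orthogonal to $\Pi_{\min}$ the crossing point cannot lie in $\bar\sigma_{\min}$ (the segment meets $\Pi_{\min}$ only at $c_{\min}$); thus the crossing point lies on another facet $\bar\sigma_j$, and since it is strictly between $c_{\min}$ and $c$ it is strictly closer to $c$, so $\dist(c,\Pi_j)\le|x_{\alpha_{\min}}-c|<|c_{\min}-c|=\dist(c,\Pi_{\min})$, contradicting the minimality of $\hcXmin$. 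Your proof instead reduces to a two-dimensional plane via orthogonal projection along the offending ridge $L=H_m\cap H_p$, shows that distances to $H_m$, $H_p$ and the interiority of the center survive the projection, and closes with a strict planar trigonometric inequality forcing $\dist(0,H_p)<\dist(0,H_m)$. Both arguments are contradiction proofs hinging on the same geometric content — that $c_m$ escaping $\mathrm{relint}(F_m)$ must make some \emph{other} facet hyperplane at least as close — but the paper's segment argument is noticeably shorter and avoids the bookkeeping of verifying that the projection preserves feet of perpendiculars, collapses the right vertices, and sends interior to interior. Your projection-and-angle argument is self-contained, handles the boundary case $\angle 0q\bar v_p=90^\circ$ cleanly inside a single inequality (whereas the paper disposes of $c_{\min}\in\partial\sigma_{\min}$ by a separate uniqueness remark), and makes the role of the ridge $L$ very explicit; it is a valid alternative, just more elaborate than necessary.
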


\begin{rem}
In the following proof, as well as  other parts of this section, we  provide estimates for quantities related to points in the neighborhood of a critical face $\cX$.
Recall (Section \ref{sec:flat_torus}) that the neighborhood $B_{\rmax}(c(\cX))$ can be isometrically embedded as a concentric ball in $\R^d$. Thus, our analysis will implicitly treat  $\cX$ under this embedding, and $|\cdot|$ will denote the Euclidean norm in this neighborhood.
In addition, for any set of $m+1$ points $\cY$ in this neighborhood we define $\Pi(\cY)$ as the $m$-dimensional affine plane in $\R^d$ that contains $\cY$, under the embedding. We will also  use $d(x, \Pi)$ to denote the Euclidean distance between a point $x$ and a plane $\Pi$.
\end{rem}

\begin{proof}[Proof of Lemma \ref{lem:xmin_crit}]
Denote $c = c(\cX), \cmin = c(\hcXmin), \sigma = \sigma(\cX), \sigma_{\min} = \sigma(\hcXmin)$. 
Recall that $\sigma$ and $\sigma_{\min}$ are geometric open simplexes, and denote $\bar \sigma = \sigma \cup \partial \sigma$ - the closed simplex.
In addition, let $\Pi_{\min} = \Pi(\hcXmin)$ be the $(k-1)$-dimensional affine plane containing $\hcXmin$.
Suppose that $c \in \sigma$, and $\cmin \not\in \sigma_{\min}$.
Notice that if $\cmin \in \partial \sigma_{\min}$, then $\cmin$ belongs to the boundary of another $(k-1)$-face $\sigma'\subset \sigma$,  contradicting  $\sigma_{\min}$ being the unique nearest face. Therefore, we will  assume that $\cmin \not \in \bar\sigma_{\min}$.
In addition, since $\cmin \in \Pi_{\min}$, and $\bar\sigma \cap \Pi_{\min} = \bar\sigma_{\min}$, we have that $\cmin \not\in \bar\sigma$ as well. 
Now, consider the line segment connecting $c$ and $\cmin$, represented by $x_{\alpha} = \alpha c + (1-\alpha)\cmin$, $\alpha \in [0,1]$. 
Define
\[
	\alpha_{\min} = \min\set{\alpha \in [0,1] : x_\alpha \in \bar\sigma}.
\]
Note that  $\bar\sigma$ is compact, $x_0=\cmin \not\in \bar\sigma$, and $x_1= c\in \bar\sigma$. This implies that 
 that $\alpha_{\min} $ exists, is strictly positive, and $x_{\alpha_{\min}} \in \partial \sigma$. 
 Since the line segment is orthogonal to $\Pi_{\min}$, we have that $x_{\alpha_{\min}} \not \in \bar\sigma_{\min}$, and  $|x_{\alpha_{\min}}-c| \le |\cmin - c|$. In other words, we found a point on the boundary of $\sigma$ closer to $c$ than $\cmin$, which contradicts $\hcXmin$ being the nearest face. Since we reached a contradiction, we conclude that $\cmin\in \sigma_{\min}$, completing the proof. 
 
\end{proof}

The second condition in Lemma \ref{lem:crit_face}  requires that $B(\hcXmin)\cap \cP =  \emptyset$. Proving this part will take most of this section. To this end, we first take a short detour, and revisit the topological and geometric arguments discussed in Section \ref{sec:pn_faces}.

\subsection{Topological ingredient (revisited):\\
A sufficient condition for positive faces}

Lemma \ref{lem:crit_positive} provided a local sufficient condition for a critical face $\cX$ to be positive. The main idea was to show that $\partial\sigma(\cX)$ is contained in an annulus $A_{\eps_0}(\cX)$ 
\eqref{eq:A_eps}, that is covered for some $\eps_0 \le\phi(\cX)$. 
However, in some of the cases we will evaluate later, $\phi(\cX)$ will be too small to guarantee that $A_{\eps_0}(\cX)$ is covered (recall that $A_{\eps_0}(\cX) \subset B(\cX)$ and $B(\cX)\cap\cP =\emptyset$, so covering $A_{\eps_0}(\cX)$ is not ``easy"). Therefore, we wish to extend the ideas from Section \ref{sec:pn_faces} to find a sufficient condition that involves \emph{thinner} annuli that are easier to cover. It turns out, this can be done for those cases where $\hcXmin$ is \emph{not} critical.
In these cases, we
can replicate the arguments from Section \ref{sec:pn_faces} replacing $\partial \sigma(\cX)$ with a slightly larger $(k-1)$-cycle, which in turn is contained in a an annulus with smaller volume.


We start with a few definitions.
For $\cX\in \cC^k(\cP)$ recall that 
\[
\phi(\cX) = \min_{1\le i\le k+1} |c(\hat\theta_i(\cX))|
= \min_{1\le i\le k+1} \frac{d(c(\cX), \Pi(\hcX_i))}{\rho(\cX)}.
\] 
Next, let $\cX,\cY\in \cC^k(\cP)$ be such that $\cX\cap \cY = \hcXmin$. Let $\hat\cY_i$,\ $i=1,\ldots,k$,\   be all the $(k-1)$-faces of $\cY$ \emph{except} for $\hcXmin$, and
define
\eqb\label{eq:phi_X_Y}
\phi(\cX,\cY) := \min_{1\le i \le k} \frac{d( c(\hcXmin), \Pi(\hat\cY_i))}{\rho(\cX)},
\eqe
In other words, $\phi(\cX,\cY)$ measures the distance (normalized) between $c(\hcXmin)$ and the nearest face of $\cY$ (except for $\hcXmin$).
 Finally, we define
\[
\phmin(\cX, \cY) = \min(\phi(\cX,\cX), \phi(\cX,\cY)).
\]
Notice that $\phmin$ measures the distance between $c(\hcXmin)$ and the nearest face on the boundary of the chain $\sigma(\cX)+\sigma(\cY)$ (see Figure \ref{fig:vol_simp}(a)).
We will also need to consider a slightly modified annulus.
Let,
\eqb\label{eq:hrho}
\hrho(\cX) := \rho(\cX) + |c(\hcXmin)-c(\cX)| = \rho(\cX)(1+\phi(\cX)), 
\eqe
and define
\eqb\label{eq:hat_A}
\hat A_\eps(\cX) := \mathrm{cl}(B_{\hat \rho(\cX)}(c(\hcXmin)) \bs B_{\eps\rho(\cX)}(c(\hcXmin)).
\eqe
The annulus $\hat A_\eps(\cX)$ has the same inner radius as $A_\eps(\cX)$ (see \eqref{eq:A_eps}), while the outer radius is slightly larger. In addition, it is centered around $c(\hcXmin)$ rather than $c(\cX)$. See Figure \ref{fig:vol_simp}(a).

Next, recall that if $\cX$ is a critical $k$-face
then Lemma \ref{lem:xmin_crit}  shows that $c(\hcXmin)\in \sigma(\hcXmin)$.
Thus, from Lemma \ref{lem:crit_face}, if $\hat\cX_{\min}$ is \emph{not} critical, we must have that $B(\hcXmin)\cap \cP\ne \emptyset$. We want to prove a slightly stronger statement. Define
\eqb\label{eq:hat_B}
I(\cX) := B^\circ_{\rho(\cX)}\param{\bigcap_{x\in\hcXmin}B_{\rho(\cX)}(x)},
\eqe
where $B^{\circ}_r(S)$ is the union of \emph{open} $r$-balls around $S$. Notice that $B(\hcXmin)\subset I(\cX)$, and therefore showing that $I(\cX)\cap \cP=\emptyset$ 
implies that $\hcXmin$ is critical.
Moreover, it shows that $\hcXmin$ remains an isolated face throughout the interval $[\rho(\hcXmin),\rho(\cX))$.

The following lemma uses these new definitions to provide a modified sufficient condition for faces to be positive, in the spirit of Lemma \ref{lem:crit_positive}.

\begin{lem}\label{lem:crit_pos_2}
Let $\cX\subset \cP$ be a critical $k$-face, with $2\le k\le d-1$.
Suppose that there exists $x_* \in I(\cX)$ and set $\cX^* = \hcXmin \cup \set{x_*}$.
If there exist $\rho_0 < \rho(\cX)$ and $\eps_0 \le \phmin(\cX,\cX^*)$ such that 
\[ 
\hat A_{\eps_0}(\cX) \subset B_{\rho_0}(\cP),
\]
then $\cX$ is a positive critical face.
\end{lem}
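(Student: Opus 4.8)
The plan is to follow the proof of Lemma \ref{lem:crit_positive} almost verbatim, but to replace the boundary cycle $\partial\sigma(\cX)$ by the larger $(k-1)$-cycle $\partial\tau$, where $\tau := \sigma(\cX)+\sigma(\cX^*)$ is the $k$-chain obtained by gluing the geometric simplices $\sigma(\cX)$ and $\sigma(\cX^*)$ along their common facet $\sigma(\hcXmin)$. Choosing orientations so that $\sigma(\cX)$ and $\sigma(\cX^*)$ induce opposite orientations on $\sigma(\hcXmin)$, that facet cancels and one gets the purely algebraic identity $\partial\sigma(\cX) = \partial\tau - \partial\sigma(\cX^*)$. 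The crucial observation is that although $\sigma(\cX)$ itself does not belong to the relevant sub-critical complex $\cC_{\rho_2}$, every $(k-1)$-simplex occurring in $\partial\tau$ or in $\partial\sigma(\cX^*)$, and the $k$-simplex $\sigma(\cX^*)$ as well, \emph{does}. So if I can show $\partial\tau\in\bB_{k-1}(\cC_{\rho_2})$, then the identity gives $\partial\sigma(\cX)\in\bB_{k-1}(\cC_{\rho_2})$, and the rest is identical to Lemma \ref{lem:crit_positive}: if $\gamma\in\bC_k(\cC_{\rho_2})$ satisfies $\partial\gamma=\partial\sigma(\cX)$, then $\gamma-\sigma(\cX)$ is a $k$-cycle in $\cC_{\rho(\cX)}$ that cannot be a boundary, since condition (2) of Lemma \ref{lem:crit_face} forbids $\sigma(\cX)$ from having any coface in $\cC_{\rho(\cX)}$; hence adding $\cX$ creates a new $k$-cycle and $\cX$ is positive.

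First I would pin down $\rho_2$. Since $\cX$ is critical, $\hcrit(\cX)=1$, so by Remark \ref{rem:half_sphere}(1) all facets of $\sigma(\cX)$ enter the filtration strictly before $\rho(\cX)$; and since $x_*\in I(\cX)$ — which, by the definition of $I(\cX)$ through unions of \emph{open} $\rho(\cX)$-balls, says exactly that $\hcXmin\cup\set{x_*}$ spans a simplex appearing before radius $\rho(\cX)$ — the simplex $\cX^*=\hcXmin\cup\set{x_*}$ and all of its facets also enter strictly before $\rho(\cX)$. Let $\rho_1<\rho(\cX)$ be a common radius by which $\sigma(\cX^*)$, its facets, and the facets of $\sigma(\cX)$ have all appeared, and put $\rho_2:=\max(\rho_0,\rho_1)<\rho(\cX)$. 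At radius $\rho_2$ we then have simultaneously $\hat A_{\eps_0}(\cX)\subset B_{\rho_0}(\cP)\subset B_{\rho_2}(\cP)$, and $\partial\tau,\ \partial\sigma(\cX^*),\ \sigma(\cX^*)\in\bC_{\bullet}(\cC_{\rho_2})$.

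The geometric heart is to verify that the geometric realization of the cycle $\partial\tau$ — which is the union of the facets of $\sigma(\cX)$ and of $\sigma(\cX^*)$ other than $\hcXmin$ — lies inside $\hat A_{\eps_0}(\cX)=\mathrm{cl}(B_{\hrho(\cX)}(c(\hcXmin))\setminus B_{\eps_0\rho(\cX)}(c(\hcXmin)))$. For the inner bound, $\phmin(\cX,\cX^*)$ is by definition the minimal normalized distance from $c(\hcXmin)$ to the affine planes spanning these facets; since the distance from $c(\hcXmin)$ to such a plane is at most its distance to any point of the facet lying in that plane, every point of $\partial\tau$ is at distance at least $\phmin(\cX,\cX^*)\,\rho(\cX)\ge\eps_0\rho(\cX)$ from $c(\hcXmin)$. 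For the outer bound I would show $\sigma(\cX)\cup\sigma(\cX^*)\subset B_{\hrho(\cX)}(c(\hcXmin))$. For $\sigma(\cX)$ this is the triangle inequality: each vertex is at distance $\rho(\cX)$ from $c(\cX)$ and $|c(\cX)-c(\hcXmin)|=\phi(\cX)\rho(\cX)$ (because $\hcXmin$ is, by definition \eqref{eq:xmin}, the facet whose plane is closest to $c(\cX)$, so $\phi(\cX)$ is attained there), giving distance $\le\rho(\cX)(1+\phi(\cX))=\hrho(\cX)$ from $c(\hcXmin)$. For $\sigma(\cX^*)$ it suffices to bound $x_*$, and I would prove $I(\cX)\subset B_{\hrho(\cX)}(c(\hcXmin))$. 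This reduces to the elementary fact that $\bigcap_{x\in\hcXmin}B_{\rho(\cX)}(x)\subset B_{\phi(\cX)\rho(\cX)}(c(\hcXmin))$: using $c(\hcXmin)\in\sigma(\hcXmin)=\conv(\hcXmin)$ (Lemma \ref{lem:xmin_crit}) one gets $\max_{x\in\hcXmin}|z-x|^2\ge|z-c(\hcXmin)|^2+\rho(\hcXmin)^2$ for $z\in\Pi(\hcXmin)$, Pythagoras in the normal direction extends it to all $z$, and $\rho(\hcXmin)^2+\phi(\cX)^2\rho(\cX)^2=\rho(\cX)^2$ converts the bound into $\phi(\cX)\rho(\cX)$; adding the outer open $\rho(\cX)$-ball then yields $\hrho(\cX)$.

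Once $\partial\tau\subset\hat A_{\eps_0}(\cX)$ is in hand, the topology is immediate: $\hat A_{\eps_0}(\cX)$ contains the nonempty set $\partial\tau$, hence is a genuine $d$-dimensional annulus, homotopy equivalent to $\S^{d-1}$, so $H_{k-1}(\hat A_{\eps_0}(\cX))\cong H_{k-1}(\S^{d-1})=0$ for $2\le k\le d-1$; since $\hat A_{\eps_0}(\cX)\subset B_{\rho_2}(\cP)$, the singular $(k-1)$-cycle $\partial\tau$ bounds in $\hat A_{\eps_0}(\cX)$, hence in $B_{\rho_2}(\cP)$, and transporting through the Nerve-Lemma equivalence $\cC_{\rho_2}\simeq B_{\rho_2}(\cP)$ (which preserves cycles and boundaries, exactly as in Lemma \ref{lem:crit_positive}) gives $\partial\tau\in\bB_{k-1}(\cC_{\rho_2})$, whence $\partial\sigma(\cX)=\partial\tau-\partial\sigma(\cX^*)\in\bB_{k-1}(\cC_{\rho_2})$. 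The step I expect to be the main obstacle is the outer-radius estimate in the geometric claim — proving $I(\cX)\subset B_{\hrho(\cX)}(c(\hcXmin))$ and checking carefully that the facet-\emph{plane} distances defining $\phmin(\cX,\cX^*)$ genuinely lower-bound the distances from $c(\hcXmin)$ to all of $\partial\tau$; the algebraic bookkeeping (the sign choice making the shared facet cancel in $\tau$, and the fact that $\partial\tau$ lies in $\cC_{\rho_2}$ even though $\sigma(\cX)$ does not) is the secondary point requiring care.
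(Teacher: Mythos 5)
Your proposal is correct and follows essentially the same route as the paper's proof: the paper also forms the chain $\sigma(\cX)+\sigma(\cX^*)$, observes that $\partial\sigma(\cX)$ and $\partial\bigl(\sigma(\cX)+\sigma(\cX^*)\bigr)$ are homologous in $\cC_{\rho_2}$, shows the latter cycle lies in the annulus $\hat A_{\phmin}$, and then invokes contractibility of singular $(k-1)$-cycles in the annulus together with the Nerve Lemma exactly as in Lemma~\ref{lem:crit_positive}. The only point worth noting is that your write-up supplies two details the paper states without proof -- the inclusion $I(\cX)\subset B_{\hrho(\cX)}(c(\hcXmin))$ via the convexity/Pythagoras estimate (which uses $c(\hcXmin)\in\overline{\sigma(\hcXmin)}$, from Lemma~\ref{lem:xmin_crit}), and the strict inequality $\tau(\cX^*)<\rho(\cX)$ coming from $x_*\in I(\cX)$, which does deserve a one-line perturbation argument rather than being read off "exactly" from the definition (move the witness point in $\bigcap_{x\in\hcXmin}B_{\rho(\cX)}(x)$ slightly towards $c(\hcXmin)$, using $\rho(\hcXmin)<\rho(\cX)$). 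Neither is a gap; both are strengthenings of the exposition.
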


\begin{proof}
Fix $\cX,\cX^*$, and denote 
$c = c(\cX), \rho = \rho(\cX), \sigma = \sigma(\cX), \sigma^* = \sigma(\cX^*), \hat A_\eps = \hat  A_\eps (\cX)$, and $\phmin = \phmin(\cX,\cX^*)$. In addition, denote by $\tau = \tau(\cX^*)$ the radius at which $\cX^*$ joins the \cech filtration (notice that $\tau$ can be smaller than $\rho(\cX^*)$, see Remark \ref{rem:half_sphere}). Since we assume $x^*\in I(\cX)$, we have that $\tau < \rho$. 

Recall that in Lemma \ref{lem:crit_face} the goal was to show that there exists $\rho_0<\rho$ such that $\partial \sigma \in \bB_{k-1}(\cC_{\rho_0})$ (i.e.~it is a $(k-1)$-boundary) and that proved that $\sigma$ was positive.
We will use a similar idea here, but for $\partial(\sigma + \sigma^*)$ instead. Figure \ref{fig:vol_simp}(a) provides a sketch for this setup.

As in the proof of Lemma \ref{lem:crit_positive}, there exists $\rho_1 < \rho$ such that $\partial \sigma$
is included in $\cC_{\rho_1}$. Taking $\rho_2 = \max(\rho_1,\tau)$, then $\rho_2<\rho$, and we know that both $\partial\sigma$ and $\sigma^*$ are in $\cC_{\rho_2}$ (while $\sigma$  is not). 
Notice that as $(k-1)$-chains in $\cC_{\rho_2}$, $\partial \sigma$ and $\partial(\sigma+\sigma^*)$ are homologous. Therefore, $\partial\sigma\in \bB_{k-1}(\cC_{\rho_2})$ if and only if $\partial(\sigma+\sigma^*)\in \bB_{k-1}(\cC_{\rho_2})$.

Finally, take $\rho_3 = \max(\rho_2,\rho_0)$, then we have that $\hat A_{\phmin}\subset B_{\rho_3}(\cP)$ and also both $\partial\sigma$ and $\sigma^*$ are in $\cC_{\rho_3}$. Notice that $\sigma,\sigma^* \subset B_{\hat \rho(\cX)}(\cmin)$, and by definition of $\phmin$ all the $(k-1)$-faces of both $\sigma$ and $\sigma^*$ (except for $\sigma(\hcXmin)$) are at distance of at least $\rho\phmin$ away from $\cmin$. Therefore, we conclude that 
the annulus $\hat A_{\phmin}$ contains  $\partial(\sigma+\sigma^*)$. As in the proof of Lemma \ref{lem:crit_positive} the fact that $\hat A_{\phmin}$ is covered by $B_{\rho_0}$ is thus enough to argue that $\partial(\sigma+\sigma^*) \in \bB_{k-1}(\cC_{\rho_3})$, i.e.~it is a boundary. As stated above this implies that $\partial \sigma \in \bB_{k-1}(\cC_{\rho_3})$, and therefore $\sigma$ is positive, concluding the proof.

\end{proof}

\begin{figure}[ht]
    \centering
\begin{subfigure}{0.45\textwidth}
\centering
\includegraphics[width=0.7\textwidth]{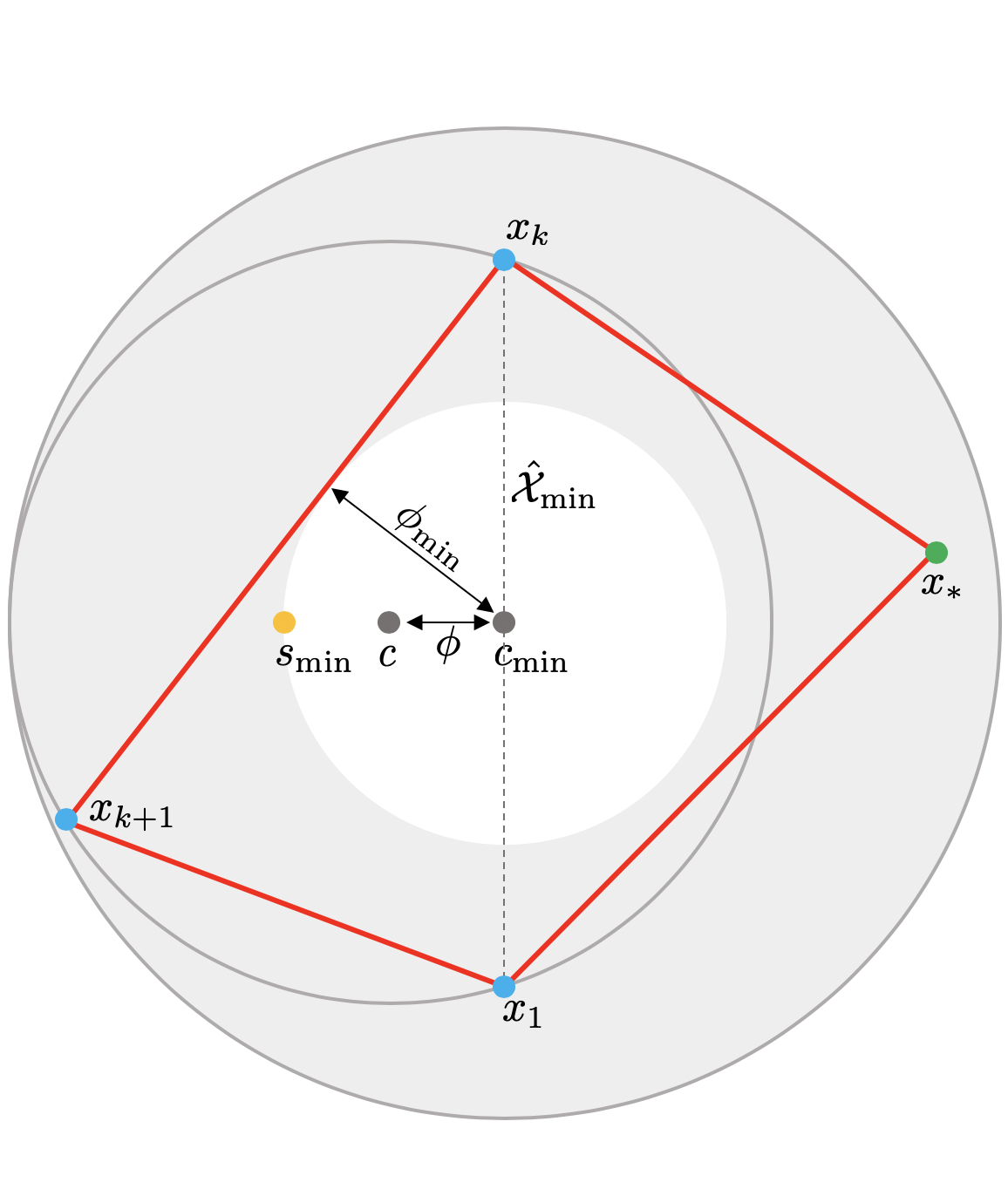}
    \caption{}
 \end{subfigure}
\begin{subfigure}{0.45\textwidth}
\centering
\includegraphics[width=0.7\textwidth]{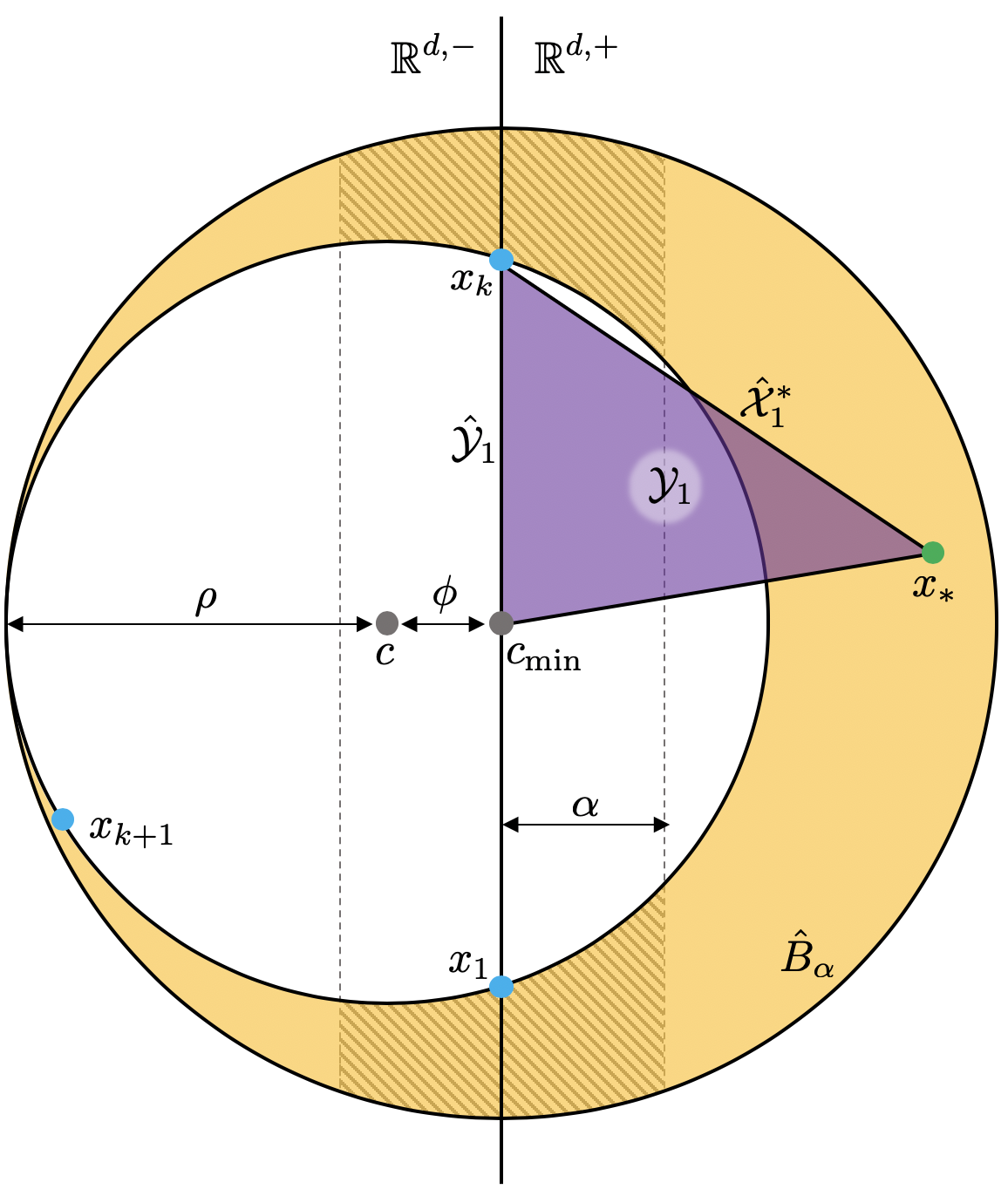}
    \caption{}
 \end{subfigure}
\caption{  \label{fig:vol_simp}
(a) A sketch for the new condition for positive faces. Here $\cX = \set{x_1,\dots,x_{k+1}}$, and $\cX^* = \hcX_{k+1}\cup \set{x_*}$. The value of $\phi$ is the distance between $c$ and the nearest face -- $\hcXmin = \hcX_{k+1}$ (the dashed edge), while $\phmin$ is the distance between $\cmin$ and the nearest face among $\hcX_1,\ldots, \hcX_k, \hcX^*_1,\ldots, \hcX^*_k$ (the red edges). The shaded area is the annulus $\hat A_{\phmin}$. Notice that $\partial \sigma(\cX) \not \subset \hat A_{\phmin}$, while  $\partial\sigma(\cX) + \partial \sigma(\cX^*)$ (the  cycle in red) is inside the annulus. If the  annulus is covered by the balls of radius $\rho_0 < \rho$, then this cycle must be trivial.
(b) A sketch for the estimates used in the proof of Lemma \ref{lem:dist_boundary}. The point configuration is the same as on the left. The half plane on the right (left) of $\hcXmin$ is denoted $\R^{d,+}$ ($\R^{d,-}$). The yellow shaded area is $\hat B$, where the no-pattern part is $\hat B_{\alpha}$. The simplex $\cY_1$ consists of the $(k-2)$-face $\hcX_{1,2} = \set{x_2,\ldots, x_k}$ together with the two vertices $\cmin$ and $x_*$.
}

\end{figure}

\subsection{Geometric ingredient (revisited):\\
The distance to the boundary}

To prove Proposition \ref{prop:pt_neg} we used the estimates on the distance to the nearest face on the boundary, discussed in Section \ref{sec:dist_bound}.
The proof of Lemma \ref{lem:pn_pairs}  requires much  subtler arguments that rely on  $\phmin$ as well, presented in this section. We will present all the lemmas together, before turning to the proofs, so that the reader may skip over the details.

Let $\bth = (\theta_1,\ldots,\theta_k) \in (\S^{k-1})^k$, and define $\hat\bth := (\theta_1,\ldots,\theta_{k-1}) \in (\S^{k-1})^{k-1}$. 
Recall the definition of $S_k(\alpha)\subset (\S^{k-1})^k$ in \eqref{eq:S_k}, and define
\[
S_k(\alpha,\beta) := \set{\bth \in (\S^{k-1})^k : \abs{c(\bth)}\le \alpha,\ |c(\hat\bth)-c(\bth)|\le\beta}  \subset S_k(\alpha).
\]

\begin{lem}\label{lem:vol_a_b}
For $k\ge 2$, define
\[
V_k(\alpha,\beta) := \vol(S_k(\alpha,\beta))
= \int_{(\S^{k-1})^k} \indf{|c(\bth)| \le \alpha}\indf{|c(\hat\bth)-c(\bth)|\le \beta}d\bth.
\]
If $\alpha < 1$ and $\beta/\sqrt{1-\alpha^2}< 1$, then there exists  $\const>0$ such
\[
V_k(\alpha,\beta) \le \const \frac{\alpha\beta}{\sqrt{1-\alpha^2}}.
\]
\end{lem}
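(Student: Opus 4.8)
The plan is to run the Blaschke--Petkantschin computation from the proof of Lemma \ref{lem:V_k_lip}, but in a nested way, so that after integrating out everything except the sub-configuration $\hat\bth$ the remaining integral is exactly a constant multiple of the function $V_{k-1}$ on the $(k-1)$-sphere, which Lemma \ref{lem:V_k_lip} already controls. First I would dispose of $k=2$: there $\hat\bth=(\theta_1)$, so $c(\hat\bth)=\theta_1$ and $|c(\hat\bth)-c(\bth)|=|\theta_1-c(\bth)|=\rho(\bth)$, while $|c(\bth)|^2+\rho(\bth)^2=1$ because $\theta_1\in\S^{1}$; the two constraints then force $\sqrt{1-\alpha^2}\le\beta$, contradicting $\beta/\sqrt{1-\alpha^2}<1$, so $S_2(\alpha,\beta)=\emptyset$ and $V_2(\alpha,\beta)=0$. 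So assume $k\ge 3$.

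For $k\ge3$ I would apply the spherical BP formula \eqref{eq:BP_sphere} to the $k$ points $\bth$, writing $\rho=\rho(\bth)$ for the circumradius (so $|c(\bth)|=\sqrt{1-\rho^2}$) and $\bvphi=(\varphi_1,\dots,\varphi_k)\in(\S^{k-2})^{k}$ for the normalized positions on the circumsphere, i.e.\ $\theta_i=c(\bth)+\rho\,\varphi_i$. The geometric fact I need is that $c(\hat\bth)$, the circumcenter of $\hat\bth=(\theta_1,\dots,\theta_{k-1})$, is the orthogonal projection of $c(\bth)$ onto the $(k-2)$-plane $\mathrm{aff}(\hat\bth)$ inside $\mathrm{aff}(\bth)$ (it is equidistant from $\theta_1,\dots,\theta_{k-1}$, hence equal to their circumcenter, which is that foot of perpendicular). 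Consequently $|c(\hat\bth)-c(\bth)|=\rho\,|c(\widehat\bvphi)|$, where $\widehat\bvphi=(\varphi_1,\dots,\varphi_{k-1})$ and $|c(\widehat\bvphi)|$ is the distance from the centre of $\S^{k-2}$ to $\mathrm{aff}(\widehat\bvphi)$. Under the change of variables the constraint $|c(\bth)|\le\alpha$ becomes $\rho\ge\sqrt{1-\alpha^2}$ and $|c(\hat\bth)-c(\bth)|\le\beta$ becomes $|c(\widehat\bvphi)|\le\beta/\rho$, and on the domain of integration $\beta/\rho\le\beta/\sqrt{1-\alpha^2}<1$ by hypothesis.

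Now I would bound $\vsimp(\bvphi)$ by a constant on the compact product of spheres and integrate out $\varphi_k$, which enters neither indicator; what remains is $\int_{(\S^{k-2})^{k-1}}\indf{|c(\widehat\bvphi)|\le\beta/\rho}\,d\widehat\bvphi=V_{k-1}(\beta/\rho)$ in the notation of Lemma \ref{lem:V_k_lip} (with $k$ replaced by $k-1$). Since $V_{k-1}(0)=0$ and $V_{k-1}$ is Lipschitz near the origin and bounded on $[0,1]$ (it is a volume, and the circumcenter of points on the unit sphere has norm $<1$), one gets $V_{k-1}(\gamma)\le C_k\gamma$ for all $\gamma\in[0,1]$, hence $V_{k-1}(\beta/\rho)\le C_k\beta/\rho$. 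Feeding this back into the BP formula leaves
\[
V_k(\alpha,\beta)\le C\beta\int_{\sqrt{1-\alpha^2}}^{1}\rho^{k^2-2k-1}(1-\rho^2)^{-1/2}\,d\rho,
\]
and the substitution $\rho=\sqrt{1-t^2}$ (exactly as in Lemma \ref{lem:V_k_lip}) turns the right-hand side into $C\beta\int_{0}^{\alpha}(1-t^2)^{(k^2-2k-2)/2}\,dt$. For $k\ge3$ the exponent $k^2-2k-2\ge1$ is positive, so the integrand is $\le1$ and the integral is at most $\alpha\le\alpha/\sqrt{1-\alpha^2}$; this gives the claimed bound $V_k(\alpha,\beta)\le C\,\alpha\beta/\sqrt{1-\alpha^2}$ (in fact even the stronger bound $C\alpha\beta$).

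The step I expect to require the most care is the geometric identity $|c(\hat\bth)-c(\bth)|=\rho\,|c(\widehat\bvphi)|$ together with the correct bookkeeping of the BP Jacobian when the spherical BP formula is applied to $\bth$ and then restricted to the sub-tuple $\hat\bth$: one must verify that after a single application of \eqref{eq:BP_sphere} the leftover configuration $\widehat\bvphi$ carries the plain product surface measure on $(\S^{k-2})^{k-1}$, so that the inner integral is genuinely $V_{k-1}(\beta/\rho)$ and Lemma \ref{lem:V_k_lip} applies verbatim. Everything downstream is elementary calculus with the incomplete Beta integral, as in Lemma \ref{lem:V_k_lip}.
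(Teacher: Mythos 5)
Your proof is correct and follows the same strategy as the paper: a change of variables via the spherical Blaschke--Petkantschin formula, the elementary similarity identity $|c(\hat\bth)-c(\bth)|=\rho(\bth)\,|c(\widehat\bvphi)|$ (which is exactly the relation $|c(\hat\bth)-c(\bth)|^2+\rho^2(\hat\bth)=\rho^2(\bth)$ used in the paper), and a linear bound $V_{k-1}(\gamma)\lesssim\gamma$ coming from Lemma~\ref{lem:V_k_lip}. The only difference is bookkeeping: the paper bounds $\beta/\rho\le\beta/\sqrt{1-\alpha^2}$ first so the inner and outer integrals factor cleanly into $V_k(\alpha)V_{k-1}(\beta/\sqrt{1-\alpha^2})$, whereas you keep $\beta/\rho$, pull out the extra $\rho^{-1}$ into the radial integral, and (for $k=2$) observe directly that $S_2(\alpha,\beta)=\emptyset$, giving the marginally sharper conclusion $V_k(\alpha,\beta)\le C\alpha\beta$.
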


For the next lemma, let $\bth \in (\S^{k-1})^{k+1}$.
Define $\hbth_i := \bth \bs \set{\theta_i}$ and $\hbth_{i,j} := \bth\bs\set{\theta_i,\theta_j}$,
and
\[
	\Scrit(\alpha,\beta,\gamma) := \set{\bth  : \hcrit(\bth) = 1,\ |c(\hbth_1)| \le \alpha,\ |c(\hbth_2)| \le \beta,\ |c(\hbth_{1,2})-c(\hbth_1)| \ge \gamma}\subset  (\S^{k-1})^{k+1}.
\]
The choice $i=1,j=2$ is arbitrary, and the following result holds for any other choice of  $i\ne j$.
\begin{lem}\label{lem:v_crit_a_b_c}
For $k\ge 2$ define,
\eqb\label{eq:v_crit}
\splitb
&\Vcrit(\alpha,\beta,\gamma) := \vol(\Scrit(\alpha,\beta,\gamma))\\
&\quad= \int_{(\S^{k-1})^{k+1}}  \hcrit(\bth) \indf{|c(\hbth_1)| \le \alpha,\ |c(\hbth_2)| \le \beta,\  |c(\hbth_{1,2})-c(\hbth_1)| \ge \gamma}d\bth.
\splite
\eqe
Suppose that $\beta = \beta(n)\to 0$ and $\gamma = \gamma(n) \to 0$ are such that $\beta =o(\gamma)$.
Then there exists $\const > 0$ such that for all $n$
\[
\Vcrit(\alpha,\beta,\gamma) \le \const \frac{\alpha\beta}{\gamma}.
\]
\end{lem}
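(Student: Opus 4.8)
\textbf{Overall strategy.} The plan is to integrate out the variables one group at a time, using the spherical Blaschke--Petkantschin formula of Lemma \ref{lem:bp_sphere} together with Lemma \ref{lem:vol_a_b} as the core estimate. The key observation is that $\Vcrit(\alpha,\beta,\gamma)$ asks for \emph{two} of the $(k-1)$-subfaces (indexed by $1$ and $2$) to have centers close to the origin, of distances $\le\alpha$ and $\le\beta$ respectively, while simultaneously demanding the center of the common $(k-2)$-subface $\hbth_{1,2}$ be \emph{far} (at distance $\ge\gamma$) from $c(\hbth_1)$. Since $\beta=o(\gamma)$, this last condition is ``in tension'' with $|c(\hbth_2)|\le\beta$: if $c(\hbth_2)$ is very close to the origin, then $c(\hbth_{1,2})$ — which lies on the plane through $\hbth_{1,2}\subset\hbth_2$ — cannot be too far from $c(\hbth_1)$ unless the geometry is degenerate, and that degeneracy costs volume. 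Capturing this quantitatively is what produces the gain of a factor $\beta/\gamma$ on top of the $V_k(\alpha,\cdot)\lesssim\alpha$ bound.

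\textbf{Key steps.} First, I would fix $\theta_1$ (the vertex that is removed to form $\hbth_1$) and integrate over it trivially, leaving $\hbth_1=(\theta_2,\ldots,\theta_{k+1})\in(\S^{k-1})^{k}$ with the constraints $|c(\hbth_1)|\le\alpha$, $|c(\hbth_2)|\le\beta$, and $|c(\hbth_{1,2})-c(\hbth_1)|\ge\gamma$, where now $\hbth_2=\hbth_1\setminus\{\theta_2\}$ and $\hbth_{1,2}=\hbth_1\setminus\{\theta_2\}$ is the $(k-2)$-face of $\hbth_1$ omitting $\theta_2$. (The $\hcrit$ factor is bounded by $1$ and dropped.) Second, apply Lemma \ref{lem:vol_a_b} in the following form: inside $\hbth_1$, writing $\hat{(\hbth_1)}$ for the subface $\hbth_{1,2}$, we have the set $S_k(\alpha,\beta')$-type structure, where $|c(\hbth_1)|\le\alpha$ plays the role of the $\alpha$-constraint. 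But we need the complementary event $|c(\hbth_{1,2})-c(\hbth_1)|\ge\gamma$ rather than $\le\beta$, so I would instead bound directly: condition on $c(\hbth_1)$ (using the BP formula of Lemma \ref{lem:bp_sphere} to change variables to $(\rho,\bvphi)$ with $\rho^2+|c(\hbth_1)|^2=1$), and on the residual sphere $\S^{k-2}$ estimate the probability that the $(k-2)$-subface's center is both within $\beta$ of the origin (forcing $|c(\hbth_2)|\le\beta$) \emph{and} at distance $\ge\gamma$ from $c(\hbth_1)$. The first of these, by Lemma \ref{lem:vol_a_b} applied one dimension down, has conditional volume $\lesssim \beta/\sqrt{1-\beta^2}\lesssim\beta$; but on that small set the distance $|c(\hbth_{1,2})-c(\hbth_1)|$ can only reach $\gamma$ in a thin sliver, of relative measure $\lesssim\beta/\gamma$ (a ratio-of-radii / solid-angle estimate, since the two centers are separated by $\ge\gamma$ while one of them is pinned within $\beta$ of a fixed point). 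Multiplying the $\alpha$ from the outer integral $\int_0^\alpha(1-t^2)^{(\cdots)/2}dt\lesssim\alpha$ by $\beta/\gamma$ gives the claimed $\const\,\alpha\beta/\gamma$.

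\textbf{Main obstacle.} The delicate point is the last estimate: quantifying why, when $c(\hbth_2)$ is forced within $\beta$ of the origin, the center $c(\hbth_{1,2})$ of the further-nested $(k-2)$-face is essentially pinned near $c(\hbth_2)$ (hence near the origin), so that demanding $|c(\hbth_{1,2})-c(\hbth_1)|\ge\gamma$ with $\gamma\gg\beta$ restricts the configuration to a set of relative volume $O(\beta/\gamma)$. Concretely I would show $|c(\hbth_{1,2})-c(\hbth_2)|$ is controlled by $|c(\hbth_2)|$ up to the angle that $\Pi(\hbth_{1,2})$ makes inside $\Pi(\hbth_2)$, and that the ``bad'' range of that angle — where the inequality with $\gamma$ can hold while $|c(\hbth_2)|\le\beta$ — scales like $\beta/\gamma$; this is where the hypothesis $\beta=o(\gamma)$ is essential to keep the ratio below $1$. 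Everything else (the BP changes of variable, bounding $\vsimp$ and $\hcrit$ by constants on the compact sphere, the elementary integrals in $t$) is routine and parallels the proofs of Lemmas \ref{lem:V_k_lip} and \ref{lem:vol_a_b}.
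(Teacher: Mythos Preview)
Your proposal has a fundamental indexing error that breaks the argument. You write ``fix $\theta_1$ \ldots and integrate over it trivially, leaving $\hbth_1=(\theta_2,\ldots,\theta_{k+1})$ with the constraints $|c(\hbth_1)|\le\alpha$, $|c(\hbth_2)|\le\beta$, \ldots, where now $\hbth_2=\hbth_1\setminus\{\theta_2\}$.'' But $\hbth_2 = \bth\setminus\{\theta_2\} = (\theta_1,\theta_3,\ldots,\theta_{k+1})$ \emph{contains} $\theta_1$; it is not a subface of $\hbth_1$. If you integrate out $\theta_1$ trivially you lose the constraint $|c(\hbth_2)|\le\beta$ entirely, and what remains --- namely $|c(\hbth_1)|\le\alpha$ and $|c(\hbth_{1,2})-c(\hbth_1)|\ge\gamma$ --- has volume of order $\alpha$ (the $\ge\gamma$ condition is generic), not $\alpha\beta/\gamma$. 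Relatedly, your intuition that ``$c(\hbth_{1,2})$ is essentially pinned near $c(\hbth_2)$'' is false: $|c(\hbth_{1,2})-c(\hbth_2)|$ can be anything up to $\rho(\hbth_2)\approx 1$.

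The paper proceeds in exactly the opposite order: it fixes $\hbth_1$ (hence $c_1,c_{1,2}$ and the planes $\Pi_1,\Pi_{1,2}$) and asks for the admissible region of $\theta_1$. Here the factor $\hcrit(\bth)$, which you discard, is essential: it forces $\theta_1$ to lie on the opposite side of the hyperplane $\Pi_0$ through $\hbth_{1,2}$ and the origin from $\theta_2$ (otherwise all of $\bth$ sits in a hemisphere). The remaining constraint $|c(\hbth_2)|\le\beta$ confines $\theta_1$ to the $(k-2)$-sphere $S_2$ of radius $\sqrt{1-|c_2|^2}$ centered at $c_2$; an explicit 2-dimensional computation in the plane $\Pi_{1,2}^{\perp}$ (using $|c_2|\le\beta$ and $|c_{1,2}-c_1|\ge\gamma$) shows that the height of $\theta_1$ above $\Pi_0$ is at most $\beta/\gamma + o(\beta/\gamma)$. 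Thus $\int_{\S^{k-1}}\hcrit(\bth)\indf{|c(\hbth_2)|\le\beta}\,d\theta_1 \le \const\,\beta/\gamma$, and integrating over $\hbth_1$ with $|c(\hbth_1)|\le\alpha$ gives the factor $V_k(\alpha)\le\const\,\alpha$. The $\beta/\gamma$ gain comes from the interplay of $\hcrit$ with the $\beta$-constraint on the face \emph{containing} $\theta_1$, not from any BP decomposition of $\hbth_1$.
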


Recall the definition $\hrho(\cX)$ \eqref{eq:hrho}, and define
\[
	\hat B(\cX) = B_{\hrho(\cX)}(c(\hcXmin)) \bs B(\cX),
\]
Note that $I(\cX) \subset B_{\hrho(\cX)}(c(\hcXmin))$. 
Recall that if $\cX$ is critical, then $B(\cX) \cap \cP = \ems$. We will aim to show that
 $\hat B(\cX)\cap \cP = \ems$ and so that $I(\cX) \cap \cP= \ems$,  implying that $\hcXmin$ is critical as well.
Let $\hat\Pi(\cX)$ be the $(d-1)$-dimensional affine plane  containing $c(\hcXmin)$ and orthogonal to the line through $c(\hcX)$ and $c(\hcXmin)$.
Define,
\eqb\label{eq:B_a}
    \hat B_\alpha(\cX) := \set{x\in \hat B(\cX) : d(x,\hat\Pi(\cX)) \ge \alpha\rho(\cX)},
\eqe
so that $\hat B_\alpha(\cX)$ contains the points in $\hat B(\cX)$ that are far from $\hat\Pi(\cX)$, and in particular are far from $\Pi(\hcXmin) \subset \hat\Pi(\cX)$. See Figure \ref{fig:vol_simp}(b) for an example.
Finally, recall the definition of $\phi(\cX,\cY)$ \eqref{eq:phi_X_Y}, as
 the normalized distance between $c(\hcXmin)$ and the nearest face of $\cY$ that is not $\hcXmin$ (assuming that $\cX\cap\cY = \hcXmin$). The following then holds.

\begin{lem}\label{lem:dist_boundary}
Let $\cX$ be a critical $k$-face, and let $x_* \in \hat B_\alpha(\cX)$. Denote $\cX^* = \hcXmin\cup\set{x_*}$, and
let $\hat\cX_i^*$\ ,\ $i=1\ldots,k$, be all the $(k-1)$-faces of $\cX^*$ that contain $x_*$. 

Suppose that $\phi(\cX, \cX) \ge \beta$, for some $\beta>0$. Then there exists $\const>0$ such that for all $n$
\[
    \phi(\cX, \cX^*) \ge 
    \const\alpha\beta.
\]
\end{lem}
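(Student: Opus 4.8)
The plan is to rewrite $\phi(\cX,\cX^*)$ as a minimum of point-to-plane distances and bound each one by an elementary volume identity. Write $c=c(\cX)$, $\cmin=c(\hcXmin)$, $\rho=\rho(\cX)$, working in a Euclidean neighbourhood of $\cX$ as elsewhere in this section. Label the vertices of $\cX$ so that $\hcXmin=\set{x_1,\dots,x_k}$ (so $x_{k+1}$ is the dropped vertex). For $1\le i\le k$ put $\hcX_i:=\cX\bs\set{x_i}$ — these are exactly the $(k-1)$-faces of $\cX$ other than $\hcXmin$ — and $W_i:=\hcXmin\bs\set{x_i}=\hcX_i\bs\set{x_{k+1}}$, the $(k-2)$-face shared by $\hcXmin$ and $\hcX_i$. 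Then the $(k-1)$-faces of $\cX^*=\hcXmin\cup\set{x_*}$ that contain $x_*$ are precisely $\hat\cX_i^*=W_i\cup\set{x_*}$, $1\le i\le k$, hence $\phi(\cX,\cX^*)=\min_{1\le i\le k}d(\cmin,\Pi(\hat\cX_i^*))/\rho$, and it suffices to prove $d(\cmin,\Pi(\hat\cX_i^*))\ge\tfrac13\alpha\beta\rho$ for each $i$.

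First I would record the relevant facts about circumcenters. Since $c$ is equidistant from the vertices of $\hcXmin$, it lies on the plane $E(\hcXmin)$ (the points equidistant from those vertices), which passes through $\cmin$ and is orthogonal to $\Pi(\hcXmin)$; therefore $\cmin$ is the orthogonal projection of $c$ onto $\Pi(\hcXmin)$, $|c-\cmin|=d(c,\Pi(\hcXmin))=\phi(\cX)\rho$ (since $\hcXmin$ is the nearest face of $\cX$), and Pythagoras gives $\rho(\hcXmin)^2=\rho^2-|c-\cmin|^2\le\rho^2$. Because $\hat\Pi(\cX)$ is the whole $(d-1)$-plane through $\cmin$ orthogonal to $c-\cmin$, this yields $\Pi(\hcXmin)\subseteq\hat\Pi(\cX)$. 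From here I would extract the three estimates I need: (i) $d(x_*,\Pi(\hcXmin))\ge d(x_*,\hat\Pi(\cX))\ge\alpha\rho$, using $x_*\in\hat B_\alpha(\cX)$; (ii) $\Pi(W_i)\subseteq\Pi(\hcX_i)$, so $d(\cmin,\Pi(W_i))\ge d(\cmin,\Pi(\hcX_i))\ge\phi(\cX,\cX)\,\rho\ge\beta\rho$; and (iii) $d(\cmin,\Pi(W_i))\le\rho(\hcXmin)\le\rho$ (every vertex of $W_i$ is at distance $\rho(\hcXmin)$ from $\cmin$), hence by the triangle inequality $d(x_*,\Pi(W_i))\le|x_*-\cmin|+d(\cmin,\Pi(W_i))\le\hrho(\cX)+\rho\le 3\rho$, using $\hrho(\cX)=\rho(1+\phi(\cX))\le 2\rho$ and $x_*\in\hat B(\cX)\subseteq B_{\hrho(\cX)}(\cmin)$.

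Next I would derive the volume identity using the $k$-simplex $\cY_i:=\conv(W_i\cup\set{\cmin,x_*})$ sketched in Figure \ref{fig:vol_simp}(b). Evaluating $\vol_k(\cY_i)$ by the base-times-height rule, first with base $\conv(\hat\cX_i^*)=\conv(W_i\cup\set{x_*})$ and apex $\cmin$, then with base $\conv(W_i\cup\set{\cmin})$ and apex $x_*$ — and observing that $\Pi(W_i\cup\set{\cmin})=\Pi(\hcXmin)$ (both are $(k-1)$-dimensional and the first lies in the second, since $d(\cmin,\Pi(W_i))>0$) — after cancelling the common $(k-2)$-face $\conv(W_i)$ one gets
\[
	d(\cmin,\Pi(\hat\cX_i^*))=\frac{d(x_*,\Pi(\hcXmin))\,d(\cmin,\Pi(W_i))}{d(x_*,\Pi(W_i))}.
\]
Plugging in (i)--(iii) gives $d(\cmin,\Pi(\hat\cX_i^*))\ge(\alpha\rho)(\beta\rho)/(3\rho)=\tfrac13\alpha\beta\rho$, so $\phi(\cX,\cX^*)\ge\tfrac13\alpha\beta$ and the lemma holds with $\const=\tfrac13$, uniformly in $n$ (indeed the argument is purely geometric and uses nothing about the Poisson process).

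I expect the only real difficulty to be the bookkeeping: keeping straight which simplex is a face of which ($\hcX_i$ versus $W_i$ versus $\hat\cX_i^*$), checking the identity $\Pi(W_i\cup\set{\cmin})=\Pi(\hcXmin)$ so that the height in the second volume computation really is $d(x_*,\Pi(\hcXmin))$, and handling $k=2$, where $W_i$ degenerates to a single point $x_i'$ and the identity reads $d(\cmin,\Pi(\hat\cX_i^*))=d(x_*,\Pi(\hcXmin))\,|\cmin-x_i'|/|x_*-x_i'|$ (with the convention $\vol_0(\mathrm{pt})=1$), which is checked directly. All the individual estimates are one-liners.
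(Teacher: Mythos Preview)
Your proof is correct and follows essentially the same route as the paper: you introduce the auxiliary $k$-simplex $\cY_i=W_i\cup\{\cmin,x_*\}$, compute its volume two ways to obtain the identity $d(\cmin,\Pi(\hat\cX_i^*))=d(x_*,\Pi(\hcXmin))\,d(\cmin,\Pi(W_i))/d(x_*,\Pi(W_i))$, and bound the three factors by $\alpha\rho$, $\beta\rho$, and $3\rho$ respectively. This is exactly the paper's argument, with $W_i$ playing the role of what the paper calls $\hat\cX_{i,k+1}$; your justification of $\Pi(W_i\cup\{\cmin\})=\Pi(\hcXmin)$ and your bound $d(x_*,\Pi(W_i))\le\hrho+\rho(\hcXmin)\le 3\rho$ are in fact slightly cleaner than the paper's corresponding steps.
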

 
 We will now present the proofs for lemmas in this section.
 
\begin{proof}[Proof of Lemma \ref{lem:vol_a_b}]
Define
\[
    f(\bth) := \indf{|c(\bth)|\le \alpha} \ind\{|c(\hat\bth)-c(\bth)| \le \beta\}.
\]
Note that  $c(\bth)$ and $c(\hat\bth)$ are the centers of two spheres, where the former contains the latter, and both are contained in $\S^{k-1}$.
Therefore, $|c(\hat\bth)-c(\bth)|^2 + \rho^2(\hat\bth) = \rho^2(\bth)$, and $|c(\bth)|^2 + \rho^2(\bth) = 1$, and we can write
\[
\splitb
f(\bth) &= \indf{\rho(\bth)\ge\sqrt{1-\alpha^2}} \ind\{\rho(\hat\bth) \ge \sqrt{\rho^2(\bth)-\beta^2}\} \\
&= \indf{\rho(\bth)\ge\sqrt{1-\alpha^2}} \ind\{\rho(\hat\bth)/\rho(\bth) \ge \sqrt{1-(\beta/\rho(\bth))^2}\}.
\splite
\]
We will use the BP formula \ref{lem:bp_sphere}, similarly to the proof of Lemma \ref{lem:V_k_lip}, and write $\bth = \rho\bvphi$, where $\rho = \rho(\bth)$, and $\bvphi\in (\S^{k-2})^k$. Denoting $\hat\bvphi = (\varphi_1,\ldots,\varphi_{k-1})$, notice that $\rho(\hat\bvphi) = \rho(\hat\bth)/ \rho(\bth)$. Therefore, we can write
\[
\splitb
f(\rho\bvphi) &= \indf{\rho\ge \sqrt{1-\alpha^2}} \indf{\rho(\hat\bvphi) \ge \sqrt{1-(\beta/\rho)^2}}\\
&\le \indf{\rho\ge \sqrt{1-\alpha^2}} \indf{\rho(\hat\bvphi) \ge \sqrt{1-\beta^2/(1-\alpha^2)}}.
\splite
\]
Using the BP formula, similarly to the proof of Lemma \ref{lem:V_k_lip},  we have
\[
\splitb
V_k(\alpha,\beta) &= 
\int_{(\S^{k-1})^k}  f(\bth)d\bth \\
&\le \Abp^\circ \int_{\sqrt{1-\alpha^2}}^1 \rho^{k^2-2k}(1-\rho^2)^{-1/2}d\rho
\int_{(\S^{k-2})^k}
\indf{\rho(\hat\bvphi) \ge \sqrt{1-\beta^2/(1-\alpha^2)}} \vsimp(\bvphi)d\bvphi \\
&\le \const V_k(\alpha)V_{k-1}(\beta/\sqrt{1-\alpha^2}).
\splite
\]
where we used the fact that $\vsimp$ is bounded from above, and the definition of $V_{k}(\alpha)$ . 
Since we assume $\alpha < 1$ and $\beta/\sqrt{1-\alpha^2}< 1$, we can apply Lemma \ref{lem:V_k_lip}, concluding the proof.

\end{proof}

\begin{proof}[Proof of Lemma \ref{lem:v_crit_a_b_c}]
To ease notation throughout the proof we will use the following notation
\[
c_i = c(\hbth_i), \quad c_{1,2} = c(\hbth_{1,2}),\quad \Pi_i = \Pi(\hbth_i),\quad \Pi_{1,2} = \Pi(\hbth_{1,2}).
\]
Fix $\hat\bth_1$ such that $|c_1|\le \alpha$ and $\abs{c_{1,2}-c_1} \ge \gamma$. We want to determine the region of all possible values for the missing coordinate ($\theta_1$), that will  satisfy $\hcrit(\bth)\indf{|c_2| \le \beta}=1$. 

Notice that $\S^{k-1} \subset \R^k$, $\Pi_i \subset \R^k$ is $(k-1)$-dimensional affine plane, and $\Pi_{1,2}\subset\R^k$ is  $(k-2)$-dimensional.
In addition, we denote by $\Pi_0$ the $(k-1)$-dimensional plane containing $\hat\bth_{1,2}$ and the origin. Notice also that the integrand in \eqref{eq:v_crit} is rotation and reflection invariant. Therefore, without loss of generality, we  assume that 
\eqb\label{eq:pi_0}
\Pi_0 = \set{(x^{(1)},\ldots, x^{(k)})\in \R^k : x^{(k)} = 0},
\eqe
see Figure \ref{fig:c12}(a).
 Without loss of generality, we will also assume that $\theta_2$ is below $\Pi_0$, i.e.~that $\theta_2^{(k)} < 0$.  In this case, in order to satisfy $\hcrit(\bth) = 1$, we must have that $\theta_1$ is above $\Pi_0$, otherwise all the points of $\bth$ will be contained in the lower hemisphere, implying that $\hcrit=0$ (see Remark \ref{rem:half_sphere}). With this in mind, our next goal is to provide an upper  bound for $\theta_1^{(k)}$, i.e.~such that the integrand in \eqref{eq:v_crit} is nonzero.

\begin{figure}[ht]
\centering
\begin{subfigure}{0.45\textwidth}
\includegraphics[scale=0.28]{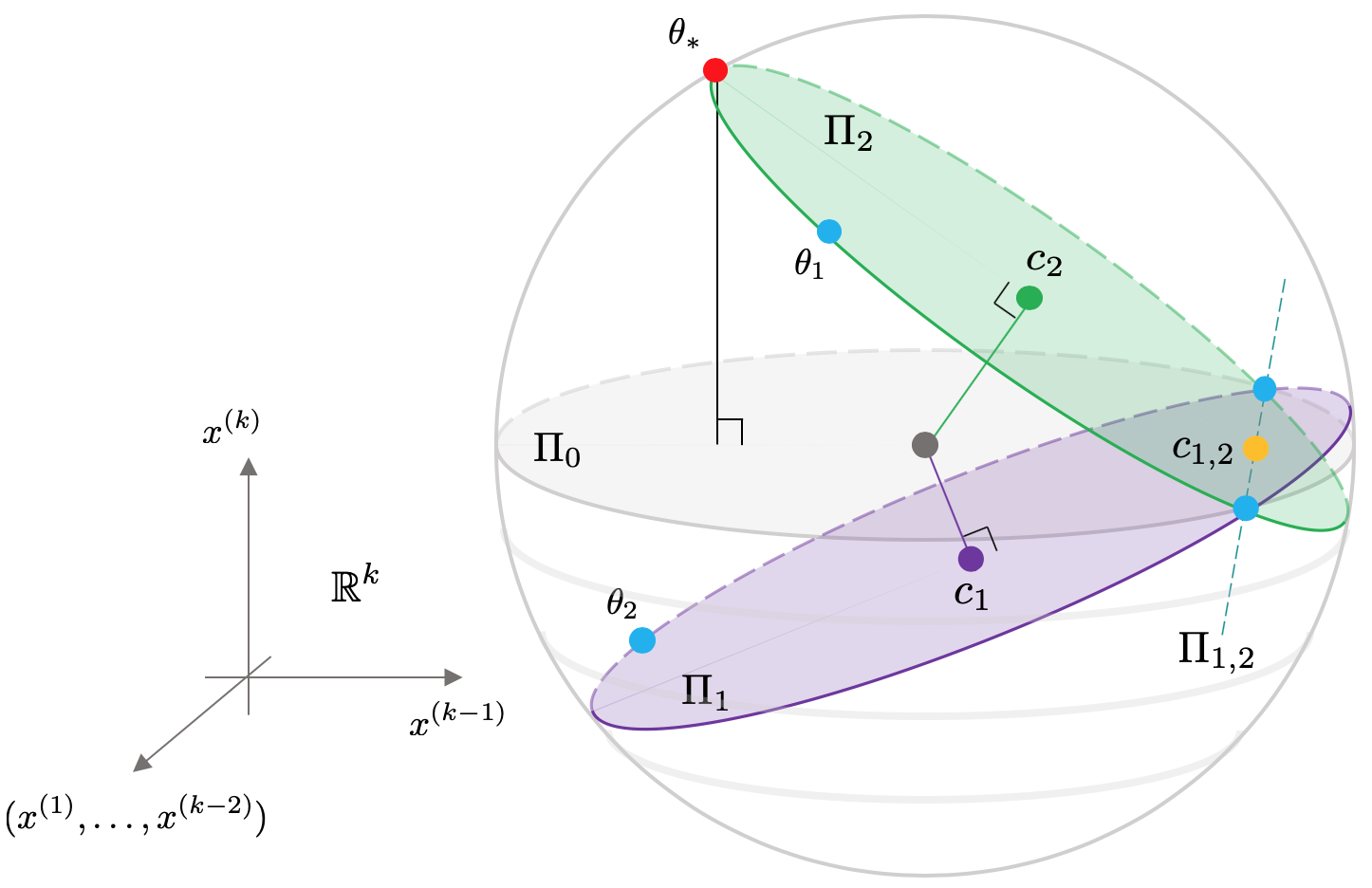}
\caption{}
\end{subfigure}
\begin{subfigure}{0.45\textwidth}
\includegraphics[scale=0.28]{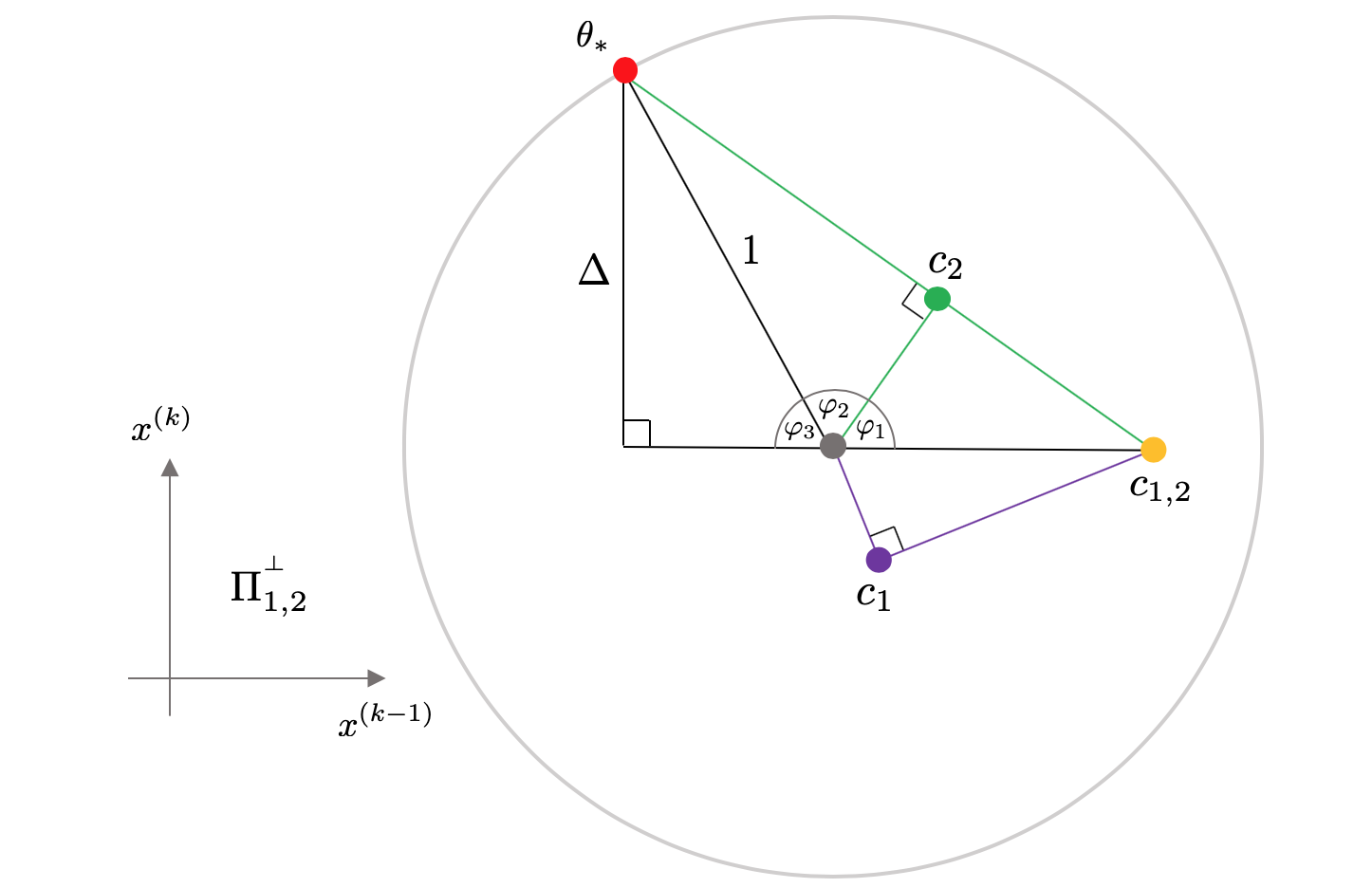}
\caption{}
\end{subfigure}
\caption{\label{fig:c12} Sketches for the point configurations examined in Lemma \ref{lem:vol_a_b}. (a) A sketch for $\S^{k-1}\subset\R^k$ and the corresponding coordinates of $\bth$, as well as the affine spaces $\Pi_0,\Pi_1,\Pi_2$, and $\Pi_{1,2}$. Notice, that we assume $\Pi_0$ is the horizontal plane given in \eqref{eq:pi_0}.
(b) The corresponding 2-dimensional projection on $\Pi_{1,2}$.}
\label{fig:my_label}
\end{figure}
In the following  assume that $c_2$ is given (as well as $c_1,c_{1,2}$ since we fixed $\hat\bth_1$).
Denote by $L_i$ the line going through $c_i$ and $c_{1,2}$ ($i=1,2$), and $L_0$ be the line going through $c_{1,2}$ and the origin. Also denote by $S_i$ the $(k-2)$-sphere containing $\hat\bth_i$ (centered at $c_i$), and by $S_{1,2}$ the $(k-3)$-sphere containing $\hat\bth_{1,2}$ (centered at $c_{1,2}$).
Note that $S_{1,2} =  S_1 \cap S_2 \subset \S^{k-1}$. Therefore, we have that $L_i \perp \Pi_{1,2}$ for $i=0,1,2$. In other words,  the four points $0, c_1, c_2, c_{1,2}$ all lie in a linear space orthogonal  to $\Pi_{1,2}$ and containing $c_{1,2}$, denoted $\Pi^{^\perp}_{1,2}$.
Since $\Pi_{1,2}$ is $(k-2)$-dimensional, then $\Pi^{^\perp}_{1,2}$ is 2-dimensional. Notice that from \eqref{eq:pi_0} we have that the vector $(0,\ldots,0, 1)$ is orthogonal to $\Pi_{1,2}\subset\Pi_0$. Thus, without loss of generality (due to the rotation invariance), we will assume that
\eqb\label{eq:Pi_12_perp}
    \Pi^{^\perp}_{1,2} = \set{(x^{(1)},\ldots, x^{(k)})\in \R^k : x^{(1)} = \cdots = x^{(k-2)} = 0}.
\eqe

Also notice that since  $\hbth_{1,2}$ lies in $\Pi_0$, we also have $c_{1,2} \in \Pi_0$, and thus $c^{(k)}_{1,2} = 0$. Further, since we assumed $\theta_1^{(k)} >0$ and $\theta_2^{(k)}<0$, we have  $c^{(k)}_{1} < 0$ , and $c^{(k)}_{2} > 0$. This setup yields the 2-dimensional picture in Figure  \ref{fig:c12}(b).
Let $\theta_*$ be the point in $\Pi^{^\perp}_{1,2}$ where the ray emanating from $c_{1,2}$ and passing through $c_2$ intersects with $\S^{k-1}$. We argue that $\theta_1^{(k)} \le \theta_*^{(k)}$.
Indeed, if $\theta \in S_2\subset \S^{k-1}$ then
\[
    (1)\ \ \sum_{i=1}^{k} (\theta^{(i)})^2 = 1\ , \qquad\quad
    (2)\ \ \sum_{i=1}^{k-2}(\theta^{(i)})^2 + (\theta^{(k-1)}-c_2^{(k-1)})^2 + (\theta^{(k)}-c_2^{(k)})^2 =   1 - |c_2|^2,
\]
where in $(2)$ we used the facts that $c_2 \in \Pi^{^\perp}_{1,2}$, assumption  \eqref{eq:Pi_12_perp}, and $\rho^2(\hat\bth_2) = 1-|c_2|^2$. It can be shown that the value of $\theta^{(k)}$ is maximized when $\theta^{(1)} = \cdots = \theta^{(k-2)} = 0$. In other words, the largest value of $\theta^{(k)}$ is when $\theta\in S(\hat\bth_2) \cap \Pi^{^\perp}_{1,2}$, and then
\[
\splitb
    (\theta^{(k-1)})^2 + (\theta^{(k)})^2 &= 1  \\
     (\theta^{(k-1)}-c_2^{(k-1)})^2 + (\theta^{(k)}-c_2^{(k)})^2 &=   1 - |c_2|^2.
\splite
\]
This implies that the point $(\theta^{(k-1)}, \theta^{(k)}) \in \Pi_{1,2}^{^\perp}$ lies in the intersection of two circles, i.e.~it can only be one of two points.
By definition, $\theta_*$ is one of these points. 
Since $c_{1,2}^{(k)} = 0$ and $c_2^{(k)} > 0$, we have that $\theta_*^{(k)} > 0$, and  the second point of intersection has  a negative value in the $k$-th coordinate. 
The conclusion is that for all points $\theta\in S_2$ we have $\theta^{(k)} \le \theta_*^{(k)}$.
Thus, since $\theta_1\in S_2$, in order to bound $\theta_1^{(k)}$ we  can find a bound on $\theta_*^{(k)}$. 

Denote $\Delta = \theta_*^{(k)}$. Considering the angles $\varphi_1,\varphi_2,\varphi_3$ marked in Figure  \ref{fig:c12}(b), we have
\[
    \varphi_1 = \cos^{-1}\param{\frac{|c_2|}{\sqrt{|c_1|^2 + |c_{1,2}-c_1|^2}}} \ge \cos^{-1} \param{\frac{\beta}{\gamma}} = \frac{\pi}{2} - \frac{\beta}{\gamma} + o(\beta/\gamma),
\]
and
\[
    \varphi_2 = \cos^{-1}(\abs{c_2}) \ge \cos^{-1}(\beta) = \frac{\pi}{2} - \beta + o(\beta),
\]
where we used the fact that $|c_2|\le \beta$ and $|c_{1,2}-c_1| \ge \gamma$.
Since we assume that $\gamma\to 0$ we have $\beta = o(\beta/\gamma)$ and therefore,
\[
    \varphi_3 = \pi-\varphi_1-\varphi_2 \le \beta/\gamma + o(\beta/\gamma).
\]
Since $\Delta = \sin(\varphi_3)$ we have 
\[
    \Delta \le \sin(\beta/\gamma + o(\beta/\gamma)) = \beta/\gamma + o(\beta/\gamma).
\]

To conclude, given $\hat\bth_1$ and $c_2$, we showed that $\theta_1$ lies inside a region, which up to rotation is of the form
\[  
D_\Delta := \set{(x^{(1)},\ldots, x^{(k)}) \in \S^{k-1} : 0\le x^{(k)} \le \Delta},
\]
and therefore
\[
    \int_{\S^{k-1}}\hcrit(\bth) \indf{|c(\hat\bth_2)|\le \beta} d\theta_1 \le \vol(D_\Delta) \le \const \Delta \le \const\beta/\gamma.
\]
Putting this back into \eqref{eq:v_crit}, we have 
\[
\splitb
\Vcrit(\alpha,\beta,\gamma) &=  \int_{(\S^{k-1})^{k+1}}  \hcrit(\bth) \indf{|c(\hbth_1)| \le \alpha,\ |c(\hbth_2)| \le \beta,\  |c(\hbth_{1,2})-c(\hbth_1)| \ge \gamma}d\bth\\
&\le \const \frac{\beta}{\gamma}\int_{(\S^{k-1})^k} \indf{|c(\hat\bth_1) \le \alpha}d\hat\bth_1 \\
&= \const\frac{\beta}{\gamma}V_k(\alpha)\\
&\le \const \frac{\alpha\beta}{\gamma},
\splite
\]
where we used Lemma \ref{lem:V_k_lip}.
This completes the proof.

\end{proof}

\begin{proof}[Proof of Lemma \ref{lem:dist_boundary}]
Set $c = c(\cX), \cmin = c(\hcXmin), \rho = \rho(\cX)$, and $\hrho = \hrho(\cX)$.
Without loss of generality, suppose that $\hat\cX_{\min} =  \set{x_1,\ldots, x_k}$, so that $\cX^* = \set{x_1,\ldots, x_k, x_*}$, and $\hat\cX^*_i = \cX^* \bs \set{x_i}$. Consider the $k$-simplex $\cY_i = \hat\cX^*_i \cup \set{c_{\min}}$, and define $\hat\cY_i = \cY_i \bs \set{x_*}$.
In other words, we have
\[
\splitb
\hat\cX^*_i &= \set{x_1\ldots, x_{i-1}, x_{i+1},\ldots,  x_k, x_*},\\
\hat\cY_i &= \set{x_1\ldots, x_{i-1}, x_{i+1},\ldots,  x_k, c_{\min}},\\
\cY_i &= \set{x_1\ldots, x_{i-1}, x_{i+1}, \ldots, x_k, x_*, c_{\min}},\\
\splite
\]
see Figure \ref{fig:vol_simp}(b). Denoting $\Pi_i^* := \Pi(\hat\cX_i^*)$, our goal is to show that $d(\cmin , \Pi_i^*) \ge \const\alpha\beta\rho$ for all $1\le i \le k$.
Defining $\Pi_{\min} := \Pi(\hcXmin)$, notice that $\Pi(\hat\cY_i) = \Pi_{\min}$.  Using Lemma \ref{lem:simp_vol}, we can evaluate the volume of $\sigma(\cY_i)$ in two ways,
\[
    \vsimp(\cY_i)  = \frac{1}{k} \vsimp(\hat\cX^*_i) d(c_{\min},\Pi^*_i ) = \frac{1}{k} \vsimp(\hat\cY_i) d(x_*,\Pi_{\min} ).
\]
Therefore,
\[
     d(c_{\min},\Pi^*_i) = d(x_*,\Pi_{\min})\frac{\vsimp(\hat\cY_i)}{\vsimp(\hat\cX^*_i)}.
\]
Next, set $\hcX_{i,j} = \cX\bs \set{x_i,x_j}$, $c_{i,j} := c(\hat\cX_{i,j})$, and $\Pi_i = \Pi(\hat\cX_{i,k+1})$. Using Lemma \ref{lem:simp_vol} again yields
\[
\splitb
    \vsimp(\hat\cY_i) &= \frac{1}{k-1} \vsimp(\hat\cX_{i,k+1}) d( c_{\min}, \Pi_i),\\
    \vsimp(\hat\cX^*_i) &= \frac{1}{k-1}\vsimp(\hat \cX_{i,k+1}) d(x_*,\Pi_i),
\splite
\]
and therefore,
\eqb \label{eq:dist_vol}
d(\cmin, \Pi^*_i) = \frac{d(x_*, \Pi_{\min})
d(c_{\min}, \Pi_{i})}{ d(x_*,\Pi_{i})}.
\eqe

If $\phi(\cX,\cX)\ge \beta$, then necessarily,
$d(c_{\min}, \Pi_{i}) \ge \beta\rho$.
In addition, since $c_{i,k+1}$ and $x_*$ are both inside the ball $\hat B(\cX)$, we have
\[
    d(x_*, \Pi_{i}) \le |x_* - c_{i,k+1} | \le 2\hrho \le 3\rho. 
\]
Finally, since $x_* \in \hat B_\alpha(\cX)$,  and $\Pi_{\min} \subset \hat \Pi(\cX)$, we have $d(x_*, \Pi_{\min})  \ge \alpha \rho(\cX)$.
Putting it all  into \eqref{eq:dist_vol} we have
\[
    d( c_{\min}, \Pi^*_i) \ge \frac{\alpha\beta\rho}{3},
\]
completing the proof.
\end{proof}

\subsection{Pairing critical faces}

With the extended topological and geometric statements provided in the previous sections, we are now 
ready to prove Lemma \ref{lem:pn_pairs}, i.e.~that in the critical window we have
\[
\limninf \prob{ \Cp_{k,r} = \Cn_{k+1,r} = \Cnp_{k+1,r}} \to 1.
\]

\begin{proof}[Proof of Lemma \ref{lem:pn_pairs}]
We will split the proof into two parts.

\underline{Part I - proving that $\Cn_{k+1,r} = \Cnp_{k+1,r}$ :}

Define
\[
	\Cnnp_{k,r} = \Cn_{k,r} - \Cnp_{k,r},
\]
so that $\Cnnp_{k,r}$ counts the negative $k$-faces for which $\hcXmin$ is \emph{not} critical.
We will show that,
\[
\mean{\Cnnp_{k+1,r}} \to 0,
\]
and use Markov's inequality.
To do so, we will bound $\Cnnp_{k,r}$ by 
\eqb\label{eq:match_ineq}.
	\Cnnp_{k,r} \le \sum_{i=1}^4 F^{(i)}_{k,r},
\eqe
where:
\[
	F^{(1)}_{k,r} := \#\text{negative critical $k$-faces $\cX$ with $\phi(\cX) > \eps_1$},
\]
\[
	F^{(2)}_{k,r} := \#\text{negative critical $k$-faces $\cX$ with $\phi(\cX) \le \eps_1$ and $\phi(\cX,\cX)\le \eps_2$},
\]
\[
	F^{(3)}_{k,r} := \#\text{negative critical $k$-faces $\cX$ with $\phi(\cX) \le \eps_1$ and $(\hat B(\cX)\bs \hat B_{\eps_3}(\cX)) \cap \cP_n \ne \emptyset$} ,
\]
\[
	F^{(4)}_{k,r} := \#\text{negative critical $k$-faces $\cX$ with $\phi(\cX) \le \eps_1$ and $\phi(\cX, \cX) > \eps_2$ and $ \hat B_{\eps_3}(\cX) \cap \cP_n \ne \emptyset$} ,
\]
where we implicitly assume everywhere that $\rho(\cX) \in (r,\rmax]$.
Notice that the faces accounted for by the all the $F_{k,r}^{(i)}$'s together, include all configurations where
$\hat B(\cX) \cap \cP_n = \ems$, implying that $I(\cX)\cap \cP_n = \ems$, which in turn implies that
 $\hcXmin$ is not critical, justifying \eqref{eq:match_ineq}.
Finally, set
\eqb
\eps_1 = \frac{2}{\Dn_{k,2}}\frac{\log\Lambda}{\Lambda}\quad
\eps_2 = \frac{e^{-4w(n)}}{(\log \Lambda)^2}\quad
\eps_3 = \eps_1^{2/3},
\eqe
In the following lemmas, we will prove that with these choices, and if $\Lambda = \thres - w(n)$ and $w(n) = o(\logg n)$,  we have
\[
\limninf\mean{F_{k+1,r}^{(i)}} = 0,
\]
for all $1\le i \le 4$. Since all quantities are decreasing in $r$, this will be true for any larger $\Lambda$ as well. That will complete the proof of part I.

\underline{Part II - proving that $\Cp_{k,r} = \Cn_{k+1,r}$ : }

At this point, we have shown that  when $\Lambda =\thres + o(\logg n)$, all the remaining negative $(k+1)$-faces are matched with a positive $k$-face.  
We want to show that this matching is a bijection between all the negative $(k+1)$-faces and negative $k$-faces appearing at $\rho \in (r,\rmax]$.

First, we show that matched faces always ``stick together'', in the sense that for any given choice of $r$, there is no such matching for which $\rho(\hcXmin)<r$ and $\rho(\cX)>r$. 
To this end, we define
\[
	F_{k,r}^{(5)} := \#\text{negative $k$-faces $\cX$, with $\phi(\cX)\le \eps_1$,  such that $\rho(\cX) > r$ and $\rho(\hcXmin) < r$}.
\]
We will prove next that $\mean{F_{k+1,r}^{(5)}}\to 0$, so that all matched faces appear after $r$. 

Recall the definition of $I(\cX)$ \eqref{eq:hat_B}, and the fact that 
 \whp\ $F_{k+1,r}^{(i)} = 0$ for $i=1,\ldots,4$. Then \whp\ $I(\cX)\cap \cP_n = \emptyset$ for all the remaining negative faces in $(r,\rmax]$.
This indicates not only that $\hcXmin$ is critical, but also that it is isolated in the entire interval $[\rho(\hcXmin), \rho(\cX))$. This, in turn, implies that the matching between negative and positive faces is injective. Since both the positive and negative faces appear in $(r,\rmax]$, we can conclude that  $\Cn_{k+1,r} = \Cnp_{k+1,r} \le \Cp_{k,r}$.
From Lemma \ref{lem:perc}, we have that $\Cn_{k+1,r} \ge \Cp_{k,r}$, and therefore $\Cn_{k+1,r} = \Cp_{k,r}$ and the matching is indeed a bijection.  This completes the proof of part II.

\end{proof}

The rest of this section will thus be dedicated to prove the limits of $\mean{F_{k+1,r}^{(i)}}$ ($i=1,\ldots, 5$).

\begin{rem}
In the following, we will use expectation calculations similar to the ones we presented in Sections \ref{sec:pt_crit} and \ref{sec:pn_faces}. Therefore, we will highlight the main points and skip some of the steps that are similar to what we have already done.
\end{rem}

\begin{lem}\label{lem:F_1}
Let $k\ge 1$. If $\Lambda = \thres - w(n)$, then
\[
	\limninf\mean{F_{k+1,r}^{(1)}} = 0.
\]
\end{lem}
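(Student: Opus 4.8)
The plan is to dominate $F_{k+1,r}^{(1)}$ by the quantity $F_{k+1,r,a,b}$ already estimated in Lemma~\ref{lem:Fk_phi}, using the fact that negativity of a critical face forces a local non‑coverage event that is precisely the one built into $\gn_{r,a,b}$. We work in the range $1\le k\le d-2$ in which $F_{k+1,r}^{(1)}$ is used (see the proof of Lemma~\ref{lem:pn_pairs}), so that $2\le k+1\le d-1$ and Lemma~\ref{lem:crit_positive} is available. Fix a negative critical $(k+1)$-face $\cX\in\cC^{k+1}(\cP_n)$ with $\phi(\cX)>\eps_1$ and $\rho(\cX)\in(r,\rmax]$. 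By the contrapositive of Lemma~\ref{lem:crit_positive}, taking $\eps_0=\eps_1\le\phi(\cX)$ and $\rho_0=(1-\eps_1/3)\rho(\cX)<\rho(\cX)$, the fact that $\cX$ is \emph{not} positive forces
\[
	A_{\eps_1}(\cX)\not\subset B_{(1-\eps_1/3)\rho(\cX)}(\cP_n).
\]
Moreover $\hcrit(\cX)=1$ since $\cX$ is critical, so by the remark following Lemma~\ref{lem:crit_positive}, $\phi(\cX)$ is the radius of the largest ball centred at the origin contained in the $(k+1)$-simplex spanned by $\theta(\cX)$; this simplex is inscribed in $\S^{k}$ (circumradius $1$), hence by the generalized Euler inequality $\phi(\cX)\le\tfrac1{k+1}\le\tfrac12$. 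Consequently every face contributing to $F_{k+1,r}^{(1)}$ satisfies $\gn_{r,\eps_1,1/(k+1)}(\cX,\cP_n)=1$, and summing over $\cX\in\cC^{k+1}(\cP_n)$ gives $F_{k+1,r}^{(1)}\le F_{k+1,r,\eps_1,1/(k+1)}$.

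With the reduction in hand I would apply Lemma~\ref{lem:Fk_phi} with its dimension index equal to $k+1$, with $a=\eps_1=\frac{2}{\Dn_{k,2}}\frac{\log\Lambda}{\Lambda}$ and $b=\tfrac1{k+1}$; the hypotheses $\tfrac1\Lambda\le a<b<1$ hold for all large $n$. The key point is that the constant $\Dn_{k,2}$ in Lemma~\ref{lem:Fk_phi} does not depend on the dimension index — in that proof $\Dn_{k,2}=C_1/\omega_d$, with $C_1$ coming from the purely $d$-dimensional volume estimate of Lemma~\ref{lem:vdiff} — so $\Dn_{k,2}\eps_1=\frac{2\log\Lambda}{\Lambda}$ and $e^{-\Lambda\Dn_{k,2}\eps_1}=\Lambda^{-2}$. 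Lemma~\ref{lem:Fk_phi} then yields
\[
	\mean{F_{k+1,r}^{(1)}}\ \le\ \mean{F_{k+1,r,\eps_1,1/(k+1)}}\ \le\ \const\, n\Lambda^{k}e^{-\Lambda(1+\Dn_{k,2}\eps_1)}\ =\ \const\, n\Lambda^{k-2}e^{-\Lambda}.
\]
Substituting $\Lambda=\thres-w(n)=\log n+(k-1)\logg n-w(n)$ gives $ne^{-\Lambda}=(\log n)^{-(k-1)}e^{w(n)}$ and $\Lambda^{k-2}=(\log n)^{k-2}(1+o(1))$, hence
\[
	\mean{F_{k+1,r}^{(1)}}\ \le\ \const\,(\log n)^{-1}e^{w(n)}(1+o(1)),
\]
which tends to $0$ as soon as $e^{w(n)}=o(\log n)$, in particular whenever $w(n)=o(\logg n)$ — the regime in which this estimate is invoked (see the proof of Lemma~\ref{lem:pn_pairs}).

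I do not expect a genuine obstacle: all the analytic content is already contained in Lemmas~\ref{lem:crit_positive} and~\ref{lem:Fk_phi}, and what remains is a reduction plus bookkeeping. The one conceptual point is the reduction step: because the faces counted by $F_{k+1,r}^{(1)}$ all have $\phi(\cX)$ bounded \emph{below} by $\eps_1$, a \emph{single} annulus $A_{\eps_1}(\cX)$ suffices in Lemma~\ref{lem:crit_positive}, so — unlike in the proof of Proposition~\ref{prop:pt_neg}, where one sums over a partition of the $\phi$-range — one term $F_{k+1,r,\eps_1,1/(k+1)}$ alone dominates $F_{k+1,r}^{(1)}$. The routine care is the index shift $k\mapsto k+1$, checking $\tfrac1\Lambda\le\eps_1<\tfrac1{k+1}<1$, invoking the dimension‑independence of $\Dn_{k,2}$ so that the prescribed $\eps_1$ buys exactly the factor $\Lambda^{-2}$, recording the elementary bound $\phi(\cX)\le\tfrac1{k+1}$ for critical faces, and tracking the precise $w(n)$-regime in the final substitution.
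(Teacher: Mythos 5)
Your proposal is correct and follows the same route as the paper's proof, bounding $\mean{F_{k+1,r}^{(1)}}$ by Lemma~\ref{lem:Fk_phi} with $a=\eps_1$ and then substituting the critical-window form of $\Lambda$. You are somewhat more careful than the paper at three small points it elides: the domination $F_{k+1,r}^{(1)}\le F_{k+1,r,\eps_1,b}$ is an inequality justified by the contrapositive of Lemma~\ref{lem:crit_positive} (the paper writes it as ``$=$''); your choice $b=1/(k+1)$, via the Euler-type bound $\phi(\cX)\le 1/(k+1)$ for a critical $(k+1)$-face, respects the hypothesis $b<1$ of Lemma~\ref{lem:Fk_phi} (the paper takes $b=1$); and you explicitly note that $\Dn_{k,2}=C_1/\omega_d$ depends only on $d$, which is what makes it legitimate to feed the $\eps_1$ defined with $\Dn_{k,2}$ into Lemma~\ref{lem:Fk_phi} at index $k+1$ and still extract the factor $\Lambda^{-2}$.
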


\begin{proof}
Using Lemma \ref{lem:Fk_phi}, and taking $a = \eps_1 = \frac{2}{\Dn_{k,2}}\frac{\log \Lambda}{\Lambda}$, $b = 1$, we have
\[
\splitb
		\mean{ F_{k,r}^{(1)}} = \mean{ F_{k,r,\eps_1,1}} &\le \Dn_{k,1} (1-\eps_1) n\Lambda^{k-1} e^{-\Lambda(1+ \Dn_{k,2} \eps_1)}\\
		&\le \Dn_{k,1} n \Lambda^{k-3} e^{-\Lambda }
\splite
\]
Therefore, if $\Lambda = \thres -w(n)$ then
\[
\mean{F_{k+1,r}^{(1)}} \le \Dn_k \Lambda^{-1} \param{\frac{\Lambda}{\log n}}^{k-1}e^{w(n)} \to 0,
\]
since $\Lambda \approx \log n$ and $w(n) = o(\logg n)$.
This completes the proof.

\end{proof}

\begin{lem}\label{lem:F_2}
Let $k\ge 1$. If $\Lambda = \thres - w(n)$, then
\[
	\limninf\mean{F_{k+1,r}^{(2)}} = 0.
\]
\end{lem}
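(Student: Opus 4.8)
The plan is to prove $\mean{F^{(2)}_{k+1,r}}\to 0$ by a direct first--moment estimate, under the standing assumptions $w(n)\to\infty$ and $w(n)=o(\logg n)$ (the case of larger $\Lambda$ then follows by monotonicity, since $F^{(2)}_{k+1,r}$ is decreasing in $r$). First I would discard the ``negative'' requirement and bound $F^{(2)}_{k+1,r}$ by the number of critical $(k+1)$-faces $\cX\in\cC^{k+1}(\cP_n)$ with $\rho(\cX)\in(r,\rmax]$, $\phi(\cX)\le\eps_1$ and $\phi(\cX,\cX)\le\eps_2$. Running Palm theory (Theorem \ref{thm:palm}) and the Blaschke--Petkantschin formula \eqref{eq:bp_torus_sep} exactly as in the proofs of Lemmas \ref{lem:mean_ck} and \ref{lem:Fk_phi} (the radial part contributing $\approx e^{-\Lambda}\Lambda^{k}/(d\,(n\omega_d)^{k+1})$ after the substitution $t=n\omega_d\rho^d$, and $\vsimp$ being bounded on the sphere), and noting that both $\phi(\cX)$ and $\phi(\cX,\cX)$ are scale-- and position--invariant, hence functions of $\theta(\cX)$ only, I obtain
\[
\mean{F^{(2)}_{k+1,r}}\ \le\ \const\, n\Lambda^{k}e^{-\Lambda}\,W,\qquad W:=\int_{(\S^{k})^{k+2}}\hcrit(\bth)\,\indf{\phi(\bth)\le\eps_1}\,\indf{\phi(\bth,\bth)\le\eps_2}\,d\bth .
\]
At $\Lambda=\thres-w(n)$ one has $n\Lambda^{k}e^{-\Lambda}=\Lambda^{k}(\log n)^{-(k-1)}e^{w(n)}\le\const\,\Lambda\,e^{w(n)}$, so everything reduces to the spherical estimate $W\le\const\,\eps_1\sqrt{\eps_2}$: recalling the prescribed values $\eps_1=\tfrac{2}{\Dn_{k,2}}\tfrac{\log\Lambda}{\Lambda}$ and $\eps_2=\tfrac{e^{-4w(n)}}{(\log\Lambda)^2}$ (so $\sqrt{\eps_2}=\tfrac{e^{-2w(n)}}{\log\Lambda}$), this gives $\mean{F^{(2)}_{k+1,r}}\le\const\,\Lambda e^{w(n)}\cdot\tfrac{\log\Lambda}{\Lambda}\cdot\tfrac{e^{-2w(n)}}{\log\Lambda}=\const\,e^{-w(n)}\to 0$.

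For the estimate $W\le\const\,\eps_1\sqrt{\eps_2}$ I would argue as follows. On the integrand's support let $i_0$ be the (a.s.\ unique) minimizer defining $\hcXmin$, so that $\phi(\bth)=|c(\hbth_{i_0})|\le\eps_1$, and let $j\ne i_0$ realize $\phi(\bth,\bth)=d(c(\hbth_{i_0}),\Pi(\hbth_j))\le\eps_2$ (distances taken in the unit sphere, where the circumradius of $\hbth_{i_0}$ equals $1$ in effect after rescaling). Union--bounding over the $(k+2)(k+1)$ ordered pairs $(i_0,j)$ and using the permutation symmetry of $d\bth$, it suffices to bound $W_0:=\int\indf{|c(\hbth_1)|\le\eps_1}\indf{d(c(\hbth_1),\Pi(\hbth_2))\le\eps_2}\,d\bth$. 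Writing $\hbth_{1,2}:=\bth\bs\set{\theta_1,\theta_2}$, I would split on whether $|c(\hbth_1)-c(\hbth_{1,2})|$ is $\le\sqrt{\eps_2}$ or $>\sqrt{\eps_2}$. In the first case, for each fixed $\theta_1$ the vector $\hbth_1\in(\S^{k})^{k+1}$ is confined to a set of measure $V_{k+1}(\eps_1,\sqrt{\eps_2})\le\const\,\eps_1\sqrt{\eps_2}$ by Lemma \ref{lem:vol_a_b} (whose hypotheses hold since $\eps_1,\eps_2\to 0$), and integrating $\theta_1$ over $\S^{k}$ costs only a constant. In the second case I fix $\hbth_1$ with $|c(\hbth_1)|\le\eps_1$ — a set of measure $\le V_{k+1}(\eps_1)\le\const\,\eps_1$ by Lemma \ref{lem:V_k_lip} — which fixes $c(\hbth_1)$, $\Pi(\hbth_{1,2})$ and $c(\hbth_{1,2})$, while $\Pi(\hbth_2)=\mathrm{aff}(\Pi(\hbth_{1,2}),\theta_1)$ rotates about $\Pi(\hbth_{1,2})$ as $\theta_1$ varies; projecting onto the $2$-plane through $c(\hbth_{1,2})$ orthogonal to $\Pi(\hbth_{1,2})$ (which contains both $c(\hbth_1)$ and the origin), one gets, exactly as in the proof of Lemma \ref{lem:v_crit_a_b_c}, that $d(c(\hbth_1),\Pi(\hbth_2))=|c(\hbth_1)-c(\hbth_{1,2})|\cdot|\sin\psi|$ with $\psi$ the planar angle at $c(\hbth_{1,2})$ between the directions toward $c(\hbth_1)$ and toward the projection of $\theta_1$; the constraint then forces $|\sin\psi|\le\eps_2/|c(\hbth_1)-c(\hbth_{1,2})|<\sqrt{\eps_2}$, confining $\theta_1$ to a slab of measure $\le\const\sqrt{\eps_2}$, so this case contributes $\le\const\,\eps_1\sqrt{\eps_2}$ too. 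Summing, $W_0\le\const\,\eps_1\sqrt{\eps_2}$, hence $W\le\const\,\eps_1\sqrt{\eps_2}$, which closes the argument as above.

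The routine pieces are the Palm/Blaschke--Petkantschin computation and the bookkeeping with $\Lambda$; the substantive step is the geometric estimate $W\le\const\,\eps_1\sqrt{\eps_2}$, and inside it the second sub-case, where a near-tangency between $\Pi(\hcXmin)$ and the plane of a neighbouring $k$-face of $\cX$ must be turned into a measure bound on the remaining vertex. The delicate point there, as already in Lemma \ref{lem:v_crit_a_b_c}, is checking that the pull-back to $\S^{k}$ of the thin angular wedge carved out in the $2$-dimensional projection genuinely has measure $\lesssim\sqrt{\eps_2}$, despite the projection map degenerating near the boundary of the $2$-disk; this is handled just as in that proof by reducing to a thin coordinate slab, and is the only part requiring real care.
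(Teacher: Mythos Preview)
Your argument is correct and follows the paper's approach closely. Both proofs drop the ``negative'' requirement, apply Palm theory and the Blaschke--Petkantschin formula to reduce to a spherical integral, union--bound over the pair $(i_0,j)$, and split according to whether $|c(\hbth_1)-c(\hbth_{1,2})|$ is small or large relative to the threshold $\sqrt{\eps_2}$ (which is exactly the paper's $\eps_{1,2}$). The small case is handled identically via Lemma~\ref{lem:vol_a_b}.

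The only genuine difference is in the large case. The paper first weakens the constraint $d(c(\hbth_1),\Pi(\hbth_2))\le\eps_2$ to $|c(\hbth_2)|\le\eps_1+\eps_2$ (via the triangle inequality $d(0,\Pi_j)\le|c_i|+d(c_i,\Pi_j)$) and then quotes Lemma~\ref{lem:v_crit_a_b_c}, whose proof is itself a $2$-plane projection ending in a coordinate slab for $\theta_1$ and uses $\hcrit=1$ to pin down the sign. You instead keep the original constraint $d(c(\hbth_1),\Pi(\hbth_2))\le\eps_2$ and extract the slab directly: since the projection of $\theta_1$ onto $\Pi_{1,2}^\perp$ lies on $L_2$ and $d(c_1,L_2)=|c_1-c_{1,2}|\,|\sin\psi|$, one gets $|\theta_1\cdot u^\perp-c_{1,2}\cdot u^\perp|\le 2\sqrt{\eps_2}$ without ever invoking $\hcrit$. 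This is a clean shortcut; the resulting bound $W\le\const\,\eps_1\sqrt{\eps_2}$ coincides with the paper's (since $\eps_{1,2}=\sqrt{\eps_2}$ and $\eps_1+\eps_2\approx\eps_2=\eps_{1,2}^2$), and the final arithmetic $\const\,e^{-w(n)}\to 0$ is identical.
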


\begin{proof}
Recall the definitions of $\theta(\cX), \hat\theta_i(\cX)$, and note that requiring $\phi(\cX) \le \eps_1$ and $\phi(\cX,\cX) \le \eps_2$ implies that there exist $i,j$ such that (a) $|c(\hat\theta_i(\cX))| \le \eps_1$, and (b)  $|c(\hat\theta_i(\cX)) - c(\hat\theta_j(\cX))| \le \eps_2$, implying that $|c(\hat\theta_j(\cX))| \le \eps_1+\eps_2$. 
Defining 
\[
	F_{k,r,i,j}^{(2)} = \# \text{negative critical $k$-faces $\cX$ with $|c(\hat\theta_i(\cX))|\le \eps_1$ and $|c(\hat\theta_j(\cX))|\le \eps_1+\eps_2$.}
\]
we therefore have
\[
	F_{k,r}^{(2)} \le \sum_{i\ne j} F_{k,r,i,j}^{(2)},
\]
and thus
 \[
	\mean{F_{k,r}^{(2)}} \le k(k+1) \mean{F_{k,r,1,2}^{(2)}}.
\]
We will further bound $F_{k,r,1,2}^{(2)}$ by two terms -
\[
	F_{k,r,1,2}^{(2,1)} := \#\text{negative critical $k$-faces with $|c(\hat\theta_1(\cX))|\le \eps_1$ and $|c(\hat\theta_{1,2}(\cX))-c(\hat\theta_1(\cX))| \le \eps_{1,2}$.}
\]
\[
\splitb
	F_{k,r,1,2}^{(2,2)} := \#&\text{negative critical $k$-faces with $|c(\hat\theta_1(\cX))|\le \eps_1$, $|c(\hat\theta_{1,2}(\cX))-c(\hat\theta_1(\cX))| > \eps_{1,2}$,} \\
	&\text{and $|c(\hat\theta_2(\cX))|\le \eps_1+\eps_2$},
\splite
\]
so that $F_{k,r,1,2}^{(2)} \le F_{k,r,1,2}^{(2,1)}+F_{k,r,1,2}^{(2,2)}$.

Repeating similar steps as in the proof of Lemma \ref{lem:Fk_phi}, we have 
\[
\mean{F_{k,r,1,2}^{(2,1)}} \le \const n\Lambda^{k-1}e^{-\Lambda} \int_{(\S^{k-1})^{k+1}} \indf{|c(\hbth_1) \le \eps_1,\ |c(\hbth_{1,2}) - c(\hbth_1)| \le \eps_{1,2}} d\bth
\]
Using Lemma \ref{lem:vol_a_b}, we have
\[
\mean{F_{k,r,1,2}^{(2,1)}} \le \const\frac{\eps_1\eps_{1,2}}{\sqrt{1-\eps_1^2}}
 n\Lambda^{k-1}e^{-\Lambda}.  \]
If we take
\[
	\eps_{1,2} = \frac{e^{-2w(n)}}{\log \Lambda},
\]
 then
\[
	\mean{F_{k,r,1,2}^{(2,1)}} \le \const e^{-2w(n)} n\Lambda^{k-2}e^{-\Lambda} .
\]
When $\Lambda = \thres-w(n)$ we therefore have
\[
	\mean{F_{k+1,r,1,2}^{(2,1)}} \le \const  e^{-w(n)} \to 0. 
\]

Similarly, evaluating $F_{k,r,1,2}^{(2,2)}$ we have
\[
\splitb
\mean{F_{k,r,1,2}^{(2,2)}} &\le \const n\Lambda^{k-1}e^{-\Lambda} \\&\times\int_{(\S^{k-1})^{k+1}} \indf{|c(\hbth_1)| \le \eps_1,|c(\hbth_2)| \le \eps_1+\eps_2,\ |c(\hbth_{1,2}) - c(\hbth_1)| > \eps_{1,2}} d\bth.
\splite
\]
In this case, notice that $\eps_1+\eps_2\approx \eps_2 = \eps_{1,2}^2$, therefore $(\eps_1+\eps_2)/ \eps_{1,2} \to 0$.
Thus, we can use Lemma \ref{lem:v_crit_a_b_c} and have
\[
\mean{F_{k,r,1,2}^{(2,2)}} \le \const \frac{\eps_1\eps_2}{\eps_{1,2}} n\Lambda^{k-1}e^{-\Lambda}  = 
\const e^{-2w(n)}n\Lambda^{k-2}e^{-\Lambda}.
\]
Taking $\Lambda = \thres - w(n)$ we have
\[
\mean{F_{k+1,r,1,2}^{(2,2)}} \le \const  e^{-w(n)} \to 0 .
\]
That  completes the proof.

\end{proof}

\begin{lem}\label{lem:F_3}
Let $k\ge 1$. If $\Lambda = \thres - w(n)$, then
\[
	\limninf\mean{F_{k+1,r}^{(3)}} = 0.
\]
\end{lem}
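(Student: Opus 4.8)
The plan is to drop the requirement that the face be \emph{negative} and bound $F_{k+1,r}^{(3)}$ from above by the number of \emph{critical} $(k+1)$-faces $\cX\in\cC^{k+1}(\cP_n)$ with $\rho(\cX)\in(r,\rmax]$, $\phi(\cX)\le\eps_1$, and such that $\cP_n$ (excluding the vertices of $\cX$) meets the thin region $\hat B(\cX)\setminus\hat B_{\eps_3}(\cX)$; then to run exactly the Palm-theory and Blaschke--Petkantschin computation of Lemmas \ref{lem:mean_ck} and \ref{lem:Fk_phi}. Applying Palm theory (Theorem \ref{thm:palm}) and conditioning on $\cX'=\bx$ as in \eqref{eq:mean_ck}--\eqref{eq:mean_integral} and \eqref{eq:cond_prob}, and noting that the two events "$B(\bx)\cap\cP_n=\ems$" and "$(\hat B(\bx)\setminus\hat B_{\eps_3}(\bx))\cap\cP_n\ne\ems$" concern the disjoint sets $B(\bx)$ and $\hat B(\bx)\setminus\hat B_{\eps_3}(\bx)$, spatial independence factors the conditional expectation and Markov's inequality gives
\[
\mean{F_{k+1,r}^{(3)}}\le\frac{n^{k+2}}{(k+2)!}\int_{(\T^d)^{k+2}}h_r(\bx)\,\indf{\phi(\bx)\le\eps_1}\,e^{-n\omega_d\rho^d(\bx)}\,n\,\vol\big(\hat B(\bx)\setminus\hat B_{\eps_3}(\bx)\big)\,d\bx .
\]

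The geometric heart of the proof is the estimate, valid whenever $\phi(\bx)\le\eps_1$,
\[
\vol\big(\hat B(\bx)\setminus\hat B_{\eps_3}(\bx)\big)\le\const\,\rho^d(\bx)\,\eps_1\eps_3 .
\]
To prove it, recall from \eqref{eq:hrho} that $\hrho(\bx)=\rho(\bx)(1+\phi(\bx))$, so the closed ball $B_{\hrho(\bx)}(c(\hcXmin))$ and the open ball $B(\bx)=B_{\rho(\bx)}(c(\bx))$ are \emph{internally tangent}, whence the crescent $\hat B(\bx)$ is radially thin, of width at most $2\phi(\bx)\rho\le2\eps_1\rho$ everywhere. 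Writing points as $x=c(\hcXmin)+s\,u+y$, with $u$ the unit vector along the axis $c(\bx)\,c(\hcXmin)$ (orthogonal to $\hat\Pi(\bx)$), $s$ the signed distance to $\hat\Pi(\bx)$, and $y\perp u$, the membership conditions $|x-c(\hcXmin)|\le\hrho$ and $|x-c(\bx)|\ge\rho$ force, for $|s|\le\eps_3\rho$ and $\phi(\bx)\le\eps_1$,
\[
|y|\in\Big[\rho\sqrt{1-(\eps_1+\eps_3)^2},\ \rho(1+\eps_1)\Big] ,
\]
an interval of length $\le\const\rho\eps_1$ (using $(\eps_1+\eps_3)^2\le\const\eps_1$ for small $\eps_1$, since $\eps_3^2=\eps_1^{4/3}$). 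Thus each slice $\{s=\text{const}\}$ of $\hat B(\bx)$ is contained in a $(d-1)$-annulus of radii $\approx\rho$ and width $\le\const\rho\eps_1$, of $(d-1)$-volume $\le\const\rho^{d-1}\eps_1$; integrating over $s\in(-\eps_3\rho,\eps_3\rho)$ produces the remaining factor $\eps_3$.

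It then remains to feed this into the standard machinery. Substituting the volume bound, applying the Blaschke--Petkantschin formula \eqref{eq:bp_torus_sep} (with $k$ replaced by $k+1$) and the change of variables $t=n\omega_d\rho^d$ as in Lemma \ref{lem:mean_ck}, using that $\int_{\Lambda}^{\Lambda_{\max}}t^{k+1}e^{-t}\,dt$ has leading term $O(\Lambda^{k+1}e^{-\Lambda})$, that $\vsimp$ is bounded on the sphere, and that $\int_{(\S^{k})^{k+2}}\indf{\phi(\bth)\le\eps_1}\,d\bth=V_{k+1}^\phi(\eps_1)\le\const\eps_1$ by Corollary \ref{cor:phi_lip}, one arrives at
\[
\mean{F_{k+1,r}^{(3)}}\le\const\,n\Lambda^{k+1}e^{-\Lambda}\,\eps_1^2\eps_3 .
\]
With $\Lambda=\thres-w(n)$ and $w(n)=o(\logg n)$ one has $n\Lambda^{k+1}e^{-\Lambda}\approx\const(\log n)^2e^{w(n)}$, while $\eps_1\approx\const\,\logg n/\log n$ and $\eps_3=\eps_1^{2/3}$, so the right-hand side is $\le\const\,e^{w(n)}(\logg n)^{8/3}(\log n)^{-2/3}\to0$; Markov's inequality then also gives $\prob{F_{k+1,r}^{(3)}>0}\to0$.

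The only genuinely new ingredient, and the main obstacle, is the geometric volume estimate: it is essential that one gains the \emph{product} $\eps_1\eps_3$ and not merely $\eps_3$ (the weaker bound $\vol\le\const\rho^d\eps_3$ would leave a diverging factor $(\log n)^{1/3}$), and this forces one to exploit both that the crescent $\hat B(\bx)$ is itself thin (width $O(\phi\rho)$, because of the internal tangency) and that we restrict to a slab of half-width $\eps_3\rho$ around $\hat\Pi(\bx)$. Everything else is a routine repetition of Palm $+$ Blaschke--Petkantschin $+$ incomplete-Gamma estimates already carried out several times earlier in the paper.
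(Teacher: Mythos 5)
Your proof is correct and arrives at the same final bound as the paper, but it derives the key volume estimate by a genuinely different geometric route. The paper writes $\hat B\setminus\hat B_{\eps_3}$ (after symmetrizing over the two half-spaces cut by $\hat\Pi(\cX)$) as a difference of spherical caps and applies a third-order Taylor expansion of $\vcap$ from \eqref{eq:vol_taylor_2}, using the non-obvious fact that the quadratic coefficient vanishes so that the linear terms cancel, to obtain $\vol(\hat B\setminus\hat B_{\eps_3})\le\const\rho^d\eps_3^{5/2}$. You instead set up cylindrical coordinates along the axis $c(\cX)c(\hcXmin)$, observe that the relation $\hrho(\cX)=\rho(\cX)(1+\phi(\cX))$ from \eqref{eq:hrho} makes the outer ball $B_{\hrho}(c(\hcXmin))$ and the inner ball $B(\cX)$ internally tangent, so that each slab slice of $\hat B$ at $|s|\le\eps_3\rho$ is an annulus of radial width $O(\phi\rho)\le O(\eps_1\rho)$, and integrate over the slab to get $\vol(\hat B\setminus\hat B_{\eps_3})\le\const\rho^d\eps_1\eps_3$. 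Since $\eps_1=\eps_3^{3/2}$, the two bounds $\eps_1\eps_3$ and $\eps_3^{5/2}$ coincide (both equal $\eps_1^{5/3}$), so the remaining Palm/Blaschke--Petkantschin/incomplete-Gamma calculation and the use of Corollary~\ref{cor:phi_lip} are identical to the paper's, yielding the same $\mean{F_{k+1,r}^{(3)}}\le\const\eps_1^{8/3}n\Lambda^{k+1}e^{-\Lambda}\to 0$. Your slicing argument is a nice elementary alternative: it replaces the exact cap-volume formula and the vanishing-quadratic-term cancellation by the direct geometric observation that internal tangency makes the crescent uniformly thin, which is arguably more transparent and robust, at the cost of being slightly less precise about constants.
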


\begin{proof}
Given a $k$-face $\cX$, denote $\rho = \rho(\cX),\hrho = \hrho(\cX), \rho_{\min} = \rho(\hcXmin), B = B(\cX),\ B_{\min} = B(\hcXmin)$, and $\hat B = \hat B(\cX),\ \hat B_\alpha = \hat B_{\alpha}(\cX),\  \hat B^c_\alpha = \hat B \bs \hat B_{\alpha}$ (see \eqref{eq:B_a}). To bound $F^{(3)}_{k, r}$ we will bound the volume of $\hat B^c_{\eps_3}$.

First, recall that $\hat\Pi(\cX)$ is a
$(d-1)$-plane that contains $\cmin$.
Using this plane, we split $\R^d$ into two half planes $\R^{d,+}$ and $\R^{d,-}$ (``above'' and ``below" $\hat\Pi(\cX)$), so that $c \in \R^{d,-}$. See Figure \ref{fig:vol_simp}(b).
Correspondingly, we can split $\hat B_{\alpha}$ and $\hat B_{\alpha}^c$ into two parts - $\hat B_{\alpha}^{\pm} = \hat B_{\alpha}\cap \R^{d,\pm}$ and $\hat B_{\alpha}^{c,\pm} = \hat B_{\alpha}^c\cap \R^{d,\pm}$, respectively.
 Since $c\in \R^{d,-}$, and $\hat B = B_{\hat \rho}(\cmin)\bs B$, we have that $\vol(\hat B_{\alpha}^{c,+}) \ge  \vol(\hat B_{\alpha}^{c,-})$, and therefore we will bound only $\vol(\hat B_{\alpha}^{c,+})$.

Next, notice that $\hat B_\alpha^+$ is a difference between two spherical caps (see Figure \ref{fig:vol_simp}(b)), so that
\[
\vol(\hat B_{\alpha}^+) = \hrho^d \vcap \param{\frac{\alpha\rho}{\hrho}} - \rho^d \vcap(\phi+\alpha).
\]
Therefore,
\eqb\label{eq:vol_B_a}
\splitb
\vol(\hat B^{c,+}_{\alpha}) &= \vol(\hat B^+_0)-\vol(\hat B^+_\alpha)\\
	&=  \hrho^d\param{\vcap(0) - \vcap\param{\frac{\alpha\rho}{\hrho}}} - \rho^d \param{\vcap(\phi) - \vcap(\phi+\alpha)} \\
	&\le
	 \hrho^d\param{\vcap(0) - \vcap\param{\alpha}} - \rho^d \param{\vcap(\phi) - \vcap(\phi+\alpha)} 
\splite
\eqe
Notice that $\vcap(\phi) - \vcap(\phi+\alpha)$ is decreasing in $\phi$. Since $\phi \le \eps_1$, we have 
\[
\vcap(\phi) - \vcap(\phi+\alpha) \ge \vcap(\eps_1) - \vcap(\eps_1+\alpha).
\]
Also,
\[
	\hrho^d = \rho^d(1+\phi)^d \le \rho(1+\eps_1)^d = \rho^d(1+d\eps_1 + o(\eps_1)) 
\]
Therefore,
\[
\splitb
\vol(\hat B^c_{\alpha}) &\le \rho^d(\vcap(0) - \vcap\param{\alpha} + \vcap(\eps_1+\alpha)-\vcap(\eps_1) ) + \const \eps_1\rho^d(\vcap(0) - \vcap\param{\alpha}) 
\splite
\]

In addition from, \eqref{eq:vol_taylor_2} we have
\[
\vcap(\Delta) = C_0 - C_1\Delta + C_3\Delta^3 + o(\Delta^4), 
\]
where $C_0 = \frac{\omega_d}{2},\ C_1 = \omega_{d-1},\ C_3 = \frac{(d-1)\omega_{d-1}}{6}$. We will use this to estimate each of the terms in \eqref{eq:vol_B_a}, for $\alpha = \eps_3$. 

We know that $\vcap(0) = C_0$. Next, recall that $ \eps_1^2 = \eps_3^3$, then
\[
\vcap(\eps_3) = C_0 - C_1\eps_3 + C_3\eps_3^3 + o(\eps_3^4).
\]
Next,
\[
\splitb
\vcap(\eps_3+\eps_1) &= C_0 - C_1(\eps_3+\eps_1) + C_3(\eps_3+\eps_1)^3 + o(\eps_3^4) \\
&= C_0 - C_1(\eps_3+\eps_1) + C_3(\eps_3^3 + 3\eps_1\eps_3^2 + 3\eps_1^2\eps_3 + \eps_1^3) + o(\eps_3^4).\\
&= C_0 - C_1\eps_3 - C_1\eps_3^{3/2} + C_3\eps_3^3 + 3C_3 \eps_3^{7/2}+ 3C_3\eps_3^4 + o(\eps_3^4).
\splite
\]
Finally,
\[
\vcap(\eps_1) = C_0 - C_1\eps_1 + C_3\eps_1^3 + o(\eps_1^4) = C_0-C_1\eps_3^{3/2} + o(\eps_3^4).
\]
Putting it all back into \eqref{eq:vol_B_a}, we have
\[
\frac{\vol(\hat B^c_{\eps_3})}{\rho^d} \le 
\frac{2\vol(\hat B^{c,+}_{\eps_3})}{\rho^d} \le
\const \eps_3^{7/2}+ \const\eps_3^4  +  
\const \eps_3^{5/2} + o(\eps_3^4)
\le \const \eps_3^{5/2}.
\]
Now, recall that $F^{(3)}_{k,r}$ counts the negative $k$-faces with $\phi\le \eps_1$ and such that there $\hat B^c_{\eps_3} \cap \cP_n \ne \emptyset$. Given $\cX=\bx$, the probability that $\hat B^c_{\eps_3} \cap \cP_n \ne \emptyset$ is
\eqb\label{eq:prob_points}
	1-e^{-n\vol(\hat B_{\eps_3}^c(\bx))} \le  n\vol(\hat B_{\eps_3}^c(\bx)) \le\const \eps_3^{5/2}n\rho^d(\bx).
\eqe
Repeating the steps as in the proof of Lemma \ref{lem:Fk_phi}, and using \eqref{eq:prob_points}, we have
\eqb\label{eq:bound_Fk3}
\splitb
	\mean{F^{(3)}_{k,r}} &\le \const \eps_1 n^{k+1}  \int_{r}^{\rmax} \rho^{dk-1} e^{-n\omega_d \rho^d}\param{1-e^{- n\vol(\hat B_{\eps_3}^c)}}d\rho \\
	& \le  \const \eps_1\eps_3^{5/2} n^{k+2} \int_r^{\rmax} \rho^{d(k+1)-1} e^{-n\omega_d\rho^d}d\rho \\
	&\le \const \eps_3^{4} n \Lambda^k e^{-\Lambda}.
\splite
\eqe
Taking $\Lambda = \thres -w(n)$, and recall that $\eps_3 = \const\param{\frac{\log \Lambda}{\Lambda}}^{2/3}$, then  
\[
\mean{F^{(3)}_{k+1,r}} \le \const\frac{(\log \Lambda)^{8/3}}{\Lambda^{2/3}}e^{w(n)} \to 0.
\]
This completes the proof.

\end{proof}

\begin{lem}\label{lem:F_4}
Let $k\ge 1$. If $\Lambda = \thres -w(n)$, then
\[
	\limninf\mean{F_{k+1,r}^{(4)}} = 0.
\]
\end{lem}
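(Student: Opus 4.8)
The plan is to bound $\mean{F^{(4)}_{k+1,r}}$ with the same Palm‑theory plus Blaschke--Petkantschin scheme used for Lemmas \ref{lem:Fk_phi} and \ref{lem:F_3}, after first turning ``$\cX$ is negative'' into a (rare) non‑coverage event. Let $\cX$ be a face counted by $F^{(4)}_{k+1,r}$: it is a negative critical $(k+1)$‑face with $\phi(\cX)\le\eps_1$, $\phi(\cX,\cX)>\eps_2$, and there is a Poisson point $x_*\in\hat B_{\eps_3}(\cX)$. Set $\cX^*:=\hcXmin\cup\{x_*\}$. By Lemma \ref{lem:dist_boundary} (with $\alpha=\eps_3$, $\beta=\eps_2$) we get $\phi(\cX,\cX^*)\ge\const\eps_2\eps_3$, and since $\phi(\cX,\cX)>\eps_2$ this gives $\phmin(\cX,\cX^*)\ge\eps_4:=\const\eps_2\eps_3$. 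The contrapositive of Lemma \ref{lem:crit_pos_2} then says that, because $\cX$ is negative, $\hat A_{\eps_4}(\cX)\not\subset B_{\rho_0}(\cX\cup\cP_n)$ for \emph{every} $\rho_0<\rho(\cX)$. Hence $F^{(4)}_{k+1,r}$ is at most the number of critical $(k+1)$‑faces $\cX$ with $\phi(\cX)\le\eps_1$ for which there is a Poisson point in $\hat B_{\eps_3}(\cX)$ and the annulus $\hat A_{\eps_4}(\cX)$ is not covered by sub‑$\rho(\cX)$ balls.

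Applying Palm theory (Theorem \ref{thm:palm}) and the BP formula \eqref{eq:bp_torus_sep} exactly as in \eqref{eq:mean_hat_ck}, one obtains a bound of the shape
\[
\mean{F^{(4)}_{k+1,r}}\le\const\, n^{k+2}\int_r^{\rmax}\rho^{d(k+1)-1}e^{-n\omega_d\rho^d}\,d\rho\int_{(\S^{k})^{k+2}}\hcrit(\bth)\,\indf{\phi(\bth)\le\eps_1}\,\hat q(\bth,\rho)\,d\bth ,
\]
where $\hat q$ bounds, conditionally on $B(\cX)\cap\cP_n=\emptyset$, the probability that $\hat B_{\eps_3}(\cX)\cap\cP_n\ne\emptyset$ \emph{and} that $\hat A_{\eps_4}(\cX)$ stays uncovered by $B_{\rho_0}(\cX\cup\cP_n)$ for all $\rho_0<\rho(\cX)$. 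I would factor $\hat q\le\const\eps_1\Lambda\cdot e^{-\const\eps_4\Lambda}$, the first factor being $\prob{\hat B_{\eps_3}(\cX)\cap\cP_n\ne\emptyset}\le n\,\vol(\hat B_{\eps_3}(\cX))\le\const\eps_1\Lambda$ (as in Lemma \ref{lem:F_3}), and the second the conditional non‑coverage probability of $\hat A_{\eps_4}(\cX)$. For the latter I would imitate the net argument leading to $p_a\le\const e^{-\Dn_{k,2}a\Lambda}$ in Lemma \ref{lem:Fk_phi}: taking $\rho_0$ just below $\rho(\cX)$, a neighbourhood of $\partial\sigma(\cX)+\partial\sigma(\cX^*)$ is covered deterministically by the $\rho_0$‑balls of $\cX$ and $x_*$ once $\rho_0$ exceeds the radii at which $\sigma(\cX),\sigma(\cX^*)$ enter the filtration, the ``singularity'' at $c(\cX)$ then lies inside the hole $B_{\eps_4\rho}(c(\hcXmin))$, and the only region whose coverage remains random is a thin layer around the inner sphere $\partial B_{\eps_4\rho}(c(\hcXmin))$, for which a constant‑size net together with Lemma \ref{lem:vdiff} (with parameter $\eps_4$) yields the factor $e^{-\const\eps_4\Lambda}$.

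Substituting, using $\int_r^{\rmax}\rho^{d(k+1)-1}e^{-n\omega_d\rho^d}d\rho\le\const(n\omega_d)^{-(k+1)}\Lambda^{k}e^{-\Lambda}$, the boundedness of $\hcrit$ on the sphere, and $\int\indf{\phi(\bth)\le\eps_1}d\bth\le\const\eps_1$ from Corollary \ref{cor:phi_lip}, gives
\[
\mean{F^{(4)}_{k+1,r}}\le\const\,\eps_1^2\,e^{-\const\eps_4\Lambda}\,n\Lambda^{k+1}e^{-\Lambda}.
\]
Now insert $\eps_1=\tfrac{2}{\Dn_{k,2}}\tfrac{\log\Lambda}{\Lambda}$, $\eps_3=\eps_1^{2/3}$, $\eps_2=\tfrac{e^{-4w(n)}}{(\log\Lambda)^2}$, $\eps_4=\const\eps_2\eps_3$, and take $\Lambda=\thres-w(n)$ with $w(n)=o(\logg n)$. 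Then $n\Lambda^{k+1}e^{-\Lambda}\approx(\log n)^2 e^{w(n)}$ and $\eps_1^2\approx\const(\logg n/\log n)^2$, so the prefactor is $e^{O(\logg n)}$, whereas $\eps_4\Lambda=\const\tfrac{e^{-4w(n)}(\log n)^{1/3}}{(\logg n)^{4/3}}$ grows exponentially in $\logg n$; hence $e^{-\const\eps_4\Lambda}$ overwhelms everything and $\mean{F^{(4)}_{k+1,r}}\to0$. Markov's inequality then gives $\prob{F^{(4)}_{k+1,r}>0}\to0$, and since $F^{(4)}_{k+1,r}$ is decreasing in $r$ this persists for all larger $\Lambda$.

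\textbf{The main obstacle} is the non‑coverage estimate for $\hat A_{\eps_4}(\cX)$. One must argue carefully that, after subtracting the portion automatically covered by the $\rho_0$‑balls of $\cX$ and $x_*$ and discarding the part that falls inside the hole $B_{\eps_4\rho}(c(\hcXmin))$, what remains is a single thin layer whose coverage genuinely depends on the Poisson field, and that the corresponding ``deficit volume'' (à la Lemma \ref{lem:vdiff}) is of order $\eps_4\rho^d$, so that the net‑and‑union‑bound argument produces the factor $e^{-\const\eps_4\Lambda}$. This is geometrically delicate precisely because $\hat A_{\eps_4}(\cX)$ is nearly concentric with, and heavily overlaps, the empty ball $B(\cX)$; and it is this factor, powered by the $e^{-4w(n)}$ built into $\eps_2$, that supplies the decay needed to beat the $e^{w(n)}$ slack of $e^{-\Lambda}$ at the threshold $\Lambda=\thres-w(n)$.
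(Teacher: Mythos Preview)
Your overall strategy—use Lemma \ref{lem:dist_boundary} and Lemma \ref{lem:crit_pos_2} to deduce that a negative $\cX$ forces the modified annulus $\hat A_\cdot(\cX)$ to be uncovered, then bound the expected number of such faces—is exactly the paper's approach. However, your non-coverage estimate is where the argument breaks down.

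The claim that ``the only region whose coverage remains random is a thin layer around the inner sphere $\partial B_{\eps_4\rho}(c(\hcXmin))$, for which a constant-size net \ldots\ yields the factor $e^{-\const\eps_4\Lambda}$'' is incorrect. The annulus $\hat A_{\eps_4}(\cX)$ has outer radius $\hat\rho\approx\rho$ and inner radius $\eps_4\rho$; it is essentially the entire ball $B_{\hat\rho}(c_{\min})$ with a tiny puncture. The balls $B_{\rho_0}(x)$ for $x\in\cX\cup\{x_*\}$ cover neighbourhoods of these $k+2$ points, all of which sit near the \emph{outer} boundary of $\hat A_{\eps_4}$; they do not deterministically cover the bulk of the annulus. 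What remains to be covered by $\cP_n$ is not a thin layer but essentially the whole annulus, and an $(\eps_4\rho/2)$-net for it has size $\const\eps_4^{-d}$, not $O(1)$. The union bound then gives $\hat p_{\eps_4}\le\const\eps_4^{-d}e^{-\const\eps_4\Lambda}$, and the polynomial prefactor $\eps_4^{-d}$ cannot be dropped.

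A second, smaller issue is the factorization $\hat q\le\const\eps_1\Lambda\cdot e^{-\const\eps_4\Lambda}$. The events $\{\hat B_{\eps_3}\cap\cP_n\ne\emptyset\}$ and $\{\hat A_{\eps_4}$ uncovered$\}$ concern overlapping regions of the Poisson process and are not independent; moreover $\eps_1\Lambda=\const\log\Lambda>1$, so this factor actually hurts. The paper simply drops it: since $\cX$ negative forces the annulus to be uncovered regardless, one bounds $F^{(4)}_{k+1,r}$ by the count of critical faces with $\phi\le\eps_1$ and $\hat A_{\eps_0}$ uncovered, ignoring the $\hat B_{\eps_3}$ condition entirely.

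The paper's fix is to introduce an intermediate scale $\eps_0=\Lambda^{-3/4}$. One checks $\eps_0\ll\const\eps_2\eps_3$ (so $\eps_0\le\phmin(\cX,\cX^*)$ and Lemma \ref{lem:crit_pos_2} applies with $\hat A_{\eps_0}$), and then runs the net argument with net size $\const\eps_0^{-d}=\const\Lambda^{3d/4}$. This yields $\mean{F^{(4)}_{k+1,r}}\le\const\eps_0^{-d}\eps_1\, n\Lambda^k e^{-\Lambda(1+\const\eps_0)}$, and since $\eps_0\Lambda=\Lambda^{1/4}$, the factor $e^{-\const\Lambda^{1/4}}$ overwhelms the polynomial $\Lambda^{3d/4}\log\Lambda\cdot e^{w(n)}$ at $\Lambda=\thres-w(n)$. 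Your approach with $\eps_4$ in place of $\eps_0$ would also work numerically once the correct net size $\eps_4^{-d}$ is inserted, but the paper's choice of a clean power of $\Lambda$ keeps the bookkeeping lighter.
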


\begin{proof}
Denote $c = c(\cX), \cmin = c(\hcXmin), \rho = \rho(\cX),\hrho = \hrho(\cX), \phi = \phi(\cX), \hphi = \phi(\cX,\cX), B = B(\cX), \hat B_{\eps_3} = \hat B_{\eps_3}(\cX)$, and $\hat \cP = \hat B_{\eps_3} \cap \cP_n $.
Recall that $F_{k,r}^{(4)}$ counts  the negative critical $k$-faces $\cX$ with $\phi \le \eps_1$, $\hphi > \eps_2$ and $ \hat \cP \ne \emptyset$. Our goal here is to apply the topological statement from Lemma \ref{lem:crit_pos_2} to show that such configurations cannot produce negative faces.

Given $\hat \cP \ne\emptyset$,
there exists $x_*\in \hat B_{\eps_3}$ such that $\cX^* = \hcXmin \cup \set{x_*} \in \cC^k(\cP_n)$.
Since $\hphi > \eps_2$,  we can apply Lemma \ref{lem:dist_boundary}, to conclude that $\phi(\cX, \cX^*) \ge \const\eps_2\eps_3$, and since $\hphi > \eps_2\gg \eps_2\eps_3$, we have
\[	
\phmin(\cX, \cX^*) \ge \const\eps_2\eps_3.
\]
Notice that $\eps_2 \eps_3 = \frac{e^{-4w(n)}}{(\log \Lambda)^{4/3}\Lambda^{2/3} }\gg \Lambda^{-3/4}$.
Thus, we will set 
 $\eps_0 = \Lambda^{-3/4}$, and aim to 
 show that $\hat A_{\eps_0}(\cX)$ is not covered.

Given $\cX = \bx$, similarly to the proof of Lemma \ref{lem:Fk_phi}, we define
\[
\hat p_{\eps_0}(\bx)= \P_{\emptyset} \param{\hat A_{\eps_0}(\bx) \not\subset B_{(1-{\eps_0}/3)\rho(\bx)}(\cP_n)}.
\]
Following the same steps as in the proof of Lemma  \ref{lem:Fk_phi}, we take an $\eps_0/2$-net $\cS$ for $\hat A_{\eps_0}$. The size of this net is bounded by
\[
	|\cS|\le \const \frac{\vol(\hat A_{\eps_0})}{\vol(B_{\eps_0\rho/2})} \le \frac{\rho^d(1+\eps_1)^d}{\rho^d(\eps_0/2)^d} \le \const {\eps_0}^{-d}.
\]
Then
\[
\hat p_{\eps_0}(\bx)\le 
\sum_{s\in \cS} \P_{\emptyset}\param{B_{\rho_1}(s) \cap \cP_n = \emptyset} = \const\eps_0^{-d} \max_{s\in \cS} e^{-n \vol(B_{\rho_1}(s)\bs B)}.
\]
where $\rho_1 = \rho(1-(5/6)\eps_0)$. Recall that $B$ is a ball of radius $\rho$ centered at $c$. Denote by $s_{\min}$ the point in $\hat A_{\eps_0}$ closest to $c$ (see Figure \ref{fig:vol_simp}(a)). Then $|s_{\min}-c| = \eps_0 - \phi \ge (11/12)\eps_0$ (since $\phi \le \eps_1\ll \eps_0$). Therefore, for every $s\in \cS$ we have
\[
		\vol(B_{\rho_1}(s)\bs B) \ge \vol(B_{\rho_1}(s_{\min})\bs B) \ge \rho^d\vdiff(\eps_0, 10/11)
\]
From Lemma \ref{lem:vdiff} we conclude that there exists $\const>0$ such that $\vol(B_{\rho_1}(s)\bs B) \ge \const \eps_0\rho^d$ for all $s\in \cS$.  Thus, we have
\[
	\hat p_{\eps_0}(\bx) \le \const \eps_0^{-d} e^{-\const \eps_0 n\rho^d}.
\]
Proceeding as in Lemma \ref{lem:Fk_phi} and since $\phi \le \eps_1$, we have
\[
\mean{F_{k,r}^{(4)}} \le \const \eps_0^{-d}\eps_1 n\Lambda^{k-1} e^{-\Lambda(1+ \const\eps_0)}
\]
Recall that  $\eps_0 = \Lambda^{-3/4}$, $\eps_1 = \const \frac{\log\Lambda}{\Lambda}$, and $\Lambda = \thres -w(n)$. Then
\[
\mean{F_{k+1,r}^{(4)}} \le \const \Lambda^{3d/4} \log \Lambda  e^{-\const \Lambda^{1/4} +w(n)} \to 0.
\]
This completes the proof.

\end{proof}

\begin{lem}\label{lem:F_5}
For $k\ge 1$, if $\Lambda= \thres-w(n)$ then
\[	\limninf\mean{F_{k+1,r}^{(5)}} = 0.
\]
\end{lem}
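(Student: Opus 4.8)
The plan is to rerun the by-now-standard first–moment machinery of Lemmas~\ref{lem:mean_ck} and~\ref{lem:Fk_phi}, the only genuinely new ingredient being an exact geometric identity tying $\rho(\hcXmin)$ to $\rho(\cX)$ and $\phi(\cX)$.

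First I would record the identity
\[
\rho(\hcXmin)=\rho(\cX)\sqrt{1-\phi^2(\cX)} .
\]
Writing $\Pi_{\min}=\Pi(\hcXmin)$, the $k$ points of $\hcXmin$ lie simultaneously on the $(k-1)$-sphere $S$ of radius $\rho(\cX)$ about $c(\cX)$ inside $\Pi(\cX)$, and on the hyperplane $\Pi_{\min}$ of $\Pi(\cX)$; hence their common circumsphere in $\Pi_{\min}$ is $S\cap\Pi_{\min}$, a $(k-2)$-sphere whose center is the orthogonal projection of $c(\cX)$ onto $\Pi_{\min}$ and whose radius is $\sqrt{\rho^2(\cX)-d(c(\cX),\Pi_{\min})^2}$, i.e.\ exactly $(c(\hcXmin),\rho(\hcXmin))$. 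Since, by \eqref{eq:xmin}, $\hcXmin$ is the face attaining $\phi(\cX)=\min_i d(c(\cX),\Pi(\hcX_i))/\rho(\cX)$, we have $d(c(\cX),\Pi_{\min})=\phi(\cX)\rho(\cX)$, and the identity follows by Pythagoras. Consequently, on the event counted by $F_{k,r}^{(5)}$, where $\phi(\cX)\le\eps_1$ and $\rho(\hcXmin)<r<\rho(\cX)$, the radius $\rho(\cX)$ is confined to the short interval $(r,\,r(1-\eps_1^2)^{-1/2}]$ (and the constraint $\rho(\cX)\le\rmax$ is vacuous since $r\to0$).

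Next, bounding $F_{k,r}^{(5)}$ from above by the number of \emph{critical} $k$-faces with those properties and repeating the Palm-theory and Blaschke--Petkantschin steps of Lemma~\ref{lem:mean_ck}, while using $\indf{\rho(\hcXmin)<r}\indf{\phi\le\eps_1}\le\indf{\rho(\cX)<r(1-\eps_1^2)^{-1/2}}\indf{\phi\le\eps_1}$ to decouple the radial and spherical parts, I obtain
\[
\mean{F_{k,r}^{(5)}}\le \const\, n^{k+1}\int_r^{r(1-\eps_1^2)^{-1/2}}\!\!\rho^{dk-1}e^{-n\omega_d\rho^d}\,d\rho\;\int_{(\S^{k-1})^{k+1}}\hcrit(\bth)\,\indf{\phi(\bth)\le\eps_1}\,(\vsimp(\bth))^{d-k+1}\,d\bth .
\]
For the spherical integral, boundedness of $\vsimp$ on the sphere together with Corollary~\ref{cor:phi_lip} and $V_k^\phi(0)=0$ give a bound $\const\,V_k^\phi(\eps_1)\le\const\,\eps_1$. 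For the radial integral, the substitution $t=n\omega_d\rho^d$ turns it into $\frac{1}{d(n\omega_d)^k}\int_{\Lambda}^{\Lambda(1-\eps_1^2)^{-d/2}}t^{k-1}e^{-t}\,dt$; the interval of integration has length $O(\Lambda\eps_1^2)$ and the integrand is $\le\const\,\Lambda^{k-1}e^{-\Lambda}$ there, so the radial integral is $\le\const\,(n\omega_d)^{-k}\Lambda^{k}\eps_1^2 e^{-\Lambda}$. Combining the two gives
\[
\mean{F_{k,r}^{(5)}}\le\const\,n\Lambda^{k}\eps_1^3 e^{-\Lambda}.
\]

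Finally I would apply this with the face dimension $k+1$ in place of $k$, so $\mean{F_{k+1,r}^{(5)}}\le\const\,n\Lambda^{k+1}\eps_1^3 e^{-\Lambda}$, and plug in $\Lambda=\thres-w(n)=\log n+(k-1)\logg n-w(n)$, for which $e^{-\Lambda}=e^{w(n)}/(n(\log n)^{k-1})$, $\Lambda\approx\log n$, and $\eps_1=\frac{2}{\Dn_{k,2}}\frac{\log\Lambda}{\Lambda}$ satisfies $\eps_1^3\le\const\,(\logg n)^3/(\log n)^3$. This yields
\[
\mean{F_{k+1,r}^{(5)}}\le\const\,\frac{(\logg n)^3}{\log n}\,e^{w(n)}\;\longrightarrow\;0,
\]
since $w(n)=o(\logg n)$ forces $e^{w(n)}=(\log n)^{o(1)}$; as in Lemmas~\ref{lem:F_1}--\ref{lem:F_4}, the case of larger $\Lambda$ then follows from the same estimate. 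An application of Markov's inequality (as elsewhere) completes the proof. The only step requiring care is the geometric identity $\rho(\hcXmin)=\rho(\cX)\sqrt{1-\phi^2(\cX)}$ and the induced pinching of the $\rho$-range to width $O(\Lambda\eps_1^2)$ — everything downstream is a routine repetition of the first-moment estimates already used in Sections~\ref{sec:pt_crit} and~\ref{sec:pn_faces}.
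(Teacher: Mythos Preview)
Your proof is correct and follows essentially the same approach as the paper: both use the Pythagorean identity $\rho(\hcXmin)=\rho(\cX)\sqrt{1-\phi^2(\cX)}$ to pin $\rho(\cX)$ to an interval of relative width $O(\eps_1^2)$ above $r$, then combine the factor $\eps_1$ from Corollary~\ref{cor:phi_lip} with the factor $\eps_1^2\Lambda$ from the short radial range to obtain $\mean{F_{k+1,r}^{(5)}}\le\const\,n\Lambda^{k+1}\eps_1^3 e^{-\Lambda}\le\const\,(\log\Lambda)^3\Lambda^{-1}e^{w(n)}\to 0$. The only cosmetic difference is that the paper phrases the radial estimate as a difference $\mean{F_{k+1,r,0,\eps_1}-F_{k+1,r(1+\eps_1^2),0,\eps_1}}$ via Lemma~\ref{lem:Fk_phi}, whereas you compute the truncated BP integral directly; also note that the lemma asks only for $\mean{F_{k+1,r}^{(5)}}\to 0$, so the Markov step is superfluous here.
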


\begin{proof}
Fix $\cX$ and let $\rho = \rho(\cX), \rhmin = \rho(\hcXmin)$, and $\phi = \phi(\cX)$.

Suppose that $\rho \ge r(1+\eps_1^2)$.
Since $\phi \le \eps_1$, we have
\[
\rhmin = \rho\sqrt{1-\phi^2} \ge \rho\sqrt{1-\eps_1^2} \ge r(1+ \eps_1^2)(1-\eps_1^2/2 + o(\eps_1^2)) = r(1 +\eps_1^2/2 + o(\eps_1^2)).
\]
Therefore, for $n$ large enough we will have $\rho_{\min} > r$, so such faces are not accounted for by $F_{k,r}^{(5)}$. 
Therefore, we should only count faces for which $\rho \le r(1+\eps_1^2)$.
Similarly to Lemma \ref{lem:Fk_phi} we can show that the number of negative $k$-faces with $\rho \in [r,r(1+\eps_1^2)]$ and with $\phi\le \eps_1$ is bounded by
\[
\splitb
	\mean{F_{k,r,0,\eps_1}-F_{k,r(1+\eps_1^2),0,\eps_1}}
&\le \const \eps_1 n \param{\Lambda^{k-1}e^{-\Lambda} - (1+\eps_1^2)^{k-1}\Lambda^{k-1}e^{-(1+\eps_1^2)^d\Lambda)}} \\
&=\const \eps_1 n \Lambda^{k-1}e^{-\Lambda}\param{d\eps_1^2\Lambda + o(\eps_1^2\Lambda)},
\splite
\]
where we used the fact that $\eps_1^2\Lambda \to 0$.
Taking $\Lambda = \thres -w(n)$ we then have
\[
	\mean{F_{k+1,r}^{(5)}} \le \const \eps_1^3\Lambda^2 e^{w(n)} \le \const \frac{(\log \Lambda)^3}{\Lambda} e^{w(n)} \to 0.
\]
This completes the proof.

\end{proof}

\subsection{The positive-negative pairing and isolated faces}

At this point, for $1\le k \le d-2$, we have shown that in when $\Lambda =\thres + o(\logg n)$, there is a bijection between  remaining positive $k$-faces and negative $(k+1)$-faces. The statements we proved above provide additional  insight into the structure of homology towards connectivity. 

The  first observation we want make is that the $k$-cycle ``born'' at $\rho(\hcXmin)$ is the one that ``dies" at $\rho(\cX)$.
More precisely, we can show that the $k$-cycle $\partial \sigma(\cX)$ that is introduced at $\rho(\hcXmin)$ (when $\hcXmin$ is added), at no point $\rho \in(\rho(\hcXmin),\rho(\cX))$ becomes homologous to any other $k$-cycle $\gamma$ that was generated earlier (i.e.~$\gamma \in \bZ_k(\cC_{\rho'}(\cP_n))$ for $\rho' < \rho(\hcXmin)$). 

Lemma \ref{lem:F_4} proves that each of the positive $k$-faces $\hcXmin$ enters the complex at radius $\rho(\hcXmin)$ and remains \emph{isolated} through the entire interval $[\rho(\hcXmin), \rho(\cX))$. This means that $\hcXmin$ is not on the boundary of any $(k+1)$-simplex, and therefore $\partial \sigma(\cX)$ cannot be homologous to any other $k$-cycle that does not include $\hcXmin$, and therefore could not have been born earlier. 

The second observation is related to the vanishing of isolated faces, mentioned in the introduction. In most models studied so far for random graphs and complexes, there is a strong link between the vanishing of isolated vertices/faces and connectivity. The results we proved earlier show that a similar phenomenon happens here as well. The obstructions to homological connectivity are the positive-negative pairs we saw above, where the positive $k$-face remains isolated until the negative face joins the complex. Thus, the point where the last $k$-cycle is terminated is exactly the point where the last isolated critical $k$-face gets covered. From that point onwards, no new critical isolated $k$-faces are going to appear in the complex. 

Notice, however, that our results pose an important distinction compared to the LM random complex. Homological connectivity occurs when the last isolated \emph{critical} face gets covered. There could be  isolated faces that are not critical, and thus have no effect on homology.
In the ER-graph as well as the LM-complex, all isolated vertices/faces are indeed critical. Therefore, the phenomenon we observe here is merely a refined version of the same behavior.

\subsection{The phase transition for the instantaneous homology}

Recall, that our definition for homological connectivity $\cH_{k,r}$ is such that $H_k(\cC_s) \cong H_k(\T^d)$ for all $s\ge r$. 
However, one may still ask about the probability that $H_k(\cC_r) \cong H_k(\T^d)$ for a specific $r$.
As opposed to $\cH_{k,r}$, this event is non-monotone, suggesting that there might not be a sharp phase transition at all. Nevertheless, the results we proved earlier, together with earlier results in \cite{bobrowski_vanishing_2017} lead to the following statements.

\begin{cor}\label{cor:pt_ihk_1}
Let $1\le k \le d-2$, and $w(n)\to\infty$. Then,
\[
	\limninf \prob{H_k(\cC_r) \cong H_k(\T^d)} = \begin{cases} 1 & \Lambda \ge \log n + (k-1)\logg n - w(n),\\ 0 & \Lambda \le \log n + (k-2)\logg n - w(n).
	\end{cases}
\]
\end{cor}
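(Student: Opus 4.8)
The plan is to reduce the statement about the inclusion-induced map $j_*\colon H_k(\cC_r)\to H_k(\T^d)$ to two independent questions: whether $j_*$ is surjective, and whether it is injective. Surjectivity is handled uniformly by Lemma \ref{lem:perc}: for $\Lambda\ge\log n$ one has $\im(j_*)=H_k(\T^d)$ \whp, which covers the whole range in the first case when $k\ge 2$; for $k=1$ (where $\thres=\log n$) surjectivity is needed slightly below $\log n$, and there it is elementary, since the $d$ essential loops of $\T^d$ are represented in $\cC_r$ \whp\ already once $G(n,2r)$ percolates across the torus, far below the connectivity threshold. Granting surjectivity, $\dim\ker(j_*)=\beta_k(\cC_r)-\binom{d}{k}$ equals the number of \emph{small} $k$-cycles alive at radius $r$ (those mapping to zero in $H_k(\T^d)$), and by the Morse dictionary of Section \ref{sec:crit_faces} every such cycle is born at a positive critical $k$-face and dies at a negative critical $(k+1)$-face. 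So the corollary reduces to counting positive $k$-faces whose matching negative $(k+1)$-face has not yet appeared by radius $r$.

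For the first case, $\Lambda\ge\thres-w(n)$, I would split at the critical window. When $\Lambda\ge\thres+w(n)$, Proposition \ref{prop:pt_neg} gives $\Cp_{k,r}=\Cn_{k+1,r}=0$ \whp; both quantities are monotone decreasing in $r$, hence in $\Lambda$, so this persists for all larger $\Lambda$. Then no positive $k$-face or negative $(k+1)$-face has radius exceeding $r$, so no small cycle can be alive at $r$, and with surjectivity this gives $H_k(\cC_r)\cong H_k(\T^d)$. For $\Lambda\in[\thres-w(n),\thres+w(n)]$ I would invoke Lemma \ref{lem:pn_pairs}: it yields $\Cp_{k,r}=\Cn_{k+1,r}=\Cnp_{k+1,r}$ \whp, and its proof (Part~II, via Lemma \ref{lem:F_5} together with the isolation statement of Lemma \ref{lem:F_4}) shows the positive–negative matching is a bijection in which the two faces of each pair ``stick together'', i.e.\ there is no pair with birth radius below $r$ and death radius above $r$. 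Hence every small cycle is born and killed on the same side of $r$, so none is alive exactly at $r$, and surjectivity again gives the isomorphism. Since the auxiliary counts $F^{(i)}_{k,r}$ are also decreasing in $r$, the conclusion is uniform over the stated range.

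The second case, $\Lambda\le\log n+(k-2)\logg n-w(n)$, is exactly the lower half of the phase transition established in Theorem~5.4 of \cite{bobrowski_vanishing_2017}: there one shows that \whp\ $j_*$ fails to be an isomorphism (either $\beta_k(\cC_r)>\binom{d}{k}$, or, for the very small values of $\Lambda$, $\im(j_*)\ne H_k(\T^d)$). The mechanism for the generic sub-range is a first/second-moment argument on positive critical $k$-faces: in this regime $\mean{\Cp_{k,r}}\to\infty$, and a constant fraction of these faces — those born in a fixed multiplicative window just below $r$ with $\phi(\cX)$ bounded away from $0$, so that the death radius $\rho(\cX)=\rho(\hcXmin)/\sqrt{1-\phi(\cX)^2}$ of the matched pair exceeds $r$ — produce a small $k$-cycle still alive at $r$, and a second-moment estimate shows at least one survives \whp.

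The main obstacle is the critical-window part of the first case, i.e.\ Lemma \ref{lem:pn_pairs} and all its inputs: the extended topological sufficient condition for positivity (Lemma \ref{lem:crit_pos_2}), the delicate spherical volume estimates (Lemmas \ref{lem:vol_a_b} and \ref{lem:v_crit_a_b_c}) and the distance-to-boundary bound (Lemma \ref{lem:dist_boundary}), and the five expectation bounds on $F^{(i)}_{k,r}$ that together force $\Cnnp_{k+1,r}=0$ and the sticking-together of pairs. Everything else is bookkeeping: assembling surjectivity, the vanishing and pairing of critical faces, and the Morse correspondence, and verifying that monotonicity in $r$ upgrades the pointwise statements to the claimed ranges of $\Lambda$.
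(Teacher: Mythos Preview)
Your proposal is correct and follows the same approach as the paper: surjectivity of $j_*$ via Lemma~\ref{lem:perc}, injectivity via the positive--negative pairing and ``sticking together'' of Lemma~\ref{lem:pn_pairs} (in particular Lemma~\ref{lem:F_5}), and the lower bound by reference to \cite{bobrowski_vanishing_2017,bobrowski_random_2019}. The paper's proof is simply terser---it invokes Lemma~\ref{lem:F_5} directly without splitting into above versus inside the critical window, and it does not flag the $k=1$ surjectivity subtlety you raise.
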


\begin{cor}\label{cor:pt_ihk_2}
Let $k =d-1,d$, and let $w(n)\to\infty$. Then,
\[
	\limninf \prob{H_k(\cC_r) \cong H_k(\T^d)} = \begin{cases} 1 & \Lambda = \log n + (d-1)\logg n + w(n),\\ 0 & \Lambda = \log n + (d-1)\logg n - w(n).
	\end{cases}
\]
\end{cor}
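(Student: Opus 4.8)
The plan is to reduce both halves to facts already established; the only genuinely new input is a short Morse-theoretic bookkeeping of $\beta_{d-1}$ over the window $[r,\rmax]$. For the ``$=1$'' direction, note that $\cH_{k,r}$ implies in particular that $H_k(\cC_r)\cong H_k(\T^d)$, so $\prob{H_k(\cC_r)\cong H_k(\T^d)}\ge\prob{\cH_{k,r}}$, and the right-hand side tends to $1$ when $\Lambda=\log n+(d-1)\logg n+w(n)$ by Theorem~\ref{thm:pt_hk_2}. (Equivalently, at this value of $\Lambda$ the balls cover, $B_r(\cP_n)=\T^d$ \whp, whence $\cC_r\simeq\T^d$ by the Nerve Lemma~\ref{lem:nerve}.) Thus all the work is in showing $\prob{H_k(\cC_r)\cong H_k(\T^d)}\to0$ when $\Lambda=\log n+(d-1)\logg n-w(n)$.

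For $k=d$ this is immediate. By Proposition~\ref{prop:pt_crit}, $F_{d,r}>0$ \whp, and every critical $d$-face $\cX$ it counts has $\rho(\cX)>r$ with $B(\cX)\cap\cP_n=\emptyset$ (condition~(2) of Lemma~\ref{lem:crit_face}), so $d_{\cP_n}(c(\cX))=\rho(\cX)>r$ and $c(\cX)\notin B_r(\cP_n)$. Hence \whp\ $B_r(\cP_n)\ne\T^d$; picking any $x_0\notin B_r(\cP_n)$, the inclusion $B_r(\cP_n)\hookrightarrow\T^d$ factors through $\T^d\setminus\{x_0\}$, whose $d$-th homology vanishes, so $j_*=0$ and $H_d(\cC_r)\not\cong H_d(\T^d)$.

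For $k=d-1$, recall that along the \cech filtration $\beta_{d-1}$ changes only at positive $(d-1)$-faces ($+1$) and negative $d$-faces ($-1$); tracking it from $r$ to $\rmax$ gives
\[
\beta_{d-1}(\cC_r)=\beta_{d-1}(\cC_{\rmax})+\Cn_{d,r}-\Cp_{d-1,r}.
\]
Since $\rmax$ is a fixed positive constant, $\omega_d n\rmax^d\gg\log n$, so \whp\ $B_{\rmax}(\cP_n)=\T^d$ and $\beta_{d-1}(\cC_{\rmax})=\binom{d}{d-1}=d$. As in the previous paragraph, \whp\ $B_r(\cP_n)\ne\T^d$, hence $\beta_d(\cC_r)=0$, so exactly one positive $d$-face builds the top cycle and $\Cn_{d,r}=F_{d,r}-1$ \whp; moreover $\Cp_{d-1,r}\le F_{d-1,r}$. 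Finally, Proposition~\ref{prop:mean_var} gives $\mean{F_{d,r}}\approx D_d e^{w(n)}\to\infty$ and $\mean{F_{d-1,r}}/\mean{F_{d,r}}\approx D_{d-1}/(D_d\Lambda)\to0$; combined with $\var{F_{k,r}}\approx\mean{F_{k,r}}$ (Chebyshev on the lower tail of $F_{d,r}$, Markov on the upper tail of $F_{d-1,r}$), this yields $F_{d,r}-F_{d-1,r}\to\infty$ \whp. Therefore \whp\ $\beta_{d-1}(\cC_r)\ge d+(F_{d,r}-1)-F_{d-1,r}>d=\beta_{d-1}(\T^d)$, so $j_*$ is not injective, proving $\prob{H_{d-1}(\cC_r)\cong H_{d-1}(\T^d)}\to0$.

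The statement has no deep obstacle — it is an assembly of Theorem~\ref{thm:pt_hk_2}, Propositions~\ref{prop:mean_var} and~\ref{prop:pt_crit}, and the Nerve Lemma. The one point needing care is the ``$=0$'' direction for $k=d-1$: Theorem~\ref{thm:pt_hk_2} only tells us that $\cH_{d-1,r}$ fails \whp\ (i.e.\ some negative $d$-face appears after $r$), whereas the \emph{instantaneous} event requires the \emph{net} count $\Cn_{d,r}-\Cp_{d-1,r}$ to be strictly positive. This is precisely where the $\logg n$-gap between the $(d-1)$-th and $(d-2)$-th critical-face thresholds is used: by Proposition~\ref{prop:mean_var} it forces $F_{d,r}$ to dominate $F_{d-1,r}\ge\Cp_{d-1,r}$ by a factor of order $\Lambda$.
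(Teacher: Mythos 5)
Your proof is correct, and it arrives at the same two conclusions as the paper; the upper-threshold direction and the $k=d$ lower-threshold direction are handled essentially the way the paper does (via $\cH_{k,r}\Rightarrow H_k(\cC_r)\cong H_k(\T^d)$ and Theorem~\ref{thm:pt_hk_2}, and via $F_{d,r}>0$ giving a point outside $B_r(\cP_n)$ so that $j_*=0$ on $H_d$). The one place where you genuinely diverge is the $k=d-1$, $\Lambda=\log n+(d-1)\logg n-w(n)$ case. The paper's argument is: by Proposition~\ref{prop:pt_crit} applied with $k=d-1$, \whp\ $F_{d-1,r}=0$ so $\Cp_{d-1,r}=0$, hence $\beta_{d-1}$ is monotone nonincreasing on $[r,\rmax]$ and the existence of a negative $d$-face (from $\Cn_{d,r}>0$) forces $\beta_{d-1}(\cC_r)>\beta_{d-1}(\T^d)$. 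You instead write out the Morse-theoretic balance $\beta_{d-1}(\cC_r)=d+\Cn_{d,r}-\Cp_{d-1,r}$, bound $\Cp_{d-1,r}\le F_{d-1,r}$, and use the first and second moments of $F_{d,r}$ and $F_{d-1,r}$ to get $F_{d,r}-F_{d-1,r}\to\infty$ \whp, which makes the net count strictly positive. This buys you a little extra: the paper's step $\Cp_{d-1,r}=0$ requires $\logg n-w(n)\to\infty$ (i.e.\ $w(n)=o(\logg n)$), whereas your quantitative comparison only needs $\Lambda\to\infty$ to push $\mean{F_{d-1,r}}/\mean{F_{d,r}}\approx D_{d-1}/(D_d\Lambda)\to0$, so it handles any $w(n)\to\infty$ with $\Lambda\to\infty$ without a monotonicity assumption. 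The cost is that you invoke more of Proposition~\ref{prop:mean_var} (both moments, for two values of $k$) rather than just the vanishing of $F_{d-1,r}$. Both routes are valid and short; yours is marginally more general, the paper's is marginally more economical.
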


\begin{proof}[Proof of Corollary \ref{cor:pt_ihk_1}]
For $1\le k \le d-2$, the case where $\Lambda \le \log n + (k-2)\logg n -w(n)$ was proved in \cite{bobrowski_random_2019,bobrowski_vanishing_2017}, by showing the existence of positive critical $k$-faces that are isolated. Suppose now that $\Lambda \ge \log n + (k-1)\logg n - w(n)$.
If $H_k(\cC_r)\not\cong H_k(\T^d)$ then either there is a generator in $H_k(\T^d)$ that is not yet present in $H_k(\cC_r)$, or  there is a generator in $H_k(\cC_r)$ that is trivial in $H_k(\T^d)$.
The former case is excluded by Lemma \ref{lem:perc}. The latter case implies that there is a positive $k$-face appearing in $(0,r)$ that is matched with  negative $(k+1)$-face that appears in $(r,\rmax]$. However, 
when $\Lambda \ge\thres - w(n)$, Lemma \ref{lem:F_5} shows that \whp~such event cannot occur.
Thus, we reached a contradiction, and therefore we conclude that $H_k(\cC_r)\cong H_k(\T^d)$.

\vspace{-5pt}
\end{proof}

\begin{proof}[Proof of Corollary \ref{cor:pt_ihk_2}]
For $k=d-1$, recall that when $\Lambda = \log n + (d-1)\logg n - w(n)$, we have $\Cn_{d,r} > 0$, while $\Cp_{d-1,r} = 0$. Thus, there must be $(d-1)$-cycles existing at $r$ that are to be terminated in $(r,\rmax]$. Therefore, $H_{d-1}(\cC_r)\not\cong H_{d-1}(\T^d)$.

For $k=d$, recall that  only a single $d$-cycle is formed throughout the filtration, and that when $\Lambda = \log n + (d-1)\logg n - w(n)$ we have $\Cp_{d,r} = 1$. Therefore, the single cycle in $H_d(\T^d)$ is to be generated in $(r,\rmax]$, implying that
 $H_d(\cC_r)\not\cong H_d(\T^d)$.

Finally, when $\Lambda = \log n + (d-1)\logg n + w(n)$, we showed that both $\cH_{d,r}$ and $\cH_{d-1,r}$ hold. This implies that $H_{d-1}(\cC_r) \cong H_{d-1}(\T^d)$ and $H_d(\cC_r)\cong H_d(\T^d)$, concluding the proof.

\end{proof}

\section{The occurrences of the last cycles}\label{sec:pois_limit}

The conclusion from Section \ref{sec:crit_window}
is that when $\Lambda = \thres + o(\logg n)$, the interference to homological connectivity is controlled by $\Cnp_{k+1,r}$ --  the number of positive-negative pairs  appearing in $(r,\rmax]$. In addition, from Lemma \ref{lem:pn_pairs} and Proposition \ref{prop:pt_neg} we have that $\Cnp_{k+1,r} = \Cp_{k,r} = F_{k,r}$ (\whp). 
Thus, in order to analyze the distribution for the number of obstructions remaining in $(r,\rmax]$ it is enough to study the distribution of $F_{k,r}$.

Recall from Proposition \ref{prop:mean_var} that 
\[
	\mean{F_{k,r}} \approx 
D_k n\Lambda^{k-1} e^{-\Lambda}.
\]
Thus, when $\Lambda = \thres - w(n)$ we still have many cycles. However, when $\Lambda = \thres + \lambda$ for $\lambda \in \R$, then from Proposition \ref{prop:mean_var}
\[
\mean{F_{k,r}} \approx  \var{F_{k,r}}\approx D_k e^{-\lambda},
\]
so the number of interferences is finite in expectation and variance, suggesting a Poisson limit. In this section we will show not only that there exists a point-wise Poisson limit for $F_{k,r}$, but also that the limiting distribution for the entire process of occurrences is that of a Poisson process in $\R$ (with respect to $\lambda$).
These Poisson limits will then enable us to derive the exact limiting probability of $\cH_{k,r}$.

\subsection{Poisson limits}

For any finite $n$, there exist finitely many critical $k$-faces with distinct radii (\whp). Denote by $\cR_k\subset \R^+$ the (finite) set of all critical radii, and by $\Rp_k, \Rn_k$ the positive and negative critical radii, respectively.
Define $\Delta_{k,n}:\R^+\to\R^+$ as
\[
	\Delta_{k,n}(r) := n(\log n)^{k-1} e^{-\omega_d nr^d},
\]
so that
\eqb\label{eq:lambda_delta}
	 \Lambda = \thres  + \lambda\quad\Leftrightarrow \quad   \Delta_{k,n}(r) = e^{-\lambda}.
\eqe

For $t\in [0,\infty)$, define
\eqb\label{eq:crit_proc}
\splitb
	N_k(t) &:= \# {r \in \cR_k : \Delta_{k,n}(r) \le t},\\
		\Np_k(t) &:= \# {r \in \Rp_k : \Delta_{k,n}(r) \le t},\\
			\Nn_{k}(t) &:= \# {r \in \Rn_{k+1} : \Delta_{k,n}(r) \le t}.
\splite
\eqe
In other words, the process $N_k$ counts the occurrences of the last critical $k$-faces, but in a reversed order. Similarly, $\Np_k$ counts the last positive $k$-faces, and $\Nn_k$ counts the negative $(k+1)$-faces.
The following theorem asserts that $N_k,\Np_k, \Nn_k$ all converge weakly to the same Poisson counting process.

\begin{thm}\label{thm:pois_proc}
Let $1 \le k \le d$, and let $V_k(t)$ ($t\ge 0$) be a homogeneous Poisson (counting) process with rate $D_k$. Then,
\[
	N_k \Rightarrow V_k,
\] 
where `$\Rightarrow$' refers to weak convergence of the final dimensional distributions. In other words, for all $m, t_1,\ldots, t_m$, we have a multivariate weak convergence
\[
	(N_k(t_1),\ldots, N_k(t_m)) \Rightarrow
	(V_k(t_1)\ldots, V_k(t_m)).
\]
Similarly, for $1\le k \le d-1$ we have $\Np_k \Rightarrow V_k$, and for $1\le k \le d-2$ we have $\Nn_k \Rightarrow V_k$.
\end{thm}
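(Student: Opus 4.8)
The plan is to prove convergence of the point process of (reversed, rescaled) critical radii to a homogeneous Poisson process of intensity $D_k$ via the standard method of convergence of factorial moments, or equivalently via a Chen–Stein / Poisson-approximation argument on a fine discretization. I will first reduce the three statements to one: by Proposition~\ref{prop:pt_neg} and Lemma~\ref{lem:pn_pairs}, for $1\le k\le d-2$ we have \whp\ $F_{k,r}=\Cp_{k,r}=\Cn_{k+1,r}$ in the critical window, so the three processes $N_k,\Np_k,\Nn_k$ differ only on an event of vanishing probability; for $k=d-1$ the equality $\Np_{d-1}=N_{d-1}$ likewise holds \whp\ in the window since $\Cn_{d-1,r}=0$ there (graph connectivity has long since occurred), and for $k=d$ the claim is just about $N_d$. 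Hence it suffices to prove $N_k\Rightarrow V_k$.

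For that, fix $t_1<\dots<t_m$ and let $r_i=r_i(n)$ be defined by $\Delta_{k,n}(r_i)=t_i$, i.e. $\Lambda_i:=\omega_d n r_i^d=\log n+(k-1)\logg n-\log t_i$. Then $N_k(t_i)$ is the number of critical $k$-faces $\cX$ with $\rho(\cX)\in(r_i,\rmax]$, which is exactly $F_{k,r_i}$. By the multivariate method of moments for counting variables, it is enough to show that for every fixed vector of nonnegative integers $(a_1,\dots,a_m)$,
\[
\E\Big[\prod_{i=1}^m (F_{k,r_i})_{(a_i)}\Big]\ \longrightarrow\ \prod_{i=1}^m (D_k t_i)_{\text{Poisson factorial moment}}=\prod_{i=1}^m \big(\text{mixed factorial moments of independent }\pois{D_k t_i}\big),
\]
where $(x)_{(a)}$ is the falling factorial; concretely, since $V_k(t_1),\dots,V_k(t_m)$ has independent increments, the target mixed factorial moments factor through the increments and equal products of powers of $D_k(t_{i}-t_{i-1})$. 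I would organize this as: (i) compute the leading asymptotics of $\E[(F_{k,r})_{(a)}]$, the $a$-th factorial moment, which by Palm theory equals $\tfrac{n^{a(k+1)}}{((k+1)!)^a}\,\E[\prod_{j=1}^a g_{r}(\cX_j',\cP_n\cup\bigcup_l\cX_l')]$ summed with the appropriate combinatorial weights, and show it is asymptotic to $(D_ke^{-\lambda})^a=(D_kt)^a$ exactly as in Lemma~\ref{lem:mean_ck}; (ii) show the cross terms where two of the $a$ simplices $\cX_j'$ are "close" (share a vertex, or have overlapping defining balls) contribute $o(1)$ — this is the decoupling step that makes the factorial moments factorize over disjoint configurations; (iii) handle the multivariate structure by noting that a critical face with $\Delta_{k,n}(\rho(\cX))\le t_i$ automatically satisfies $\le t_{i'}$ for $t_{i'}\ge t_i$, so the mixed factorial moments reduce to single-parameter ones evaluated at the $t_i$'s, giving exactly the Poisson increments structure. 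The variance assertion of Proposition~\ref{prop:mean_var} (postponed to this section) is the case $a=2$, $m=1$ of step (i)–(ii), and I would prove it here first as a warm-up.

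The main obstacle is step (ii): the decoupling/sparsity estimate showing that configurations of $a$ critical $k$-faces whose "influence regions" (the balls $B(\cX_j')$ of radius $\rho\approx (\Lambda/\omega_d n)^{1/d}$, together with their spanning simplices) intersect contribute negligibly to the factorial moment. When two such faces are close, they cannot each independently impose an empty ball, so one gets an exponent strictly larger than $2\Lambda$ in the exponential, beating the $n^{2(k+1)}$ polynomial factor and the $\rho^{2dk}$ volume factor; but one must be careful because there are several regimes of "closeness" (sharing $j$ common points for $j=1,\dots,k$, or disjoint vertex sets but overlapping balls) and in the regime of shared vertices the geometry of the Blaschke–Petkantschin change of variables must be redone on the union. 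I expect this to mirror, but be somewhat more involved than, the bounds in Lemma~\ref{lem:Fk_phi} and the $F^{(i)}_{k,r}$ estimates of Section~\ref{sec:crit_window}. Once the dominant contribution is shown to come from $a$-tuples of faces with pairwise disjoint, well-separated influence regions, their joint empty-ball events become asymptotically independent by spatial independence of the Poisson process, the factorial moment factorizes, and each factor is asymptotic to $D_k t_i$ by step (i); plugging into the method of moments and invoking that the limiting mixed factorial moments uniquely determine the law of $(V_k(t_1),\dots,V_k(t_m))$ (a vector of Poisson-with-independent-increments, whose moment generating function is entire) finishes the proof. Corollary~\ref{cor:pois_var} is then immediate from the $m=1$ case together with the total-variation bound for Poisson approximation implicit in the factorial-moment convergence.
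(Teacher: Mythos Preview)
Your proposal is valid but takes a genuinely different route from the paper. You argue via convergence of all mixed factorial moments, which requires carrying out the decoupling step (ii) for arbitrary $a$-tuples of critical faces. The paper instead invokes a functional Poisson approximation theorem of Decreusefond, Schulte, and Th\"ale (Theorem~3.1 in \cite{decreusefond_functional_2016}), which bounds the Kantorovich--Rubinstein distance between the point process $\cL_{k,t_0}$ and the target Poisson process $\mu_{k,t_0}$ by
\[
\dtv{L_{k,t_0}}{M_{k,t_0}} + 2\bigl(\var{\cL_{k,t_0}(\R_0)} - \mean{\cL_{k,t_0}(\R_0)}\bigr),
\]
where $L_{k,t_0},M_{k,t_0}$ are the respective intensity measures. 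This reduces the whole argument to two ingredients: (a) the second-moment statement $\var{F_{k,r}}\approx\mean{F_{k,r}}$ of Proposition~\ref{prop:mean_var}, and (b) a direct computation showing that the intensity measure of $\cL_{k,t_0}$ converges in total variation to Lebesgue measure times $D_k$. No higher factorial moments are needed. The transfer to $\Np_k$ and $\Nn_k$ is then done exactly as you propose.

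What each approach buys: the paper's route is considerably shorter, since the entire ``cross-term'' analysis you flag as the main obstacle collapses to the single case $a=2$ (the variance proof in Section~\ref{sec:pois_limit}), and one gets Kantorovich--Rubinstein (hence total-variation) convergence for free, which makes Corollary~\ref{cor:pois_var} immediate. Your route is more self-contained (no external Poisson-approximation machinery) and would yield the same result, but you would need to replicate the variance decomposition $I_0,\dots,I_{k+1}$ for every factorial order $a$; this is doable by the same geometry, but tedious. Also note that factorial-moment convergence alone gives weak convergence, not total-variation convergence, so your last sentence about Corollary~\ref{cor:pois_var} being ``immediate'' would still require a Stein--Chen step or similar.
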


The following is a point-wise conclusion from the proof of Theorem \ref{thm:pois_proc}
\begin{cor}\label{cor:pois_var}
Let $1\le k \le d$, and set $\Lambda = \thres + \lambda$, then
\[
	\dtv{F_{k,r}}{ Z_{\lambda}} \to 0,
\]
where $Z_{\lambda} \sim \pois{D_k e^{-\lambda}}$, and $d_{\mathrm{TV}}$ is the total-variation distance.\\
The same limit holds for $\Cp_{k,r}$ ($1\le k \le d-1$) and $\Cn_{k+1,r}$ ($1\le k \le d-2$).
\end{cor}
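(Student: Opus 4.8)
\textbf{Proof plan for Corollary \ref{cor:pois_var}.}

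The plan is to obtain the total-variation bound directly by the Chen--Stein method applied to the indicator decomposition $F_{k,r} = \sum_{\cX} g_r(\cX,\cP_n)$ from \eqref{eq:def_ck}, rather than deducing it as a special case of the process-level convergence. First I would fix $\Lambda = \thres + \lambda$, so that by Proposition \ref{prop:mean_var} we have $\mean{F_{k,r}} \to D_k e^{-\lambda} =: \mu$ and $\var{F_{k,r}} \to \mu$ as well. Working with the Poisson process $\cP_n$, I would index the relevant configurations by the (a.s.\ distinct) critical $k$-faces $\cX$ that actually occur, and view $F_{k,r}$ as a sum of $0$--$1$ random variables $I_\cX = g_r(\cX, \cP_n)$, where ``critical with $\rho(\cX) \in (r,\rmax]$'' is the event. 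For the Chen--Stein bound one declares $\cX$ and $\cY$ \emph{dependent} if $B(\cX) \cap B(\cY) \ne \emptyset$ (equivalently, if the balls certifying emptiness overlap); otherwise the emptiness events $\{B(\cX)\cap\cP_n = \emptyset\}$ and $\{B(\cY)\cap\cP_n=\emptyset\}$ are conditionally independent given the point locations, by the spatial independence of the Poisson process. The standard Chen--Stein inequality (see \cite{arratia_poisson_1989}) then gives
\[
	\dtv{F_{k,r}}{Z_\lambda} \le \min(1,\mu^{-1})\left(b_1 + b_2\right),
\]
where $Z_\lambda \sim \pois{\mu}$, $b_1 = \sum_\cX \sum_{\cY \sim \cX} \mean{I_\cX}\mean{I_\cY}$, and $b_2 = \sum_\cX \sum_{\cX \ne \cY \sim \cX} \mean{I_\cX I_\cY}$ (there is no $b_3$ term since, with the above dependency structure, $I_\cX$ is independent of $\{I_\cY : \cY \not\sim \cX\}$).

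Next I would estimate $b_1$ and $b_2$. The key point is that both sums are dominated by configurations in which two critical faces share points or have overlapping circumballs, which forces a cluster of at least $k+3$ points confined to a region of volume $O(r^d)$; a Palm-theoretic / Mecke-formula computation (exactly as in the proof of Lemma \ref{lem:mean_ck}, with an extra vertex) shows that the expected number of such clustered configurations is of order $n^{k+2} r^{d(k+1)} e^{-c\Lambda}$ for some $c \ge 1$ (one gets at least one full factor $e^{-\Lambda}$ from an emptiness constraint of volume $\asymp \omega_d r^d$, and the geometric overlap forces the second emptiness constraint to contribute a further strictly positive fraction, or at worst to overlap the first so that no gain is lost but the combinatorial volume factor shrinks). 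Since $n r^d \asymp \Lambda^{1/d}\cdot$const and $\Lambda \approx \log n$, the quantity $n^{k+2} r^{d(k+1)} e^{-\Lambda} = n (n r^d)^{k+1} e^{-\Lambda} \asymp n \Lambda^{k+1} e^{-\Lambda} = \Lambda^{2} e^{-\lambda} \cdot \tfrac{n\Lambda^{k-1}e^{-\Lambda}}{\mathrm{const}\cdot e^{-\lambda}}$; more carefully, $n\Lambda^{k+1}e^{-\Lambda} = \Lambda^{2}\cdot n\Lambda^{k-1}e^{-\Lambda} \to \infty$ is \emph{not} what we want, so the decisive observation is that the overlap geometry saves an additional power of $1/\Lambda$ or an additional $e^{-c\Lambda}$ factor --- precisely the same mechanism that makes $\var{F_{k,r}} \approx \mean{F_{k,r}}$ in Proposition \ref{prop:mean_var} rather than $\mean{F_{k,r}}^2$. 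I would therefore organize the estimate to show $b_1, b_2 = o(1)$, reusing the second-moment analysis that underlies the variance statement of Proposition \ref{prop:mean_var} (postponed in the excerpt to this very section), so that $\dtv{F_{k,r}}{Z_\lambda} \to 0$.

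Finally, for $\Cp_{k,r}$ ($1\le k\le d-1$) and $\Cn_{k+1,r}$ ($1\le k\le d-2$): by Proposition \ref{prop:pt_neg} and Lemma \ref{lem:pn_pairs}, in the critical window $\Lambda = \thres + \lambda$ we have \whp~$\Cp_{k,r} = \Cn_{k+1,r} = F_{k,r}$, and total-variation distance is unchanged under events of probability tending to $1$: $\dtv{\Cp_{k,r}}{Z_\lambda} \le \dtv{F_{k,r}}{Z_\lambda} + \prob{\Cp_{k,r}\ne F_{k,r}} \to 0$, and likewise for $\Cn_{k+1,r}$. The main obstacle I anticipate is the bookkeeping in $b_1, b_2$: one must carefully enumerate the ways two critical simplices can interact --- sharing $j$ vertices for $0 \le j \le k$, or being vertex-disjoint but with overlapping circumballs --- and in each case verify that the combined emptiness constraint contributes enough exponential decay (beyond a single $e^{-\Lambda}$) to beat the polynomial volume factors; this is exactly the computation that proves the variance half of Proposition \ref{prop:mean_var}, so the real work is to carry that out cleanly, which the excerpt defers to this section anyway.
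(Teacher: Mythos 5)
Your proposal is correct in substance but takes a genuinely different route from the paper. The paper derives this corollary as a restriction of the process-level convergence in Theorem \ref{thm:pois_proc}: it shows the point process $\cL_{k,t_0}$ of rescaled critical radii converges in Kantorovich--Rubinstein distance to a homogeneous Poisson process on $[0,t_0]$, using Theorem 3.1 of \cite{decreusefond_functional_2016}, which bounds $d_{\mathrm{KR}}$ by the total-variation distance between the intensity measures plus $2(\var{\cL_{k,t_0}(\R_0)}-\mean{\cL_{k,t_0}(\R_0)})$; the corollary then follows by evaluating both processes on $[0,t]$. You instead apply classical Chen--Stein directly to the count $F_{k,r}=\sum_\cX g_r(\cX,\cP_n)$, with dependency neighborhoods defined by overlap of the circumballs $B(\cX)$. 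The two routes rest on the same engine: your $b_2$ term is precisely the sum $\sum_{j=1}^k I_j + T_2$ that the paper bounds term-by-term in the variance half of Proposition \ref{prop:mean_var} (each is shown to be $o(\mean{F_{k,r}})=o(1)$, which is stronger than, and not recoverable from, the bare statement $\var{F_{k,r}}/\mean{F_{k,r}}\to 1$ --- so you do need those individual bounds, not just the proposition's conclusion), and your $b_1$ term requires an analogous but easier integral estimate that the paper does not literally write down. What your route buys is a shorter, self-contained proof of the pointwise statement that avoids the intensity-measure comparison $\dtv{L_{k,t_0}}{M_{k,t_0}}\to 0$ entirely (you only need $\mean{F_{k,r}}\to D_k e^{-\lambda}$); what it gives up is the full finite-dimensional Poisson-process limit, which the paper needs anyway for Theorem \ref{thm:pois_proc}. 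Two points of care: first, since the index set of your Bernoulli sum is random (subsets of $\cP_n$), you should invoke the Poisson-process form of Chen--Stein (stated via the Mecke formula) rather than the fixed-index version, though this is routine; second, the middle of your second paragraph momentarily computes a quantity that diverges before appealing to the overlap geometry --- that appeal is legitimate, but the actual saving comes from the case analysis $I_j^{(1)},I_j^{(2)},I_j^{(3)}$ in the paper's variance proof, so your argument is only complete once that computation is carried out. Your reduction of $\Cp_{k,r}$ and $\Cn_{k+1,r}$ to $F_{k,r}$ via $\prob{\Cp_{k,r}\ne F_{k,r}}\to 0$ is exactly the paper's argument.
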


\begin{proof}[Proof of Theorem \ref{thm:pois_proc}]
We first prove that $N_k \Rightarrow V_k$.
Fix $t_0 > 0$, and set $\R_0 = [0,t_0]$.
 Define
\eqb\label{eq:time_proc}
	\cL_{k,t_0} := \set{\Delta_{k,n}(r): r\in \cR_k} \cap \R_0,
\eqe
so that $\cL_{k,t_0}$ is a finite random point process in $\R_0$.
Let $\mu_{k,t_0}$ be a homogeneous  Poisson process in $\R_0$ with rate $D_k$. 
We will prove that for every $t_0$ the point process $\cL_{k,t_0}$ converges to $\mu_{k,t_0}$ in the Kantorovich-Rubinstein distance (see \cite{decreusefond_functional_2016}), which implies weak convergence. 
Using Theorem 3.1 in \cite{decreusefond_functional_2016}, we have
\[
d_{\mathrm{KR}}(\cL_{k,t_0},\mu_{k,t_0})\le \dtv{L_{k,t_0}}{M_{k,t_0})} + 2(\var{\cL_{k,t_0}(\R_0)} - \mean{\cL_{k,t_0}(\R_0)}),
\]
where $L_{k,t_0}, M_{k,t_0}$ are the intensity measures of $\cL_{k,t_0}, \mu_{k,t_0}$ respectively.
First, notice that $\cL_{k,t_0}(\R_0) = F_{k,r_0}$ where $\omega_d n r_0^d = \thres + \lambda_0$, and $\lambda_0 = -\log(t_0)$.
Using Proposition \ref{prop:mean_var}, together with the fact that $\mean{F_{k,r_0}} \approx D_k e^{-\lambda_0}$, we have  
\[
(\var{\cL_{k,t_0}(\R_0)} - \mean{\cL_{k,t_0}(\R_0)}) \to 0.
\]
We therefore need to bound the total variation distance between the intensity measures $L_{k,t_0}$ and $M_{k,t_0}$.  
Throughout, we will assume that $n$ is large enough so that 
\eqb\label{eq:n_large}
 \lambda_0 > -\logg n.
\eqe
Let $B\in \cB(\R_0)$ be a Borel set, and denote
\[
B' := -\log(B),\qquad	B'' := \log n + (k-1)\logg n +B'.
\]
Since $\cL_{k,t_0}(B)$ counts the number of critical $k$-faces with $\Delta_{k,n}(\rho) \in B$, we can follow
the same steps as in the proof of Lemma \ref{lem:mean_ck}, and have
\[
L_{k,t_0}(B) = \mean{\cL_{k,t_0}(B)} = D_k n\int_{s\in B''} s^{k-1} e^{-s}ds.
\]
Taking a change of variables $s\to  \log n + (k-1)\logg n + \lambda$, we have
\[
\splitb
L_{k,t_0}(B) &= D_k n\int_{\lambda\in  B'} (\log n + (k-1)\logg n + \lambda)^{k-1} e^{-(\log n + (k-1)\logg n +\lambda)}d\lambda \\
&= D_k\int_{\lambda\in  B'} \param{1+ \frac{(k-1)\logg n + \lambda}{\log n}}^{k-1} e^{-\lambda}d\lambda.
\splite
\]
On the other hand, since $\mu_{k,t_0}$ is a homogeneous Poisson process,
\[
	M_{k,t_0}(B) = D_k \int_B dt = D_k\int_{\lambda\in B'} e^{-\lambda}d\lambda.
\]
From \eqref{eq:n_large}, have that $L_{k,t_0}(B) > M_{k,t_0}(B)$, and therefore
\[
\splitb
	|L_{k,t_0}(B)-M_{k,t_0}(B)| &= \const \int_{B'} \param{\param{1+\frac{(k-1)\logg n+ \lambda}{\log n}}^{k-1}-1}e^{-\lambda} d\lambda\\
&= \const \sum_{i=1}^{k-1}\binom{k-1}{i} \int_{B'}\param{\frac{(k-1)\logg n + \lambda}{\log n}}^i e^{-\lambda} d\lambda\\ 
&= \const \sum_{i=1}^{k-1}\sum_{j=0}^{i}\binom{k-1}{i}\binom{i}{j} \frac{((k-1)\logg n)^{i-j}}{(\log n)^i}\int_{B'} \lambda^j e^{-\lambda}d\lambda \\
&\le \const \sum_{i=1}^{k-1}\sum_{j=0}^{i}\frac{(\logg n)^{i-j}}{(\log n)^i}\int_{\lambda_0}^\infty\lambda^j e^{-\lambda} d\lambda.
\splite
\]
Since $\int_{\lambda_0}^\infty\lambda^j e^{-\lambda}d\lambda$ is a finite positive constant, we have that for all $B\in \cB(\R)$
\[
|L_{k,t_0}(B)-M_{k,t_0}(B)| \le \const \frac{\logg n}{\log n}.
\]
Since this bound is independent of $B$, we conclude that
\[
	\dtv{L_{k,t_0}}{M_{k,t_0}} \to 0.
\]
Thus, we proved that $\cL_{k,t_0} \xrightarrow{\mathrm{KR}} \mu_{k,t_0}$, which  implies that for every $t_1,\ldots, t_m \in [0,t_0]$ we have
\eqb\label{eq:weak_fin}
	(N_k(t_1), \ldots, N_k(t_m)) \Rightarrow 	(V_k(t_1), \ldots, V_k(t_m)).
\eqe
Since $t_0$ can be as large as we want, this convergence holds for all $t_1,\ldots, t_m \in [0,\infty)$, and thus we showed that $N_k\Rightarrow V_k$.
Next, for every $t_1,\ldots, t_m$ we have
\[
	\prob{\exists i : \Np_k(t_i) \ne N_k(t_i)}
	\le \sum_{i=1}^m \prob{\Np_k(t_i) \ne N_k(t_i)} = \sum_{i=1}^m \prob{\Cp_{k,r_i} \ne F_{k,r_i}},
\]
where $t_i = \Delta_{k,n}(r_i)$. From Proposition \ref{prop:pt_neg}, for $1\le k \le d-1$  we  have that $\prob{\Cp_{k,r_i}\ne F_{k,r_i}} \to 0$. This, together with \eqref{eq:weak_fin}, implies that,
\[
	(\Np_k(t_1), \ldots, \Np_k(t_m)) \Rightarrow 	(V_k(t_1), \ldots, V_k(t_m)).
\]
Similar inequalities together with Lemma \ref{lem:pn_pairs}, proves the same result for the process $\Nn_{k}$ for $1\le k \le d-2$. This completes the proof.



\end{proof}

\begin{proof}[Proof of Corollary \ref{cor:pois_var}]
Let $r$ be such that $\omega_d nr^d = \thres + \lambda$.
Set $t = e^{-\lambda}$, then from \eqref{eq:lambda_delta} we have that $\rho >r$ if and only if $\Delta_{k,n}(\rho) < t$.
Take any $t_0 >  t$, and recall the definition of $\cL_{k,t_0}$ in \eqref{eq:time_proc},
we thus have $F_{k,r} = \cL_{k,t_0}([0,t])$.

Next, set $Z_{\lambda} = \mu_{k,t_0}([0,t])$, then $Z_\lambda\sim \pois{D_k t}=\pois{D_k e^{-\lambda}}$.
Thus, for any $A\subset \N$ we have
\[
	\abs{\prob{F_{k,r}\in A}- \prob{Z_\lambda \in A}}= \abs{\prob{\cL_{k,t_0}([0,t]) \in A} - \prob{\mu_{k,t_0}([0,t]) \in A}}. 
\]
In the previous proof we showed that 
$\cL_{k,t_0} \xrightarrow{\mathrm{KR}} \mu_{k,t_0}$, which implies $\cL_{k,t_0} \xrightarrow{\mathrm{TV}} \mu_{k,t_0}$. Therefore,
\[
	 \dtv{F_{k,r}}{Z_\lambda} = \sup_{A\subset \N} |\prob{F_{k,r}\in A} -\prob{Z_\lambda \in A}|\le \dtv{\cL_{k,t_0}([0,t])}{\mu_{k,t_0}([0,t])} \to 0,
\]
completing the proof for $F_{k,r}$. To prove convergence for $\Cp_{k,r}$, notice that
\[
	\prob{\Cp_{k,r}\in A} = \prob{F_{k,r}\in A} + \delta_{k,r}(A),
\]
where 
\[
\delta_{k,r}(A) = \prob{\Cp_{k,r}\in A,\ \Cp_{k,r} \ne F_{k,r}}-\prob{F_{k,r}\in A,\ \Cp_{k,r} \ne F_{k,r}} .
\]
Therefore,
\[
	\abs{\prob{\Cp_{k,r}\in A}- \prob{Z_\lambda\in A}} \le \abs{\prob{F_{k,r}\in A}- \prob{Z_\lambda\in A}} + \abs{\delta_{k,r}(A)}.
\]
Note that $\abs{\delta_{k,r}(A)} \le \prob{\Cp_{k,r} \ne F_{k,r}} $. Thus, for $1\le k \le d-1$ we have
\[
\sup_{A\subset \N} |\prob{\Cp_{k,r}\in A} -\prob{Z_\lambda \in A}|\le \sup_{A\subset \N} |\prob{F_{k,r}\in A} -\prob{Z_\lambda \in A}| + \prob{\Cp_{k,r} \ne F_{k,r}}  \to 0.
\]
Similarly, we can show for $\Cn_{k+1,r}$ ($1\le k \le d-2$), completing the proof.

\end{proof}

\subsection{Limiting probabilities}
The Poisson limit  proved in Theorem \ref{thm:pois_proc} allows us to derive the following limiting probabilities.

\begin{thm}\label{thm:crit_prob}
Let $1\le k\le d$. For $k\ne d-1$, if $\Lambda = \thres + \lambda$, then
\[
	\limninf \prob{\cH_{k,r}} = 
	e^{-D_k e^{-\lambda}}.
\]
For $k=d-1$, if $\Lambda = \log n +(d-1)\logg + \lambda$ then
\[
	\limninf \prob{\cH_{d-1,r}} = 
	e^{-D_d e^{-\lambda}}(1+D_de^{-\lambda}).
\]
\end{thm}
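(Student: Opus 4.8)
The plan is to derive both limiting probabilities directly from the Poisson process convergence of Theorem~\ref{thm:pois_proc}, using the characterization of $\cH_{k,r}$ in terms of critical faces.

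\emph{Case $k \ne d-1$.} First I would observe that, by the Morse-theoretic dictionary of Section~\ref{sec:crit_faces} together with Lemma~\ref{lem:perc}, when $\Lambda = \thres + \lambda$ the event $\cH_{k,r}$ is (up to a probability-zero discrepancy) exactly the event that no positive $k$-face and no negative $(k+1)$-face has radius in $(r,\rmax]$. For $1 \le k \le d-2$ this is handled by combining Proposition~\ref{prop:pt_neg} and Lemma~\ref{lem:pn_pairs}: \whp\ $\Cn_{k+1,r} = \Cp_{k,r} = F_{k,r}$, and there are no ``leftover'' small cycles born before $r$ and dying after $r$. Hence $\prob{\cH_{k,r}} = \prob{F_{k,r}=0} + o(1)$. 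Now invoke the point-wise statement in Corollary~\ref{cor:pois_var}: $\dtv{F_{k,r}}{Z_\lambda} \to 0$ with $Z_\lambda \sim \pois{D_k e^{-\lambda}}$, so $\prob{F_{k,r}=0} \to \prob{Z_\lambda = 0} = e^{-D_k e^{-\lambda}}$. For $k=d$ the argument is the same once one notes $\cH_{d,r} = \set{F_{d,r}=0}$ exactly (no negative $(d+1)$-faces exist, and $H_d(\cC_s)$ is monotone decreasing in the relevant sense — a $d$-cycle, once created, persists), so again $\prob{\cH_{d,r}} \to e^{-D_d e^{-\lambda}}$.

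\emph{Case $k = d-1$.} Here the key point, already flagged in Section~\ref{sec:results}, is that $\cH_{d-1,r}$ is governed not by $F_{d-1,r}$ but by $F_{d,r}$: we have $\Cp_{d,r}=1$ always (as long as $F_{d,r}>0$), and a $(d-1)$-cycle obstructing $\cH_{d-1,r}$ is terminated by a negative $d$-face. So $\Cn_{d,r} = F_{d,r} - 1$ whenever $F_{d,r} \ge 1$, and $\Cn_{d,r}=0$ whenever $F_{d,r}=0$. Since $\cH_{d-1,r}$ holds iff there are no negative $d$-faces with radius in $(r,\rmax]$, i.e.\ iff $\Cn_{d,r}=0$, we get $\cH_{d-1,r} = \set{F_{d,r} \le 1}$. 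Applying Corollary~\ref{cor:pois_var} at degree $d$ with $\Lambda = \log n + (d-1)\logg n + \lambda$ (the $d$-critical window), $F_{d,r}$ converges in total variation to $Z_\lambda \sim \pois{D_d e^{-\lambda}}$, hence
\[
\prob{\cH_{d-1,r}} \to \prob{Z_\lambda \le 1} = e^{-D_d e^{-\lambda}} + D_d e^{-\lambda} e^{-D_d e^{-\lambda}} = e^{-D_d e^{-\lambda}}(1 + D_d e^{-\lambda}).
\]
I would need to be careful that the normalization used for $F_{d,r}$ in $\Delta_{d,n}$ matches the claimed threshold shift; since the $d$-critical threshold is $\log n + (d-1)\logg n$, this is consistent.

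\textbf{Main obstacle.} The delicate step is rigorously reducing $\cH_{k,r}$ to a statement purely about the counts $F_{k,r}$ (or $F_{d,r}$ for $k=d-1$), controlling all the ways homology could fail to stabilize that are \emph{not} captured by ``a critical face appearing after $r$''. This is precisely where Lemma~\ref{lem:pn_pairs}, Lemma~\ref{lem:F_4} (isolation of the positive face until its paired negative face), Lemma~\ref{lem:F_5} (no born-before/die-after cycles), and Lemma~\ref{lem:perc} (all big torus cycles already present) must be assembled into a clean ``$\cH_{k,r}$ iff $F_{\bullet,r}$ vanishes, \whp'' equivalence; once that is in place, the probabilistic conclusion is an immediate read-off from the Poisson limit. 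The $k=d-1$ bookkeeping — tracking the single mandatory positive $d$-face and the resulting off-by-one between $F_{d,r}$ and $\Cn_{d,r}$ — is the other place where one must argue carefully rather than by analogy.
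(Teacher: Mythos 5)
Your proposal is correct and takes essentially the same route as the paper: reduce $\cH_{k,r}$ to the vanishing of a critical-face count ($\Cn_{k+1,r}=0$, equivalently $F_{k,r}=0$ w.h.p., for $1\le k\le d-2$; $F_{d,r}=0$ for $k=d$; $F_{d,r}\le 1$ for $k=d-1$) and then read off the limit from the Poisson convergence in Corollary \ref{cor:pois_var}. The only cosmetic difference is that you phrase the $1\le k\le d-2$ case directly in terms of $F_{k,r}$ rather than $\Cn_{k+1,r}$, which is justified by the w.h.p.\ equality from Lemma \ref{lem:pn_pairs} and yields the identical limit.
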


\begin{proof}
For $1\le k \le d-2$, recall that $\cH_{k,r}$ holds if and only if $\Cn_{k+1,r} = 0$. Therefore, we can use Corollary \ref{cor:pois_var} and have
\[
	\prob{ \cH_{k,r}} = \prob{\Cn_{k+1,r} = 0} \to  e^{-D_k e^{-\lambda}}.
\]
Similarly, for $k=d$, we showed earlier that $\cH_{d,r}$ holds if and only if $F_{d,r} = 0$, and therefore
\[
	\prob{\cH_{d,r}} = \prob{F_{d,r} =0} \to e^{-D_d e^{-\lambda}}.
\]
Finally, for $k=d-1$,  we showed earlier that $\cH_{d-1,r}$ holds if and only if $F_{d,r} \le 1$. Therefore,
\[
\prob{\cH_{d,r}} = \prob{F_{d,r} \in \set{0,1}} \to  e^{-D_de^{-\lambda}} + D_d e^{-\lambda}  e^{-D_de^{-\lambda}}.
\]
That completes the proof.

\end{proof}

\subsection{The limiting variance}
To prove the Poisson limits in this section, we heavily relied on the fact that $\mean{F_{k,r}}\approx \var{F_{k,r}}$. This was stated in Proposition \ref{prop:mean_var}, but not yet proved. We are now ready to pay this debt.

\begin{proof}[Proof of Proposition \ref{prop:mean_var} - the limiting variance]

Recall the definition of $F_{k,r}$ then
\[\splitb
\mean{F_{k,r}^2 } &=   \mean{\sum_{\cX_1 ,\cX_2 \in \cC^k(\cP_n) } g_{r}(\cX_1,\cP_n) g_{r}(\cX_2,\cP_n)} \\
&= \sum_{j=0}^{k+1} \mean{\sum_{ \abs{ \cX_1 \cap \cX_2 } =j} g_{r}(\cX_1,\cP_n) g_{r}(\cX_2,\cP_n) }\\
&= \sum_{j=0}^{k+1}I_j,
\splite 
\]
where $I_j$ denotes the $j$-th term in the sum.
Notice that $I_{k+1} = \mean{F_{k,r}}$. Therefore, we have
\eqb\label{eq:var_ck}
	\var{F_{k,r}} = \meanx{F_{k,r}^2}-\mean{F_{k,r}}^2 =  \mean{F_{k,r}} + \sum_{j=1}^k {I_j}+({I_0} - \meanx{F_{k,r}}^2),
\eqe
Our goal next is to bound the terms
on the right hand side.

\noindent\underline{Bounding ${I_j},\ \ j\ge 1$:}\\

 Using Palm theory (using Corollary \ref{cor:palm}) we have
\eqb\label{eq:mean_I_j}
\begin{split}
{I_j} &=   \mean{ \sum_{\abs{ \cX_1 \cap \cX_2 } = j}   g_{r}(\cX_1,\cP_n) g_{r}(\cX_2,\cP_n) }  \\ 
&=    \frac{n^{2k+2-j}}{ j!((k+1-j)!)^2} \meanx{  g_{r}(\cX'_1, \cX'\cup \cP_n) g_{r}(\cX'_2, \cX' \cup \cP_n)  } \\ 
& =  \const n^{2k+2-j}\int_{(\T^d)^{2k+2-j}} h_{r}( \bx_1) h_{r}( \bx_2) \hsep(\bx_1,\bx_2) e^{-n \vuni(\bx_1, \bx_2)} d\bx \\
&= \const n^{2k+2-j}\int_{\cA} e^{-n \vuni(\bx_1, \bx_2)} d\bx,
\end{split}
\eqe
where
\[
\splitb
\bx &= (x_1,\ldots, x_{2k+2-j})\subset (\T^d)^{2k+2-j},\\
\bx_1 &= (x_1,\ldots, x_{k+1}),\\
\bx_2 &= (x_1,\ldots, x_j, x_{k+2},\ldots, x_{2k+2-j}),\\
\hsep(\bx_1,\bx_2) &= \ind\set{\bx_1 \cap B(\bx_2) = \emptyset,\ \bx_2 \cap B(\bx_1) = \emptyset}, \\
\cA_j &= \set{\bx \in (\T^d)^{2k+2-j} : h_{r}(\bx_1)h_{r}(\bx_2)\hsep(\bx_1,\bx_2) = 1}.
\splite
\]
Note that $\hsep$ verifies that none of the points of $\bx_i$ are inside the open ball $B(\bx_j)$, which is required for both subsets to generate a critical face (see Lemma \ref{lem:crit_face}). Also, notice that the symmetry between $\bx_1,\bx_2$ yields,
\eqb\label{eq:int_vuni}
\splitb
\int_{\cA_j} e^{-n \vuni(\bx_1, \bx_2)}d\bx &= \int_{\cA_j} e^{-n \vuni(\bx_1, \bx_2)}\indf{\rho(\bx_1) \ge \rho(\bx_2)}d\bx \\
&+\int_{\cA_j} e^{-n \vuni(\bx_1, \bx_2)}\indf{\rho(\bx_1) < \rho(\bx_2)}d\bx \\
&= 2\int_{\cA_j'} e^{-n \vuni(\bx_1, \bx_2)},
\splite
\eqe
where
\[
    \cA'_j = \set{\bx\in \cA_j : \rho(\bx_1) \ge \rho(\bx_2)}.
\]
Thus, from here on we will assume that $\rho(\bx_1)\ge \rho(\bx_2)$.
In addition, we define $\bx_{12} = (x_1,\ldots, x_j)$ to be the points shared by $\bx_1,\bx_2$, and 
\[
\splitb
	c_i &= c(\bx_i),\quad 	\rho_i = \rho(\bx_i),\quad i=1,2,\\
	c_{12} &= c(\bx_{12}),\quad 
	\rho_{12} = \rho(\bx_{12}),
\splite
\]
Notice that if $\rho_2^2 \le \rho^2_1 - 
|c_1-c_2|^2$, then at least half of the ball $B(\bx_2)$ is contained inside $B(\bx_1)$. The requirement $\hsep(\bx_1,\bx_2)=1$ then implies that all the points of $\theta(\bx_2)$ lie on one hemisphere of $\S^{k-1}$. Following Remark \ref{rem:half_sphere} we have $\hcrit(\bx_2) = 0$, so $\bx\not\in \cA_j$.
Therefore, we will assume from here on that
\eqb\label{eq:rho2_ineq}
	\rho_1^2 - |c_1-c_2|^2 \le \rho^2_2 \le \rho_1^2.
\eqe
We now fix $\eps_j,\delta_j>0$ to be determined later, and split our calculations into three parts, denoted $I_j^{(i)}$, $i=1,2,3$.
Define
\[
\splitb
    I_j^{(1)} &:= \int_{\cA'_j} \indf{ |c_1-c_{12}| \le \eps_j\rho_1} e^{-n\vuni(\bx_1,\bx_2)} d\bx, \\
    I_j^{(2)} &:= \int_{\cA'_j} \indf{|c_1-c_{12}| >\eps_j\rho_1, |c_1-c_2|\le \delta_j\rho_1} e^{-n\vuni(\bx_1,\bx_2)} d\bx \\
    I_j^{(3)} &:= \int_{\cA'_j} \indf{|c_1-c_{12}| >\eps_j\rho_1, |c_1-c_2|> \delta_j\rho_1} e^{-n\vuni(\bx_1,\bx_2)} d\bx,
\splite
\]
so that 
\eqb\label{eq:Ij_sum}
    I_j =\const n^{2k+2-j}(I_j^{(1)}+I_j^{(2)}+I_j^{(3)}).
\eqe




\underline{Bounding $I_j^{(1)}$:}\

The extended BP-formula in Lemma \ref{lem:bp_partial} allows us to convert only part of the variables to polar coordinates, while keeping the rest in their original form.
In our case, we use it to apply a change of variables to $\bx_2$ while keeping $\bx_{12}$ unchanged. This yields,
\[
\splitb
I_j^{(1)} &= \param{\frac{k!}{(j+1)!}}^{d-k+1}\int_{\bx_1} d\bx_1\int_{c_2} dc_2  \int_{\Pi_0^{^\perp}} d\Pi_0^{^\perp} \int_{\hat\bth_2} d{\hat\bth_2} \\
&\times h_{r}(\bx_1) h_{r}(\bx_2) \indf{\rho_1\ge \rho_2,\ |c_1-c_{12}|\le \eps_j\rho_1}  \\
&\times   |c_2-c_{12}|^{-(d-k)}\rho_2^{(d-1)(k+1-j)}\param{\frac{\vsimp(\bth_2)}{\vsimp(\bth_{12})}}^{d-k+1} e^{-n \vuni(\bx_1,\bx_2)}.
\splite
\]
where $\bth_2,\bth_{12}$ are the spherical coordinates for $\bx_2,\bx_{12}$, respectively, and $\hat\bth_2 = \bth_2 \bs \bth_{12}$.
First, notice that $\rho_2 \le \rho_1$. In addition, using Corollary \ref{cor:ratio_vol}, we have that $\vsimp(\bth_2)/\vsimp(\bth_{12})\le 1$. Thus, we have
\[
\splitb
I_j^{(1)} &\le \const \int_{d\bx_1} d\bx_1 \int_{c_2}dc_2 \int_{\Pi_0^{^\perp}}d\Pi_0^{^\perp} \int_{\hat\bth_2}d\hat\bth_2 \\
&\times h_{r}(\bx_1) \indf{\rho_1\ge \rho_2,\ |c_1-c_{12}|\le \eps_j\rho_1}     |c_2 - c_{12}|^{-(d-k)}\rho_1^{(d-1)(k+1-j)} e^{-\omega_d n\rho_1^d}.
\splite
\]
Next, we will change $c_2$ into polar coordinates around $c_{12}$, so that $c_2 =c_{12}+ \tau\psi$, where $\tau\in [0,\infty)$, and $\psi\in \S^{d-j}$ ($c_2 \in \Pi^{^\perp}(\bx_{12})\cong \R^{d+1-j}$). Therefore, $dc_2 = \tau^{d-j}d\tau d\psi$, and then
\[
\splitb
I_j^{(1)} &\le \const \int_{d\bx_1} d\bx_1 \int_{\tau} \tau^{k-j} d\tau \int_{\psi}d\psi \int_{\Pi_0^{^\perp}}d\Pi_0^{^\perp} \int_{\hat\bth_2}d\hat\bth_2 \\
&\times h_{r}(\bx_1) \indf{\rho_1\ge \rho_2,\ |c_1-c_{12}|\le \eps_j\rho_1}\rho_1^{(d-1)(k+1-j)}e^{-\omega_d n\rho_1^d},
\splite
\]
where we used the fact that $|c_2-c_{12}| = \tau$.
Since we consider only $\rho_2 \le \rho_1$, then $\tau = |c_2-c_{12}| \le |c_1-c_{12}|  \le \eps_j\rho_1$. Also notice that $|c_1-c_{12}|\le \eps_j \rho_1$ implies that there exists a face of $\sigma(\bx_1)$ at distance less than $\eps_j\rho_1$ from $c_1$. By the definition of $\phi(\bx_1)$ \eqref{eq:phi_X} we have $\phi(\bx_1)\le \eps_j$.
Altogether, we have
\eqb\label{eq:ineq_Ij1}
\splitb
I_j^{(1)} &\le \const \int_{d\bx_1} d\bx_1  h_{r}(\bx_1) \indf{\phi(\bx_1)\le\eps_j}(\eps_j\rho_1)^{k+1-j}\rho_1^{(d-1)(k+1-j)}e^{-\omega_d n\rho_1^d}\\
&=\const \eps_j^{k+1-j} 
\int_{d\bx_1} d\bx_1  h_{r}(\bx_1) \indf{\phi(\bx_1)\le\eps_j} \rho_1^{d(k+1-j)}e^{-\omega_d n\rho_1^d}\\
&\le\const \eps_j^{k+2-j} 
\int_{d\rho_1} \rho_1^{dk-1}\rho_1^{d(k+1-j)}e^{-\omega_d n\rho_1^d}d\rho_1\\
&\le\const \eps_j^{k+2-j}n^{-(2k+1-j)} \Lambda^{2k-j}e^{-\Lambda},
 \splite
\eqe
where in the last inequality we followed similar steps to the proof of Lemma \ref{lem:Fk_phi}.

\underline{Bounding $I_j^{(2)}$:}\ 

As before, we apply the
 change of variables in Lemma \ref{lem:bp_partial}, 
\[
\splitb
I_j^{(2)} &=\const \int_{\bx_1} d\bx_1\int_{c_2} dc_2  \int_{\Pi_0^{^\perp}} d\Pi_0^{^\perp} \int_{\hat\bth_2} d_{\hat\bth_2} \\
&\times h_{r}(\bx_1) h_{r}(\bx_2) \indf{\rho_1\ge \rho_2,\ |c_1-c_{12}|> \eps_j\rho_1,\ |c_2-c_1| \le \delta_j\rho_1}  \\
&\times   |c_2-c_{12}|^{-(d-k)}\rho_2^{(d-1)(k+1-j)}\param{\frac{\vsimp(\bth_2)}{\vsimp(\bth_{12})}}^{d-k+1} e^{-n \vuni(\bx_1,\bx_2)},
\splite
\]
Since  $|c_1 -c_2| \le \delta_j\rho_1$, and since later we will choose $\delta_j = o(\eps_j)$, we have
\[
    |c_2-c_{12}| \ge |c_1-c_{12}|-|c_1-c_2| \ge (\eps_j-\delta_j)\rho_1 \ge \eps_j\rho_1/2,
\]
Also, since $\vsimp(\bth_2)/\vsimp(\bth_{12})\le 1$, we have
\[
\splitb
I_j^{(2)} &\le \const \int_{\bx_1} d\bx_1 h_{r}(\bx_1)e^{-n\omega_d\rho_1^d}(\eps_j \rho_1)^{-(d-k)} \rho_1^{(d-1)(k+1-j)}\int_{c_2}dc_2    \indf{|c_2-c_1| \le \delta_j\rho_1}.
\splite
\]
The last integral is merely the volume of a ball of radius $\delta_j\rho_1$ around $c_1$ in the plane $\Pi^{^\perp}(\bx_{12}) \cong \R^{d-j+1}$, which is  equal to $\omega_d (\delta_j\rho_1)^{d+1-j}$.
Thus, we have
\eqb\label{eq:ineq_Ij2}
\splitb
    I_j^{(2)} &\le \const \eps_j^{-(d-k)}\delta_j^{d+1-j} \int_{\bx_1} d\bx_1 h_{r}(\bx_1)e^{-n\omega_d\rho_1^d}\rho_1^{d(k+1-j)}\\
    &= \const  \eps_j^{-(d-k)}\delta_j^{d+1-j}n^{-(2k + 1 -j)} \Lambda^{2k-j}e^{-\Lambda}.
 \splite
\eqe

\underline{Bounding $I_j^{(3)}$:}\  

In Lemma \ref{lem:vol_r1_r2} we define $\vint(r_1,r_2,\Delta)$ as the volume of intersection of balls with radii $r_1,r_2$, and whose centers are $\Delta$ apart. Then,
\[
	\vuni(\bx_1,\bx_2) = \omega_d(\rho_1^d + \rho_2^d) - \vint(\rho_1,\rho_2,|c_1-c_2|).
\]
Recall \eqref{eq:rho2_ineq}, then the conditions of Lemma \ref{lem:vol_r1_r2} hold, and we have
\[
	\vuni(\bx_1,\bx_2) \ge \frac{\omega_d}{2}(\rho_1^d + \rho_2^d) + \kint|c_1-c_2|(\rho_1^{d-1}+\rho_2^{d-1}).
\]
Since in $I_j^{(3)}$  we have $|c_1-c_2| > \delta_j\rho_1$, and  $\rho_1,\rho_2 \ge r$, we have
\[	
n\vuni(\bx_1,\bx_2) \ge \frac{\omega_d}{2} n(\rho_1^d + r^d) + 2\kint \delta_j n r^d = \Lambda(1/2 + C_2\delta_j) + (\omega_d/2)n\rho_1^d,
\]
where $C_2 =2\kint/\omega_d$. Therefore,
\[
I_j^{(3)} \le  e^{- \Lambda(1/2 +  C_2\delta_j) } 
\int_{\cA_j'} e^{-(\omega_d/2)n\rho^d_1}.
\]
Next, recall that in $\cA_j'$ we have $\hsep(\bx_1,\bx_2) = 1$, and therefore the points in $\bx\bs \bx_1$ must all lie outside a ball of radius $\rho_1$ around $c_1$.
On the other hand, since $B(\bx_1)\cap B(\bx_2)\ne \emptyset$, we have $|c_1-c_2| \le 2\rho_1$ and therefore the points in $\bx\bs\bx_1$ are inside a ball of radius $3\rho_1$ around $c_1$. Thus, we conclude that $\bx\bs\bx_1$  lie in an annulus with radii range $[\rho_1, 3\rho_1]$ around $c_1$, whose volume is $(3^d-1)\omega_d\rho_1^d $. This yields,
\eqb\label{eq:ineq_Ij3}
\splitb
I_j^{(3)} &\le  e^{- \Lambda(1/2 +  C_2\delta_j) } 
\int_{\bx_1} h_{r}(\bx_1)e^{-(\omega_d/2)n\rho^d_1} \\
&\times \int_{\bx\bs\bx_1} \indf{\bx\bs\bx_1 \subset A_{[\rho,3\rho]}}d\bx,\\
&=\const e^{-\Lambda (1/2+ C_2\delta_j)} \int_{\bx_1} h_{r}(\bx_1) e^{-(\omega_d/2)\rho^d_1}\rho_1^{d(k+1-j)}d\bx 
\\
&\le \const n^{-(2k+1-j)} \Lambda^{2k-j}e^{-\Lambda- C_2\delta_j\Lambda}.
\splite
\eqe

Putting  \eqref{eq:ineq_Ij1}-\eqref{eq:ineq_Ij3} back into \eqref{eq:Ij_sum}
we have
\eqb\label{eq:ineq_Ij}
{I_j} \le \const n\Lambda^{k-1} e^{-\Lambda} \param{\eps_j^{k+2-j} \Lambda^{k+1-j}
+
\param{\frac{\delta_j}{\eps_j}}^{d-k} (\delta_j\Lambda)^{k-j+1}
 +
\Lambda^{k+1-j} e^{-C_2\delta_j\Lambda}}.
\eqe

\underline{Bounding $I_0 - \meanx{F_{k,r}}^2$}:\ 

Back to \eqref{eq:var_ck}, we are left with bounding the difference ${I_0}-\mean{F_{k,r}}^2$.
Define,
\[
\Phi(\cX_1,\cX_2) := \ind\set{B(\cX_1)\cap B(\cX_2) = \emptyset},
\]
then we can write
\[
\splitb
{I_0} &=    \mean{ \sum_{\cX_1, \cX_2 \in \cC^k(\cP_n)}    g_{r}(\cX_1,\cP_n) g_{r}(\cX_2,\cP_n) } \\
&=   \mean{  \sum_{\cX_1, \cX_2 \in \cC^k(\cP_n)} g_{r}(\cX_1,\cP_n) g_{r}(\cX_2,\cP_n)  \Phi(\cX_1,\cX_2) }\\ 
&+    \mean{ \sum_{\cX_1, \cX_2  \in \cC^k(\cP_n)}   g_{r}(\cX_1,\cP_n) g_{r}(\cX_2,\cP_n) (1-\Phi(\cX_1,\cX_2)) } \\ 
 &=  {T_1} +  {T_2},
\splite
\]
where $\cX_1,\cX_2$ are disjoint subsets, and $T_1,T_2$ correspond to each of the terms. Next, we will show that ${T_1}-\mean{F_{k,r}}^2 \le 0$, and that will leave us with bounding ${T_2}$.

Using Palm theory (Theorem \ref{thm:palm}) we have
\[
\begin{split}
\meanx{F_{k,r}}^2 & =  \frac{n^{2k+2}}{((k+1)!)^2} \mean{g_{r}(\cX'_1 , \cX'_1 \cup \cP_n)g_{r}(\cX'_2 , \cX'_2 \cup \cP'_n)} \\ 
{T_1 } & =  \frac{n^{2k+2}}{((k+1)!)^2} \mean{g_{r}(\cX'_1 , \cX' \cup \cP_n)g_{r}(\cX'_2 , \cX' \cup \cP_n)  \Phi(\cX_1',\cX_2')   },
\end{split}
\]
where $\cX'_1, \cX'_2$ are independent sets of $k+1$ points uniformly distributed in $\T^d$, $\cX'= \cX'_1 \cup \cX'_2$, and $\cP_n'$ an independent copy of $\cP_n$. Thus, we have 
\[
\splitb
 {T_1}-\mean{F_{k,r}}^2  &=   \frac{n^{2k+2}}{((k+1)!)^2}  \Big(  \mean{g_{r}(\cX'_1 , \cX' \cup \cP_n)g_{r}(\cX'_2 , \cX' \cup \cP_n) \Phi(\cX'_1,\cX'_2) } \\
&  \qquad\qquad\quad\ \  - \mean{  g_{r}(\cX'_1 , \cX'_1 \cup \cP_n)g_{r}(\cX'_2 , \cX'_2 \cup \cP'_n)  \Phi(\cX'_1,\cX'_2)}    \\ 
&  \qquad\qquad\quad\ \ - \mean{   g_{r}(\cX'_1 , \cX'_1 \cup \cP_n)g_{r}(\cX'_2 , \cX'_2 \cup \cP'_n) (1- \Phi(\cX'_1,\cX'_2))   }   \Big) \\ 
& \leq    \frac{n^{2k+2}}{((k+1)!)^2}  \Big(  \mean{g_{r}(\cX'_1 , \cX'_1 \cup \cP_n)g_{r}(\cX'_2 , \cX'_2 \cup \cP_n)  \Phi(\cX'_1,\cX'_2)}    \\ 
&  \qquad\qquad\quad\ \  - \mean{  g_{r}(\cX'_1 , \cX'_1 \cup \cP_n)g_{r}(\cX'_2 , \cX'_2 \cup \cP'_n)  \Phi(\cX'_1,\cX'_2)  }  \Big) \\ 
& =  \frac{n^{2k+2}}{((k+1)!)^2} \mean{\Delta g},
\splite
\]
where
$$\Delta g:= \param{g_{r}(\cX'_1 , \cX'_1 \cup \cP_n)g_{r}(\cX'_2 , \cX'_2 \cup \cP_n) - g_{r}(\cX'_1 , \cX'_1 \cup \cP_n)g_{r}(\cX'_2 , \cX'_2 \cup \cP'_n)}\Phi(\cX'_1,\cX'_2).$$
To show that $\mean{\Delta g} = 0$  we consider the conditional expectation $\E_{\cX'_1,\cX'_2}\set{\cdot} := \cmean{\cdot}{\cX'_1,\cX'_2}$. Given $\cX_1',\cX_2'$, if $\Delta g \ne 0$ then necessarily $B(\cX'_1)\cap B(\cX'_2)=\emptyset$. Using the spatial independence  of the Poisson process $\cP_n$, together with the fact that the value of $g_{r}(\cX'_i, \cX'_i\cup \cP_n)$ only depends on the points of $\cP_n$ lying inside $B(\cX'_i)$,
we conclude that
\[
\splitb
&\E_{\cX'_1,\cX'_2}\set{g_{r}(\cX'_1,\cX'_1\cup\cP_n)g_{r}(\cX'_2,\cX'_2\cup\cP_n)\Phi(\cX'_1,\cX'_2)}  \\ &\qquad= \Phi(\cX'_1,\cX'_2)\E_{\cX'_1,\cX'_2}\set{g_{r}(\cX'_1,\cX'_1\cup\cP_n)}\E_{\cX'_1,\cX'_2}\set{g_{r}(\cX'_2,\cX'_2\cup \cP_n)} \\
&\qquad= \Phi(\cX'_1,\cX'_2)\E_{\cX'_1,\cX'_2}\set{g_{r}(\cX'_1,\cX'_1\cup\cP_n)}\E_{\cX'_1,\cX'_2}\set{g_{r}(\cX'_2,\cX'_2\cup \cP'_n)},
\splite
\]
since $\cP_n$ and $\cP_n'$ are independent and have the same distribution. Thus, we have that $\E_{\cX_1,\cX_2}\set{\Delta g} = 0$.
Consequently, $\mean{\Delta g} = \mean{\E_{\cX_1,\cX_2}\set{\Delta g} }=0$.

In order to bound ${T_2}$, we  write
\[
\splitb
    I_0^{(1)} &:= \int_{\cA'_0} \indf{|c_1-c_2|\le \eps_0\rho_1} e^{-n\vuni(\bx_1,\bx_2)} d\bx \\
    I_0^{(2)} &:= \int_{\cA'_0} \indf{|c_1-c_2|> \eps_0\rho_1} e^{-n\vuni(\bx_1,\bx_2)} d\bx.
\splite
\]
Note that
\[
  I_0^{(1)} \le \int_{\bx_1} h_{r}(\bx_1)  e^{-n\omega_d\rho_1^d}
\int_{\bx_2} h_{r}(\bx_2) \indf{\rho_2\le\rho_1,\  |c_1-c_2|\le \eps_0 \rho_1}d\bx_2 d\bx_1.
\]
Since $\bx_1,\bx_2$ are disjoint, we can 
 apply the BP formula from Lemma \ref{lem:bp_torus} to $\bx_2$, and have
\[
\splitb
  I_0^{(1)} &\le \int_{\bx_1} h_{r}(\bx_1) e^{-n\omega_d \rho_1^d} d\bx_1
\\
&\times  \int_{c_2}dc_2\int_{\Pi_2}d\Pi_2 \int_{\rho_2} d\rho_2 \int_{\bth_2} d{\bth_2} 
\rho_2^{dk-1} (\vsimp(\bth_2))^{d-k+1} \indf{\rho_2\le \rho_1,\ |c_1-c_2|\le \eps_0\rho_1} .
 \splite
\]
Using the facts that $\vsimp$ is bounded, $\rho_1(1-\eps_0^2)^{1/2} \le \rho_2 \le \rho_1$, and $c_2\in B_{\eps_0\rho_1}(c_1)$, we have
\[
\splitb
I_0^{(1)} &\le \const \int_{\bx_1} h_{r}(\bx_1) e^{-n\omega_d \rho_1^d}(\eps_0\rho_1)^d (\rho_1^{dk}-\rho_1^{dk}(1-\eps_0^2)^{dk/2}) d\bx \\
&= \const \eps_0^{d+2} \int_{\bx_1} h_{r}(\bx_1) e^{-n\omega_d \rho_1^d}\rho_1^{d(k+1)}  d\bx \\
&=\const \eps_0^{d+2}n^{-(2k+1)}  \Lambda^{2k}e^{-\Lambda}.
\splite
\]
Bounding $I_0^{(2)}$ can be done along the same lines as $I_j^{(3)}$ \eqref{eq:ineq_Ij2}, which will lead to
\[
	I_0^{(2)} \le \const n^{-(2k+1)}\Lambda^{2k}e^{-\Lambda -C_2\eps_0\Lambda}.
\]
Therefore, we conclude that
\eqb\label{eq:ineq_I0}
{I_0}-\mean{F_{k,r}}^2 \le \const n\Lambda^{k-1}e^{-\Lambda}\param{\eps_0^{d+2}\Lambda^{k+1} + \Lambda^{k+1}e^{-C_2\eps_0\Lambda}}.
\eqe

We are now ready to put all the pieces together in order to bound the variance.

\underline{Convergences of the variance:}\ 

We want to show that 
\[
	\frac{\var{F_{k,r}}}{\mean{F_{k,r}}} \to 1.
\]
First, notice that from the expectation part of Theorem \ref{prop:mean_var},  we have $\mean{F_{k,r}}
\approx n\Lambda^{k-1}e^{-\Lambda}$.
Using \eqref{eq:ineq_Ij}, we have
\[
\splitb
\frac{I_j}{\mean{F_{k,r}}} &\le
\const\param{\eps_j^{k+2-j} \Lambda^{k+1-j}
+
\param{\frac{\delta_j}{\eps_j}}^{d-k}  (\delta_j\Lambda)^{k-j+1}
 +
\Lambda^{k+1-j} e^{-C_2\delta_j\Lambda}}.
\splite
\]
Taking $\eps_j = \Lambda^{-\frac{k+3/2-j}{k+2-j}} = \frac{\Lambda^{\frac{1/2}{k+2-j}}}{\Lambda}$, and $\delta_j = \frac{k+2-j}{C_2} \frac{\log \Lambda}{\Lambda}$, 
we  have $\frac{{I_j}}{\mean{F_{k,r}}}\to 0$.
Next, we use \eqref{eq:ineq_I0} to have
\[
\frac{{I_0}-\mean{F_{k,r}}^2}{\mean{F_{k,r}}} \le \const \param{\eps_0^{d+2}\Lambda^{k+1} + \Lambda^{k+1}e^{-C_2\eps_0\Lambda}}.
\]
Taking $\eps_0 = \Lambda^{-(k+2)/(d+2)}$, makes the limit go to zero.
Using \eqref{eq:var_ck} , 
we can finally conclude that
\[
\frac{\var{F_{k,r}}}{\mean{F_{k,r}}}\approx 1.
\]
This completes the proof.

\end{proof}

\section{Conclusion}

In this paper we studied homological connectivity in the random \cech complex generated by a homogeneous Poisson process on the torus. We proved a sharp phase transition, as well as a detailed analysis for the obstructions to homological connectivity.
To conclude, we want to propose some  observations and open problems.
\begin{itemize}
\item In this paper we studied the homogeneous Poisson process only. It is interesting (and not straightforward) to study what happens for non-homogeneous processes, or even non-Poisson process (e.g.~\cite{yogeshwaran_topology_2015}).
\item While the results here are stated for the flat torus, they should apply to any compact smooth Riemannian manifold using the machinery developed in \cite{bobrowski_random_2019}. It would be interesting to see what happens in other cases as well. For example - manifolds with  boundary or non-compact spaces. For example, in \cite{penrose_random_2003} it is shown that for the $d$-dimensional normal distribution, the connectivity threshold is $r \approx \frac{(d-1)\logg n}{\sqrt{2\log n}}$, which is much larger than the threshold in \eqref{eq:pt_rgg}. More related results for graphs appear in \cite{gupta_criticality_2010,hsing_extremes_2005}.
\item As seen in Corollary \ref{cor:pt_ihk_1},  the event $\set{H_k(\cC_r)\cong H_k(\T^d)}$ behaves differently than $\cH_{k,r}$ and it would be interesting to try find the exact threshold here as well. Following Corollary \ref{cor:pt_ihk_1} we conjecture that the threshold is at $\Lambda = \log n + (k-2)\logg n$. The lower bound in that case was already proven in \cite{bobrowski_vanishing_2017}, so it remains to find a matching upper bound.
\item Another interesting geometric model is the random \emph{Vietoris-Rips} (aka Rips) complex, where one starts with a random geometric graph $G(n,r)$ and  constructs a clique complex. The results in \cite{iyer_thresholds_2018} suggest a completely different sequence of phase transitions for the Rips complex. We conjecture that there exist sharp phase transitions for homological connectivity in the Rips complex, with thresholds of the form $C_k \log n$ for $C_1 < C_2 \cdots < C_d$. The main difference between the \cech and the Rips complex, is that the entire apparatus of Morse theory applies to the \cech complex only, due to the Nerve Lemma. The Rips complex analysis therefore requires a different toolbox. In \cite{kahle_random_2011} discrete Morse theory was used to derive bounds for homological connectivity. However, it is not clear if this framework can be used to prove a sharp phase transition.
\item Note that  the results for the LM complex $Y_d(n,p)$ are a perfect generalization of the results for the ER graph $G(n,p)$ in the sense that taking $d=1$ in \cite{kahle_inside_2016,meshulam_homological_2009} retrieves the results in \cite{erdos_random_1959}. This is not the case for  the \cech complex $\cC_r$. We saw that the connectivity threshold is $\Lambda = 2^{-d} \log n$ while for $k\ge 1$ we have the thresholds $\Lambda = \thres$. It would be nice to see, though, if the Rips complex provides  a more natural generalization for $G(n,r)$.

\item In the introduction we discussed the connection between isolated faces and homological connectivity. We argued that in the general case, homological connectivity is tied to the vanishing of the \emph{critical} isolated faces. It would still be interesting to examine in detail the process that contains \emph{all} isolated faces, and its connection to \emph{hypergraph connectivity} -- the connectivity of the graph that represent neighboring faces (see \cite{iyer_thresholds_2018}).  We believe that some of the tools developed in this paper could be useful  in providing a more detailed analysis of this problem, and the interaction between homological and hypergraph connectivity.
\end{itemize}

\section*{Acknowledgements}
The author is grateful to the following people, for many helpful conversations and useful advice:
Robert Adler, Asaf Cohen, Yogeshwaran Dhandapani, Herbert Edelsbrunner, Matthew Kahle, Anthea Monod, Sunder Ram-Krishnan, Primoz Skraba, and Shmuel Weinberger. Special thanks to Goncalo Oliveira.

\appendix

\newpage
{\begin{center}\bf \Large APPENDIX\end{center}}

\section{Palm Theory}

The focus of Palm theory (cf.\cite{chiu_stochastic_2013}) is  studying the conditional distribution of a point process $\cP$ given that a fixed set of point $\cY$ is already in $\cP$. The Poisson process has the unique property that its Palm measure is the same as its original distribution. This yields the following useful results, see \cite{bobrowski_distance_2014,penrose_random_2003} for proofs.

\begin{thm}
\label{thm:palm}
Let $(X,\rho)$ be a metric space, $f:X\to\R$ be a probability density on $X$, and let $\cP_n$ be a Poisson process on $X$ with intensity $\lambda_n = n f$.
Let $h(\cX,\cP)$ be a measurable function defined for all finite subsets $\cX \subset \cP \subset X$  with $\abs{\cX} = k$. Then
\[
    \mean{\sum_{ \cX \subset \cP_n}
    h(\cX,\cP_n)} = \frac{n^k}{k!} \mean{h(\cX',\cX' \cup \cP_n)}
\]
where $\cX'$ is a set of $k$ $iid$ points in $X$ with density $f$, independent of $\cP_n$.
\end{thm}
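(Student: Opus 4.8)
The final statement to prove is Theorem~\ref{thm:palm}, the Palm theory formula for Poisson processes. The plan is to derive it from the defining properties of the Poisson process together with an explicit description of its distribution as a mixture over the number of points.

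\textbf{Setup and strategy.} First I would recall that a Poisson process $\cP_n$ on $X$ with intensity measure $\lambda_n = nf$ (where $f$ is a probability density, so $\lambda_n(X) = n$) can be realized concretely: draw $N \sim \pois{n}$, and then conditionally on $N = m$, place $m$ i.i.d.\ points $X_1,\ldots,X_m$ with density $f$. This is exactly the construction used in Section~\ref{sec:poisson} for the torus. Using this representation, I would write the left-hand side as an expectation over $N$ and then over the i.i.d.\ points. The key combinatorial observation is that a sum over all $k$-subsets $\cX \subset \cP_n$ can be rewritten, for each value $N = m \ge k$, as a sum over ordered $k$-tuples of distinct indices divided by $k!$, and then — since the points are exchangeable — as $\binom{m}{k}$ times the expectation over a single fixed choice of $k$ of the points.

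\textbf{Key steps in order.} (1) Condition on $N = m$ and expand:
\[
\mean{\sum_{\cX \subset \cP_n} h(\cX, \cP_n)} = \sum_{m=k}^\infty \prob{N = m} \mean{\sum_{\cX \subset \{X_1,\ldots,X_m\}} h(\cX, \{X_1,\ldots,X_m\})}.
\]
(2) Use exchangeability of $X_1,\ldots,X_m$ to replace the inner sum over $\binom{m}{k}$ subsets by $\binom{m}{k}$ copies of the term for $\cX = \{X_1,\ldots,X_k\}$, giving $\binom{m}{k}\mean{h(\{X_1,\ldots,X_k\}, \{X_1,\ldots,X_m\})}$. (3) Split the conditioning: $\{X_1,\ldots,X_m\} = \{X_1,\ldots,X_k\} \cup \{X_{k+1},\ldots,X_m\}$, where the second set is an independent collection of $m-k$ i.i.d.\ points. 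Relabel $\cX' := \{X_1,\ldots,X_k\}$ and observe that $\{X_{k+1},\ldots,X_m\}$ has the distribution of a Poisson process conditioned to have $m-k$ points. (4) Insert $\prob{N = m} = e^{-n} n^m / m!$, use $\binom{m}{k} = \frac{m!}{k!(m-k)!}$, so the product becomes $\frac{n^k}{k!}\cdot \frac{e^{-n}n^{m-k}}{(m-k)!}$, and reindex the sum by $j = m - k$. (5) Recognize $\sum_{j=0}^\infty \frac{e^{-n}n^j}{j!}\mean{h(\cX', \cX' \cup \{Y_1,\ldots,Y_j\})}$ (with $\cX'$ fixed and independent of the $Y$'s) as precisely $\frac{n^k}{k!}\mean{h(\cX', \cX' \cup \cP_n')}$ where $\cP_n'$ is an independent Poisson process with the same law — again using the $N$-mixture representation in reverse. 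Averaging over the randomness of $\cX'$ then gives the claimed identity.

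\textbf{Main obstacle.} None of the steps is deep; the only thing requiring care is the bookkeeping in step (3)–(5), namely verifying that conditioning a Poisson process to contain a fixed $k$-tuple $\cX'$ leaves an \emph{independent} Poisson process of the same intensity as the ``residual'' — this is exactly the Mecke/Palm characterization and is where the special structure of the Poisson process (as opposed to a general point process) enters. I would either cite this as the standard Mecke equation or prove it in the finite-mixture representation above, where it reduces to the elementary fact that the first $k$ and the last $m-k$ of $m$ i.i.d.\ points are independent. A secondary minor point is a measurability/integrability remark: the interchange of the infinite sum with the expectations is justified by Tonelli when $h \ge 0$, and then extended to general measurable $h$ for which the right-hand side is finite. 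Since the excerpt states this as a cited result (from \cite{bobrowski_distance_2014, penrose_random_2003}), a short self-contained version along these lines suffices.
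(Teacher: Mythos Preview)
Your proof is correct and is the standard argument via the Poisson mixture representation and exchangeability. The paper itself does not supply a proof of this theorem---it simply states the result and refers the reader to \cite{bobrowski_distance_2014,penrose_random_2003}---so there is nothing to compare against; your self-contained derivation is exactly what those references contain.
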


The last formula is also known as the \emph{Campbell-Mecke} formula.
To calculate second moments, we will also need the following corollary.

\begin{cor}\label{cor:palm}
With the notation above, assuming $\abs{\cX_1} = \abs{\cX_2} = k$,
\[
\mean{\sum_{ \substack {
                    \cX_1 ,\cX_2\subset \cP_n  \\
                    \abs{\cX_1 \cap \cX_2} = j }}
    h(\cX_1,\cP_n)h(\cX_2,\cP_n)} = {\frac{n^{2k-j}}{j!((k-j)!)^2}} \mean{h(\cX_1',\cX' \cup \cP_n)h(\cX_2',\cX' \cup \cP_n)}
\]
where $\cX_1',\cX'_2$ are sets of $k$ points with $\abs{\cX_1'\cap\cX_2'} = j$, such that  $\cX' := \cX'_1 \cup \cX'_2$ is a set of $2k-j$ $iid$ points in $X$ with density $f$, independent of $\cP_n$.
\end{cor}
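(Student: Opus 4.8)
The plan is to deduce Corollary \ref{cor:palm} from Theorem \ref{thm:palm} by a single application to an auxiliary function defined on larger subsets. First I would observe that every ordered pair $(\cX_1,\cX_2)$ of $k$-subsets of $\cP_n$ with $\abs{\cX_1\cap\cX_2}=j$ has union $\cY=\cX_1\cup\cX_2$ of cardinality $2k-j$, and conversely a fixed $(2k-j)$-set $\cY$ arises from exactly $\binom{2k-j}{j}\binom{2k-2j}{k-j}$ such ordered pairs: choose the $j$ common points, then split the remaining $2k-2j$ points into two labelled groups of size $k-j$. Accordingly I would define, for finite $\cY\subset\cP$ with $\abs{\cY}=2k-j$,
\[
\tilde h(\cY,\cP):=\sum_{\substack{(\cX_1,\cX_2):\,\cX_1\cup\cX_2=\cY\\ \abs{\cX_1}=\abs{\cX_2}=k,\ \abs{\cX_1\cap\cX_2}=j}} h(\cX_1,\cP)\,h(\cX_2,\cP),
\]
which is measurable as a finite sum of measurable functions, and note the identity
\[
\sum_{\substack{\cX_1,\cX_2\subset\cP_n\\ \abs{\cX_1\cap\cX_2}=j}} h(\cX_1,\cP_n)h(\cX_2,\cP_n)=\sum_{\cY\subset\cP_n,\ \abs{\cY}=2k-j}\tilde h(\cY,\cP_n).
\]

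Next I would apply Theorem \ref{thm:palm} to $\tilde h$ with subset size $2k-j$ in place of $k$, obtaining
\[
\mean{\sum_{\cY\subset\cP_n}\tilde h(\cY,\cP_n)}=\frac{n^{2k-j}}{(2k-j)!}\,\mean{\tilde h(\cY',\cY'\cup\cP_n)},
\]
where $\cY'$ is a set of $2k-j$ iid points with density $f$, independent of $\cP_n$. Expanding $\tilde h(\cY',\cY'\cup\cP_n)$ as the sum over the $\binom{2k-j}{j}\binom{2k-2j}{k-j}$ ordered pairs partitioning $\cY'$, I would invoke exchangeability of the iid family $\cY'$ (permuting its labels leaves the joint law of $(\cY',\cP_n)$ invariant, since $\cP_n$ is independent of $\cY'$), so every summand contributes the same expectation $\mean{h(\cX_1',\cX'\cup\cP_n)h(\cX_2',\cX'\cup\cP_n)}$ with $\cX'=\cX_1'\cup\cX_2'$ for a fixed split. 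Hence $\mean{\tilde h(\cY',\cY'\cup\cP_n)}=\binom{2k-j}{j}\binom{2k-2j}{k-j}\,\mean{h(\cX_1',\cX'\cup\cP_n)h(\cX_2',\cX'\cup\cP_n)}$, and the elementary identity
\[
\frac{1}{(2k-j)!}\binom{2k-j}{j}\binom{2k-2j}{k-j}=\frac{1}{j!\,((k-j)!)^2}
\]
yields exactly the claimed formula.

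The only genuinely delicate point is bookkeeping: keeping the ordered-versus-unordered conventions consistent between the double sum over $(\cX_1,\cX_2)$, the definition of $\tilde h$, and the random split of $\cY'$, and verifying that the exchangeability reduction is legitimate (it is, because the points of $\cY'$ are iid and independent of $\cP_n$). Everything else — measurability of $\tilde h$, the count of splittings, and the binomial identity — is routine. I would also note that taking $j=k$ collapses the statement to Theorem \ref{thm:palm} applied to $h^2$, which serves as a consistency check on the constant $\tfrac{n^{2k-j}}{j!((k-j)!)^2}$.
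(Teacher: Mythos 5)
Your proof is correct. The paper does not actually supply its own proof of Corollary~\ref{cor:palm}; it simply cites \cite{bobrowski_distance_2014,penrose_random_2003}, where the standard route is to invoke a multivariate (here bivariate) Mecke/Campbell formula directly. Your derivation instead reduces the bivariate case to a \emph{single} application of the univariate Theorem~\ref{thm:palm} by aggregating ordered pairs $(\cX_1,\cX_2)$ according to their union $\cY=\cX_1\cup\cX_2$ and defining the auxiliary functional $\tilde h$ on $(2k-j)$-subsets. The combinatorial count $\binom{2k-j}{j}\binom{2k-2j}{k-j}$ of ordered splittings, the exchangeability step (valid since the points of $\cY'$ are iid and independent of $\cP_n$, so the joint law of $(\cY',\cP_n)$ is permutation-invariant in the labels of $\cY'$), and the identity
\[
\frac{1}{(2k-j)!}\binom{2k-j}{j}\binom{2k-2j}{k-j}=\frac{1}{j!\,((k-j)!)^2}
\]
are all correct; the $j=k$ consistency check against Theorem~\ref{thm:palm} applied to $h^2$ is a good sanity check. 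The only mild caveat is one of convention: your argument implicitly treats the sum $\sum_{\cX_1,\cX_2}$ as running over \emph{ordered} pairs, which is the convention actually used in Section~\ref{sec:pois_limit} when the corollary is applied (there $\meanx{F_{k,r}^2}$ is expanded as an ordered double sum), so the bookkeeping is consistent with the paper. In short, your proof is a valid, self-contained, and arguably more economical derivation than the cited route, since it never invokes a bivariate Mecke formula.
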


\section{Spherical caps}

Spherical volumes are a key ingredient in our calculations and proofs. In this section we provide some useful estimates. Let $B_r$ be a concentric $d$-dimensional ball defined by
\[
    B_r := \set{x\in \R^d : \abs{x} \le r}.
\]
We define the spherical cap $B_{r,\Delta}$ to be
\[
    B_{r,\Delta} := \set{x = (x^{(1)},\ldots, x^{(d)})\in B_r : x^{(d)} \ge \Delta}.
\]
The following  result is proved in  \cite{li_concise_2011}.

\begin{lem} \label{lem:spher_cap}
The volume of a $B_{1,\Delta}$ is given by
\eqb\label{eq:vol_cap}
	\vcap(\Delta) :=  \vol(B_{1,\Delta}) = 
	\omega_{d-1}\int_\Delta^1 (1-\rho^2)^{\frac{d-1}{2}} d\rho,
\eqe
where $\omega_d$ is the volume of the unit ball in $\R^d$. By scaling we have,
\[
    \vol(B_{r,\Delta}) = r^d \vcap(\Delta/r).
\]
\end{lem}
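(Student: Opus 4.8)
The plan is to compute the volume of the cap $B_{1,\Delta}$ by integrating the volumes of its horizontal slices. Write a point of $\R^d$ as $x = (y, \rho)$ with $y \in \R^{d-1}$ and $\rho = x^{(d)} \in \R$. Then
\[
B_{1,\Delta} = \set{(y,\rho) : \Delta \le \rho \le 1,\ |y|^2 \le 1-\rho^2},
\]
so for each fixed $\rho \in [\Delta,1]$ the slice $\set{y : (y,\rho) \in B_{1,\Delta}}$ is a $(d-1)$-dimensional ball of radius $\sqrt{1-\rho^2}$, whose volume is $\omega_{d-1}(1-\rho^2)^{(d-1)/2}$. For $\rho < \Delta$ the slice is empty (and for $\rho \le 1$ the constraint $\rho \le 1$ from $x \in B_1$ is what caps the upper limit). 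Applying Fubini's theorem to integrate first over $y$ and then over $\rho$ gives
\[
\vol(B_{1,\Delta}) = \int_\Delta^1 \omega_{d-1}(1-\rho^2)^{(d-1)/2}\, d\rho,
\]
which is exactly \eqref{eq:vol_cap}. (One should check the special case $\Delta \ge 1$, where both sides are $0$, and note that for $\Delta \in (-1,0)$ the formula still holds since the slicing argument did not use $\Delta \ge 0$.)

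For the scaling relation, apply the change of variables $x \mapsto x/r$, i.e.\ $\Phi(x) = x/r$, which is a linear bijection with Jacobian determinant $r^{-d}$. Since $\Phi$ maps $B_{r,\Delta}$ onto $B_{1,\Delta/r}$ bijectively (the constraints $|x| \le r$ and $x^{(d)} \ge \Delta$ become $|\Phi(x)| \le 1$ and $\Phi(x)^{(d)} \ge \Delta/r$), the change-of-variables formula yields
\[
\vol(B_{r,\Delta}) = r^d \vol(B_{1,\Delta/r}) = r^d \vcap(\Delta/r).
\]

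There is no real obstacle here: the only points requiring a word of care are the justification of Fubini (immediate, since the integrand is nonnegative and measurable, or by continuity) and the bookkeeping of the integration limits in the degenerate regimes $\Delta \ge 1$ and $\Delta \le -1$. I would simply cite \cite{li_concise_2011} for the closed form and include the two-line slicing argument above for self-containedness.
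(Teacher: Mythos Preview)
Your proposal is correct. The paper itself does not give a proof of this lemma at all; it simply states that the result is proved in \cite{li_concise_2011}, so your slicing/Fubini argument and change-of-variables for the scaling relation are more than sufficient and in fact more self-contained than what the paper provides.
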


Following \eqref{eq:vol_cap}, the derivative of $\vcap(\Delta)$ is
\eqb\label{eq:vol_deriv}
\frac{d}{d\Delta}\vcap(\Delta) = -\omega_{d-1}  (1-\Delta^2)^{(d-1)/2}.
\eqe
Therefore, we conclude that 
\eqb \label{eq:vol_taylor}
\vcap(\Delta) = \frac{\omega_d}{2} - \omega_{d-1}\Delta + o(\Delta).
\eqe

In some cases we will also need a higher-order polynomial expansion of $\vcap$ around zero, in this case we can show that the second and forth derivatives vanish, and therefore,
\eqb\label{eq:vol_taylor_2}
\vcap(\Delta) = C_0 - C_1\Delta + C_3\Delta^3 + o(\Delta^4),
\eqe
where $C_0 = \frac{\omega_d}{2}$, $C_1 = \omega_{d-1}$, and $C_3 = \frac{(d-1)\omega_{d-1}}{6}$.

For the proofs in this paper we will need some finer statements about the volume.

\begin{lem}\label{lem:vdiff}
Let $x_1,x_2\in \R^d$ be such that $\abs{x_1-x_2}= \eps>0$, and let $\alpha\in(0,1)$. Define
\[
V_{\diff}(\eps,\alpha):= \vol(B_{1-\alpha\eps}(x_2)\bs B_1(x_1) ).
\]
Then,
\[
\lim_{\eps\to0}\frac{V_{\diff}(\eps,\alpha)}{\eps} \in (0,\infty).
\]
\end{lem}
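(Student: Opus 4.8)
\textbf{Proof proposal for Lemma \ref{lem:vdiff}.}

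The plan is to compute the limit by identifying $V_{\diff}(\eps,\alpha)$ as a volume wedged between two nested balls whose boundaries are close to the unit sphere, and then extract the linear term via a first-order expansion. First I would normalize: place $x_1$ at the origin, so $B_1(x_1) = B_1(0)$, and $x_2 = \eps \mathrm{e}_1$ for a unit vector $\mathrm{e}_1$ (by rotational symmetry the choice of direction is irrelevant). Then $B_{1-\alpha\eps}(x_2) = B_{1-\alpha\eps}(\eps\mathrm{e}_1)$, and we are measuring the set of points lying in the slightly-shifted, slightly-smaller ball but outside the unit ball. Geometrically this region is a thin ``lune''-like shell near the part of the unit sphere facing away from $\mathrm{e}_1$ (i.e.\ near $-\mathrm{e}_1$), since shifting the center by $+\eps\mathrm{e}_1$ pushes the shifted ball's boundary outward on the $-\mathrm{e}_1$ side by roughly $\eps$, while shrinking the radius by $\alpha\eps$ pulls it inward uniformly by $\alpha\eps$.

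The key computation is to parametrize the boundary spheres in the direction $u\in\S^{d-1}$. A point $\rho u$ (with $\rho>0$, $u\in\S^{d-1}$) lies in $B_1(0)$ iff $\rho\le 1$, and lies in $B_{1-\alpha\eps}(\eps\mathrm{e}_1)$ iff $|\rho u - \eps\mathrm{e}_1| \le 1-\alpha\eps$, i.e.\ $\rho^2 - 2\eps\rho\,\langle u,\mathrm{e}_1\rangle + \eps^2 \le (1-\alpha\eps)^2$. Solving for the boundary radius $\rho_{\mathrm{out}}(u,\eps)$ in this direction and expanding to first order in $\eps$ gives $\rho_{\mathrm{out}}(u,\eps) = 1 + \eps(\langle u,\mathrm{e}_1\rangle - \alpha) + O(\eps^2)$. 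Hence, writing $s = \langle u,\mathrm{e}_1\rangle \in [-1,1]$, the radial extent of $V_{\diff}$ in direction $u$ is $(\rho_{\mathrm{out}}(u,\eps) - 1)_+ = \eps(s-\alpha)_+ + O(\eps^2)$, nonempty precisely when $s > \alpha$. Then by integration in polar coordinates,
\[
V_{\diff}(\eps,\alpha) = \int_{\S^{d-1}} \int_{1}^{\rho_{\mathrm{out}}(u,\eps)} \rho^{d-1}\,d\rho\, du
= \eps \int_{\{u : \langle u,\mathrm{e}_1\rangle > \alpha\}} (\langle u,\mathrm{e}_1\rangle - \alpha)\, du + O(\eps^2),
\]
so that $\lim_{\eps\to 0} V_{\diff}(\eps,\alpha)/\eps = \int_{\{u\in\S^{d-1} : \langle u,\mathrm{e}_1\rangle > \alpha\}} (\langle u,\mathrm{e}_1\rangle - \alpha)\, du$. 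This integral is manifestly strictly positive (the integrand is positive on a set of positive measure, since $\alpha<1$) and finite (the sphere has finite measure and the integrand is bounded), which gives the claim.

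The main obstacle — really the only nontrivial point — is making the interchange of limit and integral rigorous, i.e.\ justifying that the $O(\eps^2)$ remainder in $\rho_{\mathrm{out}}$ contributes $O(\eps^2)$ to the volume uniformly, and that no anomalous contribution comes from directions $u$ with $\langle u,\mathrm{e}_1\rangle$ near $\alpha$ (where the region transitions between empty and nonempty). This can be handled by a crude two-sided sandwich: for $\eps$ small one can find explicit constants $c_1, c_2 > 0$ and a $\delta(\eps) = O(\eps^2)$ such that the region $V_{\diff}(\eps,\alpha)$ is contained in $\{\rho u : 1 \le \rho \le 1 + \eps(s-\alpha+\delta)_+\}$ and contains $\{\rho u : 1 \le \rho \le 1 + \eps(s-\alpha-\delta)_+\}$, and then apply dominated convergence (or simply monotone bounds) to the explicit radial integral. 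Alternatively, one can avoid expansions entirely and quote Lemma \ref{lem:vol_r1_r2}-type volume-of-near-intersection estimates already developed in the paper; but the direct polar-coordinate argument above is self-contained and short.
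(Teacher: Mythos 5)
Your proposal is correct and takes a genuinely different route from the paper. The paper's proof identifies $V_{\diff}(\eps,\alpha)$ as the difference of two spherical caps (one from $B_{1-\alpha\eps}(x_2)$ at height $\Delta$, one from $B_1(x_1)$ at height $\Delta+\eps$), then Taylor-expands the cap-volume function $\vcap$ in $\eps$; establishing that the resulting limit is strictly positive then requires a separate explicit inequality pitting $\vcap(\alpha)$ against $\vcap'(\alpha)$. Your argument instead integrates in polar coordinates around $x_1$ and expands the outer boundary radius $\rho_{\mathrm{out}}(u,\eps)=1+\eps(\langle u,\mathrm{e}_1\rangle-\alpha)+O(\eps^2)$; the limit then reads off as $\int_{\{\langle u,\mathrm{e}_1\rangle>\alpha\}}(\langle u,\mathrm{e}_1\rangle-\alpha)\,du$, which is visibly finite and visibly positive because $\alpha<1$. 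This buys you positivity for free, which is the nicest feature of your route; the paper's approach has the advantage of reusing the $\vcap$ machinery that is already deployed heavily elsewhere (e.g.\ Lemma \ref{lem:F_3}). You correctly flag the only technical point (uniformity of the $O(\eps^2)$ remainder near $\langle u,\mathrm{e}_1\rangle=\alpha$) and sketch a sandwich argument that would close it.

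One small slip in the narration, worth fixing: you describe the shell as lying ``near $-\mathrm{e}_1$,'' but your own computation (correctly) places it where $\langle u,\mathrm{e}_1\rangle>\alpha>0$, i.e.\ near $+\mathrm{e}_1$. Shifting the center of the inner ball by $+\eps\mathrm{e}_1$ pushes its boundary outward on the $+\mathrm{e}_1$ side, so the protruding crescent is on that side. The formulas are right; only the geometric gloss is reversed.
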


\begin{proof}

\begin{figure}
    \centering
        \begin{subfigure}{0.45\textwidth}
    \centering
    \includegraphics[scale=0.3]{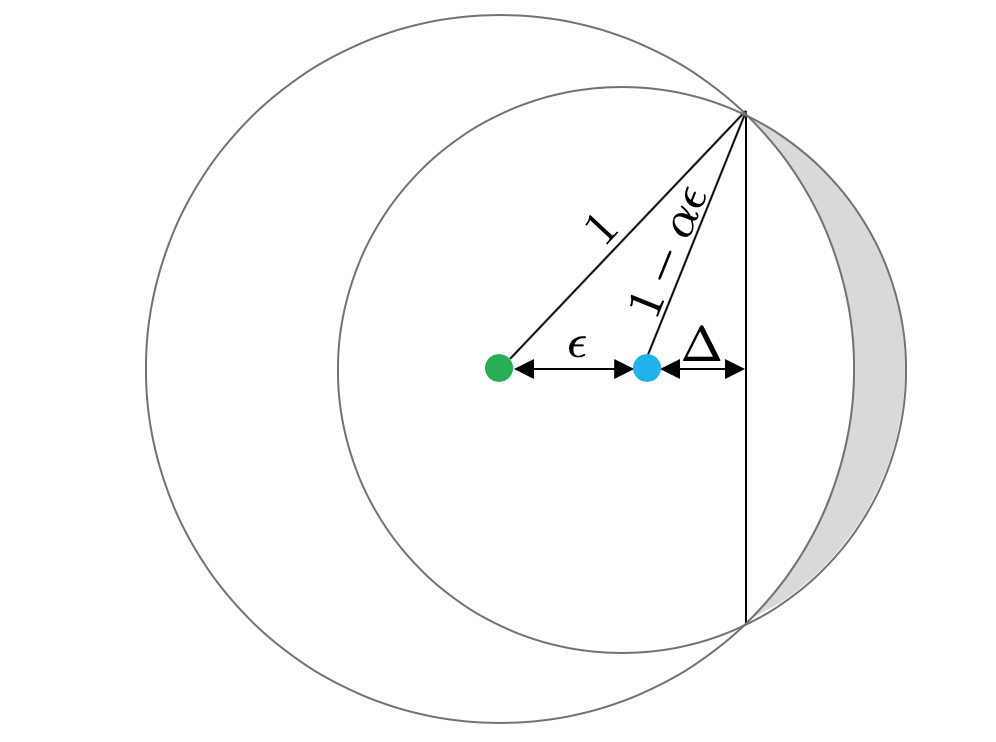}
    \caption{}
    \end{subfigure}
            \begin{subfigure}{0.45\textwidth}
    \centering
    \includegraphics[scale=0.3]{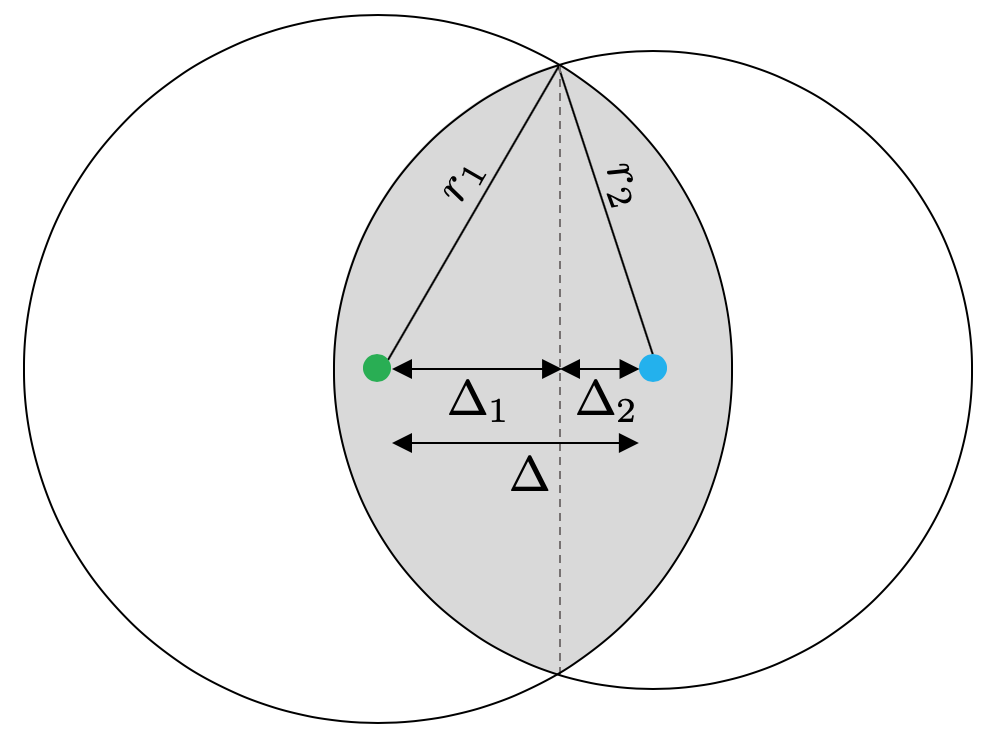}
    \caption{}
    \end{subfigure}
    \caption{(a) The balls $B_1(x_1)$, $B_{1-\alpha\eps}(x_2)$, and their difference (shaded area), whose volume is $\vdiff(\eps,\alpha)$. (b) The intersection of two balls with radii $r_1,r_2$, such that the distance between the centers is $\Delta$.}
    \label{fig:balls}
\end{figure}
Suppose that  $\eps \le 2\alpha / (1+\alpha^2)$, then $V_{\diff}(\eps,\alpha)$ can be depicted as in Figure \ref{fig:balls}(a), where the larger ball is of radius $1$ and the smaller is of radius $1-\alpha\eps$. 
The volume $V_{\diff}(\eps,\alpha)$ is the shaded area which is the difference  between two spherical caps $V^{(1)}-V^{(2)}$,
where $V^{(1)} = \vol(B_{1-\alpha\eps, \Delta})$, and $V^{(2)} = \vol(B_{1, \Delta+\eps})$. Notice that
\[
	(1-\alpha\eps)^2 - \Delta^2 = 1 - (\Delta+\eps)^2,
\]
and solving  for $\Delta$ yields $\Delta = \alpha - \frac{\eps}{2}(1+\alpha^2)$.
Therefore,
\[
\splitb
	V^{(1)} &=  (1-\alpha\eps)^d V_{\scap}\param{\frac{\Delta}{1-\alpha\eps}} \\
		&= (1-d\alpha\eps + o(\eps)) V_{\scap}\param{\Delta(1+\alpha\eps+o(\eps))} \\
		&= (1-d\alpha\eps + o(\eps)) V_{\scap}\param{\alpha-\frac{1-\alpha^2}{2}\eps +o(\eps)} \\
		&= (1-d\alpha\eps + o(\eps))\param{V_{\scap}(\alpha) -\frac{1-\alpha^2}{2}\eps V_{\scap}'(\alpha) +o(\eps)}\\
		&= V_{\scap}(\alpha) - \eps\param{\frac{1-\alpha^2}{2} V_{\scap}'(\alpha) + d\alpha V_{\scap}(\alpha)} + o(\eps),
\splite
\]
where $\vcap'$ is the derivative of $\vcap$.
In addition,
\[
	V^{(2)} = \vcap(\Delta+\eps) = \vcap\param{\alpha+\frac{1-\alpha^2}{2}\eps} = \vcap(\alpha) + \frac{1-\alpha^2}{2}\eps \vcap'(\alpha) + o(\eps).
\]
Thus,
\[
	\vdiff(\eps,\alpha) = V^{(1)}-V^{(2)} = -\eps\param{(1-\alpha^2)\vcap'(\alpha) +d\alpha \vcap(\alpha)} + o(\eps).
\]
This proves that the limit $\lim_{\eps\to 0}\frac{ \vdiff(\eps,\alpha)}{ \eps}$ exists and is finite. It remains to show that it is strictly positive.
Using \eqref{eq:vol_cap} we have that
\[
	\vcap(\alpha) = \omega_{d-1}\int_{\alpha}^1  (1-\rho^2)^{\frac{d-1}{2}}d\rho \le \frac{\omega_{d-1}}{\alpha} \int_{\alpha}^1  \rho(1-\rho^2)^{\frac{d-1}{2}}d\rho = \frac{\omega_{d-1}}{\alpha(d+1)} (1-\alpha^2)^{\frac{d+1}{2}}.
\]
In addition, from \eqref{eq:vol_deriv} we have
\[
	\vcap'(\alpha) = -\omega_{d-1}(1-\alpha^2)^{\frac{d-1}{2}}.
\]
Therefore,
\[
	\vdiff(\eps,\alpha)  \ge \eps \frac{\omega_{d-1}}{d+1} (1-\alpha^2)^{\frac{d+1}{2}} + o(\eps).
\]
Since $\alpha\in(0,1)$ we can conclude that the limit is positive.

\end{proof}

The last statement we need is a uniform bound on the volume of intersection of two balls. 

\begin{lem}\label{lem:vol_r1_r2}
Let $\vint(r_1,r_2,\Delta)$ be the volume of the intersection of balls with radii $r_1,r_2$ and whose centers are at distance $\Delta$ apart. Assuming that $\Delta^2 \ge \abs{r_1^2-r_2^2}$, then there exists $\kint>0$ such that
\[
	\vint(r_1,r_2,\Delta) \le \frac{\omega_d}{2}(r_1^d+ r_2^d) - \kint\Delta(r_1^{d-1}+r_2^{d-1}).
\]
\end{lem}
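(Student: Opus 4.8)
\textbf{Proof plan for Lemma \ref{lem:vol_r1_r2}.}

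The plan is to reduce the statement to a one-variable estimate by symmetry and then exploit the geometry of the ``lens'' $B_{r_1}(x_1)\cap B_{r_2}(x_2)$. First I would place the two centers on a coordinate axis, say $x_1$ at the origin and $x_2$ at distance $\Delta$ along the last coordinate, and let $H$ be the hyperplane orthogonal to the axis along which the two sphere boundaries meet (the radical hyperplane). The intersection volume $\vint(r_1,r_2,\Delta)$ splits as a sum of two spherical caps: the cap of $B_{r_1}$ cut off by $H$ on the side toward $x_2$, plus the cap of $B_{r_2}$ cut off by $H$ on the side toward $x_1$. Writing $a$ for the signed distance from $x_1$ to $H$ and $b=\Delta-a$ for the signed distance from $x_2$ to $H$, the condition $\Delta^2\ge |r_1^2-r_2^2|$ is exactly what guarantees $a\ge 0$ and $b\ge 0$ (both centers lie outside the intersection region, so each contributed piece really is a ``small'' cap, at most half a ball). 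Concretely $a=(\Delta^2+r_1^2-r_2^2)/(2\Delta)$ and $b=(\Delta^2+r_2^2-r_1^2)/(2\Delta)$, and $\vint = r_1^d\vcap(a/r_1) + r_2^d\vcap(b/r_2)$ using the notation of Lemma \ref{lem:spher_cap}.

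Next I would estimate each cap volume from above. The key elementary fact is that for $t\in[0,1]$ one has $\vcap(t)\le \tfrac{\omega_d}{2} - c_d\, t$ for a dimensional constant $c_d>0$: indeed $\vcap(0)=\omega_d/2$ and, from \eqref{eq:vol_deriv}, $\vcap'(t) = -\omega_{d-1}(1-t^2)^{(d-1)/2}$, which on $[0,1/2]$ is bounded above by $-\omega_{d-1}(3/4)^{(d-1)/2}$, while on $[1/2,1]$ we still have $\vcap(t)\le\vcap(1/2)\le \omega_d/2 - (\omega_{d-1}(3/4)^{(d-1)/2})\cdot\tfrac12$; combining the two ranges gives the linear bound with $c_d = \tfrac12\omega_{d-1}(3/4)^{(d-1)/2}$ (any concrete value works). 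Applying this with $t=a/r_1$ and $t=b/r_2$ gives
\[
\vint(r_1,r_2,\Delta) \le \frac{\omega_d}{2}(r_1^d+r_2^d) - c_d\big(a\,r_1^{d-1} + b\,r_2^{d-1}\big).
\]
Finally I would lower-bound the correction term: since $a+b=\Delta$ and both are nonnegative, $a\,r_1^{d-1}+b\,r_2^{d-1}\ge \min(r_1^{d-1},r_2^{d-1})\cdot\Delta$, and then $\min(r_1^{d-1},r_2^{d-1})\ge \tfrac12(r_1^{d-1}+r_2^{d-1})$ is false in general — so instead I would argue more carefully: WLOG $r_1\le r_2$; then $a\,r_1^{d-1}+b\,r_2^{d-1}\ge a\,r_1^{d-1}+b\,r_1^{d-1} = \Delta r_1^{d-1}\ge \tfrac{\Delta}{2^{d-1}}(r_1^{d-1}+r_2^{d-1})$ would still need $r_2\le r_1$... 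The cleaner route is: from $\Delta^2\ge r_2^2-r_1^2 = (r_2-r_1)(r_2+r_1)$ one can show $b/a$ is controlled, but the simplest honest bound is just $a\,r_1^{d-1}+b\,r_2^{d-1}\ge \tfrac12\Delta\,\min(r_1,r_2)^{d-1}$ and then absorb; alternatively note $a r_1^{d-1}+b r_2^{d-1}\ge a r_1^{d-1}$, and separately $\ge b r_2^{d-1}$, and $\max(a,b)\ge\Delta/2$, giving $\ge \tfrac{\Delta}{2}\max(r_1^{d-1}\mathbf 1_{a\ge b}, r_2^{d-1}\mathbf 1_{b\ge a})\ge \tfrac{\Delta}{4}(r_1^{d-1}+r_2^{d-1})$ after checking the larger of the two radii always pairs with the larger gap when $\Delta^2\ge|r_1^2-r_2^2|$ (if $r_1\le r_2$ then $a\le b$). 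This last monotonicity check — that the bigger radius gets the bigger radical-plane distance precisely under the hypothesis $\Delta^2\ge|r_1^2-r_2^2|$ — is the one spot requiring a short verification: $r_1\le r_2 \iff r_1^2-r_2^2\le 0 \iff a\le b$ directly from the formulas for $a,b$. Setting $\kint = c_d/4$ completes the proof.

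\textbf{Main obstacle.} The only genuine subtlety is bookkeeping the signs and the pairing of radii with cap-heights so that the correction term is bounded below by a positive multiple of $\Delta(r_1^{d-1}+r_2^{d-1})$ rather than, say, $\Delta\min(r_1,r_2)^{d-1}$ (which would be too weak for the application in \eqref{eq:ineq_Ij3}, where one wants the symmetric $\rho_1^{d-1}+\rho_2^{d-1}$). The hypothesis $\Delta^2\ge|r_1^2-r_2^2|$ is used exactly twice — once to ensure both caps are genuine (heights $a,b\ge 0$, so the linear upper bound on $\vcap$ applies) and once to get the favorable pairing $r_i$ larger $\Rightarrow$ its cap-height larger — and I would make both uses explicit. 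Everything else is routine one-variable calculus on $\vcap$.
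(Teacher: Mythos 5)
Your proposal is correct and follows essentially the same route as the paper: decompose the lens into two spherical caps, apply a uniform linear upper bound $\vcap(t)\le\tfrac{\omega_d}{2}-Ct$ on $[0,1]$, and use the pairing $r_1\ge r_2\Leftrightarrow \Delta_1\ge\Delta_2$ to bound $\Delta_1 r_1^{d-1}+\Delta_2 r_2^{d-1}$ from below by a constant multiple of $\Delta(r_1^{d-1}+r_2^{d-1})$. The only cosmetic differences are your explicit constant for the linear $\vcap$ bound (the paper instead argues via continuity of $(\vcap(\Delta)-\omega_d/2)/\Delta$ on the compact $[0,1]$) and your final pairing step via $\max(a,b)\ge\Delta/2$ giving $\kint=c_d/4$ (the paper uses the rearrangement inequality $(r_1^{d-1}-r_2^{d-1})(\Delta_1-\Delta_2)\ge0$ giving $\kint=C_1/2$), both of which produce a valid $\kint>0$.
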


\begin{proof}
When $\Delta^2 \ge \abs{r_1^2-r_2^2}$, 
the intersection $B_{r_1}(c_1) \cap B_{r_2}(c_2)$ consists of two spherical caps as in Figure \ref{fig:balls}(b).
The $i$-th cap is at distance $\Delta_i$ from $c_i$ and with radius $r_i$, where $\Delta_1+\Delta_2 = \Delta$. Denote its volume by $V^{(i)}$. Then
\eqb\label{eq:V_i}
	V^{(i)} = r_i^d\vcap(\Delta_i / r_i),
\eqe
where $\Delta_i/r_i \le 1$. Next, define
\[
F(\Delta) = \begin{cases}
\frac{\vcap(\Delta) - \frac{\omega_d}{2}}{\Delta} & \Delta \in (0,1],\\
-\omega_{d-1} & \Delta = 0.
\end{cases}
\]
Using \eqref{eq:vol_deriv} we have that $F(\Delta)$ is continuous and negative in $[0,1]$, and thus has a negative maximum. Therefore, there exists $C_1>0$ such that for all $\Delta\in [0,1]$
\[
\vcap(\Delta) \le \frac{\omega_{d}}{2} - C_1\Delta.
\]
Thus, using \eqref{eq:V_i} we have
\eqb\label{eq:v_r1_r2}
	\vint(r_1,r_2,\Delta) = V^{(1)}+V^{(2)} \le\frac{\omega_d}{2}(r_1^d + r_2^d) -  C_1(\Delta_1 r_1^{d-1} + \Delta_2r_2^{d-1})
\eqe

Next, notice that $r_1^2-\Delta_1^2  = r_2^2-\Delta_2^2$,\ or\ $r_1^2-r_2^2 = \Delta_1^2 - \Delta_2^2$.
Therefore, $r_1 \ge r_2$ if and only if $\Delta_1\ge \Delta_2$, which implies
\[
(r_1^{d-1} - r_2^{d-1})(\Delta_1-\Delta_2) \ge 0,
\]
which in turn implies that
\[
r_1^{d-1}\Delta_1 + r_2^{d-1}\Delta_2 \ge \frac{1}{2}(r_1^{d-1}+r_2^{d-1})(\Delta_1+\Delta_2) = \frac{1}{2}(r_1^{d-1} + r_2^{d-1})\Delta.
\]
Taking $\kint= C_1/2$ and putting this together with \eqref{eq:v_r1_r2} yields
\[
\vint(r_1,r_2,\Delta) \le \frac{\omega_d}{2}(r_1^d + r_2^d) -  \kint(r_1^{d-1}+r_2^{d-1})\Delta,
\]
completing the proof.

\end{proof}

\section{Blaschke-Petkantschin-type formulae}\label{sec:bp}

Many of our calculations will involve integrating over sets of points that form critical $k$-faces. The following lemma introduces a change of variables, extending the idea of polar coordinates, that will be highly useful in this context.
Let $\bx=(x_1,\ldots,x_{k+1}) \in (\T^d)^{k+1}$ be the coordinates we want to integrate over. The change of variables we will use follows the mapping $\bx\to (c,\rho,\Pi,\bth)$ discussed in Section \ref{sec:crit_faces}. 
As stated in Section \ref{sec:flat_torus}, we will implicitly assume that $\bx$ is contained in a ball of radius $\rmax$, and that $E(\bx)\cap B_{\rmax}(\bx) \ne \emptyset$. 
This guarantees that the definitions in \eqref{eq:crit_face_1} are all valid. The change of variables we use will include the following steps.
\begin{enumerate}
\item Choose ax center $c$,
\item Choose a radius $\rho$,
\item Choose the $k$-dimensional linear space $\Pi\subset \R^d_c$,
\item Choose points on the  unit sphere $\S^{k-1}\subset \Pi$,  denoted $\bth=(\theta_1,\ldots,\theta_{k+1}) \in (\S^{k-1})^{k+1}$.
\end{enumerate}
These variables are related via
\[
	\bx = c + \rho \bth(\Pi),
\]
where by $\bth(\Pi)$ we mean taking points on the $(k-1)$-sphere in $\Pi$.
In all our calculations, we will consider functions $f(\bx)$ that are both shift and affine invariant. 
This implies that,
\eqb\label{eq:func_inv}
	f(\bx) = f(c+\rho \bth(\Pi)) = f(\rho \bth(\Pi_0)) := f(\rho \bth),
\eqe
where $\Pi_0$ is the canonical embedding of $\R^{k}$ in $\R^d$ as $\R^k \times \set{0}^{d-k}$.
With these observation, we present the following lemma known as the \emph{Blaschke-Petkantschin} (BP) formula, that will be highly useful throughout the proofs in this paper. In \cite{bobrowski_random_2019} we presented this formula for any compact Riemannian manifold. Here, we will present its version for the flat torus. Throughout, we will always assume that if the diameter of $\bx$ is greater than $2\rmax$ then $f(\bx) = 0$.

\begin{lem}\label{lem:bp_torus}
Let $f:(\T^d)^{k+1}\to \R$ be a measurable bounded function satisfying \eqref{eq:func_inv}.
Then,
\eqb\label{eq:bp_torus_sep}
	\int_{(\T^d)^{k+1}} f(\bx) d\bx = \Abp\int_0^\infty \int_{(\S^{k-1})^{k+1}} \rho^{dk-1} f(\rho \bth)( \vsimp(\bth))^{d-k+1}d\bth d\rho ,
\eqe
where $\vsimp(\bth)$ is the volume of the $k$-simplex spanned by $\bth$, $\Abp = (k!)^{d-k+1}\Gamma_{d,k}$,  $\Gamma_{d,k} = \binom{d}{k}\frac{\Omega_d}{\Omega_k\Omega_{d-k}}$ is the volume of the Grassmannian $\Gr(d,k)$, and $\Omega_k = \prod_{i=1}^k\omega_i$. 

\end{lem}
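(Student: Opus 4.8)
The plan is to prove the formula as a single change of variables, $\Psi\colon\bx\mapsto(c,\rho,\Pi,\bth)=(c(\bx),\rho(\bx),\Pi(\bx),\bth(\bx))$, carried out in the locally Euclidean picture supplied by the cutoff at scale $\rmax$. Under the standing convention that $f(\bx)=0$ unless $E(\bx)\cap B_{\rmax}(\bx)\neq\emptyset$, only configurations with a well-defined circumcenter within distance $\rmax$ of all their points contribute to the left-hand side; every such configuration sits inside a geodesic ball isometric to a Euclidean ball (Section \ref{sec:flat_torus}), so all of the ensuing differential calculus may be performed as in $\R^d$, and in particular $\rho\le\rmax$ on the support. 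First I would check that $\Psi$ is a diffeomorphism from the (open, co-null) set of configurations in general position onto an open subset of $\T^d\times(0,\rmax)\times\Gr(d,k)\times(\S^{k-1})^{k+1}$, with inverse $(c,\rho,\Pi,\bth)\mapsto(c+\rho\,\iota_\Pi(\theta_1),\ldots,c+\rho\,\iota_\Pi(\theta_{k+1}))$ for a measurable choice of isometric embedding $\iota_\Pi\colon\R^k\hookrightarrow\R^d$ onto the subspace $\Pi$; the residual rotational ambiguity of $\iota_\Pi$ is harmless because the right-hand integrand is rotation-invariant in $\bth$. By \eqref{eq:func_inv} we have $f(\Psi^{-1}(c,\rho,\Pi,\bth))=f(\rho\bth)$, independent of $c$ and $\Pi$, so everything reduces to computing the Jacobian $J=|\det D\Psi^{-1}|$ and showing it factors as $\rho^{dk-1}$ times a function $G(\bth)$ of the angular variables alone; the $c$- and $\Pi$-integrals will then contribute $\vol(\T^d)=1$ and the Grassmannian volume $\Gamma_{d,k}$.

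For the Jacobian I would evaluate $D\Psi^{-1}$ at the base point $c=0$, $\Pi=\Pi_0:=\R^k\times\{0\}^{d-k}$ — legitimate because $J$ cannot depend on $c$ (translation-homogeneity of $\T^d$) nor on $\Pi$ (rotation-invariance) — and read off the block structure of the $d(k+1)\times d(k+1)$ matrix: the $d$ columns from $c$ contribute $I_d$ stacked $k+1$ times, the single $\rho$-column contributes $(\theta_1,\ldots,\theta_{k+1})$, the $k-1$ columns from each $\theta_j$ and the $k(d-k)$ columns from the Grassmannian directions each carry a scalar factor $\rho$. Pulling these out produces $\rho^{(k+1)(k-1)+k(d-k)}=\rho^{dk-1}$ times a determinant $G(\bth)$ that no longer involves $\rho$, $c$ or $\Pi$. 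To identify $G$ I would regroup the remaining determinant as in the classical affine Blaschke–Petkantschin formula: the $d-k$ transverse (Grassmannian) directions contribute the factor $(k!\,\vsimp(\bth))^{d-k}$, and the in-plane directions — polar coordinates for $k+1$ points of a $k$-dimensional space about their circumcenter — contribute a further $k!\,\vsimp(\bth)$, this last being the standard Gram-determinant identity that the circumcentric polar Jacobian of $k+1$ points in $\R^k$ equals $k!$ times the volume of the simplex they span. Hence $G(\bth)=(k!)^{d-k+1}(\vsimp(\bth))^{d-k+1}$, and collecting the pieces gives the stated formula with $\Abp=(k!)^{d-k+1}\Gamma_{d,k}$.

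The routine but load-bearing parts are, first, verifying that $\Psi$ is bijective off a null set (general position of $\bth$ on $\S^{k-1}$, uniqueness of the circumcenter inside $B_{\rmax}$, measurability of $\iota_\Pi$), and, second, tracking every combinatorial constant — the $k!$'s from $k$-volumes, the normalization of Haar measure on $\Gr(d,k)$, and the $\Omega_k=\prod_{i=1}^k\omega_i$ factors — so that the final constant collapses to exactly $(k!)^{d-k+1}\Gamma_{d,k}$ and not a cousin of it. The one genuinely non-mechanical step is the identification of the residual Gram determinant $G(\bth)$ with $(k!)^{d-k+1}(\vsimp(\bth))^{d-k+1}$; once that is secured the rest is careful calculus. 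I would also record that this is precisely the specialization to $M=\T^d$ of the Riemannian Blaschke–Petkantschin formula of \cite{bobrowski_random_2019}, the only torus-specific inputs being that $\T^d$ is flat, has trivial holonomy, and has unit volume — so the reduction to the Euclidean computation above is exact — and I would use that reference as an independent check on the constant.
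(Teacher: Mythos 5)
Your route is genuinely different from the paper's: the paper's entire proof is a one-line citation to Lemma 5.1 of \cite{bobrowski_random_2019} (the Riemannian Blaschke--Petkantschin formula), whose raw form is later reproduced as \eqref{eq:BP_torus} inside the proof of Lemma \ref{lem:bp_partial}, whereas you propose a self-contained Jacobian computation in the locally Euclidean chart around $c$. The structure you describe --- translate to $c=0$ and rotate $\Pi$ to $\Pi_0$ by invariance, strip the $\rho^{dk-1}$ scaling from the angular and Grassmannian columns of $D\Psi^{-1}$, then identify the residual determinant $G(\bth)$ --- is the standard way to derive such formulas, and it buys you independence from the machinery of \cite{bobrowski_random_2019}. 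The step you correctly flag as the only non-mechanical one, that the circumcentric polar Jacobian of $k+1$ points in $\R^k$ equals $\rho^{k^2-1}k!\,\vsimp(\bth)$, is in fact correct (for $k=2$ and the equilateral triple inscribed in the unit circle the $6\times 6$ determinant evaluates to $3\sqrt{3}/2 = 2!\,\vsimp$), but it is not a named textbook identity and should not be waved through as ``the standard Gram-determinant identity''; it appears, in essentially this circumcentric form, in the Delaunay-cell literature and in Edelsbrunner and Nikitenko \cite{edelsbrunner_random_2018}, which the paper already cites for the spherical analogue in Lemma \ref{lem:bp_sphere}, so pointing there or writing out the determinant directly is the way to close that gap. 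The other place to be careful is that the split ``transverse contributes $(k!\vsimp)^{d-k}$, in-plane contributes $k!\vsimp$'' is not a literal block-determinant factorization of $G(\bth)$: the $(k!)^{d-k}$ and the ratio $\Omega_d/(\Omega_k\Omega_{d-k})$ coming from the affine Blaschke--Petkantschin constant must be reconciled with whatever normalization of Haar measure on $\Gr(d,k)$ you adopt so that the constant lands on exactly $\Gamma_{d,k}$ and not a rescaling of it; that is precisely the ``tracking every combinatorial constant'' step you defer, and it is where such derivations most often silently lose a factor.
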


\begin{proof}
This is just a special case of Lemma 5.1 in \cite{bobrowski_random_2019} proved for compact Riemannian manifolds.

\end{proof}

We will also need another special case of the BP formula, for integrating over point configurations on the $(k-1)$-sphere. 

\begin{lem}\label{lem:bp_sphere}
Let $f:(\S^{k-1})^k\to \R$ be a bounded measurable function. Then,
\eqb\label{eq:BP_sphere}
\int_{(\S^{k-1})^k} f(\bth)d\bth = \Abp^{s} \int_0^1 \int_{(\S^{k-2})^k}\rho^{k^2-2k} (1-\rho^2)^{-1/2}  f(\rho \bvphi)\vsimp(\bvphi)d\bvphi d\rho,
\eqe
where $\Abp^{\text{s}} = k! \omega_{k} $. 

\end{lem}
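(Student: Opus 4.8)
Proof proposal for Lemma~\ref{lem:bp_sphere}. The plan is to recognize \eqref{eq:BP_sphere} as the instance of the general Riemannian Blaschke--Petkantschin formula (Lemma~5.1 of \cite{bobrowski_random_2019}, of which Lemma~\ref{lem:bp_torus} is itself the flat-torus instance) obtained by taking the ambient manifold to be the \emph{unit sphere} $\S^{k-1}$ and integrating over $k$ points. Geometrically, $k$ points in general position on $\S^{k-1}$ span a $(k-1)$-dimensional affine hyperplane $H\subset\R^k$, which meets $\S^{k-1}$ in a $(k-2)$-sphere; its center $c$ is the foot of the perpendicular from the origin to $H$, its radius is $\rho=\sqrt{1-|c|^2}$, and the points decompose as $\theta_i=c+\rho\varphi_i$ with $\varphi_i\in\S^{k-2}$ in the direction space of $H$. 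Since here the ``plane'' is $(k-1)$-dimensional inside a $(k-1)$-dimensional tangent space, the Grassmannian factor is trivial; the center ranges over all of $\S^{k-1}$ and contributes the volume factor $\mathrm{vol}(\S^{k-1})=k\omega_k$; and the curvature of the sphere converts the intrinsic radial variable into the chordal parameter $\rho$, producing the density $(1-\rho^2)^{-1/2}$. Matching the normalization of the general formula then gives $\Abp^{s}=(k-1)!\cdot k\omega_k=k!\,\omega_k$, and the exponent $d-j+1$ on the simplex factor specializes (with $d=j=k-1$ points-plane) to $1$, while the power of $\rho$ becomes $(k-1)^2-1=k^2-2k$.

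For a self-contained derivation I would instead argue from the classical \emph{affine} Blaschke--Petkantschin formula in $\R^k$, as follows. First write the left-hand side as a thin-shell limit, $\int_{(\S^{k-1})^k}f\,d\bth=\lim_{\eps\to 0}\eps^{-k}\int_{(\R^k)^k}\bigl(\prod_{i=1}^k\ind\{1-\eps\le|x_i|\le 1\}\bigr)\tilde f(x)\,dx$, where $\tilde f$ is the shift- and affine-invariant extension of $f$ implicit in the convention \eqref{eq:func_inv} (the limit is justified by dominated convergence and continuity of the polar decomposition; equivalently one may use the coarea formula for $x\mapsto(|x_1|,\dots,|x_k|)$, whose Jacobian is identically $1$). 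Then apply the affine Blaschke--Petkantschin formula to this $k$-point integral over $(\R^k)^k$: the points determine a hyperplane $H\in A(k,k-1)$, and the within-$H$ integrand is weighted by the $(k-1)$-volume of the simplex on the points raised to the power $n-q+1=1$ (with $n=q=k$). Parametrize $H$ by its unit normal $u\in\S^{k-1}$ and signed distance $t$, and inside $H$ use polar coordinates around the foot $c=tu$, $x_i=c+s_i\varphi_i$ with $\varphi_i\in\S^{k-2}$ and $dx_i|_H=s_i^{k-2}\,ds_i\,d\varphi_i$. Because $x_i-c\perp c$ we have $|x_i|^2=t^2+s_i^2$, so in the limit the shell forces $s_i\to\rho:=\sqrt{1-t^2}$ for every $i$, each radial integration supplying a factor $\rho^{-1}$; the simplex volume on $c+\rho\varphi_1,\dots,c+\rho\varphi_k$ equals $\rho^{\,k-1}\vsimp(\bvphi)$; and invariance gives $\tilde f(c+\rho\varphi_1,\dots)=f(\rho\bvphi)$. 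Collecting powers of $\rho$ — $\rho^{-k}$ from the constraints, $\rho^{k(k-2)}$ from $\prod_i s_i^{k-2}$, and $\rho^{k-1}$ from the simplex — leaves $\rho^{\,k^2-2k-1}$ under the remaining integrals over $u\in\S^{k-1}$ and $t\in(0,1)$. Finally substitute $t=\sqrt{1-\rho^2}$, whence $dt=\rho(1-\rho^2)^{-1/2}\,d\rho$, which supplies the missing power of $\rho$ and the factor $(1-\rho^2)^{-1/2}$; carrying out the $u$-integration, which is just $\mathrm{vol}(\S^{k-1})=k\omega_k$, produces exactly \eqref{eq:BP_sphere}.

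I expect the main obstacle to be the bookkeeping of the multiplicative constant: one must combine the constant appearing in the affine Blaschke--Petkantschin formula, the normalization of the motion-invariant measure on the affine Grassmannian $A(k,k-1)$, and the volume of $\S^{k-1}$, and verify that the various $\omega$-factors telescope to leave precisely $\Abp^{s}=k!\,\omega_k$. A convenient independent check is to test the identity against $f\equiv 1$: the right-hand side must then reproduce $(\mathrm{vol}\,\S^{k-1})^k=(k\omega_k)^k$, which pins the constant down given the explicit values of $\int_0^1\rho^{k^2-2k}(1-\rho^2)^{-1/2}\,d\rho$ and $\int_{(\S^{k-2})^k}\vsimp(\bvphi)\,d\bvphi$. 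The remaining analytic steps — the shell/coarea reduction and the within-hyperplane change to polar coordinates — are routine once the decomposition is set up.
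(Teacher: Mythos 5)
Your first paragraph essentially reproduces the paper's first sentence (recognizing \eqref{eq:BP_sphere} as a special case of Lemma~5.1 of \cite{bobrowski_random_2019}), but the paper's actual proof of record is even terser than that: it simply cites Theorem~3 and equation~(2.1) of \cite{edelsbrunner_random_2018}, stating that the lemma is obtained from that formula by the change of variable $t\to\rho^2$. Your second and third paragraphs therefore constitute a genuinely different route — a self-contained derivation from the affine Blaschke--Petkantschin formula in $\R^k$ via a thin-shell/coarea limit — that the paper does not give. The derivation is sound in outline: the shell reduction correctly recovers the spherical integral with no extra Jacobian; the affine BP exponent is indeed $n-(q-1)=1$ for $k$ points in $\R^k$; the in-hyperplane polar change $x_i=c+s_i\varphi_i$ with $|x_i|^2=t^2+s_i^2$ correctly collapses $s_i\to\rho$ and produces the powers $\rho^{-k}\cdot\rho^{k(k-2)}\cdot\rho^{k-1}=\rho^{k^2-2k-1}$; and the substitution $t=\sqrt{1-\rho^2}$ supplies the missing $\rho$ and the $(1-\rho^2)^{-1/2}$ density. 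What each approach buys: the paper's citation is economical but opaque, since it off-loads the entire geometric decomposition (circumcenter as foot of perpendicular, chordal vs.\ geodesic radius) to an external source's notation; your derivation makes that structure explicit and would be self-contained once the multiplicative constant is pinned down. You are right to flag the constant as the remaining work — one must reconcile the affine BP constant $b_{k,k}$, the normalization of the motion-invariant measure on $A(k,k-1)$ parametrized by $(u,t)\in\S^{k-1}\times\R$ (watching the $(u,t)\leftrightarrow(-u,-t)$ double count), and the $\mathrm{vol}(\S^{k-1})=k\omega_k$ factor — and the $f\equiv 1$ sanity check you propose is exactly the right way to do it; until that is carried out, $\Abp^{s}=k!\,\omega_k$ is asserted rather than derived.
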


\begin{proof}
This is also a special case of Lemma 5.1 in \cite{bobrowski_random_2019}, but can be more easily derived from Theorem 3 in \cite{edelsbrunner_random_2018}.
In particular, this formula can be retrieved from equation (2.1) in \cite{edelsbrunner_random_2018} by taking the change of variable $t\to\rho^2$.

\end{proof}

Finally, in some cases we will need a ``partial" version of the BP formula. By partial we mean that for $\bx \in (\T^d)^{k+1}$ we keep $m+1$ variables unchanged, and use a BP-style change of variables for the remaining $(k-m)$ variables. 

Let $\bx = (x_1,\ldots, x_{k+1}) \in (\T^d)^{k+1}$, and define $\bx_0 = (x_1,\ldots,x_{m+1})$, $\hat\bx_0 =\bx\bs\bx_0 = (x_{m+2},\ldots,x_{k+1})$. The variables in $\bx_0$ will remain unchanged. Fix $\bx_0$ and
denote $c_0 = c(\bx_0), \rho_0 = \rho(\bx_0)$. 
Notice that if $c(\bx)$ and $\rho(\bx)$ are well-defined (see Section \ref{sec:flat_torus}), then $\bx_0$ lies on a smaller sphere than $\bx$, and therefore 
$c(\bx_0)$ and $\rho(\bx_0)$ are well-defined as well. 
Next, similarly to our discussion in Section \ref{sec:crit_faces}, we will consider  $\bx$ in the coordinate system $\R^d_{c_0}$.
Notice that $\bx_0$ spans a $m$-dimensional  plane $\Pi(\bx_0)\subset \R^d_{c_0}$. In that case, the center $c(\bx)$ must lie on the $(d-m)$-dimensional plane orthogonal to $\Pi(\bx_0)$ denoted $\Pi^{^\perp}(\bx_0)$. Once the center $c(\bx)$ is set, in order to determine the $k$-dimensional plane $\Pi(\bx)$, we choose a $\Pi_0^{^\perp}\in \Gr(k-m-1,d-m-1)$, so that $\Pi(\bx) = \Pi(\bx_0 \cup {c}) \oplus \Pi_0^{^\perp} \cong \R^k$. Notice that once we chose $\bx_0$ and $c(\bx)$ then $\rho = \rho(\bx)$ is determined by $\rho = \sqrt{\rho^2_0 + |c-c_0|^2}$. Therefore, in order to determine the location of the points $\hat\bx_0$ all that is remained is to choose their spherical coordinates $\hat\bth_0 \in (\S^{k-1})^{k-m}$. Overall, we obtained a change of variables $\bx \to (\bx_0, c, \Pi_0^{^\perp}, \hat\bth_0)$ which leads to the following statement.

\begin{lem}\label{lem:bp_partial}
Let $f:(\T^d)^{k+1}\to \R$, be a bounded measurable function. Suppose that $0\le m \le k-1$, then
\[
\int_{\bx} f(\bx)d\bx = \param{\frac{k!}{m!}}^{d-k+1}\int_{\bx_0}d\bx_0
\int_c dc\int_{\Pi_0^{^\perp}} d\Pi_0^{^\perp}\int_{\hat\bth_0} d\hat\bth_0 \frac{\rho^{(d-1)(k-m)}}{|c-c_0|^{(d-k)}}
\param{\frac{\vsimp(\bth)}{\vsimp(\bth_0)}}^{d-k+1}f(\bx),
\]
where $\bx_0 = (x_1,\ldots, x_{m+1})$, $c\in \Pi^{^\perp}(\bx_0) \cong \R^{d-m}$,  $\Pi_0^{^\perp} \in \Gr(k-m-1,d-m-1)$, and $\hat\bth_0 \in (\S^{k-1})^{k-m}$.
In addition, $\bth = (\theta_1,\ldots, \theta_{k+1}) =  \theta(\bx) \in (\S^{k-1})^{k+1}$, $\bth_0 = (\theta_1,\ldots, \theta_{m+1})$, $c_0 = c(\bx_0)$, and $\rho = \rho(\bx) =  \sqrt{\rho^2_0 + |c-c_0|^2}$.
\end{lem}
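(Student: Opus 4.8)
The plan is to obtain Lemma \ref{lem:bp_partial} as a specialization of the general Blaschke--Petkantschin machinery of \cite{bobrowski_random_2019} (Lemma 5.1), now applied not to a single $k$-flat but to the flag $\Pi(\bx_0)\subset\Pi(\bx)$; equivalently, one may derive it by combining the torus formula of Lemma \ref{lem:bp_torus} with the classical affine Blaschke--Petkantschin identity for the relative Grassmannian of $k$-flats containing a fixed $m$-flat. As in the proof of Lemma \ref{lem:bp_torus}, I would first reduce to the Euclidean setting: since $f$ may be assumed to vanish unless $\diam(\bx)\le 2\rmax$, and on such configurations $\bx$ together with its center lies inside a metric ball of $\T^d$ that embeds isometrically into $\R^d$, it suffices to prove the identity with $\T^d$ replaced by $\R^d$ and $f$ supported on configurations of diameter at most $2\rmax$; throughout we restrict (up to a null set, as in Section \ref{sec:flat_torus}) to $\bx$ for which $c(\bx)$, $\rho(\bx)$ and $\Pi(\bx)$ are well defined, which also forces $c(\bx_0),\rho(\bx_0),\Pi(\bx_0)$ to be well defined.

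Next I would use Fubini to freeze $\bx_0=(x_1,\dots,x_{m+1})$, writing
\[
\int_{(\T^d)^{k+1}} f(\bx)\,d\bx=\int_{(\T^d)^{m+1}}\Bigl(\int_{(\T^d)^{k-m}} f(\bx_0,\hat\bx_0)\,d\hat\bx_0\Bigr)d\bx_0,
\]
where $\hat\bx_0=(x_{m+2},\dots,x_{k+1})$. For fixed $\bx_0$, set $c_0=c(\bx_0)$, $\rho_0=\rho(\bx_0)$ and $L_0=\Pi(\bx_0)$. The core of the argument is the change of variables $\hat\bx_0\leftrightarrow(c,\Pi_0^\perp,\hat\bth_0)$ described just before the statement. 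I would (i) verify that this is a bijection, modulo a null set, from the admissible $\hat\bx_0$ onto the product domain $\{c\in c_0+L_0^\perp\}\times\Gr(k-m-1,d-m-1)\times(\S^{k-1})^{k-m}$; here the key consistency point is that for $c\in c_0+L_0^\perp$ one has $|x_i-c|=\sqrt{\rho_0^2+|c-c_0|^2}=\rho$ for every $i\le m+1$ by Pythagoras, so all $k+1$ points genuinely lie on the $(k-1)$-sphere of radius $\rho$ inside $\Pi(\bx)=\Pi(\bx_0\cup\{c\})\oplus\Pi_0^\perp$, and $\rho$ is indeed the function $\sqrt{\rho_0^2+|c-c_0|^2}$ of the new variables rather than an independent one; and (ii) compute the Jacobian of the map.

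The Jacobian computation is the step I expect to be the main obstacle, precisely because the $k$-flat $\Pi(\bx)$ in which the angular coordinates $\hat\bth_0$ are recorded is itself built from the variables $c$ and $\Pi_0^\perp$ being integrated over. I would disentangle it in two stages, mirroring the standard proof of the affine Blaschke--Petkantschin formula. First, peel off the choice of the $k$-flat $\Pi(\bx)$ through the already-fixed $m$-flat $L_0$: such flats are parametrized by $c\in c_0+L_0^\perp$ together with $\Pi_0^\perp\in\Gr(k-m-1,d-m-1)$, and the density of this parametrization against $d\bx_0\,dc\,d\Pi_0^\perp$ produces the factor $|c-c_0|^{-(d-k)}$ (the offset of the sub-flat from $L_0$). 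Second, inside $\Pi(\bx)\cong\R^k$, pass the $k-m$ free points to generalized polar coordinates about $c$; combining this with the simplex-volume weights inherited from the Blaschke--Petkantschin Jacobian yields the power $\rho^{(d-1)(k-m)}$ and, since a factor $(\vsimp(\bth_0))^{d-k+1}$ worth of simplex volume has already been spent in the $d\bx_0$ integral while the ambient Jacobian carries $(\vsimp(\bth))^{d-k+1}$, leaves exactly the ratio $\bigl(\vsimp(\bth)/\vsimp(\bth_0)\bigr)^{d-k+1}$; the same bookkeeping of factorials over the $k-m$ free indices relative to the $m+1$ fixed ones accounts for the constant $(k!/m!)^{d-k+1}$. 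Collecting the three density factors and the constant gives the claimed identity, and a quick dimension check ($\hat\bx_0$ has $(k-m)d$ real degrees of freedom, and $(d-m)+(k-m-1)(d-k)+(k-m)(k-1)=(k-m)d$ as well) together with the known constant in Lemma \ref{lem:bp_torus} serves as a consistency verification.
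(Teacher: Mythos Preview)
Your route is genuinely different from the paper's. You freeze $\bx_0$ by Fubini and attempt a single change of variables $\hat\bx_0\to(c,\Pi_0^\perp,\hat\bth_0)$, promising to extract the Jacobian from a ``flag'' version of Blaschke--Petkantschin. The paper instead goes the other direction: it first applies the full torus BP formula \eqref{eq:BP_torus} to all of $\bx$, obtaining $(c,\Pi,\rho,\bth)$, and then applies the \emph{spherical} BP formula (Theorem~3 of \cite{edelsbrunner_random_2018}) to the sub-tuple $\bth_0\in(\S^{k-1})^{m+1}$, re-expressing it via $(\Pi',p,\bvphi_0)$. After swapping the flag $(\Pi,\Pi')\in\Gr(k,d)\times\Gr(m,k)$ for $(\Pi_0,\Pi_0^\perp)\in\Gr(m,d)\times\Gr(k-m,d-m)$, the variables $(c_0,\rho_0,\Pi_0,\bvphi_0)$ are recollapsed to $d\bx_0$ using \eqref{eq:BP_torus} in reverse, and a chain of explicit scalar substitutions ($\rho\to\rho_0$, $p\to u\alpha\to v\to w$, then $(w,\alpha)\leftrightarrow c-c_0$) produces each of the factors $\rho^{(d-1)(k-m)}$, $|c-c_0|^{-(d-k)}$, $(\vsimp(\bth)/\vsimp(\bth_0))^{d-k+1}$, and $(k!/m!)^{d-k+1}$ one at a time. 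The case $m=0$ is handled separately by a simpler reordering.

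What each approach buys: the paper never has to compute a new Jacobian---everything is assembled from two already-proved BP identities plus one-dimensional substitutions, so each exponent and constant is forced and checkable line by line. Your approach is conceptually cleaner and would give a shorter proof \emph{if} the Jacobian were in hand, but as written the two-stage decomposition is only a heuristic: the assertion that parametrizing $k$-flats through $L_0$ by $(c,\Pi_0^\perp)$ contributes exactly $|c-c_0|^{-(d-k)}$, and that the remaining polar step contributes exactly $\rho^{(d-1)(k-m)}$ together with the simplex ratio, is precisely the content of the lemma and is not derived. In particular, $c$ is simultaneously the center on $L_0^\perp$ \emph{and} part of the data determining the $k$-flat (through $\Pi(\bx_0\cup\{c\})$), so the two ``stages'' are coupled; separating their contributions cleanly requires either an explicit Jacobian matrix or exactly the compose-and-invert trick the paper uses. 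Your dimension count and the bijectivity argument are fine, but to turn this into a proof you would need to actually execute the Jacobian---and the most efficient way to do that is, in the end, the paper's route.
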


\begin{proof}
We will prove first the case $1\le m \le k-1$.
We first go back to raw form of the BP formula that appeared in Lemma 5.1 in \cite{bobrowski_random_2019}, stating that
\eqb\label{eq:BP_torus}
    \int_{\bx\in (\T^d)^{k+1}}f(\bx)d\bx = (k!)^{d-k+1} \int_c dc \int_{\Pi}d\Pi \int_\rho d\rho \int_{\bth}d\bth \rho^{dk-1} (\vsimp(\bth))^{d-k+1}f(\bx),
\eqe
where $c\in \T^d$, $\Pi\in \Gr(k,d)$,  $\rho \in [0,\infty)$,  $\bth \in (\S^{k-1})^{k+1}$, and $\bx$ on the right hand side is short for $c+\rho\bth(\Pi)$. 

Next, we also go back to the raw form of the spherical BP formula appearing in Theorem 3 of \cite{edelsbrunner_random_2018}. This states that if $g:(\S^{k-1})^{m+1}\to \R$, then
\[
\int_{(\S^{k-1})^{m+1}}d\bth_0 g(\bth_0) = (m!)^{k-m}\int_{\Pi'}d\Pi'\int_{p}dp\int_{\bvphi_0}d\bvphi_0  (1-|p|^2)^{\frac{m(k-1)}{2}-1}(\vsimp(\bvphi_0))^{k-m}g(\bth_0),
\]
where $\Pi'\in \Gr(m,k)$, $p\in B_1\cap (\Pi')^{^\perp}$ is a point in the $(k-m)$-unit ball in $(\Pi')^{^\perp}$, and $\bvphi_0 \in (\S^{m-1})^{m+1}$ are spherical coordinates in $\Pi'$. This way, $\bth_0 = (p+(1-|p|^2)^{1/2}\bvphi_0))$.
Using this in \eqref{eq:BP_torus} we have
\[
\splitb
\int_{\bx}f(\bx)d\bx &= (m!)^{k-m}(k!)^{d-k+1}\int_c dc   \int_{\Pi}d\Pi\int_\rho d\rho\int_{\hat\bth_0}d\hat\bth_0\int_{\Pi'}d\Pi'\int_p dp \int_{\bvphi_0}d{\bvphi_0}  \\
& \times\rho^{dk-1} (1-|p|^2)^{\frac{m(k-1)}{2}-1}  (\vsimp(\bvphi_0))^{k-m} (\vsimp(\bth))^{d-k+1}f(\bx),
\splite
\]
where $\bx = c+\rho\bth = (c_0 + \rho_0\bvphi_0(\Pi'), c+\rho\hat\bth_0(\Pi))$, and $c_0 = c+\rho p$, $\rho_0 = \rho(1-|p|^2)^{1/2}$.
Notice that integrating over  $(\Pi,\Pi')\in \Gr(k,d)\times\Gr(m,k)$ provides the $k$-plane $\Pi(\bx)$ and the $m$-plane $\Pi(\bx_0)\subset \Pi(\bx)$. We can switch the order of integration by taking $(\Pi_0, \Pi_0^{^\perp})  \in\Gr(m,d)\times\Gr(k-m,d-m)$, so that $\Pi' = \Pi_0$ and $\Pi = \Pi_0 \oplus \Pi_0^{^\perp}$. This yields,
\[
\splitb
\int_{\bx}f(\bx)d\bx &= (m!)^{k-m}(k!)^{d-k+1}\int_c dc\int_{\Pi_0}d\Pi_0\int_{\Pi_0^{^\perp}} d\Pi_0^{^\perp} \int_p dp  \int_\rho d\rho \int_{\hat\bth_0} d\hat\bth_0 \int_{\bvphi_0}d{\bvphi_0}\\
& \rho^{dk-1} (1-|p|^2)^{\frac{m(k-1)}{2}-1}  (\vsimp(\bvphi_0))^{k-m} (\vsimp(\bth))^{d-k+1}f(\bx),
\splite
\]
where $p\in B_1\cap  \Pi_0^{^\perp}$.
Next, notice that fixing $p$ and taking  $\rho\to \rho_0 (1-|p|^2)^{-1/2}$, then $d\rho = d\rho_0 (1-|p|^2)^{-1/2}$. In addition, fixing $\rho$ and $p$, recall that $c_0 = c+\rho p$, implying that we can use the change of variables $c\to c_0-\rho p$ with $dc = dc_0$.
With these changes, we have
\[
\splitb
\int_{\bx}f(\bx)d\bx 
 &= (m!)^{k-m}(k!)^{d-k+1}\int_{\Pi_0}d\Pi_0\int_{\Pi_0^{^\perp}} d\Pi_0^{^\perp}  \int_p dp \int_{\rho_0} d\rho_0 \int_{c_0} dc_0\int_{\hat\bth_0} d\hat\bth_0 \int_{\bvphi_0}d{\bvphi_0}\\
& \rho_0^{dk-1} (1-|p|^2)^{\frac{m(k-1)-dk}{2}-1}  (\vsimp(\bvphi_0))^{k-m} (\vsimp(\bth))^{d-k+1} f(\bx),
\splite
\]
where $\rho_0 \in [0,\infty)$, and $c_0\in \T^d$. Reordering the ingetral we have,
\eqb\label{eq:bp_two_sets}
\splitb
\int_{\bx}f(\bx)d\bx &= (k!/m!)^{d-k+1}(m!)^{d-m+1}\\
&\times\int_{c_0} dc_0\int_{\Pi_0}d\Pi_0 \int_{\rho_0} d\rho_0 \int_{\bvphi_0}d{\bvphi_0}
\rho_0^{dm-1}(\vsimp(\bvphi_0))^{d-m+1}\\
&
\times\int_{\Pi_0^{^\perp}} d\Pi_0^{^\perp}\int_p dp \int_{\hat\bth_0} d\hat\bth_0  
\rho_0^{d(k-m)}(\rho/\rho_0)^{dk-mk+m+2}  
\param{\frac{\vsimp(\bth)}{\vsimp(\bvphi_0)}}^{d-k+1}f(\bx) \\
&= (k!/m!)^{d-k+1}\int_{\bx_0}d\bx_0
\int_{\Pi_0^{^\perp}} d\Pi_0^{^\perp}\int_p dp \int_{\hat\bth_0} d\hat\bth_0 \\
&\times \rho_0^{d(k-m)} (\rho/\rho_0)^{dk-mk+m+2}  \param{\frac{\vsimp(\bth)}{\vsimp(\bvphi_0)}}^{d-k+1}f(\bx) \\
&= (k!/m!)^{d-k+1}\int_{\bx_0}d\bx_0
\int_{\Pi_0^{^\perp}} d\Pi_0^{^\perp}\int_p dp \int_{\hat\bth_0} d\hat\bth_0 \\
&\times
\rho_0^{d(k-m)}\param{{\rho}/{\rho_0}}^{d(k-m)+2}   \param{\frac{\vsimp(\bth)}{\vsimp(\bth_0)}}^{d-k+1}f(\bx),
\splite
\eqe
where in the last equality we used the fact that $\bvphi_0$ are coordinates in $\S^{m-1}$, and  $\bth_0 = p+  (\rho_0/\rho)\bvphi_0$ are on a sphere of radius $(\rho_0/\rho)$ inside $\S^{k-1}$. 
Therefore $\vsimp(\bth_0) = (\rho_0/\rho)^m\vsimp(\bvphi_0)$.

Next, recall that $\Pi_0^{^\perp}$ is a $(k-m)$-plane in $\Pi^{^\perp}(\bx_0) \cong \R^{d-m}$, and $p \in B_1 \cap \Pi_0^{^\perp}$. 
Since $p$ is in a $(k-m)$-plane, we can use polar coordinates $p\to  u\alpha$  where $u \in [0,1]$ and $\alpha\in \S^{k-m-1}$. This will result in 
\[
dp = u^{k-m-1} dud\alpha.
\]
Notice that
$
c_0-c = \rho p = \rho_0 (1-u^2)^{-1/2} u \alpha$. We will therefore take another change of variables $u(1-u^2)^{-1/2} \to v$, so $u = v(1+v^2)^{-1/2}$ and $du= (1+v^2)^{-3/2} dv$. Thus, we have
\[
dp = \param{\frac{v}{(1+v^2)^{1/2}}}^{k-m-1}(1+v^2)^{-3/2}  dv d\alpha,
\] 
where $v\in [0,\infty)$.
Since $(\rho/\rho_0)^2 = (1-u^2)^{-1} = 1+v^2$, we can write this as
\[
dp = v^{k-m-1}\param{\frac{\rho_0}{\rho}}^{k-m+2}  dv d\alpha.
\]
Since $\rho p = \rho_0 v \alpha$ we take $\rho_0 v \to w$, so that $\rho p = w\alpha$, $v = \rho_0^{-1}w$, and $dv = \rho_0^{-1}dw$. Then
\[
dp = w^{k-m-1}\param{\frac{\rho_0}{\rho}}^{k-m+2} \rho_0^{-(k-m)} dw d\alpha,
\]
where $w\in [0,\infty)$.
Putting this back into \eqref{eq:bp_two_sets}, we have
\[
\splitb 
\int_{\bx} f(\bx)d\bx &= (k!/m!)^{d-k+1}\int_{\bx_0}d\bx_0
\int_{\Pi_0^{^\perp}} d\Pi_0^{^\perp}\int_w dw\int_\alpha d\alpha \int_{\hat\bth_0} d\hat\bth_0 \\
&\times
w^{k-m-1}
\rho^{(d-1)(k-m)} (\vsimp(\bth)/\vsimp(\bth_0))^{d-k+1}f(\bx).
\splite
\]
Note that integrating over $(w,\S^{k-m-1}) \in \R^+\times \S^{k-m-1}$ is equivalent to integrating over $(w', L) \in \R\times \Gr(1,k-m)$. Next, integrating over $(\Pi_0^{^\perp}, L) \in \Gr(k-m, d-m) \times \Gr(1,k-m)$ we can switch the order to 
$(L, \Pi_0^{^\perp}) \in \Gr(1,d-m) \times \Gr(k-m-1,d-m-1)$.
Once we make this switch, we revert from $(w', L) \in \R\times \Gr(1,d-m)$ to $(w,\alpha) \in \R^+ \times \S^{d-m-1}$. This leads to,
\[
\splitb 
\int_{\bx} f(\bx)d\bx &= (k!/m!)^{d-k+1}\int_{\bx_0}d\bx_0
\int_w dw\int_\alpha d\alpha \int_{\Pi_0^{^\perp}} d\Pi_0^{^\perp}
\int_{\hat\bth_0} d\hat\bth_0 \\
&\times
w^{k-m-1}
\rho^{(d-1)(k-m)} (\vsimp(\bth)/\vsimp(\bth_0))^{d-k+1}f(\bx).
\splite
\]
The variables $w\in [0,\infty)$ and $\alpha \in \S^{d-m-1}$ are polar coordinates for a point in $z\in \Pi^{^\perp}(\bx_0)\cong\R^{d-m}$, where $dz = w^{d-m-1}dw d\alpha$. Thus, we can write
\[
\splitb 
\int_{\bx} f(\bx)d\bx &= (k!/m!)^{d-k+1}\int_{\bx_0}d\bx_0
\int_z dz
\int_{\Pi_0^{^\perp}} d\Pi_0^{^\perp}
\int_{\hat\bth_0} d\hat\bth_0 \\
&\times
|z|^{-(d-k)}
\rho^{(d-1)(k-m)} (\vsimp(\bth)/\vsimp(\bth_0))^{d-k+1}f(\bx).
\splite
\]
Finally, taking $z\to (c-c_0)$, completes the proof.

For $m=0$, we start with \eqref{eq:BP_torus}, and change the order of integration.
\[
\splitb
    \int_{\bx\in (\T^d)^{k+1}}f(\bx)d\bx &= (k!)^{d-k+1} \int_c dc\int_{\Pi}d\Pi \int_\rho d\rho \int_{\bth}d\bth \rho^{dk-1} (\vsimp(\bth))^{d-k+1}f(\bx)\\
    &=(k!)^{d-k+1} \int_c dc\int_{\Pi} d\Pi \int_{\theta_1} d\theta_1  \int_\rho d\rho \int_{\hat\bth_0}d\hat\bth_0 \rho^{dk-1} (\vsimp(\bth))^{d-k+1}f(\bx).
    \splite
\]
Similarly to what we did above, we can change the order of 
$(\Pi, \theta_1) \in \Gr(k,d) \times \S^{k-1}$ into $(\theta_1,\Pi_0^{^\perp}) \in \S^{d-1}\times \Gr(k-1,d-1)$, so that $\Pi = \Pi(\theta_1) \oplus \Pi_0^{^\perp}$.
Then we have
\[
    \int_{\bx\in (\T^d)^{k+1}}f(\bx)d\bx 
    =(k!)^{d-k+1} \int_c dc\int_\rho \rho^{d-1} d\rho\int_{\theta_1} d\theta_1\int_{\Pi_0^{^\perp}} d\Pi_0^{^\perp} \int_{\hat\bth_0}d\hat\bth_0 \rho^{d(k-1)} (\vsimp(\bth))^{d-k+1}f(\bx).
\]
Denoting $z = \rho\theta_1$, then $dz=\rho^{d-1}d\rho d\theta_1$, and thus
\[
 \int_{\bx\in (\T^d)^{k+1}}f(\bx)d\bx 
    =(k!)^{d-k+1} \int_c dc\int_z dz \int_{\Pi_0^{^\perp}} d\Pi_0^{^\perp}   \int_{\hat\bth_0}d\hat\bth_0 \rho^{d(k-1)} (\vsimp(\bth))^{d-k+1}f(\bx).
\]
Using the  fact that $x_1 = c+z$ we can switch $(c,z) \in \T^d\times \R^d $ into $(x_1, c) \in \T^d\times \Pi^{^\perp}(x_1)$, where $\Pi^{^\perp}(x_1) = \R^d_{x_1} \cong \R^d$. Therefore,
\[
 \int_{\bx\in (\T^d)^{k+1}}f(\bx)d\bx 
    =(k!)^{d-k+1} \int_{x_1} dx_1  \int_c dc \int_{\Pi_0^{^\perp}} d\Pi_0^{^\perp} \int_{\hat\bth_0}d\hat\bth_0 \rho^{d(k-1)} (\vsimp(\bth))^{d-k+1}f(\bx).
\]
Recalling that in this case $c_0 = x_1$ we have that $|c-c_0| = \rho$. In addition, we have $\vsimp(\bth_0)=1$, and we reached the stated result for $m=0$, concluding the proof.

\end{proof}

To conclude this section, since $\vsimp$ plays a key role in the BP-formula, we will provide the following useful estimates.

\begin{lem}\label{lem:simp_vol}
Let $\cX = \set{x_1,\ldots x_{k+1}}$ be a $k$-simplex, and $\hat \cX_i  = \cX \bs \set{x_i}$ be a $(k-1)$-face. Denote by $h_i$ the distance between $x_{i}$ and the affine plane $\Pi(\hat\cX_i)$. Then,
\[
\vsimp(\cX) = \frac{1}{k}h_i \vsimp(\hat\cX_i).
\]
\end{lem}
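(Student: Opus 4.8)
The plan is to prove this classical identity by Cavalieri's principle, realizing the $k$-simplex $\sigma(\cX)$ as a cone with apex $x_i$ over the base $(k-1)$-simplex $\sigma(\hat\cX_i)$ and integrating the $(k-1)$-dimensional volumes of its horizontal slices. First I would reduce to a convenient coordinate system: all three quantities appearing in the identity ($\vsimp(\cX)$, $h_i$, and $\vsimp(\hat\cX_i)$) are invariant under isometries of the $k$-dimensional affine hull of $\cX$, so without loss of generality we may assume this affine hull is $\R^k$, that $\Pi(\hat\cX_i) = \{x\in\R^k : x^{(k)}=0\}$ is a coordinate hyperplane, and that the last coordinate $x^{(k)}=t$ records signed distance to $\Pi(\hat\cX_i)$, with $x_i$ lying at height $t = h_i > 0$.

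Next I would use the fact that every point of $\sigma(\cX)$ has a unique representation $x = (1-\lambda)\,y + \lambda\, x_i$ with $\lambda\in[0,1]$ and $y\in\sigma(\hat\cX_i)$ — this is exactly the statement that a simplex is the join of one of its faces with the opposite vertex. Under this parametrization the height of $x$ is $t = \lambda h_i$ (since $y$ has height $0$ and $x_i$ has height $h_i$), so the slice $\sigma(\cX)\cap\{x^{(k)} = \lambda h_i\}$ is precisely the image of $\sigma(\hat\cX_i)$ under the homothety centered at $x_i$ with ratio $(1-\lambda)$. As this homothety scales the $(k-1)$-dimensional slice uniformly by the factor $(1-\lambda)$, the $(k-1)$-volume of the slice equals $(1-\lambda)^{k-1}\,\vsimp(\hat\cX_i)$.

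Finally I would integrate over the height by Fubini's theorem: substituting $t = \lambda h_i$ with $dt = h_i\,d\lambda$,
\[
\vsimp(\cX) = \int_0^{h_i} \vol_{k-1}\bigl(\sigma(\cX)\cap\{x^{(k)}=t\}\bigr)\,dt = \vsimp(\hat\cX_i)\,h_i\int_0^1 (1-\lambda)^{k-1}\,d\lambda = \frac{1}{k}\,h_i\,\vsimp(\hat\cX_i),
\]
which is the assertion (for $k=1$ this degenerates to the statement that the length of a segment equals the distance between its endpoints, using $\vsimp$ of a single point equal to $1$). I do not expect a genuine obstacle here; the only points deserving a line of justification are that the isometric reduction is legitimate and that the slices are measurable with the stated cross-sectional volume, both of which are standard. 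As a fallback I would note the purely algebraic alternative, writing $\vsimp(\cX) = \frac{1}{k!}\lvert\det[\,x_2-x_1\ \cdots\ x_{k+1}-x_1\,]\rvert$ in an orthonormal frame of the affine hull and expanding the determinant along the row of last coordinates, but I would present the slicing argument as the main proof since it is the most transparent.
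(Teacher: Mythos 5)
Your slicing argument is correct and complete: the WLOG reduction to $\Pi(\hat\cX_i) = \{x^{(k)}=0\}$ is legitimate since all three quantities in the identity are isometry-invariant, the observation that the slice at height $\lambda h_i$ is a homothetic copy of $\sigma(\hat\cX_i)$ with ratio $(1-\lambda)$ is the key geometric fact, and the Fubini/Cavalieri integral $h_i\vsimp(\hat\cX_i)\int_0^1(1-\lambda)^{k-1}\,d\lambda = \frac{1}{k}h_i\vsimp(\hat\cX_i)$ is evaluated correctly. This differs from the paper's proof in the final computation: the paper passes to the same adapted coordinate frame but then invokes the determinant formula $\vsimp(\cX)=\frac{1}{k!}\bigl|\det(x_2-x_1,\dots,x_{k+1}-x_1)\bigr|$ and expands along the last row (which under the coordinate reduction has a single nonzero entry equal to $h_i$), giving $\frac{1}{k!}h_i\cdot(k-1)!\vsimp(\hat\cX_i)$. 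The two proofs are structurally parallel --- same normalization, then either integrate over the height or expand a determinant along the height row --- and both are standard; your slicing version is more geometric and self-contained, while the paper's determinant version is shorter once the Gram/determinant volume formula is taken as given (which the paper cites). You already flag the determinant alternative as a fallback, so you effectively covered both.
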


For example, for triangles we have  the well known formula $\mathrm{Area} = \mathrm{Length(base)}\times\mathrm{height}/2$, and for a tetrahedron we have  $\mathrm{Volume} = \mathrm{Area(base)}\times \mathrm{height}/3$.

\begin{proof}
Without loss of generality, suppose that $\cX \subset \R^k$, in which case we have (see \cite{stein_note_1966}),
\[
	\vsimp(\cX) = \frac{1}{k!} \abs{\det(x_2-x_1, x_3-x_1,\ldots, x_{k+1}-x_1)}.
\]
Suppose further that $i=k+1$.
Without loss of generality, since the volume is shift and rotation invariant, we can assume that $\hcX_{k+1}$ lies on $\R^{k-1}\times \set{0} \subset \R^k$, and $x_{k+1}$ is on the $k$-th axis $\set{0}^{k-1}\times \R$.
In this case we have $x_i^{(k)} - x_1^{(k)} = 0$ for all $i=1,\ldots, k$, and $x_{k+1}^{(k)} - x_1^{(k)} = x_{k+1}^{(k)} = h_{k+1}$.
Denote by $\hat x_i = (x_i^{(1)},\ldots, x_i^{(k-1)})$ (the projection of $x_i$ on $\R^{k-1}$). Then we have
\[
	\vsimp(\cX) = \frac{1}{k!} h_{k+1} \abs{\det(\hat x_2 - \hat x_1, \ldots, \hat x_k - \hat x_1)}	= \frac{(k-1)!}{k!} h_{k+1} \vsimp(\hcX_{k+1}).
\]
This completes the proof.

\end{proof}

Finally, we use the last Lemma to bound the ratio of volumes that shows up in Lemma \ref{lem:bp_partial}.

\begin{cor}\label{cor:ratio_vol}
For $k\ge 2$, let $\bth = (\theta_1, \ldots, \theta_{k+1})\in (\S^{k-1})^{k+1}$, and $\bth_m = (\theta_1,\ldots,\theta_{m})$, for $2\le m \le k$.
Then 
\[
	\vsimp(\bth) \le \vsimp(\bth_m).
\]
\end{cor}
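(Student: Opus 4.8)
The claim $\vsimp(\bth) \le \vsimp(\bth_m)$ compares the volume of the $k$-simplex spanned by $k+1$ points on $\S^{k-1}$ with the volume of a sub-simplex spanned by a subset of $m$ of those points, the latter being an $(m-1)$-simplex living inside the ambient $\R^k$. Since $\vsimp$ is being used throughout the paper as ``$j$-dimensional Hausdorff volume of the geometric $j$-simplex spanned by the given points'', the inequality is really a statement that simplices with vertices on the unit sphere have volume at most one (normalized appropriately), or more precisely a monotonicity statement obtained by peeling off one vertex at a time. The plan is to reduce to the single-step inequality $\vsimp(\theta_1,\ldots,\theta_{j+1}) \le \vsimp(\theta_1,\ldots,\theta_j)$ for points on $\S^{j-1}$, and then iterate from $j=k$ down to $j=m$.

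First I would invoke Lemma \ref{lem:simp_vol}: with $\cX = \set{\theta_1,\ldots,\theta_{j+1}}$ and $i = j+1$, we have $\vsimp(\cX) = \tfrac{1}{j} h_{j+1}\,\vsimp(\hat\cX_{j+1})$, where $h_{j+1}$ is the distance from $\theta_{j+1}$ to the affine hyperplane $\Pi(\hat\cX_{j+1})$ spanned by $\theta_1,\ldots,\theta_j$. The key geometric observation is that $h_{j+1} \le j$ is far too weak; what we actually need is that the ``height'' contributed by each successive vertex, when combined with the dimension factor $1/j$, telescopes correctly. The cleanest route: prove directly that for points $\theta_1,\ldots,\theta_{j+1}$ on the unit sphere $\S^{j-1}\subset\R^j$, the distance $h_{j+1}$ from $\theta_{j+1}$ to the affine span of the remaining $j$ points satisfies $h_{j+1} \le j \cdot \tfrac{\vsimp(\hat\cX_{j+1})}{\vsimp(\theta_1,\ldots,\theta_{j-1})}$ — no, that is circular. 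Instead, the honest approach is to note that all $\theta_i$ lie in the closed unit ball, so the affine hyperplane through $\theta_1,\ldots,\theta_j$ and the point $\theta_{j+1}$ are all within distance $1$ of the origin in the relevant directions; combined with the fact that $\vsimp$ of a $j$-simplex with vertices in the unit ball is at most $\tfrac{1}{j!}\cdot(\text{something})$, one gets a clean bound.

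Concretely, I expect the right statement to prove by induction is: \emph{if $y_1,\ldots,y_{j+1}\in B_1(0)\subset\R^N$ then $\vsimp(y_1,\ldots,y_{j+1}) \le \vsimp(y_1,\ldots,y_j)$ whenever $y_1,\ldots,y_j$ lie on the unit sphere.} By Lemma \ref{lem:simp_vol} this is equivalent to $h_{j+1}\le j$, where $h_{j+1} = \dist(y_{j+1},\Pi(y_1,\ldots,y_j))$. But $h_{j+1} \le |y_{j+1} - c|$ for any point $c$ in that affine plane; taking $c$ to be the foot of the perpendicular and using $|y_{j+1}|\le 1$ together with the fact that the plane passes through points of norm $1$, one gets $h_{j+1}\le 2$, which is \emph{not} $\le j$ for $j=1$. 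So the single-step bound with constant $1$ fails at $j=1$; this tells me the statement is genuinely about $m\ge 2$ and $k\ge 2$ (as stated), and the first peel is from $k+1$ points down to $k\ge 2$ points, so $j\ge 2$ throughout and the weaker bound $h\le 2 \le j$ actually suffices once $j\ge 2$. That is the crux: for $j\ge 2$, any point of the unit ball is within distance $2$ of any affine hyperplane meeting the unit ball, hence $h_{j+1}\le 2\le j$, giving $\vsimp(\cX)\le \tfrac{2}{j}\vsimp(\hat\cX_{j+1})\le\vsimp(\hat\cX_{j+1})$. Iterating this from $j=k$ down to $j=m$ (all $\ge 2$) yields $\vsimp(\bth)\le\vsimp(\bth_m)$.

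The main obstacle, then, is not any deep computation but getting the height bound $h_{j+1}\le 2$ airtight: one must argue that the affine plane $\Pi(\theta_1,\ldots,\theta_j)$ contains at least one point of norm $1$ (namely $\theta_1$), so for the orthogonal projection $p$ of $\theta_{j+1}$ onto that plane we have $h_{j+1} = |\theta_{j+1}-p|$ and $|p|\le |p-\theta_1| + |\theta_1|$; this does not immediately bound $|p|$. The correct quick argument is simply $h_{j+1} = \dist(\theta_{j+1},\Pi) \le |\theta_{j+1}-\theta_1| \le |\theta_{j+1}| + |\theta_1| = 2$, since $\theta_1\in\Pi$ and the distance to a set is at most the distance to any of its points. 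This is clean and needs only $\theta_1\in\Pi(\hat\cX_{j+1})$, which holds since $\theta_1$ is one of the defining points (as $j\ge 2$ means $\hat\cX_{j+1}$ still contains $\theta_1$). So the write-up is: state the single-step lemma, prove $h\le 2\le j$ for $j\ge 2$, apply Lemma \ref{lem:simp_vol}, and telescope.
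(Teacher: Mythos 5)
Your argument is correct and is essentially identical to the paper's own proof: both peel off one vertex at a time via Lemma~\ref{lem:simp_vol}, bound each height by $2$ (the diameter of the unit ball, since $\theta_1$ lies in the affine span and both points have norm $1$), and use that every denominator $j$ in the telescoping product $\prod_{j=m}^{k}\tfrac{h_j}{j}$ satisfies $j\ge m\ge 2$, making each factor at most $1$. The paper writes this in three lines; your version reaches the same crux after some exploratory detours, but the final chain of inequalities matches.
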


\begin{proof}
For every $j$ define $h_j$ to be the distance between $x_{j+1}$ and $\Pi(\bth_j)$. Using Lemma \ref{lem:simp_vol} we have
\[
	\vsimp(\bth) = \frac{h_k}{k} \vsimp(\bth_k) = \frac{h_k}{k} \frac{h_{k-1}}{k-1} \vsimp(\bth_{k-1}) = \cdots = \frac{h_k}{k} \frac{h_{k-1}}{k-1}\cdots \frac{h_m}{m} \vsimp(\bth_m).
\]
Since $\bth\subset(\S^{k-1})^{k+1}$, we have that $h_j \le 2$ for all $j$.
Since $m\ge 2$ we have $\frac{h_k}{k} \frac{h_{k-1}}{k-1}\cdots \frac{h_m}{m}  \le 1$, that completes the proof.

\end{proof}
\newpage
\bibliographystyle{plain}
\bibliography{zotero}

\end{document}